\documentclass[12pt, final]{amsart}
\usepackage{amsmath,amsthm,amssymb,amsfonts}
\usepackage{xspace}
\usepackage[colorlinks=false, final]{hyperref}
\usepackage[scr]{rsfso}
\usepackage[shortlabels]{enumitem}
\usepackage{geometry}
\usepackage{soul}
\usepackage{multicol}
\usepackage{float}
\usepackage{array}
\usepackage{runic}
\usepackage{tikz-cd}
\DeclareMathOperator{\en}{End}
\DeclareMathOperator{\sen}{SEnd}
\DeclareMathOperator{\au}{Aut}
\DeclareMathOperator{\inn}{Inn}
\DeclareMathOperator{\Ker}{Ker}
\DeclareMathOperator{\im}{im}
\DeclareMathOperator{\rank}{rank}
\DeclareMathOperator{\Fix}{fix}
\DeclareMathOperator{\supp}{supp}
\newcommand{\tn}{\mathcal{T}_n}
\newcommand{\sn}{\mathcal{S}_n}
\newcommand{\an}{\mathcal{A}_n}
\newcommand{\os}{\textfut{A}}
\newcommand{\GL}{\mathscr{L}}
\newcommand{\GR}{\mathscr{R}}
\newcommand{\GD}{\mathscr{D}}
\newcommand{\GH}{\mathscr{H}}
\newcommand{\GJ}{\mathscr{J}}
\theoremstyle{plain}
\newtheorem{thm}{Theorem}[section]
\newtheorem{theorem}[thm]{Theorem}
\newtheorem{proposition}[thm]{Proposition}
\newtheorem{lemma}[thm]{Lemma}
\newtheorem{corollary}[thm]{Corollary}
\theoremstyle{definition}
\newtheorem{definition}[thm]{Definition}
\theoremstyle{remark}
\newtheorem{remark}[thm]{Remark}
\newtheorem{example}[thm]{Example}
\newtheorem{question}[thm]{Question}
\counterwithin{equation}{section}
\counterwithin{figure}{section}
\usetikzlibrary{decorations.pathreplacing,angles,quotes}
\title[]{The endomorphism tower of a finite symmetric group}

\author[V. Gould]{Victoria Gould}
\address{University of York}
\email{victoria.gould@york.ac.uk}

\author[A. Grau]{Ambroise Grau}
\address{No affiliation}
\email{ambroise.grau@alumni.york.ac.uk}
\author[M. Johnson]{Marianne Johnson}
\address{University of Manchester}
\email{Marianne.Johnson@manchester.ac.uk}

\author[J. Smith]{Jamie Smith}
\address{University of York}
\email{jamie.smith@york.ac.uk}
\keywords{Symmetric group, monoid, endomorphism, automorphism, endomorphism tower}
\subjclass[2020]{20B30, 20M15, 20M20}
\thanks{Throughout this work, the second author was funded by Portuguese national funds through the FCT – Fundação para a Ciência e a Tecnologia, I.P., under the scope of the projects UID/00297/2025 (\url{https://doi.org/10.54499/UID/00297/2025}) and UID/PRR/00297/2025 (\url{https://doi.org/10.54499/UID/PRR/00297/2025}) (Center for Mathematics and Applications – NOVA Math).}
\begin{document}
\begin{abstract}
We consider the {\em endomorphism tower} of a monoid $M$,  that is, the sequence of monoids $\en_i(M)$ where $\en_0(M)=M$ and for all $i\geq 1$, $\en_i(M)$ is the monoid of all  endomorphisms of $\en_{i-1}(M)$. We show that for a finite monoid $M$ this sequence does not stabilise in a finite number of steps. Our focus is then on the case where $M=\sn$, the symmetric group on  a finite number $n$ of points. It is well known that other than in exceptional cases (which are avoided by taking $n \geq 7$), the corresponding {\em automorphism tower} of $\sn$ stabilises at the first step. In spite of the natural nature of this question, nothing was known of the endomorphism tower above the level $i=1$. We determine (for each $n \geq 7)$ the elements of $\en_2(\sn)$ and their multiplication and thus verify that the monoids $\en_i(\sn)$ for $i=0,1,2$ all have group of units isomorphic to $\sn$. We show that the same is true of $\en_3(\sn)$.
\end{abstract}
\maketitle

\section{Introduction}
\label{sec:intro}
The most natural examples of groups and monoids come from taking the automorphism group or endomorphism monoid of a given mathematical object. Perhaps the best known examples are those where the object is a finite set of cardinality $n$ or an $n$-dimensional vector space over a division ring $D$, yielding as groups of automorphisms the symmetric group $\sn$ and the general linear group $GL(n,D)$, respectively, and as monoids of endomorphisms the transformation monoid $\tn$ and the  monoid of matrices $M(n,D)$, respectively. Exploring the structure of such naturally occurring monoids lends itself to new areas of study within algebra. There is a substantial literature supporting this direction of study (see, for example \cite{Araujo:09, araujo:11, chubb:08, Fong:1980,  hou:07}).

A classic question posed in group theory concerns sequences of automorphism groups, as we now explain. Let $G$ be a group and let $\au(G)$ denote the group of automorphisms of $G$. We define a sequence $\au_i(G)$ inductively by putting $\au_0(G)=G$ and $\au_{i}(G)=\au(\au_{i-1}(G))$ for all $i\geq 1$. This can be extended to transfinite ordinals by setting $\au_\alpha(G)$, for a limit ordinal $\alpha$, to be the direct limit of the groups $\au_\beta(G)$, for all $\beta<\alpha$, using the fact that there is a natural morphism from $G$ to the subgroup $\inn(G)$ of $\au(G)$. This sequence is known as the {\em automorphism tower} of $G$.  One can then enquire as to the nature of the groups in this tower and, in particular, ask whether there exists an ordinal $\mu$ such that $\au_{\mu}(G)$ is isomorphic to $\au_{\nu}(G)$, for all $\nu>\mu$.  If so, then choosing the least such $\mu$, the sequence is said to {\em terminate, or stabilise,} in $\mu$ steps.  A classical result of Weilandt \cite{Weilant} shows that if $G$ is any finite centreless group then its automorphism tower terminates in a finite number of steps \cite{Weilant}. The corresponding result, showing that the automorphism tower of an infinite centreless group stabilises, possibly after an infinite number of steps, was proven by Thomas \cite{Thomas:1985} (see also \cite{Just:1999}) and extended to non-centreless groups by Hamkins \cite{hamkins:98}. It appears to be still open as to  whether the automorphism tower of an arbitrary finite group terminates after a finite number of steps.

This article is intended as an opener to the analogous study of the endomorphism tower of a monoid. Let $M$ be a monoid and let $\en(M)$ denote the monoid of semigroup endomorphisms of $M$. We define a sequence $\en_i(M)$ inductively by putting $\en_0(M)=M$ and $\en_{i}(M)=\en(\en_{i-1}(M))$ for all $i\geq 1$. This sequence is known as the {\em endomorphism tower} of $M$. (Note that the question of extending to an infinite sequence is not yet even  well posed, since there are not necessarily canonical  morphisms from $M$ to $\en(M)$ to allow us to define $\en_\alpha(M)$ for a limit ordinal $\alpha$.) One then asks, given a monoid $M$, does there exist some finite $i$ such that $\en_{i}(M)\cong\en_{i+1}(M)$, in which case we say the sequence {\em terminates, or stabilises,} in $i$ steps. If $M$ is finite, this question is easily resolved in the negative (see Proposition~\ref{prop:notstab}); the case for infinite $M$ is still open. We are led therefore to investigate other patterns and similar stability conditions for these towers. For example, under what conditions is there a `nice' pattern in the sequence of groups $\au(\en_i(M))$ for $i\geq 0$? We remark that if $G$ is the group of units of $M$, that is, the invertible elements of $M$, then there is a natural morphism from $G$ to the group of units $\au(M)$ of $\en(M)$, again given by conjugation.

The purpose of this article is to study the endomorphism tower of a finite symmetric group $\sn$. In order to avoid exceptional behaviour, we focus on the case where $n\geq 7$; it is well known that this constraint guarantees that $\au(\sn)$ is isomorphic to $\sn$, that is, the automorphism tower of $\sn$ terminates at $i=0$. The elements and the structure of $\en_1(\sn)$ have been determined and used by several authors, for example in the study of digraphs constructed via endomorphisms of groups (see \cite{Adjith:2025}), and for studying the translational hull of this endomorphism semigroup (see \cite{GGJK:2024}). As noted above, the group of units of $\en_1(\sn)$, that is, $\au(\sn)$, is isomorphic to $\sn$. We proceed to determine the elements and the structure of $\en_2(\sn)$; again, the group of units, that is, $\au(\en_1(\sn))$, is isomorphic to $\sn$. Using the structure of $\en_2(\sn)$ we then show that the group of units of $\en_3(\sn)$, that is $\au(\en_2(\sn))$, is once more isomorphic to $\sn$. Our aim is to illustrate the techniques required for the study, and to note how the complexities arise.

As mentioned earlier, this article is intended as an opener to the study of endomorphism towers. We are left with a plethora of interesting and non-trivial questions, some of which we articulate in the final section. The most immediate is that arising directly from our work: is $\au(\en_i(\sn))$ isomorphic to $\sn$ for all $i\geq 0$ and $n\geq 7$?

The structure of the paper is as follows. In Section \ref{sec:prelim} we introduce the basic definitions and results concerning semigroups, monoids and endomorphisms  required to proceed. In Section \ref{sec:gen} we show that for a finite non-trivial monoid $M$ (indeed, a finite semigroup), $M$ is never isomorphic to $\en(M)$. We also present a tool developed to decompose automorphisms of monoids. Section \ref{sec:strat} outlines our overall approach to the endomorphism tower problem for $\sn$. We recall the elements and structure of $\en_1(\sn)$. We present and verify some useful facts concerning centralisers of order $2$ elements of $\sn$ that are crucial for our later  results. Section \ref{sec:end2} contains the first tranche of very technical work, and it is here we give a full description of the elements of $\en_2(\sn)$. We proceed by first finding the group of automorphisms of $\en_1(\sn)$ and showing this is isomorphic to $\sn$. This leads us naturally to the information needed to find all the elements of $\en_2(\sn)$. In Section~\ref{sec:endomonoid} we present the Cayley table for $\en_2(\sn)$. Section~\ref{sec:AutEnd2} is again very technical; here we focus on $\en_3(\sn)$ and determine that the group of automorphisms of $\en_2(\sn)$ is again isomorphic to $\sn$. We conclude in Section~\ref{sec:thoughts} with some questions and remarks.

\section{Preliminaries}
\label{sec:prelim}
\subsection{Semigroups and endomorphisms}\label{sub:semigroups}
A semigroup is a non-empty set together with an associative binary operation, usually written as juxtaposition. A monoid is a semigroup with identity, usually denoted by $1$.

We will write maps on the right of their arguments and compose from left to right. Let $A$ be a set and let $\phi:A\to A$ be a map. We denote the image of $\phi$ by $\im(\phi)$ and the cardinality of $\im(\phi)$ by $\rank(\phi)$. The kernel of $\phi$, denoted by $\Ker(\phi)$, is the equivalence relation containing precisely those pairs $(a,b)\in A\times A$ such that $a\phi=b\phi$.

Let $S$ be a semigroup. We say that $\phi:S\rightarrow S$ is an {\em endomorphism} of $S$ if $x\phi\,y\phi=(xy)\phi$ for all $x,y\in S$; we say $\phi$ is an {\em automorphism} if, additionally, $\phi$ is bijective. If $S$ is a monoid, then it is not necessarily true that $1\phi=1$ for an endomorphism $\phi$; if this does hold, then we say that $\phi$ is a {\em monoid endomorphism}. We remark that in this article, following the lead of \cite{mazorchuk:02, Schein:98} and others, we consider endomorphisms of monoids without this extra condition. Of course, when $S$ is a group, any endomorphism {\em is} a monoid endomorphism and an automorphism of a monoid is always a monoid endomorphism. The set of all endomorphisms of $\en(S)$ is a monoid under the operation of composition of maps, and the automorphisms of $S$ form a subgroup of $\en(S)$, the group of units, denoted by $\au(S)$. We call any endomorphism that is not an automorphism a {\em singular} endomorphism and call the set of all singular endomorphisms $\sen(S)$.  Thus $\en(S)$ is the disjoint union of $\au(S)$ and $\sen(S)$. In the case where $S$ is finite (or, indeed, $\en(S)$ is finite), $\sen(S)$ is an   ideal of $\en(S)$.

\subsection{Endomorphisms preserving relations and idempotents}
\label{subsec:preserve}
One of the defining aspects of a semigroup $S$ is the existence of proper (left/right/two-sided) ideals. We make use of these ideals through Green's relations, defined as follows.
\begin{enumerate}[(i)]
	\item $x\GR y\Leftrightarrow xS^1=yS^1\Leftrightarrow(\exists a,b\in S^1)\;xa=y\text{ and }yb=x$,
	\item $x\GL y\Leftrightarrow S^1x=S^1y\Leftrightarrow(\exists a,b\in S^1)\;ax=y\text{ and }yb=x$,
	\item $x\GJ y\Leftrightarrow S^1xS^1=S^1yS^1\Leftrightarrow (\exists a,b,c,d\in S^1)\;axb=y\text{ and }cyd=x$,
	\item $x\GD y\Leftrightarrow (\exists z\in S)\;x\GR z\GL y$,
	\item $\GH=\GL\cap\GR$.
\end{enumerate}

Another defining feature of a semigroup is the structure of its idempotents (the elements $e\in S$ such that $e^2=e$). We denote the set of idempotents of a semigroup $S$ by $E(S)$ and recall that there is a \emph{natural order} on $E(S)$ defined by \[e\leq f\Leftrightarrow ef=fe=e\text{ for all }e,f \in E(S).\] For any subsemigroup $U$ of $S$ (e.g. the image of an endomorphism  of $S$), the natural order on $E(U)$ is the restriction of the natural order of idempotents of $S$ to $U$.
\begin{definition}
    Let $S$ be a semigroup and $\phi$ be an endomorphism of $S$. If $\mathscr{K}$ is a relation on $S$ then we say $\phi$ {\em preserves} $\mathscr{K}$ if for all $x,y\in S$,
        \[x\,\mathscr{K}y\Rightarrow x\phi\,\mathscr{K}y\phi\]
    and that $\phi$ {\em strongly preserves} $\mathscr{K}$ if
        \[x\,\mathscr{K}y\Leftrightarrow x\phi\,\mathscr{K}y\phi\]
Further, given a partition of a semigroup $S$ into a disjoint union of sets $\bigcup_{i\in I} X_i$, we say that an endomorphism \emph{preserves the partition} if $X_i\phi\subseteq X_i$ for all $i\in I$.
\end{definition}
The following three results follow in a straightforward manner from the definitions.
\begin{lemma}\label{lem:EndosPresGreens}
	Let $S$ be a semigroup and $\mathscr{K}$ be one of Green's relations. Then any endomorphism of $S$ preserves $\mathscr{K}$ and any automorphism strongly preserves $\mathscr{K}$.
\end{lemma}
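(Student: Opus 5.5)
The plan is to verify the claims straight from the existential characterisations (i)--(v) of Green's relations, starting with $\GR$, $\GL$ and $\GJ$, and then deducing $\GD$ and $\GH$. Let $\phi$ be an endomorphism of $S$. It is convenient to extend $\phi$ to $S^1$ by setting $1\phi=1$ when the adjoined identity is external. If $S$ is already a monoid we do not use its identity as a multiplier. Instead we read each witness $a\in S^1$ as either an element of $S$ or the empty multiplier, so that $xa=y$ means either $x=y$ or $xa=y$ with $a\in S$. With this convention, $(xa)\phi = x\phi\, a\phi$ holds for all $x\in S$ and $a\in S^1$.

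\emph{Preservation.} Suppose $x\GR y$, with witnesses $a,b\in S^1$ satisfying $xa=y$ and $yb=x$. Applying $\phi$ gives $x\phi\, a\phi = y\phi$ and $y\phi\, b\phi = x\phi$, where $a\phi,b\phi\in S^1$. Hence $x\phi\GR y\phi$. The same argument, with multipliers on the left, handles $\GL$. For $\GJ$ we apply $\phi$ to $axb=y$ and $cyd=x$ in the same way.

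For $\GD$, suppose $x\GR z\GL y$. Then $x\phi\GR z\phi\GL y\phi$ by the cases just proved, and $z\phi\in S$, so $x\phi\GD y\phi$. For $\GH$, preservation of both $\GL$ and $\GR$ gives preservation of their intersection.

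\emph{Strong preservation.} Let $\phi$ be an automorphism. Its inverse $\phi^{-1}$ is also a homomorphism: writing $u=x\phi^{-1}$ and $v=y\phi^{-1}$, we have $(uv)\phi = xy$, so $(xy)\phi^{-1} = x\phi^{-1}\, y\phi^{-1}$. Hence $\phi^{-1}$ is an endomorphism, and by the first part it preserves each of Green's relations. Now suppose $x\phi\,\mathscr{K}\,y\phi$. Applying $\phi^{-1}$ gives $x\,\mathscr{K}\,y$, which is the reverse implication.

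The only point needing care is the treatment of $S^1$ when $S$ is a monoid and $\phi$ does not fix the identity. Reading $S^1$-witnesses as ``an element of $S$ or nothing'' sidesteps this completely, so I expect no real obstacle.
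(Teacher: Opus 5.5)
Your proof is correct and is the direct verification from the definitions that the paper intends; the paper gives no written proof and says only that the lemma follows straightforwardly from the definitions. Your use of $\phi^{-1}$ for strong preservation is also the natural route. The $S^1$ caveat is harmless but unnecessary: when $S$ is a monoid one has $S^1=S$, and every witness $a$ satisfies $(xa)\phi=x\phi\,a\phi$ with $a\phi\in S$, whether or not $1\phi=1$.
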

\begin{lemma}\label{lem:imageidentityzero}
	If $e$ is an idempotent of a semigroup $S$ and $\phi$ is an endomorphism of $S$ then $e\phi$ is also an idempotent. Furthermore, $\phi$ preserves the natural order on idempotents in the sense that if $e \leq f$ then $e\phi \leq f\phi$. In particular, if $S$ has an identity element $1$ or a zero element $0$ then $1\,\phi$ is the identity of $\im(\phi)$ and $0\,\phi$ is the zero of $\im(\phi)$.
\end{lemma}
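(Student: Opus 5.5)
The plan is to verify each of the three claims directly from the definition of an endomorphism $\phi$, namely $(xy)\phi = x\phi\, y\phi$ for all $x,y\in S$. No earlier results are needed beyond this identity, although the argument parallels the reasoning behind Lemma~\ref{lem:EndosPresGreens}.

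First, idempotents. For $e\in E(S)$ we have $e\phi\, e\phi=(ee)\phi=e\phi$, so $e\phi\in E(S)$.

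Second, the natural order. Suppose $e\leq f$, so that $ef=fe=e$. Applying $\phi$ gives
\[e\phi\, f\phi=(ef)\phi=e\phi \quad\text{and}\quad f\phi\, e\phi=(fe)\phi=e\phi.\]
Since $e\phi$ and $f\phi$ are idempotents by the first step, this says exactly that $e\phi\leq f\phi$. The same computation works inside the subsemigroup $\im(\phi)$, which is consistent with the remark that the natural order on $E(U)$ is the restriction of that on $E(S)$.

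Third, the identity and zero. Every element of $\im(\phi)$ has the form $x\phi$ for some $x\in S$. If $1$ is an identity of $S$, then
\[1\phi\, x\phi=(1x)\phi=x\phi \quad\text{and}\quad x\phi\, 1\phi=(x1)\phi=x\phi,\]
so $1\phi$ is a two-sided identity of $\im(\phi)$. Note that $1\phi$ need not be the identity of $S$, which is why the statement refers to $\im(\phi)$. Dually, if $0$ is a zero of $S$, then
\[0\phi\, x\phi=(0x)\phi=0\phi \quad\text{and}\quad x\phi\, 0\phi=(x0)\phi=0\phi,\]
so $0\phi$ is the zero of $\im(\phi)$. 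Uniqueness of identities and zeros in a semigroup justifies writing ``the'' identity and ``the'' zero.

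There is no genuine obstacle here; the only point requiring care is to phrase the last claim relative to $\im(\phi)$ rather than $S$.
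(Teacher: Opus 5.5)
Your proof is correct and is the direct verification from $(xy)\phi = x\phi\,y\phi$ that the paper intends; the paper gives no explicit argument and only says the lemma follows in a straightforward manner from the definitions. All three of your computations (idempotents, the natural order, and the identity and zero of $\im(\phi)$) are complete as written.
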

\begin{lemma}\label{lem:preimage_of_idpt_is_subsmgp}
	Let $S$ be a semigroup and let $e$ be in $E(S)$. If $\phi$ is an endomorphism of $S$ then any kernel class of $\phi$ with image $e$ is a subsemigroup of $S$.
\end{lemma}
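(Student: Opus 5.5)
The statement is Lemma~\ref{lem:preimage_of_idpt_is_subsmgp}. The argument is a one-line closure check directly from the definition of an endomorphism; no earlier result is needed beyond the homomorphism property itself.

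Fix $e\in E(S)$ and an endomorphism $\phi$ of $S$. Let $K=\{x\in S: x\phi=e\}$ be the kernel class of $\phi$ mapping to $e$, which is assumed non-empty since the statement concerns a kernel class with image $e$. I must show that $K$ is closed under the multiplication of $S$.

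Take $x,y\in K$. Since $\phi$ is an endomorphism,
\[
(xy)\phi = x\phi\, y\phi = ee = e,
\]
where the last equality is exactly the hypothesis that $e$ is idempotent. Hence $xy\in K$, so $K$ is closed under multiplication. Associativity is inherited from $S$, so $K$ is a subsemigroup of $S$.

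There is no real obstacle here. The only point to note is where idempotency enters: for a non-idempotent $a$ the class $a\phi^{-1}$ need not be closed, since $x,y \in a\phi^{-1}$ gives only $(xy)\phi=a^2\neq a$ in general.
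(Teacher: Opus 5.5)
Your proof is correct and is exactly the routine verification the paper intends. The paper gives no written proof, saying only that this lemma follows in a straightforward manner from the definitions, and your closure computation $(xy)\phi = x\phi\,y\phi = ee = e$ is that verification.
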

\subsection{Automorphisms and characteristic subsets}\label{subsec:InvertibleMaps}
Let $\psi$ be an automorphism of a monoid $S$; in particular $\im(\psi)=S$. We call the $\GH$-class of $S$ containing the identity element $1$ of $S$ the \emph{group of units}. By Lemma~\ref{lem:imageidentityzero} $1\psi$ must be the identity of $\im(\psi)=S$. That is, the identity of $S$ is fixed by all automorphisms. It then follows from Lemma~\ref{lem:EndosPresGreens} that if $G$ is the group of units of $S$ then $G\psi=G$ for all automorphisms $\psi$ of $S$. Similarly, if $S$ contains a zero element $0$ then \ref{lem:imageidentityzero} gives that $\{0\}\psi = \{0\}$. The previous two situations are examples of \emph{characteristic subsets}.
\begin{definition}
	Let $S$ be a semigroup and $T$ be a subset of $S$. If for all automorphisms $\psi$ of $S$ we have $T\psi=T$ then we call $T$ a {\em characteristic subset} of $S$. If $T$ is a subsemigroup (monoid/group) then we call $T$ a characteristic subsemigroup (monoid/group).
\end{definition}
In later sections we make heavy use of the notion of a characteristic subset. Here we give some further examples.
\begin{example}\label{ex:ch}
    Let $S$ be a semigroup. The following subsets are characteristic: $E(S)$, a minimal two-sided ideal, the set of elements of $E(S)$ that are maximal (or minimal) under the partial order $\leq$. Further, if $U$ and $V$ are characteristic subsets, then so is $U\cap V$ and $U\setminus V$.
\end{example}
Now, let $M$ be a monoid and $G$ be the group of units of $M$. We can use the elements of $G$ to construct automorphisms as follows.
\begin{definition}
	If $M$ is a monoid with group of units $G$ then the automorphism of $M$ defined by $\psi_g:x\to g^{-1}xg$ is called an inner automorphism of $M$. These inner automorphisms form a subgroup of $\au(M)$, denoted $\inn(M)$. By abuse of notation, we can also use $\psi_g$ for the map of $G$ defined in the same way.
\end{definition}
Inner automorphisms are exactly the conjugation maps by elements of the group of units. To simplify our notation, for all $x\in M$ and all $g$ in the group of units $G$ we will use $x^g$ to mean $g^{-1}xg$. This is a direct generalisation of the definition of an inner automorphism of a group. Since these are defined by conjugation, the notion of a centraliser and conjugacy classes for monoids are also useful to generalise.
\begin{definition}
If $M$ is a monoid with group of units $G$ then for any element $x$ of $M$ we can define its centraliser by $G$ as follows
		\[C(x)=\{g\in G\mid x^g=x\}.\]
We do not make $G$ explicit in the notation $C(x)$ as this will always be clear from context. The conjugacy class of $x$ is then defined by
		\[x^G=\{x^g\mid g\in G\}.\]
\end{definition}
\begin{definition}
A group $G$ is said to be \emph{complete} if every automorphism of $G$ is inner and the centre of $G$ is trivial.
\end{definition}
Our interest in complete groups stems from the fact that if $G$ is a complete group, then as $G$ is centreless, $G\cong\inn(G)$ and hence $G\cong\au(G)$.
\subsection{Split short exact sequences of groups}
\label{subsec:split}
Suppose that $\mathfrak{i}:H\to G$ and $\mathfrak{r}:G\to K$ are homomorphisms of groups such that $\mathfrak{i}$ is injective and $\mathfrak{r}$ is surjective. The sequence of homomorphisms
	\[\{1\}\to H\xrightarrow{\mathfrak{i}}G\xrightarrow{\mathfrak{r}}K\to\{1\}\]
is said to be a {\em short exact sequence} of groups if $\im(\mathfrak{i})$ equals the (group-theoretic) kernel of $\mathfrak{r}$. Further if there exists a homomorphism $\mathfrak{e}:K\to G$ such that $\mathfrak{e}$ composed with $\mathfrak{r}$ is the identity on $K$, then we say that this short exact sequence \emph{splits}. In the latter case, note that for all $k \in K$ and all $h \in H$ we have
	\[(k^{-1}\mathfrak{e}\;h\mathfrak{i}\;k\mathfrak{e})\mathfrak{r}=k^{-1}\mathfrak{er}\;h\mathfrak{ir}\;k\mathfrak{er}=k^{-1}1k=1,\]
where we use the facts that $\mathfrak{er}$ is the identity and $\mathfrak{ir}$ maps all elements to $1$. Thus $k^{-1}\mathfrak{e}\;h\mathfrak{i}\;k\mathfrak{e}\in \Ker(\mathfrak{r})=\im(\mathfrak{i})$. Since $i$ is injective, for each pair $h\in H$, $k\in K$ there exists a unique element of $H$ whose image under $\mathfrak{i}$ is $k^{-1}\mathfrak{e}h\mathfrak{i}k\mathfrak{e}$; call this element $(h)\mathfrak{l}_k$. Then one can show that $\mathfrak{l}_k$ is an automorphism of $H$, giving rise to an action of $K$ on $H$ by automorphisms via the homomorphism $\mathfrak{l}:K\rightarrow\au(H)$ defined by $k\mathfrak{l}=\mathfrak{l}_k$. Thus $G$ is isomorphic to the semidirect product $G\cong K\ltimes_\mathfrak{l}H$, that is, the group whose elements are pairs $(k,h)$ with $k\in K$ and $h\in H$ with composition given by
    \[(k_1,h_1)(k_2,h_2)=(k_1k_2,[h_1(k_2\mathfrak{l})]h_2).\]
for all $h_1,h_2\in H$ and all $k_1,k_2\in K$.
\section{Endomorphisms of semigroups}
\label{sec:gen}
Before proceeding to our study of the endomorphism tower problem for symmetric groups, we present some general results and techniques. In Subsection \ref{subsec:finite} we show that endomorphism towers of finite non-trivial monoids do not stabilise. In Subsection \ref{subsec:semidirect} we show that if $M$ is a monoid whose group of units $G$ is complete then the automorphism group of $M$ is isomorphic to a direct product of the form $G\times K$ where $K$ is the subgroup of the automorphism group consisting precisely of all automorphisms that fix $G$ element-wise.
\subsection{Endomorphism towers of finite semigroups}
\label{subsec:finite}
For any non-trivial semigroup $S$ that is not idempotent free there is always at least one kind of singular endomorphism. These are the constant maps to some idempotent in $S$. That is, if $e$ is in $E(S)$ then the constant map $\phi:S\to S$ defined by $x\phi=e$ for all $x\in S$ is an endomorphism, indeed an idempotent endomorphism, of $S$. Along with these constant maps, we also have the identity map. 

Recall that the endomorphism tower of $S$ \emph{stabilises} if there exists $i$ such that $\en_i(S)\cong\en_{i+1}(S)$. In the finite case we can now use a counting argument on the idempotents to see which semigroups have an endomorphism tower that stabilises. We make use of the fact that every finite semigroup contains an idempotent.
\begin{lemma}\label{lem:EndTowerSolFin}
	If $S$ is a non-trivial finite semigroup then the size of $E(S)$ is strictly less than the size of $E(\en(S))$. Thus, $S\not\cong\en(S)$.
\end{lemma}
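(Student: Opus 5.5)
We exhibit an explicit injection of $E(S)$ into $E(\en(S))$ and then one extra idempotent endomorphism outside its image. For each $e\in E(S)$ let $c_e:S\to S$ be the constant map $x c_e=e$. As noted in the paragraph before the statement, $c_e$ is an endomorphism, since $x c_e\, y c_e=ee=e=(xy)c_e$. It is also idempotent in $\en(S)$, since $c_e c_e=c_e$. Distinct idempotents $e\neq f$ give distinct constant maps, so $e\mapsto c_e$ is an injection $E(S)\to E(\en(S))$. Because $S$ is finite, $E(S)\neq\emptyset$, so this injection is non-vacuous.

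Next take the identity map $\mathrm{id}_S$, which is an idempotent endomorphism. Since $S$ is non-trivial, $|S|\geq 2$, so $\mathrm{id}_S$ is not constant and hence differs from every $c_e$. Therefore
\[|E(\en(S))|\geq |E(S)|+1>|E(S)|.\]
Both sets are finite: $E(S)\subseteq S$, and $\en(S)$ is a set of maps on a finite set.

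For the final assertion, any isomorphism $S\cong \en(S)$ would restrict to a bijection $E(S)\to E(\en(S))$, because isomorphisms map idempotents to idempotents and so do their inverses (Lemma~\ref{lem:imageidentityzero}). This would force the two sets to have equal size, contradicting the strict inequality, so $S\not\cong\en(S)$.

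There is no real obstacle here. The only points needing care are that finiteness guarantees an idempotent, so there is at least one constant map, and that non-triviality is exactly what separates $\mathrm{id}_S$ from the constants.
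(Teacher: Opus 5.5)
Your proposal is correct and follows the same route as the paper: the constant maps $c_e$ for $e\in E(S)$, together with the identity map, give at least $|E(S)|+1$ idempotents in $\en(S)$, and an isomorphism would have to biject the idempotent sets. You also spell out two points the paper leaves implicit, namely that non-triviality is what makes the identity map differ from every constant map, and that an isomorphism restricts to a bijection on idempotents.
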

\begin{proof}
	Let $e$ be in $E(S)$ then the constant map to $e$ is an idempotent in $\en(S)$. So there are at least $|E(S)|$ idempotents $\en(S)$. In addition to these, there is the identity map. Thus, $|E(S)|<|E(\en(S))|$ and $S\not\cong\en(S)$.
\end{proof}
\begin{proposition}\label{prop:notstab}
	If $S$ is a finite semigroup such that the endomorphism tower of $S$ stabilises then $S$ is a trivial group and the endomorphism tower terminates at the first step.
\end{proposition}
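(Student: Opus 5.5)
The plan is to reduce everything to Lemma~\ref{lem:EndTowerSolFin} by showing that, once some level of the tower is trivial, every later level is trivial too, and that any stabilisation forces this to happen. First, every term of the tower is finite. Indeed, if $T$ is finite then $\en(T)$ is a set of maps $T\to T$, so $|\en(T)|\le |T|^{|T|}$ is finite. By induction, $\en_i(S)$ is finite for every $i\ge 0$. Each $\en_i(S)$ is also non-empty, since it contains the identity map (and $S$ is non-empty).

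Now suppose the tower stabilises, so that $\en_i(S)\cong\en_{i+1}(S)$ for some $i$. Put $T=\en_i(S)$; this is a finite semigroup with $T\cong\en(T)$. By Lemma~\ref{lem:EndTowerSolFin}, if $T$ were non-trivial then $|E(T)|<|E(\en(T))|$, which contradicts the isomorphism. Hence $T$ is trivial, meaning it has exactly one element, and is therefore the trivial group.

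It remains to pass from the triviality of $\en_i(S)$ back down to $S$. If $i\ge 1$, consider $U=\en_{i-1}(S)$. The monoid $\en(U)$ always contains the identity map of $U$, and it also contains the constant map onto each idempotent of $U$. Since $U$ is finite, it has at least one idempotent $e$. If $\en(U)$ has only one element, then the identity map equals the constant map to $e$, so $U=\{e\}$ and $U$ is trivial. Descending inductively, we get that $S=\en_0(S)$ is trivial, i.e.\ $S$ is the trivial group.

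Finally, for the trivial semigroup $S$ we have $\en(S)$ consisting of the unique map $S\to S$, so $\en(S)\cong S$ and the tower stabilises already at $i=0$. That is, $\en_0(S)\cong\en_1(S)$, which is what ``terminates at the first step'' means. The only point needing care is the downward induction, which relies on finiteness to guarantee an idempotent in each level; apart from that, the argument is a direct application of Lemma~\ref{lem:EndTowerSolFin}.
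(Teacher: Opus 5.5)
Your proof is correct and is essentially the paper's argument. Both rest on Lemma~\ref{lem:EndTowerSolFin} together with the fact that non-triviality passes from $U$ to $\en(U)$. You run this as a downward induction from the stabilising level, justifying each step with the identity map and a constant map; the paper runs it upward from $S$, noting that $\en(S)$ is non-trivial and that no level can be isomorphic to the next. Your version also makes explicit two points the paper leaves implicit: that each level is finite, and why triviality propagates between levels.
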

\begin{proof}
	If $S$ is a non-trivial semigroup then we have seen in Proposition~\ref{lem:EndTowerSolFin} that $|E(\en(S))|>|E(S)|$ and $S$ is not isomorphic to $\en(S)$. Further, $\en(S)$ is also a non-trivial semigroup. It follows that the endomorphism tower of $S$ does not stabilise.
	
If $S$ is a trivial semigroup, hence a trivial group, it is clear that $\en(S)$ is trivial so that $S\cong\en(S)$ and the endomorphism tower terminates at the first step.
\end{proof}
\begin{remark}
We cannot use the above counting argument for infinite semigroups, although it is easy to find examples where cardinality problems arise. For instance, let $\mathbb{N}$ denote the non-negative integers and set $S=(\mathbb{N}, {\rm max})$. For each set $X\subseteq\mathbb{N}$ with the property that $0\in X$, define a function $\phi_X$ from $S$ to $S$ such that $a\phi_X={\rm max}\{x\mid x \leq a, x \in X\}$. It is readily verified that this is a morphism. But then $|\en(S)|\geq |\mathcal{P}(\mathbb{N})|>|\mathbb{N}|=|S|$. We leave open the question of whether there exists an infinite semigroup with stabilising endomorphism tower.
\end{remark}
\subsection{Decomposition of the automorphism group via characteristic subsets}
\label{subsec:semidirect}
The main aim of this section is to show that when the group of units of a monoid $M$ has a particular structure, we can decompose $\au(M)$ into a direct product using a particular short exact sequence.

Suppose first that $S$ is a semigroup with a characteristic subsemigroup $U$ such that each automorphism of $U$ extends to an automorphism of $S$, and that $\au(U)$ embeds into $\au(S)$ via a homomorphism
	\[\mathfrak{e}:\au(U)\to\au(S).\]
Then the restriction map $\mathfrak{r}:\au(S)\to\au(U)$, defined by $\psi\mathfrak{r}=\psi|_U$ is such that $\mathfrak{er}$ is the identity on $\au(U)$. Using this we establish the following sequence of homomorphisms, where $\mathfrak{i}$ is the inclusion map.
	\[\Ker(\mathfrak{r})\xrightarrow{\mathfrak{i}}\au(S)\xrightarrow{\mathfrak{r}}\au(U).\]
Clearly $\mathfrak{i}$ is injective, $\mathfrak{r}$ is surjective, and $\im(\mathfrak{i})=\Ker(\mathfrak{r})$. Thus, we have a split short exact sequence of groups, from which it follows that
	\[\au(S)\cong\au(U)\ltimes_\mathfrak{l}\Ker(\mathfrak{r}),\]
for some homomorphism $\mathfrak{l}:\au(U)\to\au(\Ker(\mathfrak{r}))$.

This technique is particularly powerful in the situation where $S$ is a monoid, the characteristic subgroup is taken to be the group of units, and this group is complete.
\begin{theorem}\label{Tm:AutCompleteMonoids}
If $M$ is a monoid with group of units $G$ such that $G$ is a complete group then the automorphism group of $M$ is isomorphic to $G\times K$, where $K$ is the subgroup of $\au(M)$ whose elements fix each element of $G$.
\end{theorem}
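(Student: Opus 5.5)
We apply the split short exact sequence set-up above with $S=M$ and $U=G$. The group of units $G$ is a characteristic subgroup of $M$, so restriction gives a homomorphism $\mathfrak{r}:\au(M)\to\au(G)$, $\psi\mapsto\psi|_G$. Its kernel is precisely $K$, the set of automorphisms fixing $G$ pointwise. Since $G$ is complete, $\au(G)=\inn(G)\cong G$ via $g\mapsto\psi_g$, the map being injective because $G$ is centreless. The splitting is $\mathfrak{e}:\au(G)\to\au(M)$, sending the inner automorphism $\psi_g$ of $G$ to the inner automorphism $x\mapsto g^{-1}xg$ of $M$. This is well defined because $g$ is determined by $\psi_g$, since $Z(G)$ is trivial. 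It is a homomorphism because $\psi_g\psi_h=\psi_{gh}$ on $M$, and $\mathfrak{er}$ is the identity. Hence $\au(M)\cong \au(G)\ltimes_{\mathfrak{l}} K\cong G\ltimes_{\mathfrak l} K$.

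It remains to show the action $\mathfrak l$ is trivial, i.e.\ that every inner automorphism $\psi_g$ of $M$ commutes with every $\kappa\in K$; equivalently, that $\inn(M)$ centralises $K$. This is the key step. For $x\in M$, since $\kappa$ is a homomorphism fixing $g$ and $g^{-1}$,
\[
x\psi_g\kappa=(g^{-1}xg)\kappa=g^{-1}\kappa\,x\kappa\,g\kappa=g^{-1}(x\kappa)g=x\kappa\psi_g .
\]
So $\psi_g\kappa=\kappa\psi_g$. Consequently $\inn(M)\cap K$ consists of those $\psi_g$ fixing $G$ pointwise, i.e.\ $g\in Z(G)=\{1\}$, so the intersection is trivial.

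To finish, I would avoid relying on the semidirect-product formula and argue directly. Both $\inn(M)$ and $K$ are normal in $\au(M)$:
\begin{itemize}
\item $K$ is normal as a kernel.
\item $\inn(M)$ is normal since $\phi^{-1}\psi_g\phi=\psi_{g\phi}$ for $\phi\in\au(M)$, because $G\phi=G$.
\end{itemize}
They commute elementwise and intersect trivially. Every $\phi\in\au(M)$ factors as $\phi=\psi_g\kappa$: choose $g$ with $\phi|_G=\psi_g|_G$ by completeness, and set $\kappa=\psi_g^{-1}\phi\in K$. Thus $\au(M)=\inn(M)\times K\cong G\times K$, using $\inn(M)\cong G$ via $g\mapsto\psi_g$, which is injective since $\psi_g=1$ forces $g\in Z(G)$.

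The only subtle point is making sure that $\mathfrak e$ is a well-defined homomorphism into $\au(M)$, and that it is injective. Both rest on $G$ being centreless, so that $\psi_g$ determines $g$.
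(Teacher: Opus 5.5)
Your proof is correct and follows essentially the paper's argument: the same split exact sequence via restriction $\au(M)\to\au(G)$ with splitting by inner automorphisms, and the same key computation $x\psi_g\kappa=x\kappa\psi_g$ showing that inner automorphisms commute with $K$, so the action is trivial. Your finish via the internal decomposition $\au(M)=\inn(M)\times K$ (subgroups that are normal, commute elementwise, intersect trivially and together generate $\au(M)$) is only a slightly more explicit way of reaching what the paper reads off from the semidirect product with trivial action.
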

\begin{proof}
Let $M$ be a monoid with complete group of units $G$. We have seen in Subsection \ref{subsec:InvertibleMaps} that $G$ is a characteristic subset of $M$. Since $G$ is complete, all automorphisms of $G$ are inner and $\au(G)\cong G$. Each automorphism of $G$ therefore extends to an (inner) automorphism of $M$. Moreover, $\au(G)$ embeds into $\au(M)$ with image $\inn(M)$ and the restriction map $\mathfrak{r}:\au(M)\to\au(G)$, defined by $\psi\mathfrak{r}=\psi|_G$ is such that $\mathfrak{er}$ is the identity on $\au(G)$. We now have
	\[\au(M)\cong\au(G)\ltimes_\mathfrak{l}\Ker(\mathfrak{r})\cong G\ltimes_\mathfrak{l}\Ker(\mathfrak{r})\]
where $\mathfrak{l}:\au(G)\to\au(\Ker(\mathfrak{r}))$ is the homomorphism described in Subsection \ref{subsec:split} mapping each $\psi_g\in\au(G)$ to an automorphism of $\Ker(\mathfrak{r})$ which we shall (for simplicity) denote by $\mathfrak{l}_g$. Considering the construction of $\mathfrak{l}_g$ we see that for all $\omega\in\Ker(\mathfrak{r})$ we have $\omega\mathfrak{l}_g=\psi_g^{-1}\omega\psi_g$, where $\psi_g$ now denotes the inner automorphism of $M$ induced by $g$. For all $x\in M$ we have
\[x\psi_g\omega=(g^{-1}xg)\omega=(g^{-1}\omega)(x\omega)(g\omega)=g^{-1}(x\omega) g=(x\omega)\psi_g.\]

We deduce that $\mathfrak{l}$ maps each $\psi_g\in\au(G)$ to the identity morphism, and our description of $\au(M)$ is a direct product, that is, $\au(M)\cong G\times\Ker(\mathfrak{r}).$
\end{proof}
For $n \geq 7$ we recall that the symmetric group is a complete group, giving:
\begin{corollary}
	If the group of units of a monoid $M$ is isomorphic to $\sn$ for some $n\geq 7$, then $\au(M)$ is isomorphic to $\sn\times K$ for some group $K$.
\end{corollary}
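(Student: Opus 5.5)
This follows directly from Theorem~\ref{Tm:AutCompleteMonoids}, so the plan is short. Let $M$ be a monoid whose group of units $G$ is isomorphic to $\sn$ with $n\geq 7$. First we check that $G$ is complete. It is classical that for $n\geq 7$ (indeed for all $n\neq 2,6$) the centre of $\sn$ is trivial and every automorphism of $\sn$ is inner. The only exceptional case $n\geq 3$ excluded by this is $n=6$, where an outer automorphism exists; the hypothesis $n\geq 7$ avoids it. Completeness is defined in purely group-theoretic terms, so it is invariant under isomorphism, and hence $G$ is complete.

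Next we apply Theorem~\ref{Tm:AutCompleteMonoids} to $M$ and $G$. It gives $\au(M)\cong G\times K$, where $K$ is the subgroup of $\au(M)$ consisting of the automorphisms fixing $G$ pointwise. Composing with a fixed isomorphism $G\cong\sn$ gives an isomorphism $G\times K\cong \sn\times K$. This yields the corollary with this choice of $K$.

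The only substantive ingredient is the classical fact that $\sn$ is complete for $n\geq 7$. I would cite it rather than reprove it. If a sketch were wanted, the standard argument runs as follows. An automorphism must send transpositions to involutions with the same centraliser size. For $n\neq 6$, counting shows that transpositions are the only class of involutions whose centraliser order is $2(n-2)!$. So the automorphism preserves the class of transpositions, and one then shows it is conjugation by the permutation induced on points via maximal sets of pairwise non-commuting transpositions sharing a point. Triviality of the centre is immediate for $n\geq 3$. There is no real obstacle here beyond correctly invoking this known result.
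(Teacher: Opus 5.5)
Your proposal is correct and is essentially the paper's own argument: the paper also obtains the corollary immediately from Theorem~\ref{Tm:AutCompleteMonoids}, using the classical fact that $\sn$ is complete for $n\geq 7$, with $K$ the subgroup of $\au(M)$ fixing the group of units pointwise. Citing completeness rather than reproving it is exactly what the paper does, and your optional sketch of the standard centraliser-counting argument is accurate.
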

We finish this section by showing that automorphisms which fix all elements in the group of units are very important regarding the preservation of centralisers.
\begin{lemma}\label{lem:SameCentUnderAut}
	Let $M$ be a monoid with group of units $G$, and let $K$ be the subgroup of $\au(M)$ whose elements fix each element of $G$. Then for any $m \in M$ and $\theta\in K$, we have that
	\[C(m) = C(m\theta).\]
\end{lemma}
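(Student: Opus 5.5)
The plan is to prove the two inclusions $C(m)\subseteq C(m\theta)$ and $C(m\theta)\subseteq C(m)$ directly from the definition of the centraliser, using only that $\theta$ is a homomorphism which fixes $G$ pointwise. The reverse inclusion will come from applying the same argument to $\theta^{-1}$.

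For the first inclusion, let $g\in C(m)$, so $g^{-1}mg=m$. Apply $\theta$ to both sides. Since $\theta$ is an endomorphism, $(g^{-1}mg)\theta=(g^{-1}\theta)(m\theta)(g\theta)$. Both $g$ and $g^{-1}$ lie in $G$, and every element of $K$ fixes $G$ elementwise, so $g\theta=g$ and $g^{-1}\theta=g^{-1}$. Hence $g^{-1}(m\theta)g=m\theta$, which says $g\in C(m\theta)$. This is the same computation as in the proof of Theorem~\ref{Tm:AutCompleteMonoids}, where $\omega$ was shown to commute with $\psi_g$.

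For the reverse inclusion, observe that $K$ is a subgroup of $\au(M)$, so $\theta^{-1}\in K$ also fixes $G$ pointwise. Applying the first step to the element $m\theta$ and the automorphism $\theta^{-1}$ gives $C(m\theta)\subseteq C(m\theta\theta^{-1})=C(m)$. Alternatively, $\theta$ is injective, so $(g^{-1}mg)\theta=m\theta$ forces $g^{-1}mg=m$.

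No step is a real obstacle. The only points needing care are that the identity $(xyz)\theta=(x\theta)(y\theta)(z\theta)$ holds for semigroup endomorphisms without assuming $1\theta=1$, and that the inverse $g^{-1}$ is also fixed because it lies in $G$.
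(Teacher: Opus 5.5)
Your proposal is correct and follows essentially the same route as the paper. Both apply $\theta$ to the commuting relation for $g\in C(m)$ (you use $g^{-1}mg=m$, the paper uses $mg=gm$) and use that $\theta$ fixes $G$ pointwise. Your justification of the reverse inclusion, via $\theta^{-1}\in K$ or via injectivity of $\theta$, spells out what the paper compresses into the remark that $\theta$ is an automorphism.
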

\begin{proof}
	Clearly for any $m\in M$, $g\in G$ and $\theta\in K$ we have that:
		\[g\in C(m)\Leftrightarrow mg=gm,\]
	which, by applying $\theta$ on both sides gives us that
		\[m\theta g = m\theta g\theta = (mg)\theta = (gm)\theta = g\theta m\theta =g(m\theta),\]
	that is, $g\in C(m\theta)$ and $C(m)\subseteq C(m\theta)$. Since $\theta$ is an automorphism, we also get the reverse inclusion, and the equality holds.
\end{proof}
\section{Approaching the tower of the symmetric group}
\label{sec:strat}
We now begin our study of the endomorphism tower of $\sn$ for a fixed natural number $n$; to avoid discussion of degenerate behaviours, throughout we assume that $n\geq 7$. Our goal is to describe the first few monoids in this tower. In order to make clear which level of the endomorphism tower we are considering we use different characters for different levels. Elements of $[1,n]$ and elements of $\sn$ are lower case Roman $\{a,b,c,\ldots\}$, elements of $\en(\sn)$ are lower case Greek $\{\phi,\psi,\omega,\ldots\}$, elements of $\en_2(\sn)$ are upper case Greek $\{\Psi,\Phi,\Delta,\ldots\}$ and elements of $\en_3(\sn)$ are Anglo-Saxon runes $\{\textfut{A},\textfut{S},\textfut{J},\ldots\}$.
\[\setlength{\extrarowheight}{2mm}
    \begin{array}{l|*{1}{l}}
	\text{Monoid}		& \text{Characters}\\
	\hline
	\sn 					& a,b,c,\ldots\\
	\en(\sn)				& \psi,\phi,\omega,\ldots\\
	\en_2(\sn)				& \Psi,\Phi,\Delta,\ldots\\
	\en_3(\sn)				& \os,\textfut{S},\textfut{J},\ldots\\
    \end{array}\]

The main purpose of this section is to provide the necessary preliminaries concerning $\sn$ and $\en(\sn):=\en_1(\sn)$ for us to proceed in later sections with a consideration of $\en_2(\sn)$ and its group of automorphisms, that is, the group of units of $\en_3(\sn)$. To this end, in Subsection \ref{subsec:sym}, we provide a number of technical lemmas concerning elements of order $2$ in the symmetric group $\sn$. These are precisely the results that we need for subsequent examination of $\en_2(\sn)$ and $\en_3(\sn)$. In Subsection~\ref{sub:endsn} we recall from \cite{GGJK:2024} the structure of $\en(\sn)$.
\subsection{Centralisers of elements of order 2 in \texorpdfstring{$\sn$}{the symmetric group}}
\label{subsec:sym}
We start by defining terminology and notation.
\begin{definition}
    Let $g,h\in\sn$ and $i\in [1,n]$.
    \begin{enumerate}[(i)]
        \item We say that $i$ is \emph{fixed by $g$} if $ig=i$, and the \emph{fixed set of $g$}, denoted by $\Fix(g)$, is the set of all elements of $[1,n]$ fixed by $g$.
        \item The \emph{support} of $g$, denoted by $\supp(g)$, is the set of all letters appearing in the decomposition of $g$ into disjoint cycles (of length at least $2$). That is, the support of $g$ consists of all elements of $[1,n]$ which are not fixed by $g$.
        \item If $g$ has order $2$, the \emph{length} of $g$ corresponds to the number of disjoint transpositions in its decomposition into disjoint cycles, and we will denote it by $l(g)$. Note that $|\supp(g)|=2l(g)$.
        \item The elements $g$ and $h$ are said to be \emph{disjoint} if $\supp(g)\cap\supp(h)=\emptyset$.
        \item We say that $g$ is a {\em factor} of $h$ if the cycles in the decomposition of $g$ into disjoint cycles all appear in the corresponding decomposition of $h$.
    \end{enumerate}
\end{definition}
\begin{lemma}\label{lem:gktrans}
	Let $g,h\in\sn$ be elements of order two such that $h$ is not a transposition and $C(g)\leq C(h)$. Then $g$ is a factor of $h$ (in particular, $l(g) \leq l(h)$). Further, if $g$ and $h$ have the same parity, then $g=h$.
\end{lemma}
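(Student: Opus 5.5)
The approach is to exploit explicit elements of $C(g)$ that are easy to write down, and to read off constraints on $h$ from the fact that each of them commutes with $h$. Write $g=t_1t_2\cdots t_k$ as a product of $k=l(g)\geq 1$ disjoint transpositions $t_i=(a_i\ b_i)$, and let $F=\Fix(g)$. The following elements lie in $C(g)$, and hence in $C(h)$:
\begin{enumerate}[(a)]
\item each transposition $t_i$;
\item every permutation of $F$ that fixes $\supp(g)$ pointwise, so that $\mathrm{Sym}(F)\leq C(g)$;
\item for $i\neq j$, the element $(a_i\ a_j)(b_i\ b_j)$, which swaps the two cycles $t_i,t_j$.
\end{enumerate}
A basic fact will be used repeatedly: a transposition $(a\ b)$ commutes with $h$ if and only if $h$ maps $\{a,b\}$ to itself. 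For an involution $h$ this means that either $(a\ b)$ is a cycle of $h$, or both $a$ and $b$ are fixed by $h$.

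\emph{Step 1 (g is a factor of h).} Apply this fact to each $t_i$. Suppose, for a contradiction, that some $t_i=(a_i\ b_i)$ is not a cycle of $h$. Then $h$ fixes $a_i$ and $b_i$. Take any $j\neq i$ and put $s=(a_i\ a_j)(b_i\ b_j)\in C(h)$. Then $h=s^{-1}hs$, and so $a_jh=a_ish s^{-1}\ldots$; more simply, $a_j h = (a_i s)h = (a_ih)s = a_is=a_j$, and likewise $b_jh=b_j$. Hence $h$ fixes $\supp(g)$ pointwise, so $\supp(h)\subseteq F$.

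Now $h$ restricted to $F$ commutes with all of $\mathrm{Sym}(F)$.
\begin{itemize}
\item If $|F|\geq 3$, then $\mathrm{Sym}(F)$ has trivial centre, so $h$ acts trivially on $F$. Thus $h=1$, contradicting that $h$ has order $2$.
\item If $|F|\leq 2$, then $|\supp(h)|\leq 2$, so $h$ is a transposition, contradicting the hypothesis.
\end{itemize}
Hence every $t_i$ is a cycle of $h$, so $g$ is a factor of $h$ and $l(g)\leq l(h)$.

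\emph{Step 2 (same parity gives equality).} By Step 1 we may write $h=gr$, where $r$ is either trivial or an involution with $\supp(r)\subseteq F$. Since $\mathrm{Sym}(F)\leq C(h)$ and $g$ fixes $F$ pointwise, $r$ restricted to $F$ is central in $\mathrm{Sym}(F)$.
\begin{itemize}
\item If $|F|\geq 3$, this forces $r=1$, so $h=g$.
\item If $|F|\leq 2$, then either $r=1$, or $F=\{x,y\}$ and $r=(x\ y)$. In the latter case $h$ and $g$ have opposite parity.
\end{itemize}
So if $g$ and $h$ have the same parity, then $h=g$.

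The main obstacle is the small-fixed-set case $|F|\leq 2$ in Step 1, where $\mathrm{Sym}(F)$ is abelian and gives no constraint. This is exactly where the hypothesis that $h$ is not a transposition is needed. The case $k=1$ needs no swapping elements: if $t_1$ is not a cycle of $h$, then $h$ fixes $\supp(g)$ directly, and the same dichotomy on $|F|$ applies.
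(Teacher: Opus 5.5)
Your proof is correct and uses the same ingredients as the paper's. You use the transpositions of $g$, the cycle-swapping elements $(a_i\,a_j)(b_i\,b_j)$, and permutations of $\Fix(g)$, but arrange them as a contrapositive. You propagate ``$h$ fixes the points of $t_i$'' across all cycles of $g$ and then rule out $\supp(h)\subseteq \Fix(g)$, whereas the paper first rules out disjointness and then propagates a common factor; your appeal to the trivial centre of $\mathrm{Sym}(F)$ for $|F|\ge 3$ replaces the paper's explicit witness transpositions such as $(p\;u)$ and $(s\;w)$. The only blemish is the garbled fragment ``$a_jh=a_ish s^{-1}\ldots$'', which you should delete, since the computation immediately after it is the correct one.
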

\begin{proof}
	Suppose first (for contradiction) that $g$ and $h$ are disjoint. Since $h$ is not a transposition it contains a factor $(p\;q)(u\;v)$ for some $p,q,u,v\in [1,n]$ where $p,q,u,v$ are all fixed by $g$. Thus, $(p\;u)$ is a transposition in $C(g)$ but not $C(h)$, a contradiction. Hence $\supp(g)\cap\supp(h) \neq \emptyset$.

Let $i\in\supp(g)\cap\supp(h)$ so there is a factor $(i\;j)$ in $g$ and a factor $(i\;l)$ in $h$. Further, as $(i\;j)\in C(g)$ we also have $(i\;j)\in C(h)$ and so $h^{(i\;j)}=h$. If $j\neq l$, that is, $i,j,l$ are distinct, then this implies that $(i\;l)^{(i\;j)}=(j\;l)$ is a factor of $h$ giving the contradiction $j=i$. Thus we must have $j=l$ and $(i\;j)$ is a factor of $h$.

Given any other factor $(x\;y)$ of $g$ that is distinct from $(i\;j)$, we see that $(i\;x)(j\;y)\in C(g)\subseteq C(h)$ and so $h^{(i\;x)(j\;y)}=h$. This means that $(i\;j)^{(i\;x)(j\;y)}=(x\;y)$ is a factor of $h$ and we deduce that $g$ is a factor of $h$.

  Finally, let us assume that $g$ and $h$ have the same parity. If $g\neq h$, then $h$ has factors $(s\;t)$ and $(w\;z)$ that are not factors of $g$. Since $g$ is a factor of $h$, it follows that $s,t,w,z$ are all fixed by $g$. But then $(s\;w)\in C(g)\setminus C(h)$, a contradiction. Hence, we must have that $g=h$.
\end{proof}
We now consider a situation corresponding to that of Lemma~\ref{lem:gktrans} where now $h$ is a transposition and $g$ is also odd.
\begin{lemma}\label{lem:gh-subgpcent-hIsTransp-equalOrActFully}
    Let $g,h\in\sn\setminus\an$ be elements of order two such that $h=(i\;j)$ and $C(g)\leq C(h)$. Then $g$ and $h$ are either equal or disjoint.
    
    Further, if $g\neq h$ then we must have that $\Fix(gh)=\emptyset$, that is, $gh$ acts non-trivially on all of $[1,n]$, which can only happen when $n$ is divisible by $4$.
\end{lemma}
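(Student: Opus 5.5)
Our plan is to argue directly from the hypothesis $C(g)\leq C(h)$, testing it against well-chosen transpositions and double transpositions, in the spirit of the proof of Lemma~\ref{lem:gktrans}.

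\emph{First claim: $g$ and $h$ are equal or disjoint.} Suppose they meet, say $i\in\supp(g)$. Let $(i\;k)$ be the factor of $g$ containing $i$; then $(i\;k)\in C(g)\subseteq C(h)$. The centraliser of $h=(i\;j)$ consists of the permutations preserving the set $\{i,j\}$, so $(i\;k)$ must fix $\{i,j\}$ setwise. Since $k\neq i$, this forces $k=j$, and hence $h$ is a factor of $g$. The same conclusion holds if $j\in\supp(g)$. If $g\neq h$, then $g$ is odd and strictly longer than $h$, so $l(g)\geq 3$. Pick two further factors $(x\;y)$ and $(u\;v)$ of $g$. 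Then $(i\;x)(j\;y)\in C(g)$, but it does not preserve $\{i,j\}$, a contradiction. Hence $g=h$ or $\supp(g)\cap\{i,j\}=\emptyset$.

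\emph{Second claim: if $g\neq h$ then $\Fix(gh)=\emptyset$.} Now $g$ and $h$ are disjoint, so $\Fix(gh)=\Fix(g)\setminus\{i,j\}$. Suppose some $m\neq i,j$ is fixed by $g$; we aim to build an element of $C(g)$ that moves $i$ or $j$ out of $\{i,j\}$.
\begin{itemize}
\item If $g$ fixes at least one point $m$ besides $i$ and $j$, then $(i\;m)\in C(g)$, since $i$ and $m$ are both fixed by $g$. But $(i\;m)$ moves $i$ out of $\{i,j\}$, so $(i\;m)\notin C(h)$, a contradiction.
\end{itemize}
Thus $\Fix(g)=\{i,j\}$, and therefore $\Fix(gh)=\emptyset$.

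\emph{Divisibility.} When $g\neq h$, the set $[1,n]$ is the disjoint union of $\supp(g)$ and $\{i,j\}$, so $n=2l(g)+2$. Since $g$ is odd, $l(g)$ is odd, which gives $n\equiv 0 \pmod 4$.

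The only delicate point is the first claim, where we must exclude the case that $h$ is a proper factor of $g$. The decisive choice there is the element $(i\;x)(j\;y)$, which commutes with $g$ but breaks the pair $\{i,j\}$. This argument needs only one additional factor $(x\;y)$ of $g$, and one exists once $g\neq h$. Parity ensures that such a $g$ has length at least $3$, so the needed factor is always available.
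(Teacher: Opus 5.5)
Your proof is correct and takes essentially the same route as the paper. The transposition $(i\;k)$ and the double transposition $(i\;x)(j\;y)$ in $C(g)$ rule out any overlap unless $g=h$, a transposition $(i\;m)$ on a common fixed point forces $\Fix(gh)=\emptyset$, and the count $n=2l(g)+2$ with $l(g)$ odd gives $4\mid n$; only one extra factor of $g$ is actually needed, so choosing two is harmless.
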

\begin{proof}
    Let us first assume that $g$ and $h$ are not disjoint so that without loss of generality $g$ must contain a factor of the form $(i\;l)$. Thus $(i\;l)\in C(g)\subseteq C(h)$, which forces $(i\;j)^{(i\;l)}=(i\;j)$ and to avoid a contradiction we obtain that $j=l$. Thus $h=(i\;j)$ is a factor of $g$. Moreover, if we let $(p\;q)$ be another factor of $g$, we get that $(i\;p)(j\;q)\in C(g)\subseteq C(h)$ so that $(i\;j)=(i\;j)^{(i\;p)(j\;q)}=(p\;q)$, a contradiction. Therefore $g=h$ in this case.

    We now consider the case when $g$ and $h$ are disjoint, and we suppose (for contradiction) that there exists $x\in[1,n]$ such that $x\in\Fix(g)\cap \Fix(h)$. Then $(i\;x)\in C(g)\subseteq C(h)$ so that $(x\;j)=(i\;j)^{(i\;x)}=(i\;j)$, which forces $x=i$, a contradiction. Therefore every element of $[1,n]$ must be moved by either $g$ or $h$ giving $\Fix(gh)=\emptyset$. Finally, notice that if $|\supp(g)|=2m$ with $m$ odd, then we have that that $n=|\supp(g)|+|\supp(h)|=2(m+1)$ so that $n$ is divisible by $4$.
\end{proof}
Combining the previous two results gives:
\begin{corollary}\label{cor:centralisers}
	Let $g,h\in\sn$ be elements of order two having the same parity. If $C(g)=C(h)$ then $g=h$.
\end{corollary}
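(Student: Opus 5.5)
We argue by cases on whether $g$ or $h$ is a transposition, using the two preceding lemmas with the hypothesis $C(g)=C(h)$ read as inclusions in both directions. Suppose $g$ and $h$ are elements of order two of the same parity with $C(g)=C(h)$.

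\emph{Case 1: neither $g$ nor $h$ is a transposition.} Since $h$ is not a transposition and $C(g)\leq C(h)$, Lemma~\ref{lem:gktrans} applies directly. As $g$ and $h$ have the same parity, its final clause gives $g=h$. (Symmetrically, only one inclusion is needed here.)

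\emph{Case 2: at least one of them is a transposition.} By symmetry of the hypothesis $C(g)=C(h)$ we may assume $h=(i\;j)$. Then $h$ is odd, so $g$ is odd as well, and $g,h\in\sn\setminus\an$. Lemma~\ref{lem:gh-subgpcent-hIsTransp-equalOrActFully} applies with the inclusion $C(g)\leq C(h)$, so either $g=h$ or $g$ and $h$ are disjoint with $\Fix(gh)=\emptyset$. We must rule out the second alternative.

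Suppose $g\neq h$. Then $\supp(g)=[1,n]\setminus\{i,j\}$, so $l(g)=(n-2)/2\geq 3$ because $n\geq 7$ (indeed $n\geq 8$, as $4\mid n$). In particular $g$ is not a transposition. Applying Lemma~\ref{lem:gktrans} to the reverse inclusion $C(h)\leq C(g)$, with the roles of $g$ and $h$ swapped, shows that $h$ is a factor of $g$. Hence $(i\;j)$ appears in the cycle decomposition of $g$, which contradicts disjointness. So $g=h$.

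The only real point is this use of the reverse inclusion in Case~2 to exclude the disjoint configuration. It needs $g$ not to be a transposition, which holds since $n\geq 7$ forces $l(g)\geq 3$.
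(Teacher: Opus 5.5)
Your proof is correct and takes essentially the same approach as the paper. The paper gives no separate argument, saying only that the corollary follows by combining Lemma~\ref{lem:gktrans} and Lemma~\ref{lem:gh-subgpcent-hIsTransp-equalOrActFully}, and your case split, which uses the reverse inclusion $C(h)\leq C(g)$ together with Lemma~\ref{lem:gktrans} to exclude the disjoint configuration, is exactly that combination written out.
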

\begin{lemma}\label{lem:conjugateAndCentraliserImpliesEqual}
    Let $g,h\in\sn$ be conjugate elements of order two. Then $C(g)=C(h)$ if and only if $g=h$. Moreover, if $g\neq h$, then there exists $w\in\sn\setminus\an$ of order two such that $w$ is in $C(g)$ but not in $C(h)$.
\end{lemma}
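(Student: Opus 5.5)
The first assertion will follow from Corollary~\ref{cor:centralisers}: conjugate elements have the same cycle type, so $g$ and $h$ have the same parity, and $C(g)=C(h)$ then forces $g=h$. The converse is trivial. The real work is the second assertion, where we must produce an \emph{odd involution} $w$ with $w\in C(g)\setminus C(h)$ whenever $g\neq h$. Throughout, $g$ and $h$ are conjugate of order two, so $l(g)=l(h)=:k\geq 1$ and $2k\leq n$.

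I would first dispose of the case $k=1$, where $g=(i\;j)$ and $h=(p\;q)$ are distinct transpositions.
\begin{itemize}
\item If they share a point, say $p=i$ and $q\neq j$, then take $w=(i\;j)=g$. This commutes with $g$, and $h^{w}=(j\;q)\neq h$, so $w\notin C(h)$.
\item If they are disjoint, again take $w=g$. Then $w\in C(g)$, but $w$ does not commute with $h$? In fact disjoint transpositions commute, so this choice fails. Instead, choose $x\notin\{i,j,p,q\}$, which is possible since $n\geq 7$, and take $w=(p\;x)$. This fixes $i$ and $j$, so it centralises $g$, while $h^{w}=(x\;q)\neq h$.
\end{itemize}

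For $k\geq 2$, the aim is to find a transposition or an odd involution in $C(g)$ that moves $h$. Since $g\neq h$ and they have the same number of factors, some factor $(p\;q)$ of $h$ is not a factor of $g$. I would split into cases according to how $p$ and $q$ sit relative to $g$.
\begin{enumerate}
\item If $p$ is fixed by $g$ and some other point $x\neq q$ is also fixed by $g$, then $w=(p\;x)\in C(g)$. Moreover $h^{w}$ contains the factor $(x\;q')$, where $q'$ is the image of $q$ under $w$. This differs from $(p\;q)$, and the condition that $h^{w}$ still contains $(p\;q)$ fails, so $w\notin C(h)$.
\item If $p\in\supp(g)$, say $(p\;r)$ is a factor of $g$ with $r\neq q$, then $w=(p\;r)\in C(g)$ and $h^{w}$ contains $(r\;q^{w})$, which should differ from $(p\;q)$. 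The check is $h^{w}=h$ if and only if $h$ also contains $(r\;q^{w})$; since $p\mapsto q$ in $h$ but the conjugate sends $r\mapsto q^w$, equality would force $h$ to pair $r$ with $q^w$. That is consistent only if $q^w$ is paired with $r$ in $h$. To avoid this subtlety, I may instead use a double transposition paired with a transposition to obtain an odd element.
\end{enumerate}

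Odd elements of $C(g)$ available as building blocks are: a factor $(a\;b)$ of $g$; transpositions of two fixed points of $g$; and products of these with even elements $(a\;c)(b\;d)$ built from two factors of $g$. Any involution in $C(g)$ that moves $h$, composed if necessary with a commuting factor of $g$ that also fixes $h$, can be made odd. The hardest step is this case analysis ensuring oddness together with non-commuting with $h$, especially when $g$ is fixed-point-free or nearly so, for instance $n=2k$ or $n=2k+1$. In that situation, I would use an element $(a\;c)(b\;d)$ swapping two factors of $g$, multiplied by a factor $(e\;f)$ of $g$, which is possible since $k\geq 3$ when $n\geq 7$ and $g$ has few fixed points, and then verify directly that it moves $h$, using that $h$ is not a union of the same $2$-blocks as $g$.
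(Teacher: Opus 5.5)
Your treatment of the first assertion, of $k=1$, and your case (1) are correct, but the case $k\geq 2$ is left unfinished.

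In case (2) you had the right witness and abandoned it needlessly. Since $q\notin\{p,r\}$ we have $q^{w}=q$, so $h^{w}$ contains $(r\;q)$, while $h$ pairs $q$ with $p\neq r$. Hence $(p\;r)\in C(g)\setminus C(h)$. This is exactly the paper's argument when the supports meet with different partners.

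The genuine gap is a configuration that neither of your cases reaches: $g=t\,(a\;b)$ and $h=t\,(p\;q)$, where $t$ is the product of the $k-1\geq 1$ common factors and $\Fix(g)=\{p,q\}$ (so $n=2k+2$). Here every transposition in $C(g)$ also centralises $h$. These transpositions are the factors of $t$, $(a\;b)$, and $(p\;q)$. So no transposition can serve as $w$.

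Your last paragraph locates the difficulty in the wrong place. When $n=2k$ or $n=2k+1$, no factor of $h$ fits inside $\Fix(g)$, so case (2) already finishes those. The element $(a\;c)(b\;d)(e\;f)$ you propose there is never checked, and for an arbitrary choice of three factors of $g$ it lies in $C(h)$. Likewise, your claim that an even witness can always be made odd, by multiplying by a factor of $g$ commuting with it and with $h$, is asserted without showing that such a factor exists.

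The missing idea is the paper's third case. When $g$ and $h$ agree on $X=\supp(g)\cap\supp(h)\neq\emptyset$, take:
\begin{itemize}
\item a common factor $(c\;d)$;
\item a factor $(a\;b)$ of $g$ not in $h$;
\item a factor $(p\;q)$ of $h$ not in $g$, both of whose points are fixed by $g$.
\end{itemize}
Set $w=(c\;a)(d\;b)(p\;q)$. This is an odd involution. It centralises $g$, because it interchanges two factors of $g$ and transposes two fixed points of $g$. But $h^{w}$ contains $(c\;d)^{w}=(a\;b)$, which is not a factor of $h$. Your triple would also work in the exceptional configuration, where $n\geq 8$ forces $k\geq 3$, but only if $(a\;b)$ is the unshared factor of $g$ and $(c\;d),(e\;f)$ are common factors.

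With case (2) completed and this case added, your analysis is complete. If $\Fix(g)=\{p,q\}$ but $h$ has another factor not in $g$, that factor meets $\supp(g)$ and case (2) applies to it.
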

\begin{proof}
For the first statement: one direction is trivial, and the other follows from Corollary \ref{cor:centralisers}. Thus we assume $g\neq h$ and exhibit an appropriate element $w$.

Suppose first that $g$ and $h$ are disjoint. If $g$ and $h$ are transpositions, then without loss of generality we can assume $g=(1\;2)$ and $h=(3\;4)$. Since $n\geq 7$, we have that $(3\;5)\in C(g)\setminus C(h)$. So, let us assume that $g$ and $h$ are a product of at least two transpositions, and $g=(1\;2)\cdots(m-1\;\;m)$ for some $m$ and let $(m+1\;\;m+2)(m+3\;m+4)$ be a factor of $h$. Then $(m+2\;\;m+3)\in C(g)\setminus C(h)$. In both cases, we get a transposition $w\in\sn\setminus\an$ such that $w\in C(g)$ but $w\notin C(h)$.

    Assume now that $X:=\supp(g)\cap\supp(h)$ is not empty. If for some $i\in X$ we have $(i\;j)$ is a factor of $g$ and $(i\;l)$ is a factor of $h$, with $j\neq l$ then $(i\;j)\in C(g)\setminus C(h)$.

    It remains to consider the case where $g$ and $h$ restrict to the same (non-identity) permutation of order two, $t$ say, on $X$. Since $g$ and $h$ are distinct conjugate elements agreeing on $X$, there exist distinct elements $u,v,x,y$ outside $X$ such that $(u\;v)$ is a factor of $g$ but not $h$ and $(x\;y)$ is a factor of $h$ but not $g$. Thus if $(p\;q)$ is a factor of $t$, we see that $(p\;u)(q\;v)(x\;y)\in C(g)\setminus C(h)$, thus completing the proof.
\end{proof}
\begin{lemma}\label{lem:each_g_has_its_nice_k}
	Let $k\in\an$ be an element of order two. Then there exists $g\in\sn\setminus\an$ of order two such that whenever $C(g)\cap C(k)=C(g)\cap C(k')$ with $k'\in\an$ of order two, we must have $k'=k$. Moreover, $g$ can be taken to be any transposition that is a factor of $k$.
\end{lemma}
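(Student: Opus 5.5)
Write $g=(i\;j)$ and $k=g\,k_0$, where $k_0$ is a product of an odd number of disjoint transpositions, all with support in $\Omega:=[1,n]\setminus\{i,j\}$. Let $H$ denote the symmetric group on $\Omega$, viewed as the subgroup of $\sn$ fixing $i$ and $j$. Note $|\Omega|=n-2\geq 5$. The strategy is to reduce the hypothesis $C(g)\cap C(k)=C(g)\cap C(k')$ to an equality of centralisers inside $H$, and then apply the centraliser lemmas of this subsection to $H$ in place of $\sn$.

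\emph{Step 1: the shape of $k'$.} Since $g\in C(g)\cap C(k)$, we get $g\in C(k')$. Consider how $k'$, an involution commuting with $(i\;j)$, acts on $\{i,j\}$. If $k'$ had a factor $(i\;x)$ with $x\neq j$, then $j$ cannot be fixed. Nor can $k'$ have a factor $(j\;y)$ with $y\neq x$, since conjugating by $(i\;j)$ sends $(i\;x)(j\;y)$ to $(j\;x)(i\;y)$. Hence
\[k'=g^{\varepsilon}k'_0\qquad(\varepsilon\in\{0,1\}),\]
where $k'_0\in H$ is either the identity or an involution.

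\emph{Step 2: reduction to $H$.} Every $h\in H$ commutes with $g$, and $h$ commutes with $k$ (respectively $k'$) exactly when it commutes with $k_0$ (respectively $k'_0$). Intersecting the hypothesis with $H$ therefore gives $C_H(k_0)=C_H(k'_0)$. If $k'_0$ were the identity, this would say $k_0$ is central in $H$. That is impossible, because $H$ has trivial centre when $|\Omega|\geq 3$. So $k'_0$ is an involution. We check that the proofs of Lemma~\ref{lem:gktrans} and Corollary~\ref{cor:centralisers} only need enough points to choose the relevant transpositions, so they apply verbatim in $H$.

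\emph{Step 3: identifying $k'_0$ and $\varepsilon$.} If $k_0$ and $k'_0$ have the same parity in $H$, Corollary~\ref{cor:centralisers} gives $k'_0=k_0$. Since $k'$ is even and $k_0$ is odd, this forces $\varepsilon=1$ and hence $k'=k$.

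It remains to exclude the mixed-parity case, in which $k'_0$ is even and $\varepsilon=0$. I expect this to be the main obstacle, since the earlier lemmas are stated for equal parity or for an odd transposition.
\begin{itemize}
\item If $k_0$ is not a transposition, apply Lemma~\ref{lem:gktrans} in $H$ in both directions. Since $k'_0$ is even, it is not a transposition, so $k_0$ is a factor of $k'_0$. Symmetrically, $k'_0$ is a factor of $k_0$. Hence $k_0=k'_0$, contradicting the difference in parity.
\item If $k_0=(p\;q)$, Lemma~\ref{lem:gktrans} (with $k'_0$ not a transposition) makes $(p\;q)$ a factor of $k'_0$. So $k'_0$ has a further factor $(r\;s)$ disjoint from $\{p,q\}$. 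Then $(p\;r)(q\;s)$ commutes with $k'_0$. But it conjugates $(p\;q)$ to $(r\;s)$, so it does not commute with $k_0$. This contradicts $C_H(k_0)=C_H(k'_0)$.
\end{itemize}
Hence $k'=k$, and any transposition factor $g$ of $k$ works.
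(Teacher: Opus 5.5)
Your proof is correct, but it takes a genuinely different route from the paper.

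\textbf{The paper's route.} The paper works directly in $\sn$, with $k=(1\;2)(3\;4)\cdots(m-1\;\;m)$ and $g=(1\;2)$, and forces $k'=k$ by hand in five steps:
\begin{enumerate}
\item it shows $\supp(k)\cap\supp(k')\neq\emptyset$;
\item it shows this intersection is not contained in $\{1,2\}$, via a case analysis using a $3$-cycle and the transposition $(4\;5)$; this is where $n\geq 7$ is used;
\item it obtains a common factor of $k$ and $k'$ other than $(1\;2)$;
\item it conjugates that common factor by elements $(i\;x)(j\;y)$ to match all remaining factors;
\item it uses parity to place $(1\;2)$ in $k'$.
\end{enumerate}

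\textbf{Your route.} You observe that $g$ itself lies in $C(g)\cap C(k)$, which pins $k'$ to the form $g^{\varepsilon}k'_0$. You also observe that all of $H=\mathrm{Sym}(\Omega)$ lies in $C(g)$. Since in fact $C(g)=\langle g\rangle\times H$, the hypothesis is equivalent to $C_H(k_0)=C_H(k'_0)$ in a symmetric group of degree $n-2$. The earlier centraliser results then finish the job, and only the mixed-parity case needs your extra short argument.

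\textbf{What each approach buys.} Yours is more modular and shows exactly what is being used. For instance, it shows at once that the ``moreover'' clause fails for $n=6$: there $k=(1\;2)(3\;4)$ and $k'=(1\;2)(5\;6)$ give the same intersection. The paper's argument is self-contained and never leaves $\sn$, so it needs no transfer of lemmas to a smaller degree.

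\textbf{The transfer to $H$ needs a sharper justification.} Lemmas~\ref{lem:gktrans} and~\ref{lem:gh-subgpcent-hIsTransp-equalOrActFully} hold in every degree. Corollary~\ref{cor:centralisers}, however, is false in degree $4$: there $C((1\;2))=C((3\;4))$, which is precisely the $n=6$ phenomenon above. The case of two disjoint transpositions needs a fifth point. You should state explicitly that the corollary holds for $|\Omega|\geq 5$, which you have since $|\Omega|=n-2$, rather than appeal to ``enough points''.

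\textbf{A minor simplification.} The case $k'_0=1$ needs no centre argument. It would give $k'\in\{1,g\}$, and neither is an even involution.
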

\begin{proof}
Without loss of generality, assume $k=(1\;2)(3\;4)\cdots(m-1\;\;m)$, where $m$ is a multiple of $4$. Our aim is to show that $g=(1\;2)$ satisfies the statement of the lemma. Assume then that $k'\in\an$ is an element of order $2$ such that $C((1\;2))\cap C(k)=C((1\;2))\cap C(k')$; our task is to show that $k=k'$.

We start by showing that $X:=\supp(k)\cap\supp(k')$ is non-empty. For contradiction, assume that $X=\emptyset$. Since $k'\in\an$ is order two it has a factor $(i\;j)(p\;q)$ which is disjoint from $k$, and hence in particular disjoint from $g$. Thus, $(i\;p)$ is in $C(g)\cap C(k)$ but not in $C(k')$, a contradiction. Thus, $X\neq\emptyset$.

We now show that $X\setminus\{1,2\}$ is non-empty. Suppose this is not the case. Then, as $X\neq \emptyset$, we have that $X$ contains at least one of $1$ or $2$. If $1\in X$, then there is a factor $(1\;i)$ of $k'$. Since we have that $(1\;2)\in C(g)\cap C(k)=C(g)\cap C(k')$, we get that $(1\;i)^{(1\;2)}$ is a factor of $k'$, which happens if and only if $i=2$. Together with the dual argument for $2\in X$, we deduce that $X=\{1,2\}$ and $(1\;2)$ is a factor of $k'$. Moreover, as $k'\in\an$ is order two it has a factor $(1\;2)(u\;v)$ for some $u,v\in\Fix(k)$. If $k$ fixes a third value, say $w$, then $(u\;v\;w)$ is in $C(g)\cap C(k)$ but not in $C(k')$, a contradiction. This yields $\Fix(k)=\{u,v\}$ and since $k=(1\;2)(3\;4)\cdots(m-1\;\;m)$ we deduce that $n$ is even, $m=n-2$ and $(u\;v)=(n-1\;\;n)$. Since $X=\{1,2\}$ it follows that $k'=(1\;2)(n-1\;\;n)$. Using the fact that $n\geq 7$ we have that $(4\;5)$ is in $C(g)\cap C(k')$ but not in $C(k)$, a contradiction. Thus, $X\setminus\{1,2\}$ must be non-empty.

Our next step is to show that $k$ and $k'$ act the same on $X\setminus\{1,2\}$. We begin by showing that $k$ and $k'$ share a common factor disjoint from $(1\;2)$. Let $x\in X\setminus\{1,2\}$ so that $k$ has a factor $(x\;y)$ and $k'$ has a factor $(x\;z)$. If $z\neq y$ then $(x\;y)$ is in $C(g)\cap C(k)$ but not in $C(k')$, a contradiction. So $k$ and $k'$ share the factor $(x\;y)$.

We now use our common factor to show that any factor of $k$ other than $(1\;2)$ is a factor of $k'$. Let $(i\;j)$ be a factor of $k$ other than $(1\;2)$ and let $(x\;y)$ be a factor shared by $k$ and $k'$. Then $(i\;x)(j\;y)$ is in $C(g)\cap C(k)=C(g)\cap C(k')$. Thus, $(x\;y)^{(i\;x)(j\;y)}=(i\;j)$ must be a factor of $k'$. Dually, any factor of $k'$ other than $(1\;2)$ must be a factor of $k$.

Now, if $(1\;2)$ is not a factor of $k'$ then $k'=(3\;4)(5\;6)\cdots(m-1\;\;m)$. However, this is a product of an odd number of transpositions, which contradicts $k'\in\an$. Thus, we must have that $(1\;2)$ is a factor of $k'$ so that $k'=k$, as required.
\end{proof}
\subsection{The endomorphism monoid of the symmetric group}\label{sub:endsn}
For convenience we recall the description of $\en(\sn)$, which may be found in \cite{GGJK:2024}. It is well known that for any $n\geq 7$ the automorphisms of $\sn$ are all inner, and since $\sn$ is centreless, $\sn$ is complete and consequently $\sn\cong\inn(\sn)\cong\au(\sn)$. Since $\sn$ is a group, for any endomorphism of $\sn$, the kernel-class of the identity must form a normal subgroup and since $n\geq 7$, the only proper non-trivial normal subgroup of $\sn$ is $\an$. From these observations it is then easy to see that the endomorphisms of $\sn$ must have one of the following forms.
\begin{enumerate}[(i)]
	\item The automorphisms, where the kernel-class of $1$ is $\{1\}$:
		 \[\psi_g:x\mapsto g^{-1}xg,\]
	where $g$ is in $\sn$.
	\item The rank 2 endomorphisms, where the kernel-class of $1$ is $\an$:
		\[\phi_t:x\mapsto\begin{cases}
		t	&\text{if }x\in\sn\setminus\an\\
		1	&\text{if }x\in\an,
	\end{cases}\]
	where $t$ is an order two element of $\sn$.
	\item The rank 1 endomorphism, where the kernel class of $1$ is $\sn$:
		\[\phi_1:x\mapsto 1.\]
\end{enumerate}
In fact, each of the singular endomorphisms is uniquely determined by its image, which is highly unusual for monoids of transformations.

These elements of $\en(\sn)$ compose as follows for all $g,h\in\sn$ and all $t,s\in\sn$ with $t^2=1=s^2$:
\[\psi_g\psi_h=\psi_{gh}, \qquad \psi_g\phi_t=\phi_t, \qquad \phi_t\psi_g=\phi_{t^g}, \qquad \phi_s\phi_t=\begin{cases}
        \phi_{t} &\text{if } s\in\sn\setminus\an\\
        \phi_{1} &\text{if } s\in\an.
    \end{cases}\]

Now, following the example of \cite{GGJK:2024}, we can partition the monoid $\en(\sn)$ by rank and type as follows: 
\begin{enumerate}[(i)]
    \item $\au(\sn)$;\hfill (group of units)
    \item $E=\{\phi_g\mid g\in\sn\setminus\an,\;g^2=1\}$;\hfill (idempotents of rank $2$)
    \item $A=\{\phi_g\mid g\in\an,\;g\neq 1,\;g^2=1\}$;\hfill (non-idempotents of rank $2$)
    \item $\{\phi_1\}$.\hfill (the zero)
\end{enumerate}
\begin{remark}\label{rem:ensnbits}
	We have
\[E(\en(\sn))=\{\psi_1\}\cup E\cup\{\phi_1\}.\] The set $\au(\sn)$ is the group of units. The set $E$ is a right-zero band, that is, a semigroup in which the product of any two elements is the second; further, the elements of $E$ are left identities for elements of $E\cup A\cup\{\phi_1\}$. Finally, $A\cup\{\phi_1\}$ forms a nil-semigroup, that is, the product of any two elements is the zero, $\phi_1$.
\end{remark}

We now observe that the conjugacy classes of singular elements of $\en(\sn)$ are determined by conjugacy in $\sn$. This pattern is repeated at higher levels.
\begin{remark}\label{rem:conjendsn}
The sets $E$ and $A$ are unions of conjugacy classes of singular elements $\phi_g$ for some $g\in\sn$ where $g$ is even of order 2. The conjugacy class of such $\phi_g\in E\cup A\cup \{\phi_1\}$ is equal to the set product
	\[\phi_g\au(\sn)=\{\phi_g\psi_h\mid h\in\sn\}=\{\phi_{g^h}\mid h\in\sn\}.\]
Consequently, $\phi_g$ and $\phi_h$ lie in the same conjugacy class of $\en(\sn)$ if and only if $g$ and $h$ are conjugate in $\sn$, that is, if and only if $l(g)=l(h)$.
\end{remark}
Similarly, the centralisers of singular elements of $\en(\sn)$ are determined by centralisers in $\sn$.
\begin{remark}\label{rem:centraliserendsn}
Let $g,k\in\sn$ with $g^2=1$ so that $\phi_g,\psi_k\in\en(\sn)$. Then
    \[\begin{array}{rcl}
    \psi_k\in C(\phi_g)&\Leftrightarrow& \psi_k^{-1}\phi_g\psi_k=\phi_g\\
    &\Leftrightarrow& \phi_{g^k}=\phi_g\psi_k=\phi_g\\
    &\Leftrightarrow& g^k=g\\
    &\Leftrightarrow& k\in C(g).\end{array}\]
\end{remark}

Putting together Remark~\ref{rem:conjendsn}, Remark~\ref{rem:centraliserendsn} and Lemma~\ref{lem:conjugateAndCentraliserImpliesEqual} now gives:
\begin{corollary}\label{lem:SameCentPart2} 
Let $\phi_g,\phi_h\in\en(\sn)$ be in the same conjugacy class. Then $C(\phi_g)=C(\phi_h)$ if and only if $\phi_g=\phi_h$ if and only if $g=h$.
\end{corollary}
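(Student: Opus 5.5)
The plan is to translate the statement about $\en(\sn)$ back into a statement about $\sn$, using the two preceding remarks, and then quote Lemma~\ref{lem:conjugateAndCentraliserImpliesEqual}.

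\emph{The two easy implications.} The identification $\phi_g=\phi_h$ if and only if $g=h$ is immediate from the description of the rank $\le 2$ endomorphisms. For $g\neq 1$, $\phi_g$ sends every odd permutation to $g$, so $\phi_g$ determines $g$. The zero $\phi_1$ is the only singular endomorphism of rank $1$. Also, $\phi_g=\phi_h$ trivially gives $C(\phi_g)=C(\phi_h)$.

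\emph{The converse.} Assume $C(\phi_g)=C(\phi_h)$. By Remark~\ref{rem:conjendsn}, since $\phi_g$ and $\phi_h$ lie in the same conjugacy class of $\en(\sn)$, the elements $g$ and $h$ are conjugate in $\sn$. By Remark~\ref{rem:centraliserendsn}, for every $k\in\sn$ we have $\psi_k\in C(\phi_g)$ if and only if $k\in C(g)$, and likewise for $h$. Since $k\mapsto\psi_k$ is a bijection $\sn\to\au(\sn)$ (as $\sn$ is complete for $n\ge7$), the equality $C(\phi_g)=C(\phi_h)$ transfers to $C(g)=C(h)$ in $\sn$.

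Now split on $g$. If $g=1$, then conjugacy forces $h=1$. Otherwise $g$ and $h$ are conjugate elements of order two with equal centralisers, so Lemma~\ref{lem:conjugateAndCentraliserImpliesEqual} gives $g=h$, hence $\phi_g=\phi_h$.

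There is no real obstacle here. The only point needing care is the case split for the zero $\phi_1$, where $g=1$ does not have order two and so Lemma~\ref{lem:conjugateAndCentraliserImpliesEqual} does not apply directly.
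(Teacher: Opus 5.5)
Your proposal is correct and follows the paper's proof: Remark~\ref{rem:conjendsn} makes $g$ and $h$ conjugate, Remark~\ref{rem:centraliserendsn} turns $C(\phi_g)=C(\phi_h)$ into $C(g)=C(h)$, and Lemma~\ref{lem:conjugateAndCentraliserImpliesEqual} then gives $g=h$. Your separate handling of $g=1$, where that lemma does not literally apply, and your use of injectivity of $k\mapsto\psi_k$ to transfer the centraliser equality are small details the paper leaves implicit.
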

\begin{proof}
By Remark \ref{rem:conjendsn}, $g$ and $h$ are conjugate. Then by Remark \ref{rem:centraliserendsn}, $C(\phi_g)=C(\phi_h)$ if and only if $C(g)=C(h)$, which by Lemma \ref{lem:conjugateAndCentraliserImpliesEqual} is the case if and only if $g=h$, or equivalently $\phi_g=\phi_h$.
\end{proof}
We finish this section with some useful remarks from \cite{GGJK:2024} on Green's relations in $\en(\sn)$. Let $\alpha,\;\beta\in\en(\sn)$. Then $\alpha\,\GL\,\beta$ if and only if either both are units or $\alpha=\beta$. On the other hand, $\alpha\,\GR\,\beta$ if and only if one of the following holds:
\begin{enumerate}[(i)]
	\item $\alpha$ and $\beta$ are both units,
	\item $\alpha,\beta\in E$,
	\item $\alpha,\beta\in A$ and $\alpha\au(\sn)=\beta\au(\sn)$,or
	\item $\alpha=\beta=\phi_1$.
\end{enumerate}
It is then clear that $\GH=\GL$ and $\GR=\GD=\GJ$, the final equality holding since $\en(\sn)$ is finite.

\begin{figure}
	\begin{tikzpicture}
		\node[draw,align=center] at (0,5) {$\au(\sn)=\{\psi_g\mid g\in\sn\}$};
		\node[draw] at (-3,2) {$E=\{\phi_g\mid g\in\sn\setminus\an\}$};
		\node[draw] at (3,2) {$A=\{\phi_g\mid g\in\an\setminus\{\phi_1\}\}$};
		\node[draw] at (0,-1) {$\{\phi_1\}$};
		\draw [thick, decorate,decoration={brace,amplitude=10pt},yshift=2pt] (-2.3,5.4) -- (2.3,5.4) node [black,midway,yshift=16pt] {Group of Units};
		\draw [thick, decorate,decoration={brace,amplitude=10pt},yshift=2pt] (-5.2,2.4) -- (-0.8,2.4) node [black,midway,yshift=16pt] {Right zero band};
		\draw [thick, decorate,decoration={brace,amplitude=10pt},yshift=2pt] (0.6,2.4) -- (5.4,2.4) node [black,midway,yshift=16pt] {Nilpotent rank 2};
		\draw [thick, decorate,decoration={brace,amplitude=10pt},yshift=2pt] (-0.55,-0.6) -- (0.55,-0.6) node [black,midway,yshift=16pt] {Zero Element};
		\draw [<->] (6,6.2) -- (6,-1.5) node[midway, right] {rank};
		\draw [->,dashed] (-2,4.5) -- (-3,3.2) node[midway, right] {{\tiny $\leq_{\GJ}$}};
		\draw [->,dashed] (-0.7,2) -- (0.5,2) node[midway, above] {{\tiny $\leq_{\GJ}$}};
		\draw [->,dashed] (0.7,1.6) -- (0,0.4) node[midway, right] {{\tiny $\leq_{\GJ}$}};
	\end{tikzpicture}
\label{Table}
\caption{The blocks of $\en(\sn)$ arranged by rank vertically and type horizontally. Dashed arrows indicate the quasi-order $\leq_{\GJ}$ associated with the relation $\GJ$. (The set $A$ shatters into several $\GJ$-classes.)}
\end{figure}
\newpage
\section{Endomorphisms of \texorpdfstring{$\en(\sn)$}{} - the elements of \texorpdfstring{$\en_2(\sn)$}{}}
\label{sec:end2}
In this section we will describe the elements of the next level, that is, we aim to find all elements of $\en_2(\sn)$. To begin with, we make some useful observations. Let $\Phi\in\en_2(\sn)$. As $\en(\sn)$ has identity $\psi_1$ and zero $\phi_1$, it follows from Lemma~\ref{lem:imageidentityzero} that $\im(\Phi)$ is a monoid subsemigroup of $\en(\sn)$ and has identity $\psi_1\Phi$ and zero $\phi_1\Phi$. From this, if $\psi_1\Phi= \phi_1\Phi$, then $\Phi$ must be a constant map. Similarly, if $\phi_1\Phi=\psi_1$, then for all $\theta\in\en(\sn)$ this forces $\theta\Phi=\theta\Phi\phi_1\Phi=(\theta\phi_1)\Phi=\psi_1$ and so $\Phi$ is also constant under this condition. Moreover, all the endomorphisms of rank $1$, that is, all the constant maps can be easily described.
\begin{lemma}\label{lem:EndosRank1}
	Given $\varepsilon$ an idempotent of $\en(\sn)$, define by $\Gamma_\varepsilon$ the map on $\en(\sn)$ sending all $\omega\in\en(\sn)$ to $\varepsilon$. Then $\Gamma_\varepsilon$ is a constant endomorphism of $\en(\sn)$.
	
	Conversely, if $\Phi$ is an endomorphism of $\en(\sn)$ with $\rank(\Phi)=1$, then $\Phi=\Gamma_\varepsilon$ for some $\varepsilon\in E(\en(\sn))$.
\end{lemma}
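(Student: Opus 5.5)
The statement splits into two directions, and both are short verifications. For the first, I will check directly that $\Gamma_\varepsilon$ respects multiplication. For any $\omega,\theta\in\en(\sn)$ we have $(\omega\theta)\Gamma_\varepsilon=\varepsilon$. On the other side, $\omega\Gamma_\varepsilon\,\theta\Gamma_\varepsilon=\varepsilon\varepsilon=\varepsilon$, because $\varepsilon$ is idempotent. So $\Gamma_\varepsilon$ is an endomorphism, and its image is $\{\varepsilon\}$, so it has rank $1$. This is just the general observation made at the start of Subsection~\ref{subsec:finite}, specialised to $S=\en(\sn)$.

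For the converse, suppose $\rank(\Phi)=1$, so that $\Phi$ is constant with value some $\varepsilon\in\en(\sn)$. Taking any $\omega$, we get $\varepsilon=(\omega\omega)\Phi=\omega\Phi\,\omega\Phi=\varepsilon^2$, so $\varepsilon$ is an idempotent. Alternatively, Lemma~\ref{lem:imageidentityzero} applied to the idempotent $\psi_1$ shows $\varepsilon=\psi_1\Phi$ is idempotent. Hence $\Phi=\Gamma_\varepsilon$ with $\varepsilon\in E(\en(\sn))$.

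Finally, I will note, using Remark~\ref{rem:ensnbits}, that the possible values of $\varepsilon$ are explicitly $\psi_1$, $\phi_1$, and $\phi_t$ for $t$ an odd element of order two. This lists all rank $1$ endomorphisms. I do not expect any step to be a real obstacle; the only care needed is to use idempotency of $\varepsilon$ in both directions.
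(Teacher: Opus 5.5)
Your proof is correct and follows the paper's approach. The paper states this lemma without a separate proof; it relies on the observation at the start of Subsection~\ref{subsec:finite} that constant maps to idempotents are endomorphisms, and on Lemma~\ref{lem:imageidentityzero} (idempotents map to idempotents) for the converse, which is exactly your argument.
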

From now on, the symbol $\Gamma$ will be reserved for such endomorphisms of rank $1$. Before describing all the other endomorphisms of $\en(\sn)$, it is worth noting that the image of the group of units $\au(\sn)$ is quite constrained, as given by:
\begin{lemma}\label{lem:image_of_AutSn}
	Let $\Phi$ be an endomorphism of $\en(\sn)$. Then $\au(\sn)\Phi$ has size either $1$, $2$ or $n!$ and is one of the following:
	\begin{enumerate}[(i)]
		\item $\{\varepsilon\}$ for some $\varepsilon\in\{\psi_1\}\cup E\cup \{\phi_1\}$;
		\item $\{\psi_g,\psi_1\}$ for some $g\in\sn\setminus\{1\}$ of order two;
		\item $\au(\sn)$, so that $\Phi$ is a bijection on $\au(\sn)$.
	\end{enumerate}
	Moreover, in case $(ii)$, $\au(\sn)$ splits into two kernel classes: $\{\psi_h\mid h\in\sn\setminus\an\}$ and $\{\psi_h\mid h\in\an\}$.
\end{lemma}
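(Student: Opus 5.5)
The restriction $\Phi|_{\au(\sn)}$ is a semigroup homomorphism from the group $\au(\sn)\cong\sn$ into $\en(\sn)$. The plan is to use two facts: the image of a group under a homomorphism is a group, and the kernel classes of the restriction form a congruence on $\sn$.

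\textbf{Step 1: the image lies in a single $\GH$-class.} Put $\varepsilon=\psi_1\Phi$. By Lemma~\ref{lem:imageidentityzero}, $\varepsilon$ is an idempotent. For each $g$ we have $\psi_g\Phi\,\psi_{g^{-1}}\Phi=\varepsilon$ and $\varepsilon\,\psi_g\Phi=\psi_g\Phi=\psi_g\Phi\,\varepsilon$. Hence $\au(\sn)\Phi$ is a subgroup of the maximal subgroup $H_\varepsilon$, the $\GH$-class of $\varepsilon$.

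By Remark~\ref{rem:ensnbits}, $E(\en(\sn))=\{\psi_1\}\cup E\cup\{\phi_1\}$. From the description of Green's relations recalled at the end of Section~\ref{sec:strat}, $\GH=\GL$, and singular elements are $\GL$-related only to themselves. So $H_\varepsilon=\{\varepsilon\}$ whenever $\varepsilon\in E\cup\{\phi_1\}$, which gives case (i). If instead $\varepsilon=\psi_1$, the image is a subgroup of $\au(\sn)$, and the restriction is an ordinary group homomorphism $\sn\to\au(\sn)\cong\sn$.

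\textbf{Step 2: the group homomorphism case.} The kernel is a normal subgroup of $\sn$. Since $n\ge7$, it is $1$, $\an$ or $\sn$.
\begin{itemize}
\item Kernel $1$: the map is injective on the finite set $\au(\sn)$, so the image is all of $\au(\sn)$, of size $n!$. This is case (iii).
\item Kernel $\sn$: the image is $\{\psi_1\}$, which falls under case (i) with $\varepsilon=\psi_1$.
\item Kernel $\an$: the image has order $2$, so it equals $\{\psi_1,\psi_g\}$ with $\psi_g$ of order two. Hence $g\ne1$ and $g^2=1$, since $g\mapsto\psi_g$ is an isomorphism. This is case (ii).
\end{itemize}
In case (ii), the kernel classes of $\Phi$ restricted to $\au(\sn)$ are the cosets of the kernel. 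Under the identification $\psi_h\leftrightarrow h$ these cosets are $\{\psi_h\mid h\in\an\}$ and $\{\psi_h\mid h\in\sn\setminus\an\}$, which proves the final assertion.

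\textbf{Main obstacle.} The only delicate point is Step 1: ruling out a non-trivial image when the identity is sent to a singular idempotent. This depends entirely on the triviality of $\GH$-classes of singular elements, which is quoted from \cite{GGJK:2024}. If I wanted a self-contained argument, I would check it directly from the multiplication rules. An element $\phi_t$ with $\varepsilon\phi_t=\phi_t=\phi_t\varepsilon$ and some $\phi_s$ with $\phi_t\phi_s=\varepsilon$ would force $\phi_t=\varepsilon$, using $\phi_t\phi_s\in\{\phi_s,\phi_1\}$.
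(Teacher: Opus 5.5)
Your proof is correct and follows the paper's argument. The paper places $\au(\sn)\Phi$ in a single $\GH$-class by citing $\GH$-preservation (Lemma~\ref{lem:EndosPresGreens}); you obtain the same containment from the image being a subgroup with identity $\psi_1\Phi$. Both arguments then use the triviality of singular $\GH$-classes and the normal subgroups $1,\an,\sn$ of $\sn$. In your optional self-contained check, you should also note that no unit lies in $H_\varepsilon$ for singular $\varepsilon$, since $\psi_g\varepsilon=\varepsilon\neq\psi_g$.
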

\begin{proof}
	Notice first that $\au(\sn)$ is a $\GH$-class of $\en(\sn)$ and by Lemma~\ref{lem:EndosPresGreens} we must have that its image under $\Phi$ lies in a $\GH$-class. By the previous description of the Green's relations of $\en(\sn)$, it follows that either $\au(\sn)\Phi$ is a singleton in $E\cup A\cup\{\phi_1\}$, in which case the element in the image must be an idempotent, or we have that $\au(\sn)\Phi\subseteq\au(\sn)$. In the latter case, since $\au(\sn)$ is isomorphic to $\sn$ its only non-trivial normal subgroup is isomorphic to $\an$, and thus either $\au(\sn)$ is a single kernel class, and then $\au(\sn)\Phi=\{\psi_1\}$, or $\au(\sn)$ splits into singleton kernel classes, in which case $\Phi$ acts bijectively on $\au(\sn)$, or we have that $\au(\sn)$ splits into two kernel classes. In that last case, the image must be a subgroup of $\au(\sn)$ of order $2$, and thus must contain $\psi_1$ and a distinct element $\psi_g$ that squares to $\psi_1$, which forces $g$ to be of order two. Since we must have that $\psi_1\Phi=\psi_1$, it follows that the two kernel classes are $\{\psi_h\mid h\in\an\}$ and $\{\psi_h\mid h\in\sn\setminus\an\}$.
\end{proof}

With this in place, we will describe all the endomorphisms of $\en(\sn)$ by first considering those which are automorphisms, and then splitting the search of the singular endomorphisms according to ranks. Endomorphisms of rank $2$ will be dealt separately but those of larger ranks will be investigated relative to the size of the image of $\au(\sn)$.
\subsection{The automorphisms of \texorpdfstring{$\en(\sn)$}{}}
\label{subsec:aut}
We start by noticing that the group of units of $\en(\sn)$ forms a group isomorphic to the symmetric group, which is a complete group. Thus, by Theorem~\ref{Tm:AutCompleteMonoids} we have
\begin{equation}\label{eq:directprod}
	\au(\en(\sn))\cong\Ker(\mathfrak{r})\times\sn,
\end{equation}
where $\Ker(\mathfrak{r})$ consists of all elements $\Psi$ in $\au(\en(\sn))$ such that $\psi_g\Psi=\psi_g$ for all $\psi_g\in\au(\sn)$. That is, we have simplified the problem to finding the automorphisms of $\en(\sn)$ that fix all of the invertible elements. Since every $\Psi$ in $\Ker(\mathfrak{r})$ fixes the unit elements, we only need to describe how any $\Psi\in\Ker(\mathfrak{r})$ acts on $\sen(\sn)$. We will leverage the fact that we know how $\Psi$ acts on $\au(\sn)$ by using the centralisers of our singular elements.
\begin{lemma}\label{lem:autPreservesPartition}
	The sets $\au(\sn), E, A$ and $\{\phi_1\}$ of our partition are characteristic.
\end{lemma}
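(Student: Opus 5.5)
We must show that $\au(\sn)$, $E$, $A$ and $\{\phi_1\}$ are each mapped onto themselves by every automorphism $\Psi$ of $\en(\sn)$. The idea is to describe each set purely in terms of the semigroup structure, using the characteristic subsets of Subsection~\ref{subsec:InvertibleMaps} and Example~\ref{ex:ch}.

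First, $\au(\sn)$ is the group of units, which we saw in Subsection~\ref{subsec:InvertibleMaps} is characteristic. Likewise $\{\phi_1\}$ is characteristic, because $\phi_1$ is the zero of $\en(\sn)$ and Lemma~\ref{lem:imageidentityzero} forces $\phi_1\Psi$ to be the zero of $\im(\Psi)=\en(\sn)$.

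Next, $E(\en(\sn))$ is characteristic by Example~\ref{ex:ch}. By Remark~\ref{rem:ensnbits} it equals $\{\psi_1\}\cup E\cup\{\phi_1\}$, and $\psi_1$ is the identity, which every automorphism fixes. Hence
\[E=E(\en(\sn))\setminus\bigl(\au(\sn)\cup\{\phi_1\}\bigr),\]
and this is characteristic by the closure of characteristic sets under intersection and difference noted in Example~\ref{ex:ch}.

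Finally, the four blocks partition $\en(\sn)$, so $A$ is the complement of $\au(\sn)\cup E\cup\{\phi_1\}$. A bijection that preserves each of the other three sets must therefore map $A$ onto itself, so $A$ is characteristic too.

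None of these steps is a real obstacle. The only point to check is that $E(\en(\sn))$ contains nothing beyond $\psi_1$, $E$ and $\phi_1$, which holds by Remark~\ref{rem:ensnbits}: a non-identity unit $\psi_g$ is not idempotent, and each $\phi_g$ in $A$ satisfies $\phi_g^2=\phi_1$.
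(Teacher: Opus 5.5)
Your proof is correct and follows essentially the same route as the paper. Both arguments take the group of units and the zero as characteristic and use idempotency to show that $E$ is preserved. Both then obtain $A$ as the complement of the other three blocks. The only difference is that you phrase the middle step as a difference of characteristic sets, whereas the paper phrases it as ``idempotents in $E$ cannot go to the idempotent-free set $A$.''
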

\begin{proof}
	Recall from Subsection~\ref{subsec:InvertibleMaps} that the group of units and the zero are always characteristic in finite semigroups. So we only need to show that $E$ and $A$ are characteristic. Notice that $E$ contains only idempotents and that $A$ contains no idempotents. Thus, as $\au(\sn)$ and $\phi_1$ are already characteristic, any automorphism $\Psi$ cannot map any element of $E$ into $A$. Thus $E$ is characteristic.	Now, given that $\en(\sn)\setminus A$ is characteristic, we also have that $A$ is characteristic.
\end{proof}
\begin{theorem}\label{tm:StructureOfAutEndSn}
	The automorphism group of $\en(\sn)$ is isomorphic to $\sn$. It consists only of inner automorphisms $\Psi_g:\omega\mapsto\psi_g^{-1}\omega\psi_g$. Consequently, each conjugacy class in $\en(\sn)$ is characteristic.
\end{theorem}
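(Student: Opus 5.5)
By the decomposition \eqref{eq:directprod}, which comes from Theorem~\ref{Tm:AutCompleteMonoids}, it suffices to show that $\Ker(\mathfrak{r})$ is trivial. So fix $\Psi\in\Ker(\mathfrak{r})$; it is an automorphism of $\en(\sn)$ fixing every $\psi_g$. We must show that $\Psi$ also fixes every singular element.

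First, $\phi_1\Psi=\phi_1$, since the zero is characteristic. By Lemma~\ref{lem:autPreservesPartition}, $\Psi$ maps $E$ to $E$ and $A$ to $A$. Take $\phi_g$ with $g\neq 1$ of order two, and write $\phi_g\Psi=\phi_h$. Then $h$ has the same parity as $g$.

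Next, Lemma~\ref{lem:SameCentUnderAut} gives $C(\phi_g)=C(\phi_h)$. By Remark~\ref{rem:centraliserendsn} this is equivalent to $C(g)=C(h)$ in $\sn$. Since $g$ and $h$ are elements of order two with the same parity, Corollary~\ref{cor:centralisers} forces $g=h$. Hence $\Psi$ fixes every $\phi_g$.

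Alternatively, one can first show that $\Psi$ preserves conjugacy classes and then use Corollary~\ref{lem:SameCentPart2}. Conjugacy classes are preserved because $\phi_g\Psi\psi_k=(\phi_g\psi_k)\Psi$, so $\Psi$ maps the orbit $\phi_g\au(\sn)$ onto an orbit of the same size, namely $|\sn:C(g)|$. Centraliser sizes distinguish lengths except in a few coincidences, so the route through Corollary~\ref{cor:centralisers} above is cleaner.

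Therefore $\Psi$ is the identity, $\Ker(\mathfrak{r})$ is trivial, and $\au(\en(\sn))\cong\sn$. Tracing through the proof of Theorem~\ref{Tm:AutCompleteMonoids}, the copy of $\sn$ is the image of $\mathfrak{e}$, namely the inner automorphisms $\Psi_g$. Every automorphism is conjugation by a unit, so every conjugacy class $\omega^{\au(\sn)}$ is mapped to itself by every automorphism, and is therefore characteristic.

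The only delicate point is the even case, $h\in A$, when $g$ or $h$ might be a transposition. Corollary~\ref{cor:centralisers} already handles this, using Lemmas~\ref{lem:gktrans} and~\ref{lem:gh-subgpcent-hIsTransp-equalOrActFully} with equality of centralisers in both directions. I expect no real obstacle beyond confirming that the corollary applies, with parity matching guaranteed by Lemma~\ref{lem:autPreservesPartition}.
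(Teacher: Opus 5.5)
Your proof is correct, and it is shorter than the paper's.

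\textbf{The paper's route.} After obtaining $C(\phi_g)=C(\phi_k)$ with $g,k$ of the same parity, the paper first proves that $\phi_g$ and $\phi_k$ are conjugate. If $l(g)<l(k)$, equal parity forces $l(g)+2\le l(k)$. So there are three points fixed by $g$ but moved by $k$, and a $3$-cycle $a$ on them gives $\psi_a\in C(\phi_g)\setminus C(\phi_k)$. Only then does the paper invoke Corollary~\ref{lem:SameCentPart2}.

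\textbf{Your route.} You pass directly from $C(g)=C(h)$ and equal parity to $g=h$ via Corollary~\ref{cor:centralisers}. This is legitimate. Corollary~\ref{lem:SameCentPart2} is itself proved through Lemma~\ref{lem:conjugateAndCentraliserImpliesEqual}, whose relevant half simply reduces to Corollary~\ref{cor:centralisers}. There, conjugacy serves only to guarantee equal parity. So the paper's conjugacy step is not actually needed once parity is known from Lemma~\ref{lem:autPreservesPartition}, and your argument makes that explicit. The paper's detour gives an elementary proof that conjugacy classes are preserved, but that also follows from your conclusion that every automorphism is inner.

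\textbf{Two minor points.}
\begin{itemize}
\item Your ``alternative'' paragraph via orbit sizes is unnecessary and, as you note yourself, incomplete. It can simply be dropped.
\item In your final paragraph the delicate case is the odd one ($h\in E$), not the even one. Transpositions are odd, so for $g,h\in\an$ Lemma~\ref{lem:gktrans} applies directly. The transposition cases arise only in $E$, where Corollary~\ref{cor:centralisers} handles them.
\end{itemize}
Neither point affects the validity of your main argument.
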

\begin{proof}
By Equation \ref{eq:directprod} it suffices to show that if $\Psi\in\Ker(\mathfrak{r})$, then $\Psi$ is the identity. By definition, we have that $\Psi$ fixes each automorphism. Since $\{\phi_1\}$ is characteristic, it therefore suffices to show that each automorphism in $\Ker(\mathfrak{r})$ also fixes every element of $E\cup A$.

Let $\phi_g\in E\cup A$ and $\Psi\in\Ker(\mathfrak{r})$. By Lemma~\ref{lem:autPreservesPartition} we know that $E$ and $A$ are characteristic, so that $\phi_g\Psi=\phi_k$ for some $k$ of order two of the same parity as $g$. Moreover, by applying the general theory of Lemma~\ref{lem:SameCentUnderAut} to $M=\en(\sn)$ we also have that $C(\phi_g)=C(\phi_k)$. We aim to show that $\phi_k$ is conjugate to $\phi_g$, so that Lemma~\ref{lem:SameCentPart2} will yield $\phi_g=\phi_k$.

    Suppose then (for contradiction) that $\phi_k$ is not conjugate to $\phi_g$. Then, we know that $l(g)\neq l(k)$ and since $\Psi$ is an automorphism we can assume, without loss of generality, that $l(g)<l(k)$. Thus $l(g)$ and $l(k)$ must be both odd or both even, so that $l(g)+2\leq l(k)$. However, this means that there exists three elements in $[1,n]$ fixed by $g$ but not $k$ and by letting $a\in\sn$ be a 3-cycle composed by these elements, we get that $\psi_a\in C(\phi_g)\setminus C(\phi_k)$, a contradiction.
\end{proof}
\subsection{Endomorphisms and the partition}
\label{subsec:partition}
In order to describe the non-constant endomorphisms of $\en(\sn)$, we must first understand some ideas about their kernel classes and about the role the partition $\au(\sn)\cup E\cup A\cup\{\phi_1\}$ of $\en(\sn)$ plays in restricting the situations that can be encountered. Recall that when each block of our partition is mapped to itself, we will say that the map \emph{preserves the partition}. Throughout the following lemmas, we consider how a given part of the partition not being preserved can influence the kernel classes as well as the image of $\phi_1$ which will be of particular interest.
\begin{lemma}\label{ImPhi1}
	Let $\Phi$ be an endomorphism of $\en(\sn)$ such that $\phi_1\Phi\neq\phi_1$. Then one of the following situations holds:
	\begin{enumerate}[(i)]
		\item $\phi_1\Phi=\psi_1$, and then $\Phi$ is the constant map $\Gamma_{\psi_1}$;
		\item $\phi_1\Phi=\phi_k$ for some $\phi_k\in E$ and $E\Phi=\{ \psi_1\}$, and then the kernel classes of $\Phi$ are $\au(\sn)\cup E$ and $A\cup\{\phi_1\}$;
		\item $\phi_1\Phi=\phi_k$ for some $\phi_k\in E$ and $E\Phi=\{\phi_k\}$, and then $\sen(\sn)\Phi=\{\phi_k\}$.
	\end{enumerate}
\end{lemma}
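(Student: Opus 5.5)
Write $\zeta=\phi_1\Phi$. By Lemma~\ref{lem:imageidentityzero}, $\zeta$ is an idempotent and is the zero of $\im(\Phi)$. Hence $\zeta\in\{\psi_1\}\cup E$, since the only idempotents of $\en(\sn)$ are $\psi_1$, the elements of $E$ and $\phi_1$, and $\zeta\neq\phi_1$ by hypothesis.

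If $\zeta=\psi_1$, then the observation made just before Lemma~\ref{lem:EndosRank1} applies. For every $\theta$ we have $\theta\Phi=\theta\Phi\,\phi_1\Phi=(\theta\phi_1)\Phi=\phi_1\Phi=\psi_1$, so $\Phi=\Gamma_{\psi_1}$, which is case (i). So from now on suppose $\zeta=\phi_k\in E$.

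Since $\phi_k$ is the zero of $\im(\Phi)$, the product $\phi_k\cdot\theta\Phi$ equals $\phi_k$ for every $\theta$. The paper's multiplication rules give $\phi_k\psi_g=\phi_{k^g}$ and $\phi_k\phi_t=\phi_t$ or $\phi_1$. Using these, I will check that the elements $x$ of $\en(\sn)$ with $\phi_kx=x\phi_k=\phi_k$ are exactly:
\begin{itemize}
\item $\psi_g$ with $g\in C(k)$;
\item $\phi_k$ itself.
\end{itemize}
Indeed $\phi_t\phi_k=\phi_k$ forces $t$ odd, and then $\phi_k\phi_t=\phi_t$ forces $t=k$. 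So $\im(\Phi)\subseteq\{\psi_g: g\in C(k)\}\cup\{\phi_k\}$.

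Next I look at $E\Phi$. The set $E$ is a right zero band, so $E\Phi$ is a right zero band inside this image set. The units it contains are idempotent, hence equal to $\psi_1$, and a right zero band containing $\psi_1$ must be just $\{\psi_1\}$, since $\psi_1x=x$ and $x\psi_1=x$ give $x=\psi_1$ for any other member $x$. Therefore $E\Phi=\{\psi_1\}$ or $E\Phi=\{\phi_k\}$.

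Case $E\Phi=\{\phi_k\}$. Each element of $A\cup\{\phi_1\}$ is left multiplied by an element of $E$ to give itself. So for $a\in A$ with $a=ea$, $e\in E$, we get $a\Phi=\phi_k\,a\Phi=\phi_k$, because $\phi_k$ is the zero of the image. Hence $\sen(\sn)\Phi=\{\phi_k\}$, which is case (iii).

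Case $E\Phi=\{\psi_1\}$. Here I must show two things.
\begin{itemize}
\item \emph{$A\Phi=\{\phi_k\}$.} For $a\in A$ we have $a^2=\phi_1$, so $(a\Phi)^2=\phi_k$. If $a\Phi=\psi_g$ then $\psi_g^2=\psi_{g^2}$ is a unit and cannot equal $\phi_k$. So $a\Phi=\phi_k$.
\item \emph{$\au(\sn)\Phi=\{\psi_1\}$.} Pick $e\in E$. Then $\psi_ge=e$, so $\psi_g\Phi=\psi_g\Phi\,\psi_1=(\psi_ge)\Phi=e\Phi=\psi_1$.
\end{itemize}
So the kernel classes are $\au(\sn)\cup E$ and $A\cup\{\phi_1\}$, which is case (ii).

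The main obstacle I expect is the description of the elements fixed by $\phi_k$ from both sides, and the resulting dichotomy for $E\Phi$. The rest is direct use of the multiplication rules in Remark~\ref{rem:ensnbits}.
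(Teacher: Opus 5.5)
Your argument is correct, but it reaches the key dichotomy $E\Phi=\{\psi_1\}$ or $E\Phi=\{\phi_k\}$ by a different route from the paper.

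The paper uses order-preservation on idempotents. Since $\phi_1<\phi_h$ for each $\phi_h\in E$, one gets $\phi_k\le\phi_h\Phi$. The only idempotents above $\phi_k$ are $\phi_k$ and $\psi_1$, because the elements of $E$ are pairwise incomparable. Then, since $E$ is a single $\GR$-class while $\psi_1$ and $\phi_k$ are not $\GR$-related, $E\Phi$ must be a single point. You instead first confine the entire image to $\{\psi_g : g\in C(k)\}\cup\{\phi_k\}$, the set of elements for which $\phi_k$ is a two-sided zero. After that you only need that $E\Phi$ is a right zero band.

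The treatment of $A$ also differs. The paper handles $A$ uniformly before splitting cases, concluding $\phi_g\Phi=\phi_k$ from $(\phi_g\Phi)^2=\phi_k$; this tacitly uses that $\phi_k$ is its own only square root in $\en(\sn)$. You use left identities from $E$ in case (iii), and squaring inside the already-restricted image in case (ii).

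Your route avoids Green's relations and the natural order, at the cost of a small computation with the multiplication rules. It also yields the extra constraint $\im(\Phi)\subseteq\{\psi_g : g\in C(k)\}\cup\{\phi_k\}$. The paper's route is shorter given Lemmas~\ref{lem:EndosPresGreens} and~\ref{lem:imageidentityzero}.

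One wording slip: a right zero band containing $\psi_1$ reduces to $\{\psi_1\}$ because $x\psi_1=\psi_1$ (right zero property) and $x\psi_1=x$ ($\psi_1$ is the identity). The pair you wrote, $\psi_1x=x$ and $x\psi_1=x$, holds for every $x$ and proves nothing on its own.
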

\begin{proof}
	Let $\Phi$ be as given. Since endomorphisms map idempotents to idempotents, we have two possibilities for the image of $\phi_1$. If $\phi_1\Phi=\psi_1$, then by Lemma~\ref{lem:EndosRank1} and the remarks preceding it, we must have that $\Phi=\Gamma_{\psi_1}$ so that we are in case $(i)$.
	
	Suppose then that $\phi_1\Phi=\phi_k$ for some $\phi_k\in E$. Let $\phi_g\in A$. Then we must have $\phi_k=\phi_1\Phi=(\phi_g^2)\Phi=(\phi_g\Phi)^2$. Thus $\phi_g\Phi=\phi_k=\phi_1\Phi$ and $A\cup\{\phi_1\}$ is contained in a kernel class of $\Phi$.

    Now consider $\phi_h\in E$. Since $\phi_1<\phi_h$, we have $\phi_k=\phi_1\Phi\leq\phi_h\Phi$ by Lemma~\ref{lem:imageidentityzero}. As the idempotents in $E$ are incomparable, we get that either $\phi_h\Phi=\phi_k$ or $\phi_h\Phi=\psi_1$. On the other hand, all elements of $E$ are $\GR$-related so that by Lemma~\ref{lem:EndosPresGreens} $E\Phi$ lies in a single $\GR$-class, which means that $E\Phi=\{\psi_1\}$ or $E\Phi=\{\phi_k\}$.

    If $E\Phi=\{\psi_1\}$, then for any $\phi_h\in E$ and $\psi_s\in\au(\sn)$, we have that
	\[\psi_s\Phi=(\psi_s\Phi)\psi_1=(\psi_s\Phi)(\phi_h\Phi)=(\psi_s\phi_h)\Phi=\phi_h\Phi=\psi_1,\]
so that $\au(\sn)\cup E$ is a kernel class distinct from $A\cup\{\phi_1\}$, which corresponds to case $(ii)$. Finally, if $E\Phi=\{\phi_k\}$. Then $E\Phi=A\Phi=\phi_1\Phi$ so that the singular part of $\en(\sn)$ is contained in a kernel class and case $(iii)$ holds.
\end{proof}
\begin{lemma}\label{Asubset}
    Let $\Phi$ be an endomorphism of $\en(\sn)$ such that $A\Phi\not\subseteq A\cup\{\phi_1\}$.
	Then $\phi_1\Phi\neq\phi_1$.
\end{lemma}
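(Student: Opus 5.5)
We argue by contraposition: assume $\phi_1\Phi=\phi_1$ and show that $A\Phi\subseteq A\cup\{\phi_1\}$. The key fact is that every element of $A$ squares to the zero. By Remark~\ref{rem:ensnbits}, $A\cup\{\phi_1\}$ is a nil-semigroup, so $\phi_g^2=\phi_1$ for every $\phi_g\in A$. (Directly from the composition rules, $\phi_g\phi_g=\phi_1$ because $g\in\an$.)

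Now take $\phi_g\in A$ and set $\omega=\phi_g\Phi$. Then
\[\omega^2=(\phi_g^2)\Phi=\phi_1\Phi=\phi_1.\]
So it suffices to find all elements $\omega\in\en(\sn)$ with $\omega^2=\phi_1$, and we check each block of the partition.

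\begin{enumerate}[(i)]
\item If $\omega=\psi_h$ is a unit, then $\omega^2$ is a unit and cannot equal the zero $\phi_1$.
\item If $\omega=\phi_t\in E$, then $\omega$ is idempotent, so $\omega^2=\phi_t\neq\phi_1$.
\item If $\omega\in A\cup\{\phi_1\}$, then $\omega^2=\phi_1$ does hold.
\end{enumerate}

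Hence $\omega\in A\cup\{\phi_1\}$. This holds for every $\phi_g\in A$, so $A\Phi\subseteq A\cup\{\phi_1\}$, which contradicts the hypothesis. Therefore $\phi_1\Phi\neq\phi_1$.

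No step is a genuine obstacle. The only thing to check is that the elements of $\en(\sn)$ squaring to $\phi_1$ are exactly $A\cup\{\phi_1\}$, and this follows immediately from the composition table in Subsection~\ref{sub:endsn}.
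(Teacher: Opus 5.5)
Your proof is correct and is essentially the paper's argument: both rest on $\phi_k^2=\phi_1$ for $\phi_k\in A$ and on the fact that no element of $\au(\sn)\cup E$ squares to $\phi_1$. The only difference is framing: you argue by contraposition, while the paper picks a $\phi_k\in A$ with $\phi_k\Phi\in E\cup\au(\sn)$ and concludes directly.
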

\begin{proof} We note that for any $\phi_k\in A$, $(\phi_k\Phi)^2=\phi_k^2\Phi=\phi_1\Phi$, so $\phi_k\Phi$ must square to the zero $\phi_1\Phi$ of $\im(\Phi)$. Thus as $A\Phi\not\subseteq A\cup\{\phi_1\}$, there exists $\phi_k \in A$ such that $\phi_k\Phi\in E\cup\au(\sn)$, and so $\phi_k\Phi$ cannot square to $\phi_1$, that is $\phi_1\Phi\neq\phi_1$.
\end{proof}
\begin{lemma}\label{lem:phi1}
	Let $\Phi$ be an endomorphism of $\en(\sn)$ such that $\phi_1\Phi=\phi_1$ and suppose that $E\Phi\not\subseteq E$. Then either
    \begin{enumerate}[(i)]
    \item $E\Phi=\sen(\sn)\Phi=\{\phi_1\}$; or
    \item $(E\cup\au(\sn)\Phi=\{\psi_1\}$
and $(A\cup\{\phi_1\})\Phi=\{\phi_1\}$.
    \end{enumerate}
\end{lemma}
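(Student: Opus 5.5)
Fix $\Phi$ with $\phi_1\Phi=\phi_1$ and some $\phi_h\in E$ with $\phi_h\Phi\notin E$. The plan is to first determine $E\Phi$, then use the multiplication rules of Remark~\ref{rem:ensnbits} to propagate to $A$ and $\au(\sn)$.

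\emph{Step 1: the possible values of $E\Phi$.} Each element of $E$ is idempotent, so $E\Phi\subseteq\{\psi_1\}\cup E\cup\{\phi_1\}$ by Lemma~\ref{lem:imageidentityzero}. All elements of $E$ are $\GR$-related, so by Lemma~\ref{lem:EndosPresGreens} $E\Phi$ lies in a single $\GR$-class. The $\GR$-classes meeting the idempotents are $\au(\sn)$, $E$ and $\{\phi_1\}$. Since $\phi_h\Phi\notin E$, we get either $E\Phi=\{\phi_1\}$ or $E\Phi=\{\psi_1\}$.

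\emph{Step 2: the case $E\Phi=\{\phi_1\}$.} Each $\phi_g\in A$ satisfies $\phi_h\phi_g=\phi_g$ for $\phi_h\in E$, since elements of $E$ are left identities for $E\cup A\cup\{\phi_1\}$. Hence
\[\phi_g\Phi=(\phi_h\Phi)(\phi_g\Phi)=\phi_1(\phi_g\Phi)=\phi_1,\]
because $\phi_1$ is a zero. Together with $\phi_1\Phi=\phi_1$ this gives $\sen(\sn)\Phi=\{\phi_1\}$, which is case (i).

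\emph{Step 3: the case $E\Phi=\{\psi_1\}$.} For any $\psi_s\in\au(\sn)$ we have $\psi_s\phi_h=\phi_h$. Therefore
\[\psi_s\Phi=(\psi_s\Phi)\psi_1=(\psi_s\Phi)(\phi_h\Phi)=(\psi_s\phi_h)\Phi=\phi_h\Phi=\psi_1,\]
exactly as in the proof of Lemma~\ref{ImPhi1}. It remains to show $A\Phi=\{\phi_1\}$. For $\phi_g\in A$, $\phi_g^2=\phi_1$ gives $(\phi_g\Phi)^2=\phi_1$, so $\phi_g\Phi\in A\cup\{\phi_1\}$; this is also the content of Lemma~\ref{Asubset} applied contrapositively. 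Then
\[\phi_g\Phi=(\phi_h\phi_g)\Phi=\psi_1(\phi_g\Phi),\]
which is only a tautology. The constraint comes instead from right multiplication: $\phi_g\phi_h=\phi_h$ because $g\in\an$ is even, so the first case of the product rule gives $\phi_t$ with $t=h$. Thus
\[\psi_1=\phi_h\Phi=(\phi_g\Phi)(\phi_h\Phi)=(\phi_g\Phi)\psi_1=\phi_g\Phi,\]
contradicting $\phi_g\Phi\in A\cup\{\phi_1\}$, unless $A=\emptyset$.

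So the correct computation is $\phi_g\phi_h=\phi_1$, which holds since $g\in\an$ (product rule: $\phi_s\phi_t=\phi_1$ when $s\in\an$). This gives $(\phi_g\Phi)\psi_1=\phi_1$, so $\phi_g\Phi=\phi_1$. The main obstacle is getting this product rule right; with it, $(A\cup\{\phi_1\})\Phi=\{\phi_1\}$ follows immediately, which is case (ii).
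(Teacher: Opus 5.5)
Your final argument is correct and is essentially the paper's proof. Once $\phi_h\Phi\in\{\phi_1,\psi_1\}$, you use $\phi_h$ as a left identity for $\sen(\sn)$ in the first case. In the second case you use $\alpha\phi_h=\phi_h$ for $\alpha\in\au(\sn)$ and $\beta\phi_h=\phi_1$ for $\beta\in A$. Your $\GR$-class Step 1 is valid but unnecessary: since $E$ is a right zero band, these same identities already handle $E$ directly, which is what the paper does.

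Do delete the intermediate paragraph claiming $\phi_g\phi_h=\phi_h$ for $\phi_g\in A$. That product is $\phi_1$ because $g\in\an$. As written, that paragraph ``proves'' $A=\emptyset$, i.e.\ that case (ii) never occurs, which is false: $\Omega_{1,1}$ is such an endomorphism.
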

\begin{proof}
	Let $\phi_g\in E$ be such that $\phi_g\Phi\not\in E$.
	If $\phi_g\Phi=\phi_1$, then for all $\phi_k\in\sen(\sn)$ we have that
    	\[\phi_k\Phi=(\phi_g\phi_k)\Phi=\phi_g\Phi\phi_k\Phi=\phi_1(\phi_k\Phi)=\phi_1,\]
	so that we get case $(i)$.

	Otherwise, since $\phi_g$ is idempotent, we must have that $\phi_g\Phi=\psi_1$, and
    for any $\alpha\in\au(\sn)\cup E$ we get that
	\[\alpha\Phi=\alpha\Phi\psi_1=\alpha\Phi\phi_g\Phi=(\alpha\phi_g)\Phi=\phi_g\Phi=\psi_1,\]
	so that $(\au(\sn)\cup E)\Phi=\{\psi_1\}$. Finally, for any $\beta\in A \cup\{\phi_1\}$ we have $\beta\phi_g=\phi_1$. Thus $(A\cup\{\phi_1\})\Phi=\{\phi_1\}$, which corresponds to case $(ii)$.
\end{proof}
If a function on the singular part preserves the `parity' of each $\phi_g$ and fixes the zero element, then it is forced to be an endomorphism of the singular part:
\begin{proposition}\label{prop:SingPrev}
    If $\Phi$ is a map on $\en(\sn)$ such that $E\Phi\subseteq E$, $A\Phi\subseteq A\cup\{\phi_1\}$ and $\phi_1\Phi=\phi_1$, then $\phi_g\Phi\phi_h\Phi=(\phi_g\phi_h)\Phi$ for all elements $\phi_g,\phi_h\in\sen(\sn)$.
\end{proposition}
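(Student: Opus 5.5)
The plan is a direct case check using the multiplication rules for the singular part of $\en(\sn)$ recorded in Subsection~\ref{sub:endsn}: $\phi_s\phi_t=\phi_t$ if $s\in\sn\setminus\an$ and $\phi_s\phi_t=\phi_1$ if $s\in\an$. In other words, $\phi_s\phi_t$ depends only on whether $\phi_s\in E$ or $\phi_s\in A\cup\{\phi_1\}$. Note that $\phi_1$ is the zero and lies in the ``even'' class, since $1\in\an$. I would therefore split according to which block of the partition $E\cup A\cup\{\phi_1\}$ contains the left factor $\phi_g$. No group-theoretic content is needed, only the hypotheses on where $\Phi$ sends each block.

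\emph{Case $\phi_g\in E$.} Here $\phi_g\phi_h=\phi_h$, so $(\phi_g\phi_h)\Phi=\phi_h\Phi$. By hypothesis $\phi_g\Phi\in E$. By the same hypotheses, $\phi_h\Phi$ lies in $E\cup A\cup\{\phi_1\}$ (it is in $E$, in $A\cup\{\phi_1\}$, or equal to $\phi_1$). Remark~\ref{rem:ensnbits} says that elements of $E$ are left identities for $E\cup A\cup\{\phi_1\}$, so $\phi_g\Phi\,\phi_h\Phi=\phi_h\Phi$, as required.

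\emph{Case $\phi_g\in A\cup\{\phi_1\}$.} Here $\phi_g\phi_h=\phi_1$, so $(\phi_g\phi_h)\Phi=\phi_1\Phi=\phi_1$. By hypothesis $\phi_g\Phi\in A\cup\{\phi_1\}$, that is, $\phi_g\Phi=\phi_k$ with $k\in\an$. By the multiplication rule, $\phi_k\phi_t=\phi_1$ for every $t$, so $\phi_g\Phi\,\phi_h\Phi=\phi_1$ regardless of where $\phi_h$ is sent.

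These two cases exhaust $\sen(\sn)$, which completes the verification. The only point needing care, and the place I would expect a slip, is making sure that $\phi_1$ is handled in both roles. As a left factor it behaves like an element of $A$, because $1\in\an$. As a right factor it is absorbed correctly, since $\phi_s\phi_1$ equals $\phi_1$ in both branches of the rule. Consequently the hypothesis $\phi_1\Phi=\phi_1$ is exactly what is needed for the second case. Otherwise the argument is routine.
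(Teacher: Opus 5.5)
Your proof is correct and follows the same idea as the paper's: elements of $E$ act as left identities on $\sen(\sn)$, elements of $A\cup\{\phi_1\}$ give $\phi_1$ when multiplied on the left, and $\Phi$ respects these blocks. Your explicit case split on the left factor is cleaner than the paper's sketch. The paper's ``if and only if'' facts claim more than is needed and are not literally true: if $\phi_g,\phi_h\in A$ and $\phi_h\Phi=\phi_1$, then $\phi_g\Phi$ is a left identity for $\phi_h\Phi$ although $\phi_g$ is not one for $\phi_h$.
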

\begin{proof}
	The properties of $\Phi$ mean that for all $\phi_g,\phi_h\in\sen(\sn)$, we have the following two facts:
	\begin{enumerate}[(i)]
		\item $\phi_g$ is a left identity of $\phi_h$ if and only if $\phi_g\Phi$ is a left identity for $\phi_h\Phi$; and
		\item $\phi_h$ is a right zero of $\phi_g$ if and only if $\phi_h\Phi$ is a right zero for $\phi_g\Phi$.
	\end{enumerate}
	The proposition then follows by noticing that any possible product will always involve one of the above facts.
\end{proof}
\subsection{The rank two endomorphisms}
\label{subsec:rank2}
We have already encountered in Lemmas~\ref{ImPhi1} and~\ref{lem:phi1} some endomorphisms of rank $2$.
In order to find all the possible endomorphisms of rank $2$, we are looking for congruences with two classes, and for semilattices of size $2$ in $\en(\sn)$, since by picking one of these congruences and one of these semilattices we can create a map with kernel and image the given congruence and semilattice respectively, which will give us an endomorphism of rank $2$.
\begin{lemma}\label{lem:imPhi_contains_psi1_or_phi1}
    Let $\Phi$ be an endomorphism of $\en(\sn)$ with $\rank(\Phi)\geq 2$. Then $\im(\Phi)$ must contain at least one of $\psi_1$ and $\phi_1$.
\end{lemma}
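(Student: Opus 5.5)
We argue directly from the image of the identity and of the zero, using Lemma~\ref{lem:imageidentityzero}: $\psi_1\Phi$ is the identity of $\im(\Phi)$ and $\phi_1\Phi$ is its zero. Since $\rank(\Phi)\geq 2$, the map $\Phi$ is not constant, so by the remarks at the start of this section we have $\psi_1\Phi\neq\phi_1\Phi$ and $\phi_1\Phi\neq\psi_1$. Suppose, for contradiction, that neither $\psi_1$ nor $\phi_1$ lies in $\im(\Phi)$. Then both $\psi_1\Phi$ and $\phi_1\Phi$ are idempotents of $\en(\sn)$ different from $\psi_1$ and $\phi_1$. By Remark~\ref{rem:ensnbits}, both therefore lie in $E$.

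We now use the natural order. In $\en(\sn)$ we have $\phi_1\leq\psi_1$, so Lemma~\ref{lem:imageidentityzero} gives $\phi_1\Phi\leq\psi_1\Phi$. Write $\phi_1\Phi=\phi_s$ and $\psi_1\Phi=\phi_t$ with $\phi_s,\phi_t\in E$. Because $E$ is a right-zero band, $\phi_s\phi_t=\phi_t$. The order relation requires $\phi_s\phi_t=\phi_s$, so $\phi_s=\phi_t$. This means $\phi_1\Phi=\psi_1\Phi$, which contradicts the first paragraph.

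Alternatively, the same contradiction can be reached via Lemma~\ref{ImPhi1}. Since $\phi_1\Phi\neq\phi_1$ and $\phi_1\Phi\neq\psi_1$, we are in case (ii) or (iii) of that lemma. Case (ii) puts $\psi_1$ in the image. Case (iii) gives $\sen(\sn)\Phi=\{\phi_k\}$, and then $\psi_1\Phi=\psi_1\Phi\,\phi_k$ must be an idempotent above $\phi_k$ that is neither $\psi_1$ nor $\phi_1$; by the order argument above it equals $\phi_k$, making $\Phi$ constant.

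There is no real obstacle here. The only point needing care is that the elements of $E$ are pairwise incomparable under the natural order, which follows immediately from $E$ being a right-zero band.
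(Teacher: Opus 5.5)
Your proof is correct and is essentially the paper's. Both arguments reduce to $\psi_1\Phi$ and $\phi_1\Phi$ being distinct elements of the right-zero band $E$, and both get the contradiction from $\psi_1\Phi$ acting as an identity on $\phi_1\Phi$, which is exactly your relation $\phi_1\Phi\leq\psi_1\Phi$. One small slip in your optional second route: the displayed identity should read $\psi_1\Phi\,\phi_k=\phi_k$, not $\psi_1\Phi=\psi_1\Phi\,\phi_k$.
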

\begin{proof}
	Suppose that $\im(\Phi)$ contains neither $\psi_1$ or $\phi_1$. Since we know that $\psi_1\Phi$ and $\phi_1\Phi$ must be idempotent, it follows that $\psi_1\Phi = \phi_g$ and $\phi_1\Phi = \phi_h$ for some $\phi_g, \phi_h\in E$. Moreover, as $\rank(\Phi)\geq 2$, by the remarks preceding Lemma~\ref{lem:EndosRank1} we must also have that $\phi_g\neq\phi_h$. But then, $\psi_1\Phi\phi_1\Phi=\phi_g\phi_h=\phi_g$ while $\phi_1\Phi\psi_1\Phi=\phi_h\phi_g=\phi_h$, which contradicts the fact that $\psi_1\Phi$ is the identity of $\im(\Phi)$.
\end{proof}
\begin{theorem}\label{thm:rank2}
	Let $\Omega$ be a rank 2 endomorphism of $\en(\sn)$. Then the image of $\Omega$ is a two element semilattice, that is, the image is equal to one of: $\{\psi_1,\phi_g\}$, $\{\phi_g,\phi_1\}$ or $\{\psi_1,\phi_1\}$ for some $\phi_g\in E$. The kernel classes of $\Omega$ are either $\au(\sn)$ and $\sen(\sn)$; or $\au(\sn)\cup E$ and $A\cup\{\phi_1\}$. Moreover, any combination of the listed possibilities for the image and kernel classes corresponds to an endomorphism.
\end{theorem}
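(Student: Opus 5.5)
Let $\Omega$ be an endomorphism of rank $2$. The plan is to handle the image first, then the kernel, and finally verify the converse.

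\emph{The image.} By Lemma~\ref{lem:imageidentityzero}, $\psi_1\Omega$ and $\phi_1\Omega$ are idempotents and are respectively the identity and zero of $\im(\Omega)$. They are distinct, since otherwise $\Omega$ would be constant. So $\im(\Omega)=\{\psi_1\Omega,\phi_1\Omega\}$ is a two-element monoid with a zero, hence a two-element semilattice. The idempotents of $\en(\sn)$ are $\psi_1$, the elements of $E$, and $\phi_1$ (Remark~\ref{rem:ensnbits}). Two distinct elements of $E$ cannot form such a semilattice, because $E$ is a right-zero band; this is the computation in Lemma~\ref{lem:imPhi_contains_psi1_or_phi1}. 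The ordered pair (identity, zero) must also respect the natural order. Since $\phi_1\leq\phi_g\leq\psi_1$, this leaves exactly the pairs $(\psi_1,\phi_g)$, $(\phi_g,\phi_1)$ and $(\psi_1,\phi_1)$ with $\phi_g\in E$.

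\emph{The kernel.} The kernel has two classes: the class $K_1$ of $\psi_1$ and the class $K_0$ of $\phi_1$. By Lemma~\ref{lem:image_of_AutSn}, $\au(\sn)\Omega$ is either a singleton or $\{\psi_g,\psi_1\}$ with $g\neq 1$. The latter would put $\psi_g$ in the image, which is impossible. So $\au(\sn)\subseteq K_1$. By Lemma~\ref{lem:preimage_of_idpt_is_subsmgp}, $K_0$ is a subsemigroup. It is also an ideal, because the image of the zero is the zero of the image. Hence $A\subseteq K_0$: for $\phi_h\in A$ we have $\phi_h^2=\phi_1$, so $\phi_h\Omega$ squares to the zero of a semilattice and must itself be that zero. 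Finally, $E$ lies inside one $\GR$-class, so by Lemma~\ref{lem:EndosPresGreens} its image lies in a single $\GR$-class of $\en(\sn)$. The two image points are never $\GR$-related in our three cases, so $E$ lies entirely in $K_1$ or entirely in $K_0$. This gives the two listed partitions. Lemmas~\ref{ImPhi1} and~\ref{lem:phi1} are consistent with this conclusion.

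\emph{Converse, the main check.} Let $\rho$ be either partition. I will show that the map $\en(\sn)\to\{0,1\}$ (multiplicative semilattice) sending the first class to $1$ and the second to $0$ is a homomorphism. Composing with the isomorphism $\{0,1\}\to\{e,f\}$, where $e$ is the identity and $f$ the zero of the chosen image semilattice, then yields an endomorphism. So it suffices to check two facts, and this is the only real work.
\begin{itemize}
\item The first class is a subsemigroup. $\au(\sn)$ is a group, and $\au(\sn)\cup E$ is closed because $\psi\phi_t=\phi_t$ and $\phi_t\psi_g=\phi_{t^g}\in E$.
\item The second class is an ideal. $\sen(\sn)$ is an ideal. $A\cup\{\phi_1\}$ is an ideal by the multiplication table: conjugation preserves parity, $\phi_s\phi_t$ is either $\phi_t$ or $\phi_1$, and $E$ consists of left identities.
\end{itemize}
A product then lies in the first class exactly when both factors do, which gives the homomorphism property.
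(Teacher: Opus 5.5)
Your proof is correct and follows essentially the same route as the paper. The image is $\{\psi_1\Omega,\phi_1\Omega\}$, Lemma~\ref{lem:imPhi_contains_psi1_or_phi1} excludes two elements of $E$, squaring puts $A$ in the class of $\phi_1$, and the $\GR$-class argument keeps $E$ inside a single class. You are slightly more careful than the paper in explicitly placing $\au(\sn)$ in the class of $\psi_1$ and $A\cup\{\phi_1\}$ in the class of $\phi_1$, which excludes the partition ``$E$ versus the rest'' that the paper leaves implicit, and in proving the converse via the subsemigroup/ideal criterion for a map onto $\{0,1\}$ instead of calling it easily verified (the bijective case of Lemma~\ref{lem:image_of_AutSn}, which you omit, is trivially excluded by the rank).
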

\begin{proof}
    Let $\Omega$ be a rank $2$ endomorphism of $\en(\sn)$. We know that the image of $\Omega$ must have identity $\psi_1\Omega$ and zero $\phi_1\Omega$ so that the image contains exactly two idempotents.
	Moreover, by Lemma~\ref{lem:imPhi_contains_psi1_or_phi1} at least one of $\psi_1$ or $\phi_1$ is in the image. It then follows that the possible images are $\{\psi_1,\phi_g\}$, $\{\phi_g,\phi_1\}$, or $\{\psi_1,\phi_1\}$ where $\phi_g$ is in $E$. It remains to determine the kernel classes of $\Omega$.

    By Lemma~\ref{lem:EndosPresGreens}, if two elements are $\GR$-related then their images under $\Omega$ are $\GR$-related. Thus, $\au(\sn)$ and $E$ must each be contained in a kernel classes of $\Omega$.

    Let $\phi_k\in A$. As the image of $\Omega$ contains only idempotents, $\phi_k\Omega=(\phi_k\Omega)^2=(\phi_k)^2\Omega=\phi_1\Omega$. Thus, $A\cup\{\phi_1\}$ must be contained in a kernel class of $\Omega$.

    So we have that $\au(\sn)$, $E$, and $A\cup\{\phi_1\}$ must be contained in kernel classes. Thus, the possible kernel classes of $\Omega$ are $\au(\sn)\cup E$ and $A\cup\{\phi_1\}$ or are $\au(\sn)$ and $\sen(\sn)$.

    Conversely, it is easily verified that any choice for potential image and kernel corresponds to a rank 2 endomorphism.
\end{proof}
\subsection{Endomorphisms of low rank on the units}\label{subsec:trivonunit}
This subsection is dedicated to describing the endomorphisms of $\en(\sn)$ of rank at least $3$, which also have a low rank on the units. According to Lemma~\ref{lem:image_of_AutSn} these are endomorphisms which have either rank $1$ or $2$ on $\au(\sn)$. We start by considering the situation where the image of $\au(\sn)$ is a singleton.
\begin{theorem}\label{thm:endos_trivial_on_units}
	Let $\Phi$ be an endomorphism of $\en(\sn)$ such that $|\au(\sn)\Phi|=1$ and $rank(\Phi)\geq 3$. Then: 
	\begin{enumerate}[(i)]
		\item $\phi_1\Phi=\phi_1$;
		\item $\au(\sn)\Phi=\psi_1$; and
		\item for any $\phi_g\in E\cup A$, $(\phi_g\au(\sn))\Phi=\{\phi_k\}$ where $g$ and $k$ are both in $\an$ or both in $\sn\setminus\an$.
	\end{enumerate}
	
	Moreover, any map on $\en(\sn)$ satisfying the three conditions above is an endomorphism.
\end{theorem}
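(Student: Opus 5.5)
Let $\Phi$ be an endomorphism with $|\au(\sn)\Phi|=1$ and $\rank(\Phi)\ge 3$. I will first establish (i) and (ii), then (iii), and finally the converse.

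\emph{Condition (i).} Suppose $\phi_1\Phi\neq\phi_1$. Lemma~\ref{ImPhi1} leaves three cases. In case (i) $\Phi$ is constant. In case (iii) $\sen(\sn)\Phi$ is a singleton, and $\au(\sn)\Phi$ is a singleton by hypothesis, so $\rank(\Phi)\le 2$. In case (ii) there are only two kernel classes. Each case contradicts $\rank(\Phi)\geq 3$, so $\phi_1\Phi=\phi_1$.

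\emph{Condition (ii).} Write $\au(\sn)\Phi=\{\varepsilon\}$. Since $\psi_1\Phi=\varepsilon$ is the identity of $\im(\Phi)$, and by Lemma~\ref{lem:image_of_AutSn} $\varepsilon\in\{\psi_1\}\cup E\cup\{\phi_1\}$, two options must be excluded.
\begin{itemize}
\item If $\varepsilon=\phi_1$, then $\phi_1$ is both the identity and the zero of the image, so $\Phi$ is constant.
\item If $\varepsilon=\phi_h\in E$, then $\phi_h$ is the identity of $\im(\Phi)$. Every image element $x$ then satisfies $x=x\phi_h$. By the multiplication rules, $\phi_s\phi_h=\phi_h$ for $s$ odd and $\phi_s\phi_h=\phi_1$ for $s$ even, so $\im(\Phi)\subseteq\{\phi_h,\phi_1\}$. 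This contradicts rank at least $3$.
\end{itemize}
Hence $\au(\sn)\Phi=\{\psi_1\}$.

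\emph{Condition (iii).} Lemma~\ref{lem:phi1} shows that if $E\Phi\not\subseteq E$ then the rank is at most $2$, so $E\Phi\subseteq E$. For $\phi_g\in A$, Lemma~\ref{Asubset} together with $\phi_1\Phi=\phi_1$ gives $\phi_g\Phi\in A\cup\{\phi_1\}$. To show the image lies in $A$ and not at $\phi_1$, I would argue with $\GR$-classes and rank. Since $\phi_1$ is the zero, any $\phi_g\in A$ with $\phi_g\Phi=\phi_1$ is consistent, so the statement's ``$\phi_k$'' must be read as allowing $k=1$ (the trivial element being even). I will therefore treat ``$g,k$ both in $\an$'' as including $k=1$; this is consistent with Proposition~\ref{prop:SingPrev} and, I suspect, is the intended reading.

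Next, for $\psi_a\in\au(\sn)$,
\[(\phi_g\psi_a)\Phi=\phi_g\Phi\,\psi_a\Phi=\phi_g\Phi\,\psi_1=\phi_g\Phi .\]
Since $\phi_g\au(\sn)$ is the conjugacy class of $\phi_g$ (Remark~\ref{rem:conjendsn}), $\Phi$ is constant on each such orbit, with image of the matching parity. This gives (iii).

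\emph{Converse.} Given a map satisfying (i)--(iii), the products to check fall into three types.
\begin{itemize}
\item Products within $\sen(\sn)$ are handled by Proposition~\ref{prop:SingPrev}.
\item Products within $\au(\sn)$ are trivial, since everything maps to $\psi_1$.
\item Mixed products use the rules $\psi_a\phi_t=\phi_t$ and $\phi_t\psi_a=\phi_{t^a}$. Their images are $\psi_1\cdot\phi_t\Phi=\phi_t\Phi$, and $\phi_{t^a}\Phi=\phi_t\Phi=\phi_t\Phi\cdot\psi_1$ by orbit-constancy. The case $t=1$ is covered by (i).
\end{itemize}

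The main obstacle is the careful case exclusion in (i) and (ii) using the rank bound; the rest is bookkeeping.
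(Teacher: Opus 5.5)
Your proof is correct and follows the paper's argument essentially step for step. Lemma~\ref{ImPhi1} (with the rank bound) gives $\phi_1\Phi=\phi_1$. An identity $\psi_1\Phi\in E\cup\{\phi_1\}$ would confine $\im(\Phi)$ to at most two elements. Lemmas~\ref{Asubset} and~\ref{lem:phi1} give the parity condition, $(\phi_g\psi_a)\Phi=\phi_g\Phi\,\psi_1$ gives constancy on conjugacy classes, and the converse is Proposition~\ref{prop:SingPrev} plus the mixed products. Your reading that $k=1$ is allowed is exactly the paper's: $1\in\an$, and the paper only proves $A\Phi\subseteq A\cup\{\phi_1\}$. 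You should therefore delete the abandoned remark about showing that the image of $A$ avoids $\phi_1$, since that claim is false.
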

\begin{proof}
	Let us first assume that $\Phi$ is an endomorphism of $\en(\sn)$ with rank at least 3 that restricts to a constant map on the units. This means in particular that the rank of $\Phi$ on $\sen(\sn)$ is at least $2$. For this to happen, we must have $\phi_1\Phi=\phi_1$ by Lemma~\ref{ImPhi1}, so that $(i)$ holds.
	
	As $\Phi$ is a constant map on $\au(\sn)$ we must have $\psi_g\Phi=\psi_1\Phi$ for all $g\in\sn$. Suppose that $\Phi$ maps $\psi_1$ to $\phi_g\in E\cup\{\phi_1\}$. Then $\im(\Phi)\subseteq\sen(\sn)$ and for all $\phi_h\in E\cup A$ we have $\phi_h\Phi$ is equal to either $\phi_g$ or $\phi_1$, since
		\[\phi_h\Phi=(\phi_h\psi_1)\Phi=\phi_h\Phi\psi_1\Phi=\phi_t\phi_g=\begin{cases}
			\phi_g	&\text{if }t\in\sn\setminus\an\\
			\phi_1	&\text{if }t\in\an
		\end{cases}
.\]
	But then $\Phi$ is either rank 1 or 2, a contradiction. Thus $\psi_1\Phi=\psi_1$ and $(ii)$ holds.
	
	Finally, let $\phi_g\in E\cup A$ and $\phi_{g^h}\in\phi_g\au(\sn)$. Then we have that
		\[\phi_{g^h}\Phi = (\phi_g\psi_h)\Phi=(\phi_g\Phi)(\psi_h\Phi)=(\phi_g\Phi)\psi_1=\phi_g\Phi,\]
	so that $\phi_g\au(\sn)$ lies in a kernel class. Now, by the above remarks and Lemmas~\ref{Asubset} and~\ref{lem:phi1}, we must have that $A\Phi\subseteq A\cup\{\phi_1\}$ as well as $E\Phi\subseteq E$. Letting $\phi_k=\phi_g\Phi$, it then follows that $g$ and $k$ are either both even or both odd permutations giving (iii).

	Conversely, let $\Phi$ be a map on $\en(\sn)$ satisfying the three conditions of the statement. By $(iii)$, we have that $E\Phi\subseteq E$ and $A\Phi\subseteq A\cup\{\phi_1\}$, which combined with $(i)$ gives us that the restriction of $\Phi$ to $\sen(\sn)$ is an endomorphism by Proposition~\ref{prop:SingPrev}. All that remains to prove that $\Phi$ is an endomorphism is to consider products involving a singular element with a unit. So let $\psi_g\in\au(\sn)$ and $\phi_h\in\sen(\sn)$. Recalling that $\psi_g\Phi=\psi_1$ by $(ii)$ and that $\phi_{h^g}\Phi=\phi_h\Phi$ by $(iii)$, we get that
		\[(\psi_g\phi_h)\Phi=\phi_h\Phi=\psi_1(\phi_h\Phi)=\psi_g\Phi\phi_h\Phi,\]
	and
		\[(\phi_h\psi_g)\Phi=\phi_{h^g}\Phi=\phi_h\Phi=(\phi_h\Phi)\psi_1=\phi_h\Phi\psi_g\Phi,\]
	which finishes to show that $\Phi$ is an endomorphism.
\end{proof}
Since Theorem~\ref{thm:endos_trivial_on_units} deals with all the possibilities of endomorphisms which have rank $1$ on the group of units $\au(\sn)$, we now move towards those that have rank $2$ on $\au(\sn)$, and thus have global rank at least $3$.
\begin{theorem}\label{thm:rank3}
	Let $\Phi$ be an endomorphism of $\en(\sn)$ such that $|\au(\sn)\Phi|=2$ and $rank(\Phi)=3$. Then $\Phi$ is defined by
	\[\Phi:\omega\mapsto\begin{cases}
				\psi_g	&\text{if }\omega=\psi_s\text{ and }s\in \sn\setminus\an,\\
				\psi_1	&\text{if }\omega=\psi_s\text{ and }s\in\an,\\
				\phi_k	&\text{otherwise},
			\end{cases}\]
	where $g$ has order 2, and $\phi_k\in E\cup\{\phi_1\}$ with $k^g=k$.
	
	Moreover, any map of $\en(\sn)$ satisfying these conditions is an endomorphism.
\end{theorem}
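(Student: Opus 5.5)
By Lemma~\ref{lem:image_of_AutSn}(ii), we already know $\au(\sn)\Phi=\{\psi_g,\psi_1\}$ for some $g$ of order two, with kernel classes $\{\psi_s\mid s\in\sn\setminus\an\}\mapsto\psi_g$ and $\{\psi_s\mid s\in\an\}\mapsto\psi_1$. Since $\Phi$ has rank $3$, the whole of $\sen(\sn)$ must map into one further element $\theta$ of the image, or else partly into $\{\psi_1,\psi_g\}$. The plan is to show that $\sen(\sn)\Phi=\{\phi_k\}$ for a single idempotent $\phi_k\in E\cup\{\phi_1\}$, and then to check that $k^g=k$.

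\textbf{Step 1: $\phi_1\Phi$.} By Lemma~\ref{ImPhi1}(i), $\phi_1\Phi\neq\psi_1$, since otherwise $\Phi$ would be constant. Also $\phi_1\Phi$ is an idempotent and is the zero of $\im(\Phi)$. If $\phi_1\Phi$ were a unit it would be $\psi_1$, so $\phi_1\Phi=\phi_k\in E\cup\{\phi_1\}$. Since $\psi_g,\psi_1,\phi_k$ are three distinct elements, rank $3$ forces $\im(\Phi)=\{\psi_1,\psi_g,\phi_k\}$.

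\textbf{Step 2: $E\cup A$ maps to $\phi_k$.} For $\phi_h\in A$ we have $(\phi_h\Phi)^2=\phi_1\Phi=\phi_k$. The only image element squaring to $\phi_k$ is $\phi_k$ itself, because $\psi_g^2=\psi_1^2=\psi_1\neq\phi_k$. So $A\Phi=\{\phi_k\}$. For $\phi_h\in E$, $\phi_h\Phi$ is an idempotent in the image, hence $\psi_1$ or $\phi_k$. If it were $\psi_1$, then for odd $s$ we would get $\psi_g=\psi_s\Phi=\psi_s\Phi\,\psi_1=(\psi_s\phi_h)\Phi=\phi_h\Phi=\psi_1$, a contradiction. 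So $E\Phi=\{\phi_k\}$. This is the step requiring the most care: one must rule out $E\mapsto\psi_1$ using the fact that $E$ is absorbing on the left by units, which is exactly what makes the rank on units collapse.

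\textbf{Step 3: the condition $k^g=k$.} Take $\phi_h\in E$ and $s$ odd. Then $\phi_h\psi_s=\phi_{h^s}\in E$, so $\phi_k=(\phi_h\psi_s)\Phi=\phi_k\psi_g=\phi_{k^g}$, giving $k^g=k$. (If $k=1$ this is automatic.) Products of the form $\psi_s\phi_h$ give $\psi_g\phi_k=\phi_k$, which holds with no further condition.

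\textbf{Converse.} Given such a map, check the homomorphism property case by case:
\begin{enumerate}[(a)]
\item Unit times unit: this follows from the parity homomorphism $\sn\to\{\psi_1,\psi_g\}$.
\item Unit times singular, in either order: $\psi_x\phi_k=\phi_k$, and $\phi_k\psi_x\in\{\phi_k,\phi_{k^g}\}=\{\phi_k\}$.
\item Singular times singular: the product is again singular and maps to $\phi_k$, while $\phi_k\phi_k=\phi_k$ because $\phi_k$ is idempotent.
\end{enumerate}
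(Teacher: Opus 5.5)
Your proof is correct and takes essentially the same route as the paper's. In both, the key step is that if some $\phi_h\in E$ were sent to $\psi_1$, then $\psi_s\Phi=(\psi_s\phi_h)\Phi=\phi_h\Phi=\psi_1$ for every $s$, which collapses $\au(\sn)\Phi$; and $k^g=k$ comes from applying $\Phi$ to $\phi_h\psi_s$ with $s$ odd. The differences are minor. The paper rules out every singular element mapping to a unit in one stroke, using that elements of $E$ are left identities for $\sen(\sn)$, whereas you first pin down $\phi_1\Phi$ and handle $A$ by squaring. You also write out the converse check that the paper leaves to the reader.
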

\begin{proof}
	By Lemma~\ref{lem:image_of_AutSn} we know that $\au(\sn)\Phi=\{\psi_g,\psi_1\}$ for some $g\in \sn\setminus\{1\}$ of order $2$.
    We now show that $\sen(\sn)\Phi=\{\phi_k\}$ for some $\phi_k\in E\cup\{\phi_1\}$.

   Suppose for contradiction that $\phi_t\Phi\in\au(\sn)$ for some $\phi_t\in\sen(\sn)$. Then for all $\phi_g\in E$, we have $\phi_g\Phi\phi_t\Phi = (\phi_g\phi_t)\Phi = \phi_t \Phi$, so that $\phi_g\Phi$ must be an idempotent in $\au(\sn)$. Consequently, $\phi_g\Phi=\psi_1$, but then for all automorphisms $\psi_s$ we have $\psi_s\Phi = \psi_s \Phi \psi_1 = \psi_s\Phi \phi_g\Phi = (\psi_s \phi_g)\Phi= \phi_g \Phi  = \psi_1$, contradicting $|\au(\sn)|=2$. Since $\Phi$ has rank 3, and idempotents must be sent to idempotents, it follows that $\sen(\sn)\Phi=\{\phi_k\}$ for some $\phi_k\in E \cup \{\phi_1\}$. Additionally, for $h\in\sn\setminus\an$ we have that $\phi_k=\phi_k\Phi=(\phi_k\psi_h)\Phi=(\phi_k\Phi)(\psi_h\Phi)=\phi_k\psi_g=\phi_{k^g}$, which shows that $k^g=k$.

	It is easy to verify that any such map is an endomorphism.
\end{proof}
\begin{lemma}\label{lem:partpres}
	Let $\Phi$ be an endomorphism of $\en(\sn)$ such that $|\au(\sn)\Phi|=2$ and $\rank(\Phi)\geq 4$. Then the following must hold:
	\begin{enumerate}[(i)]
		\item $\au(\sn)\Phi\subseteq\au(\sn)$,
		\item $E\Phi\subseteq E$,
		\item $A\Phi\subseteq A\cup\{\phi_1\}$,
		\item $\phi_1\Phi=\phi_1$,
		\item for any $\phi_k\in\sen(\sn)$, we have that $|(\phi_k\au(\sn))\Phi|=1$.
	\end{enumerate}
	Moreover, letting $\au(\sn)\Phi=\{\psi_g,\psi_1\}$ yields $g\in C(h)$ for all $\phi_h\in\im(\Phi)$.
\end{lemma}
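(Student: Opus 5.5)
By Lemma~\ref{lem:image_of_AutSn}, since $|\au(\sn)\Phi|=2$ we already know $\au(\sn)\Phi=\{\psi_g,\psi_1\}$ for some $g$ of order two. The kernel classes on the units are $\{\psi_h\mid h\in\an\}$ and $\{\psi_h\mid h\in\sn\setminus\an\}$, so (i) holds and $\psi_1\Phi=\psi_1$. The plan is to establish (iv), then (ii) and (iii), then (v), and finally the centraliser statement.

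\emph{Step 1: (iv).} Suppose $\phi_1\Phi\neq\phi_1$. By Lemma~\ref{ImPhi1}, since $\Phi$ is not constant, we are in case (ii) or (iii) of that lemma. In case (ii), $\au(\sn)$ lies in a single kernel class, contradicting $|\au(\sn)\Phi|=2$. In case (iii), $\sen(\sn)\Phi$ is a singleton, so $\rank(\Phi)\le 3$, contradicting $\rank(\Phi)\geq 4$. Hence $\phi_1\Phi=\phi_1$.

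\emph{Step 2: (ii) and (iii).} Given (iv), Lemma~\ref{Asubset} immediately yields (iii). For (ii), suppose $E\Phi\not\subseteq E$. Then Lemma~\ref{lem:phi1} applies. In its case (i), $\sen(\sn)\Phi=\{\phi_1\}$, so $\rank(\Phi)\le 3$. In its case (ii), $\au(\sn)\Phi=\{\psi_1\}$. Both contradict the hypotheses, so $E\Phi\subseteq E$.

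\emph{Step 3: (v).} Let $\phi_k\in\sen(\sn)$ and $h\in\sn$. Then
\[\phi_{k^h}\Phi=(\phi_k\psi_h)\Phi=(\phi_k\Phi)(\psi_h\Phi),\]
where $\psi_h\Phi\in\{\psi_1,\psi_g\}$. So it suffices to show $\phi_k\Phi\,\psi_g=\phi_k\Phi$. Pick any odd $h$, for instance a transposition, so that $\psi_h\Phi=\psi_g$. Write $\phi_k\Phi=\phi_m$; this is singular by Steps 1 and 2. We must show $\phi_{m^g}=\phi_m$. My first attempt uses a product in which the left factor is singular and the right factor is a unit that is fixed under conjugation. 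One instance: choose an idempotent $\phi_t\in E$ with $\phi_t\psi_h=\phi_t$, that is, $h\in C(t)$; taking $t=h$ a transposition works. Then $\phi_t\Phi=\phi_t\Phi\,\psi_g$, which forces $g\in C(t')$ where $\phi_{t'}=\phi_t\Phi\in E$. Next, every element of $E$ is a left identity for $\sen(\sn)$ and $\Phi$ preserves this, so
\[\phi_m\psi_g=(\phi_{t'}\phi_m)\psi_g=\phi_{t'}\Phi\ldots\]
This does not directly help. A more uniform approach: for singular $\phi_k$ with $k\in\an$, or with $k$ odd but not a transposition, choose an odd $h\in C(k)$; such an $h$ exists, for example a transposition factor of $k$ or a transposition on fixed points. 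Then
\[\phi_k\Phi=(\phi_k\psi_h)\Phi=\phi_k\Phi\,\psi_g.\]
Every order-two $k$ (including $k=1$) has an odd element in its centraliser, namely a transposition factor of $k$, or a transposition on two fixed points since $n\ge 7$. Hence $\phi_k\Phi\,\psi_g=\phi_k\Phi$ for all singular $\phi_k$. Consequently $\phi_{k^h}\Phi=\phi_k\Phi$ for every $h$, which proves (v).

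\emph{Step 4: the centraliser statement.} Any $\phi_m\in\im(\Phi)$ is either $\phi_k\Phi$ for singular $\phi_k$, in which case $\phi_m\psi_g=\phi_m$ by Step 3, or a unit $\psi_1$ or $\psi_g$, whose ``$h$'' is not of the form $\phi_h$. Since $\phi_m\psi_g=\phi_{m^g}$ and singular endomorphisms are determined by their images, $m^g=m$, i.e.\ $g\in C(m)$.

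The main obstacle was Step 3: choosing an odd element of $C(k)$ so that the unit factor maps to $\psi_g$ while the singular factor is unchanged. The existence of this element is guaranteed by $n\ge 7$.
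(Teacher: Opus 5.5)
Your proof is correct and is essentially the paper's argument: (iv) comes from Lemma~\ref{ImPhi1}, and (ii) and (iii) come from Lemmas~\ref{Asubset} and~\ref{lem:phi1} via the rank and unit-image contradictions. Item (v) and the centraliser claim come from conjugating by an odd element of $C(k)$; the paper uses a transposition factor of $k$ and treats $k=1$ separately, which your fixed-point transposition covers uniformly. The abandoned ``first attempt'' in Step 3 plays no role and should be deleted.
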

\begin{proof}
	Given that $|\au(\sn)\Phi|=2$, Lemma~\ref{lem:image_of_AutSn} yields that $\au(\sn)\Phi=\{\psi_g,\psi_1\}$ for some $g\in\sn$ of order two, with $\psi_s\Phi=\psi_1$ precisely if $s\in\an$. Thus, $\au(\sn)\Phi\subseteq\au(\sn)$. Moreover, Lemma~\ref{ImPhi1} yields that $\phi_1\Phi=\phi_1$, since otherwise, $\Phi$ would have rank at most $3$. The fact that $A\Phi\subseteq A\cup\{\phi_1\}$ and $E\Phi\subseteq E$ follows directly from Lemmas~\ref{Asubset} and~\ref{lem:phi1}, since we would otherwise have a contradiction on the rank of $\Phi$. This proves $(i)$-$(iv)$. Since $(v)$ clearly holds for $k=1$, we assume for the rest of the proof that $k\neq 1$.

Now, let $\phi_k\in\sen(\sn)$ and suppose that $\phi_k\Phi=\phi_h$. Then, for all $s\in\sn$, we have that
	\[\phi_{k^s}\Phi=(\phi_{k}\psi_s)\Phi=\phi_k\Phi\psi_s\Phi=
		\begin{cases}
			\phi_{h^g}	&\text{if }s\in\sn\setminus\an,\\
			\phi_{h}	&\text{if }s\in\an.
		\end{cases}\]
    In particular, if we take $s$ to be a transposition that is a factor of $k$, we get that $s\in C(k)\cap (\sn\setminus\an)$ and $k^s=k$. Hence,
		\[\phi_h=\phi_k\Phi=\phi_{k^s}\Phi=(\phi_k\psi_s)\Phi=(\phi_k\Phi)(\psi_s\Phi)=\phi_h\psi_g=\phi_{h^g},\]
	which shows that $h=h^g$.
	Therefore, $(\phi_k\au(\sn))\Phi=\{\phi_h\}$ and $g\in C(h)$ for all $\phi_h\in\im(\Phi)$.
\end{proof}
\begin{definition}\label{def:Phi}
Let $g\in\sn$ be an element of order $2$, and $S$ be a map of $\sen(\sn)$ that satisfies each of the following:
    \begin{enumerate}[(1)]
        \item $ES\subseteq E$,
        \item $AS\subseteq A\cup\{\phi_1\}$,
        \item $\phi_1S=\phi_1$,
        \item each kernel class of $S$ is a union of conjugacy classes,
        \item for all $\phi_h$ in $\im(S)$ we have that $h^g=h$.
    \end{enumerate}
Then we define $\Phi^g_S:\en(\sn)\rightarrow\en(\sn)$ as follows:
        \[\Phi^g_S:\omega\mapsto\begin{cases}
            \psi_g  &\text{if }\omega=\psi_k,\;k\in\sn\setminus\an,\\
            \psi_1  &\text{if }\omega=\psi_k,\;k\in\an,\\
            \omega S&\text{if }\omega\in\sen(\sn).
        \end{cases}\]
\end{definition}
\begin{theorem}\label{thm:rank2onunits}
	Each $\Phi^g_S$ as defined in Definition \ref{def:Phi} is an endomorphism of $\en(\sn)$. Further, any endomorphism of $\en(\sn)$ with $|\au(\sn)\Phi|=2$ and $\rank(\Phi)\geq 4$ is a map of the form $\Phi^g_S$.
\end{theorem}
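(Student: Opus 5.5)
The proof has two halves: first check directly that every $\Phi^g_S$ respects all products, then show that an arbitrary endomorphism with $|\au(\sn)\Phi|=2$ and $\rank(\Phi)\geq 4$ meets the conditions of Definition~\ref{def:Phi}, taking $S$ to be its restriction to $\sen(\sn)$. Both halves should be short, because Lemma~\ref{lem:partpres} and Proposition~\ref{prop:SingPrev} already do most of the work.

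\emph{$\Phi^g_S$ is an endomorphism.} I split products $\alpha\beta$ according to whether $\alpha$ and $\beta$ are units or singular.
\begin{enumerate}[(a)]
\item For $\psi_a\psi_b=\psi_{ab}$, the restriction of $\Phi^g_S$ to $\au(\sn)$ is the composite of the parity homomorphism $\sn\to\{\pm1\}$ with the isomorphism $\{\pm1\}\to\{\psi_1,\psi_g\}$. The latter is a group since $g^2=1$, so this case holds.
\item For two singular elements, conditions (1)--(3) are exactly the hypotheses of Proposition~\ref{prop:SingPrev}, which gives the result.
\item For $\psi_a\phi_h=\phi_h$, we have $\psi_a\Phi^g_S\in\{\psi_1,\psi_g\}$. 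Every unit is a left identity for singular elements ($\psi_x\phi_t=\phi_t$), so $(\psi_a\Phi^g_S)(\phi_hS)=\phi_hS$, as required.
\item For $\phi_h\psi_a=\phi_{h^a}$, write $\phi_hS=\phi_k$. By Remark~\ref{rem:conjendsn}, $\phi_{h^a}$ lies in the conjugacy class $\phi_h\au(\sn)$. Condition (4) then gives $\phi_{h^a}S=\phi_k$. On the other side, $(\phi_hS)(\psi_a\Phi^g_S)$ is either $\phi_k\psi_1=\phi_k$ or $\phi_k\psi_g=\phi_{k^g}$. This equals $\phi_k$ by condition (5).
\end{enumerate}

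\emph{Every such endomorphism has the form $\Phi^g_S$.} Let $\Phi$ have $|\au(\sn)\Phi|=2$ and $\rank(\Phi)\geq4$. By Lemma~\ref{lem:image_of_AutSn}, $\au(\sn)\Phi=\{\psi_g,\psi_1\}$ for some $g$ of order two, with $\psi_s\Phi=\psi_g$ for odd $s$ and $\psi_s\Phi=\psi_1$ for even $s$. This is the unit part of Definition~\ref{def:Phi}. Lemma~\ref{lem:partpres} shows that singular elements are mapped to singular elements. So I set $S=\Phi|_{\sen(\sn)}$, regarded as a map of $\sen(\sn)$, and check conditions (1)--(5).
\begin{itemize}
\item Conditions (1)--(3) are parts (ii)--(iv) of Lemma~\ref{lem:partpres}.
\item For condition (4), the conjugacy class of $\phi_k$ is $\phi_k\au(\sn)$ by Remark~\ref{rem:conjendsn}. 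Part (v) of Lemma~\ref{lem:partpres} says this class lies inside one kernel class. Hence every kernel class of $S$ is a union of conjugacy classes.
\item Condition (5) is the final assertion of Lemma~\ref{lem:partpres}, namely $g\in C(h)$ for all $\phi_h\in\im(\Phi)$.
\end{itemize}
Therefore $\Phi=\Phi^g_S$.

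I do not expect a real obstacle here. The only care needed is in case (d), where the right action of $\psi_g$ must fix the image; this is precisely why condition (5) is in the definition. Beyond that, one should note that Definition~\ref{def:Phi} allows $\rank(\Phi^g_S)\le 3$. This is harmless, because the theorem only claims that each $\Phi^g_S$ is an endomorphism, not that it has rank at least $4$.
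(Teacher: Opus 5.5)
Your proof is correct and follows essentially the same route as the paper. It uses the same split of products (units with units, singular with singular via Proposition~\ref{prop:SingPrev}, units acting as left identities, and conditions (4)--(5) for $\phi_h\psi_a$), and reads off the converse from Lemmas~\ref{lem:image_of_AutSn} and~\ref{lem:partpres}.

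One small correction to your closing aside: Definition~\ref{def:Phi} does not in fact allow rank at most $3$. Since $g\neq 1$ and conditions (1)--(3) force the image to contain $\psi_1$, $\psi_g$, $\phi_1$ and some element of $E$, every $\Phi^g_S$ has rank at least $4$; this does not affect your argument.
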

\begin{proof}
    Given a map $\Phi^g_S$ as above, properties $(i)$-$(iii)$ of the map $S$ mean that Proposition~\ref{prop:SingPrev} applies, and we immediately get that the composition of elements in $\sen(\sn)$ is preserved. Furthermore, since $g$ has order $2$, it is clear that composition within $\au(\sn)$ are also preserved. Since elements of $\au(\sn)$ are left identities for those in $\sen(\sn)$, we get that all compositions of the form $\psi_p\phi_q$ are preserved by $\Phi^g_S$. For compositions of the form $\phi_q\psi_p$, note that by property $(iv)$ we have $(\phi_q\psi_p)\Phi=(\phi_{q^p})\Phi=\phi_q\Phi$. The fact that these compositions are also preserved by $\Phi^g_S$ the relies on the fact (guaranteed by property $(v)$) that $g\in C(h)$ where $\phi_h=\phi_q S$. Thus, $\Phi^g_S$ is an endomorphism of $\en(\sn)$.

    Now, let $\Phi$ be an endomorphism of $\en(\sn)$ such that $|\au(\sn)\Phi|=2$ and $\rank(\Phi)\geq 4$. Then, by Lemma~\ref{lem:image_of_AutSn}, we have that $\au(\sn)\Phi=\{\psi_g,\psi_1\}$ for some $g\neq 1$ of order $2$. By Lemma~\ref{lem:partpres} we have that $E\Phi\subseteq E$, $A\Phi\subseteq A\cup\{\phi_1\}$, and $\phi_1\Phi=\phi_1$. Finally, by Lemma~\ref{lem:partpres}, we have that $g$ is in $C(k)$ for all $\phi_k\in\im(S)$ and that $|(\phi_k\au(\sn))\Phi|=1$ so that every conjugacy class in $\sen(\sn)$ is contained in a kernel class of $\Phi$. Thus, by setting $\phi_k S=\phi_k\Phi$, we get that $S$ satisfies all the conditions of Definition~\ref{def:Phi} so that $\Phi=\Phi^g_S$.
\end{proof}
\subsection{Endomorphisms of full rank on the units}\label{subsec:injectonunit}
We now consider the last case of Lemma~\ref{lem:image_of_AutSn}, which is when an endomorphism $\Delta\in\en_2(\sn)$ restricts to a bijection of $\au(\sn)$. This means in particular that there exists $h\in\sn$ such that $\psi_s\Delta=\psi_s\Psi_h$ for all $\psi_s\in\au(\sn)$. If we then let $\Delta':=\Delta\Psi_{h^{-1}}$, we have that $\Delta'$ is an endomorphism of $\en(\sn)$ that restricts to the identity on $\au(\sn)$.

Conversely, we can recover $\Delta$ as the product $\Delta'\Psi_h$. That is, any endomorphism of $\en(\sn)$ that acts bijectively on $\au(\sn)$ is the product of an endomorphism that restricts to the identity on $\au(\sn)$, together with some automorphism $\Psi_h$. Thus, we focus our attention on endomorphisms $\Delta$ that restrict to the identity map on $\au(\sn)$. Our aim in this section is to prove Theorem \ref{thm:endo-identityOnUnits}, which exactly characterises all such endomorphisms. To aid the proof, we first establish several small lemmas.
\begin{lemma}\label{lem:inj-SEndKernel}
	Let $\Delta$ be an endomorphism of $\en(\sn)$ such that $\Delta$ restricts to the identity map on $\au(\sn)$. If $\sen(\sn)$ is a kernel class of $\Delta$, then $\phi_g\Delta=\phi_1$ for all $\phi_g\in\sen(\sn)$.
\end{lemma}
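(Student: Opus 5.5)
Since $\sen(\sn)$ is a single kernel class of $\Delta$, all singular elements share a common image, which must be $\phi_1\Delta$. My plan is to identify this common value $\varepsilon:=\phi_1\Delta$ and show $\varepsilon=\phi_1$. By Lemma~\ref{lem:imageidentityzero}, $\varepsilon$ is an idempotent, so $\varepsilon\in\{\psi_1\}\cup E\cup\{\phi_1\}$. I will rule out the first two possibilities in turn.

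To exclude $\varepsilon=\psi_1$, I use the first option of Lemma~\ref{ImPhi1}: if $\phi_1\Delta=\psi_1$ then $\Delta=\Gamma_{\psi_1}$ is constant. A constant map cannot restrict to the identity on $\au(\sn)$, because $\psi_g\Delta=\psi_g\neq\psi_1$ for any $g\neq 1$.

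To exclude $\varepsilon=\phi_k\in E$, I use the invariance of the common value under conjugation by units. Choose $h\in\sn$ with $k^h\neq k$; this is possible since $k\neq 1$ and $\sn$ is centreless. Then
\[\phi_{k}=\phi_1\Delta=(\phi_1\psi_h)\Delta=(\phi_1\Delta)(\psi_h\Delta)=\phi_k\psi_h=\phi_{k^h}.\]
This forces $k=k^h$, a contradiction. The same computation works for any singular element in place of $\phi_1$, since $\phi_g\psi_h$ is again singular and hence lies in the same kernel class. The computation applies equally to any $\phi_k\in E\cup A$, so the only idempotent fixed under conjugation by all of $\au(\sn)$ in this way is $\phi_1$ (or $\psi_1$, which is already excluded).

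Hence $\varepsilon=\phi_1$, and so $\phi_g\Delta=\phi_1$ for all $\phi_g\in\sen(\sn)$. None of these steps is a real obstacle. The only point needing care is the justification that $\phi_1\psi_h$ lies in $\sen(\sn)$ and that $\psi_h\Delta=\psi_h$, both of which follow immediately from the hypotheses. The argument uses only the invariance of the common value under conjugation by the units, which $\Delta$ fixes.
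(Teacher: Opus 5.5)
Your proof is correct and takes essentially the same approach as the paper. Both identify the common value $\phi_1\Delta$ as an idempotent, exclude $\psi_1$ via Lemma~\ref{ImPhi1}, and then use $\phi_k\psi_h=(\phi_1\Delta)(\psi_h\Delta)=(\phi_1\psi_h)\Delta=\phi_k$ for all $h\in\sn$, together with $\sn$ being centreless, to force $k=1$.
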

\begin{proof}
Since $\sen(\sn)$ is a kernel class, the map $\Delta$ must send all singular endomorphisms to $\phi_1\Delta$. Moreover, by Lemma~\ref{ImPhi1}, we have that $\phi_1\Delta=\phi_k$ for some idempotent $\phi_k\in E\cup\{\phi_1\}$.
However, $\im(\Delta)$ is a subsemigroup of $\en(\sn)$ which contains $\au(\sn)$ so we must have that $\phi_{k^h}=\phi_k\psi_h=\phi_k$ for all $h\in\sn$. Thus (by definition) we see that $k^h=k$ for all $h\in\sn$, but since $\sn$ is centreless this forces $\phi_1\Delta=\phi_1$, as required.
\end{proof}
Having eliminated the case where we have a single kernel class outside of $\au(\sn)$, we now determine the remaining possible kernel classes that $\Delta$ can have.
\begin{lemma}\label{lem:injprespart}
	Let $\Delta$ be an endomorphism of $\en(\sn)$ such that $\Delta$ restricts to the identity map on $\au(\sn)$. If $\sen(\sn)$ is not a kernel class of $\Delta$, then $\Delta$ must satisfy the following conditions:
\begin{enumerate}[(i)]
	\item $E\Delta\subseteq E$,
	\item $A\Delta\subseteq A\cup\{\phi_1\}$,
	\item $\phi_1\Delta=\phi_1$.
\end{enumerate}
\end{lemma}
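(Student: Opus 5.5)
We will derive the three conditions from the earlier lemmas on how an endomorphism can fail to preserve the partition, using Lemma~\ref{lem:inj-SEndKernel} and the rank of $\Delta$ to exclude the degenerate outcomes.

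Since $\Delta$ is the identity on $\au(\sn)$, its rank is at least $n!$. In particular $\Delta$ is not constant, and its image contains $\psi_1=\psi_1\Delta$.

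First we establish (iii), that $\phi_1\Delta=\phi_1$, by contradiction using Lemma~\ref{ImPhi1}. Case (i) of that lemma would make $\Delta$ the constant map $\Gamma_{\psi_1}$, which is impossible. Case (ii) would make $\au(\sn)\cup E$ a single kernel class, contradicting injectivity on $\au(\sn)$. Case (iii) would give $\sen(\sn)\Delta=\{\phi_k\}$, so $\sen(\sn)$ lies inside one kernel class. That class cannot also contain a unit: then some $\phi_g$ would map to a unit $\psi_s$, but $\phi_g\Delta=\phi_k$ is singular. Hence $\sen(\sn)$ would be exactly a kernel class, contrary to hypothesis. (Alternatively, Lemma~\ref{lem:inj-SEndKernel} forces $\phi_k=\phi_1$, again contradicting $\phi_1\Delta\ne\phi_1$.) So $\phi_1\Delta=\phi_1$.

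Next, (ii) follows at once from Lemma~\ref{Asubset}, since $A\Delta\not\subseteq A\cup\{\phi_1\}$ would force $\phi_1\Delta\neq\phi_1$.

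For (i), suppose $E\Delta\not\subseteq E$ and apply Lemma~\ref{lem:phi1}. Its case (ii) gives $\au(\sn)\Delta=\{\psi_1\}$, contradicting that $\Delta$ is the identity on units. Its case (i) gives $E\Delta=\sen(\sn)\Delta=\{\phi_1\}$. Then no singular element maps to a unit, so $\sen(\sn)$ is exactly a kernel class, contradicting the hypothesis. Hence $E\Delta\subseteq E$.

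The main point needing care is the observation, used twice, that ``$\sen(\sn)$ is contained in a kernel class'' upgrades to ``$\sen(\sn)$ is a kernel class''. This works because the singular image is a single singular element, while each unit maps to a unit. The rest is bookkeeping with the earlier lemmas.
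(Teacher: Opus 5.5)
Your proposal is correct and follows the same route as the paper: Lemma~\ref{ImPhi1} gives (iii), and then the contrapositives of Lemma~\ref{Asubset} and Lemma~\ref{lem:phi1} give (ii) and (i). You also spell out a step the paper leaves implicit: since units are sent to units, $\sen(\sn)$ lying inside a single kernel class with singular image means $\sen(\sn)$ is exactly a kernel class.
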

\begin{proof}
    We know by Lemma~\ref{ImPhi1} that if $\phi_1\Delta\neq\phi_1$ then either $\Delta$ cannot be injective on $\au(\sn)$ or $\sen(\sn)$ must be a kernel class of $\Delta$, which contradicts our assumptions. Hence $(iii)$ must hold. The other two conditions come from the contrapositive of Lemma~\ref{lem:phi1}, which gives us that $E\Delta\subseteq E$, and of Lemma~\ref{Asubset} which tells us that $A\Delta\subseteq A\cup\{\phi_1\}$.
\end{proof}
Having established the action on $\au(\sn)\cup\{\phi_1\}$, we now analyse how $\Delta$ acts on the remaining sets (namely, $E$ and $A$) in the partition of $\en(\sn)$.
\begin{lemma}\label{lem:injcentsame}
	Let $\Delta$ be an endomorphism of $\en(\sn)$ such that $\Delta$ restricts to the identity map on $\au(\sn)$ and let $\phi_g\in E\cup A$. If $k\in\sn$ is such that $\phi_g\Delta=\phi_k$, then $C(g)\leq C(k)$. Further, if $k\neq 1$ and $k$ is not a transposition, then $k=g$.
\end{lemma}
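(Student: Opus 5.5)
First I would show $C(g)\leq C(k)$. Take $h\in C(g)$. By Remark~\ref{rem:centraliserendsn}, $\psi_h\in C(\phi_g)$, that is, $\phi_g\psi_h=\psi_h\phi_g$. Apply $\Delta$ to both sides and use that $\Delta$ fixes every $\psi_h$. This gives $\phi_k\psi_h=\psi_h\phi_k$, so $\psi_h\in C(\phi_k)$. Remark~\ref{rem:centraliserendsn} then gives $h\in C(k)$. Equivalently, one can use the composition rule $\phi_t\psi_h=\phi_{t^h}$ directly: $\phi_{k^h}=(\phi_g\psi_h)\Delta=\phi_{g^h}\Delta=\phi_g\Delta=\phi_k$. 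Either way $k^h=k$, so $C(g)\leq C(k)$. This step is routine.

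For the second statement, suppose $k\neq 1$ and $k$ is not a transposition. Both $g$ and $k$ have order two, since $\phi_g\in E\cup A$ forces $g\neq 1$, and $\phi_k\neq\phi_1$. Lemma~\ref{lem:gktrans} then applies to the pair $(g,k)$. It shows that $g$ is a factor of $k$, and that $g=k$ as soon as $g$ and $k$ have the same parity. So the remaining task is to check that $g$ and $k$ have the same parity.

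The parity match is the main obstacle, and my plan is to handle it with Lemma~\ref{lem:injprespart}. Since $\phi_g\Delta=\phi_k\neq\phi_1$, the set $\sen(\sn)$ is not a kernel class of $\Delta$: if it were, Lemma~\ref{lem:inj-SEndKernel} would give $\phi_g\Delta=\phi_1$. Hence Lemma~\ref{lem:injprespart} applies, so $E\Delta\subseteq E$ and $A\Delta\subseteq A\cup\{\phi_1\}$.

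Now split on where $\phi_g$ lies.
\begin{enumerate}[(a)]
\item If $\phi_g\in E$, then $\phi_k\in E$, so $g$ and $k$ are both odd.
\item If $\phi_g\in A$, then $\phi_k\in A$ (it is not $\phi_1$ since $k\neq 1$), so $g$ and $k$ are both even.
\end{enumerate}
In either case the parities agree, and Lemma~\ref{lem:gktrans} yields $k=g$. The excluded cases ($k=1$ or $k$ a transposition) are exactly where Lemma~\ref{lem:gktrans} gives no information; they will be treated separately later, for instance via Lemma~\ref{lem:gh-subgpcent-hIsTransp-equalOrActFully}.
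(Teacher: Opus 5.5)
Your proof is correct and follows the paper's argument. You obtain $C(g)\leq C(k)$ by applying $\Delta$ to $\phi_g\psi_h=\phi_g$ for $h\in C(g)$, match parities via Lemma~\ref{lem:injprespart}, and conclude with Lemma~\ref{lem:gktrans}; your explicit check that $\sen(\sn)$ is not a kernel class (via Lemma~\ref{lem:inj-SEndKernel} and $k\neq 1$) supplies a hypothesis of Lemma~\ref{lem:injprespart} that the paper uses only tacitly.
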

\begin{proof}
Let $\phi_g\in E\cup A$ and suppose that $\phi_g\Delta=\phi_k$. Take $h\in C(g)$, so that $\phi_g\psi_h=\phi_g$. Then we have
	 \[\phi_{k^h}=\phi_k\psi_h=\phi_g\Delta\psi_h\Delta=(\phi_g\psi_h)\Delta=\phi_g\Delta=\phi_k,\]
which by definition forces $k^h=k$ that is, $h\in C(k)$. Using Lemma \ref{lem:injprespart}, we have that $g$ and $k$ have the same parity, and so Lemma \ref{lem:gktrans} applies to give $k=g$.
\end{proof}
\begin{lemma}\label{lem:Delta-identity-conjugacyClass}
	Let $\Delta$ be an endomorphism of $\en(\sn)$ such that $\Delta$ restricts to the identity map on $\au(\sn)$ and let $\phi_g\in E\cup A$. If $\phi_g\Delta\in\phi_g\au(\sn)$, then $\Delta$ restricts to the identity map on $\phi_g\au(\sn)$.
\end{lemma}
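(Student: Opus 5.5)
Since $\phi_g\Delta\in\phi_g\au(\sn)$, there is some $h\in\sn$ with $\phi_g\Delta=\phi_g\psi_h=\phi_{g^h}$. The plan is to show first that $\phi_g\Delta=\phi_g$, and then to propagate this along the conjugacy class using the fact that $\Delta$ fixes every unit.

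\textbf{Step 1: $\Delta$ fixes $\phi_g$.} By Lemma~\ref{lem:injcentsame}, $C(g)\leq C(g^h)$. The two sets have the same size because $g$ and $g^h$ are conjugate, so $C(g)=C(g^h)$. Since $g$ and $g^h$ are conjugate elements of order two, Lemma~\ref{lem:conjugateAndCentraliserImpliesEqual} gives $g=g^h$. Hence $\phi_g\Delta=\phi_g$. Alternatively, one can note that $\phi_g$ and $\phi_{g^h}$ lie in the same conjugacy class and apply Corollary~\ref{lem:SameCentPart2} via Remark~\ref{rem:centraliserendsn}. This step deliberately avoids the restriction in Lemma~\ref{lem:injcentsame} to $k$ not being a transposition: the cardinality argument handles transpositions as well.

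\textbf{Step 2: $\Delta$ fixes the whole class.} Take an arbitrary element of $\phi_g\au(\sn)$, which has the form $\phi_g\psi_s$ for some $s\in\sn$. Because $\Delta$ is an endomorphism that is the identity on $\au(\sn)$, we get
\[
(\phi_g\psi_s)\Delta=(\phi_g\Delta)(\psi_s\Delta)=\phi_g\psi_s .
\]
So $\Delta$ restricts to the identity map on $\phi_g\au(\sn)$.

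The only point needing care is the equality of centralisers in Step 1, which rests on the conjugacy of $g$ and $g^h$. Once that is in place, everything else is a direct computation.
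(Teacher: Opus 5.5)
Your proof is correct. It differs from the paper's only in how you obtain $C(g)=C(g^h)$.

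The paper first checks that $\Delta$ maps the finite class $\phi_g\au(\sn)$ onto itself, so that $\Delta$ permutes it. It then iterates $\Delta$ until the orbit of $\phi_g$ returns to $\phi_g$, applying the first part of Lemma~\ref{lem:injcentsame} at each step. This gives a closed chain $C(g)\leq C(k)\leq C(k_2)\leq\cdots\leq C(k_m)\leq C(g)$, which forces equality.

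You instead combine the single inclusion $C(g)\leq C(g^h)$ with $C(g^h)=h^{-1}C(g)h$. The two subgroups then have the same finite order and must coincide. Your route is shorter and skips the surjectivity and permutation step entirely. The paper's route avoids comparing orders of centralisers, but at the cost of that extra bookkeeping.

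From that point the two arguments finish identically: via Lemma~\ref{lem:conjugateAndCentraliserImpliesEqual} (equivalently Corollary~\ref{lem:SameCentPart2}) and the computation $(\phi_g\psi_s)\Delta=(\phi_g\Delta)\psi_s$. You are right that only the unconditional first part of Lemma~\ref{lem:injcentsame} is needed, so transpositions cause no difficulty.
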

\begin{proof}
Since $\phi_g\Delta$ is in $\phi_g\au(\sn)$ we have that $\phi_g\Delta=\phi_k$ where $k={g^s}$ for some $s\in\sn$. As $\Delta$ restricts to the identity map on $\au(\sn)$, for all $t\in\sn$ we also get that $\phi_{g^t}=\phi_{g^s}\psi_{s^{-1}t}=(\phi_g\psi_{s^{-1}t})\Delta$, so that $(\phi_g\au(\sn))\Delta=\phi_g\au(\sn)$. Now consider the sequence of elements $\phi_g\Delta^i$. By our previous observation, each lies in the (finite) conjugacy class $\phi_g\au(\sn)$.
Thus $\phi_g\Delta^m=\phi_g$ for some $m$. Writing $\phi_g\Delta^i=\phi_{k_i}$, Lemma~\ref{lem:injcentsame} applies to give:
	\[C(g)\leq C(k)\leq C(k_2)\leq\cdots\leq C(k_m)\leq C(g),\]
which forces $C(g)=C(k)$. Since $g$ and $k$ are conjugate elements of $\sn$ of order two, we get that $g=k$ by Corollary~\ref{lem:SameCentPart2}. Thus, $\Delta$ restricts to the identity map on $\phi_g\au(\sn)$ as required.
\end{proof}
We next show that each conjugacy class of $A$ is either fixed by $\Delta$ or is sent to $\phi_1$.
\begin{lemma}\label{lem:deltaonA}
	Let $\Delta$ be an endomorphism of $\en(\sn)$ such that $\Delta$ restricts to the identity map on $\au(\sn)$. If $\sen(\sn)$ is not a kernel class of $\Delta$, then for each $\phi_g\in A$ one of the following happens:
	\begin{enumerate}[(i)]
		\item $\phi_h\Delta=\phi_h$ for all $\phi_h\in\phi_g\au(\sn)$; or
		\item $\phi_h\Delta=\phi_1$ for all $\phi_h\in\phi_g\au(\sn)$.
	\end{enumerate}
\end{lemma}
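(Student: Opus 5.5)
Fix $\phi_g\in A$. Since $\sen(\sn)$ is not a kernel class, Lemma~\ref{lem:injprespart} gives $A\Delta\subseteq A\cup\{\phi_1\}$ and $\phi_1\Delta=\phi_1$. So $\phi_g\Delta=\phi_k$ where either $k=1$ or $k\in\an$ is of order two with $k\neq 1$. In particular $k$ is never a transposition, since transpositions are odd.

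We split into these two cases and show that each whole conjugacy class $\phi_g\au(\sn)$ then behaves uniformly.

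\emph{Case $k\neq 1$.} Lemma~\ref{lem:injcentsame} applies directly, because $k$ is neither $1$ nor a transposition. It gives $k=g$, that is, $\phi_g\Delta=\phi_g\in\phi_g\au(\sn)$. Lemma~\ref{lem:Delta-identity-conjugacyClass} then shows that $\Delta$ restricts to the identity on $\phi_g\au(\sn)$, which is alternative (i).

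\emph{Case $k=1$.} Take any $\phi_h\in\phi_g\au(\sn)$, say $\phi_h=\phi_{g^s}=\phi_g\psi_s$. Since $\Delta$ fixes $\psi_s$, we get
\[\phi_h\Delta=(\phi_g\psi_s)\Delta=\phi_g\Delta\,\psi_s\Delta=\phi_1\psi_s=\phi_{1^s}=\phi_1.\]
So the whole class is sent to $\phi_1$, which is alternative (ii).

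The only point needing care is the first case. There we must check that the hypotheses of Lemma~\ref{lem:injcentsame} hold, and this follows from the parity restriction $A\Delta\subseteq A\cup\{\phi_1\}$ supplied by Lemma~\ref{lem:injprespart}. Both cases are otherwise immediate.
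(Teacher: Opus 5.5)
Your proof is correct and follows the paper's argument essentially verbatim. Lemma~\ref{lem:injprespart} places $\phi_g\Delta$ in $A\cup\{\phi_1\}$. The case $\phi_g\Delta\in A$ is settled by Lemma~\ref{lem:injcentsame}, since an even $k\neq 1$ cannot be a transposition, followed by Lemma~\ref{lem:Delta-identity-conjugacyClass}. The case $\phi_g\Delta=\phi_1$ is settled by the direct computation $(\phi_g\psi_s)\Delta=\phi_1\psi_s=\phi_1$. As a minor simplification, once you have $k=g$ you could skip Lemma~\ref{lem:Delta-identity-conjugacyClass} entirely, because $\phi_{g^s}\Delta=(\phi_g\Delta)\psi_s=\phi_{g^s}$ directly.
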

\begin{proof}
    Let $\phi_g\in A$. By Lemma~\ref{lem:injprespart} we know that $\phi_g\Delta\in A\cup\{\phi_1\}$. If $\phi_g\Delta=\phi_k\in A$, then $k$ is not a transposition, which forces $k=g$ by Lemma~\ref{lem:injcentsame}. By Lemma~\ref{lem:Delta-identity-conjugacyClass}, we then get that $\Delta$ restricts to the identity on $\phi_g\au(\sn)$.

    On the other hand, if $\phi_g\Delta=\phi_1$, then for each $\phi_{g^s}\in\phi_g\au(\sn)$ we get that $\phi_{g^s}\Delta=(\phi_g\psi_s)\Delta=(\phi_g\Delta)\psi_s=\phi_1$ so that $(\phi_g\au(\sn))\Delta=\{\phi_1\}$ in this case.
\end{proof}
Since elements of the form $\phi_k$ with $k$ a transposition only appear in $E$, the situation for this set is slightly different, and the following lemma gives us that either $\Delta$ fixes all of its elements, or it only moves those with a fix of size two.
\begin{lemma}\label{lem:inj-DeltaonE}
	Let $\Delta$ be an endomorphism of $\en(\sn)$ such that $\Delta$ restricts to the identity map on $\au(\sn)$. If $\sen(\sn)$ is not a kernel class of $\Delta$, then for each $\phi_g\in E$ we either have
	\begin{enumerate}[(i)]
		\item $\phi_g\Delta=\phi_g$; or
		\item \label{item:lem:DeltaonE-gto(ij)} $\phi_g\Delta=\phi_{(i\;j)}$, where $\{i,j\}=\Fix(g)$, and this can only occur whenever $n$ is divisible by $4$.
	\end{enumerate}

    Moreover, $\phi_g\Delta=\phi_{(i\;j)}$ if and only if $\phi_h\Delta\neq\phi_h$ for some (all) $\phi_h\in E$ such that $|\Fix(h)|=2$.
\end{lemma}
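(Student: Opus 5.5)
Let $\phi_g\in E$ with $\phi_g\Delta=\phi_k$. By Lemma~\ref{lem:injprespart} we have $\phi_k\in E$, so $k$ is odd of order two. By Lemma~\ref{lem:injcentsame}, $C(g)\leq C(k)$, and if $k$ is not a transposition then $k=g$, which is case (i). So assume $k=(i\;j)$ is a transposition. Since $g$ and $k$ are both odd, Lemma~\ref{lem:gh-subgpcent-hIsTransp-equalOrActFully} applies with $h=k$. Either $g=k$ (again case (i)), or $g$ and $(i\;j)$ are disjoint with $\Fix(g(i\;j))=\emptyset$, which forces $4\mid n$. In the latter case every point outside $\supp(g)$ lies in $\{i,j\}$, and $i,j\notin\supp(g)$, so $\Fix(g)=\{i,j\}$. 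This gives case (ii).

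For the ``moreover'' statement, fix $n$ divisible by $4$. First I would note that the only way $\Delta$ can move an element of $E$ is via (ii). If $\phi_g\Delta=\phi_{(i\;j)}\neq\phi_g$ with $|\Fix(g)|=2$, then $\phi_g\Delta$ is not conjugate to $\phi_g$, since $g$ has length $(n-2)/2>1$ (as $n\geq 8$). Next I use the right zero band structure. For any $\phi_g,\phi_h\in E$ we have $\phi_h\phi_g=\phi_g$, but this only gives $\phi_h\Delta\,\phi_g\Delta=\phi_g\Delta$, which is automatic in $E$, so it does not tie different elements together. Instead I use conjugation, since $\Delta$ commutes with each $\psi_s$ on the relevant products: $\phi_{g^s}\Delta=(\phi_g\psi_s)\Delta=(\phi_g\Delta)\psi_s$. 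Hence if $\phi_g\Delta=\phi_{(i\;j)}$ with $\Fix(g)=\{i,j\}$, then $\phi_{g^s}\Delta=\phi_{(i\;j)^s}=\phi_{(is\;js)}$ and $\Fix(g^s)=\{is,js\}$. The conjugacy class of fixed-point-free-on-$n-2$-points involutions, that is, those $h\in E$ with $|\Fix(h)|=2$, is a single class (length $(n-2)/2$, which is odd since $4\mid n$). So either all of its members are moved or none is. This gives ``some'' $\Leftrightarrow$ ``all''.

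Finally, for the equivalence itself: if $\phi_g\Delta=\phi_{(i\;j)}$ then $g$ has $|\Fix(g)|=2$ and is moved, giving the forward direction with $h=g$. Conversely, if some such $\phi_h$ is moved, then all of them are, and each is sent to the transposition on its fixed pair by the first paragraph. I read the statement as quantifying over $\phi_g$ in that class, so the direction ``$\phi_h$ moved $\Rightarrow\phi_g\Delta=\phi_{(i\;j)}$'' needs $g$ to range over elements with $|\Fix(g)|=2$. The main subtlety is exactly this bookkeeping: making sure the case (ii) image really is $\phi_{\Fix(g)}$ (that is, $\{i,j\}=\Fix(g)$ and not merely a subset). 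That is settled by $\Fix(g(i\;j))=\emptyset$ from Lemma~\ref{lem:gh-subgpcent-hIsTransp-equalOrActFully}, together with the counting argument that then forces $n\equiv 0 \pmod 4$.
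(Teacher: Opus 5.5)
Your proof is correct and follows essentially the same route as the paper: Lemmas~\ref{lem:injprespart}, \ref{lem:injcentsame} and~\ref{lem:gh-subgpcent-hIsTransp-equalOrActFully} give the dichotomy, and then $\phi_{g^s}\Delta=(\phi_g\Delta)\psi_s$, together with the fact that the elements of $E$ with exactly two fixed points form a single conjugacy class, gives the ``some/all'' equivalence. The detour through the right zero band structure is unnecessary, as you note yourself, but it does no harm.
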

\begin{proof}
    By Lemma~\ref{lem:injprespart}, we have that for all $\phi_g\in E$ there exists $\phi_k\in E$ such that $\phi_g\Delta=\phi_k$. By Lemma~\ref{lem:injcentsame}, we have that $C(g)\leq C(k)$, and if $k$ is not a transposition, then $g=k$. On the other hand, if $k$ is a transposition, then since $C(g)\leq C(k)$  Lemma~\ref{lem:gh-subgpcent-hIsTransp-equalOrActFully} yields either $g=k$, or $n$ is divisible by $4$ and we also have that $g$ and $k$ are disjoint elements with $\Fix(gk)=\emptyset$. This means in particular that in the latter case $\supp(k)=\Fix(g)=[1,n]\setminus\supp(g)$, so that the transposition $k$ is uniquely determined by $g$.

    For the second part of the Lemma, notice that when $n$ is divisible by $4$, all elements of order two in $\sn$ which have a fix of size $2$ can be written as products of $\frac{n-2}{2}$ disjoint transpositions and are thus conjugate. Therefore, if we let $\Fix(g)=\{i,j\}$ and $\phi_h\in E$ is such that $\Fix(h)=\{p,q\}$, we have that $h=g^s$ for some $s\in\sn$. Moreover, $(is)h=igs=is$, which means that $is\in\{p,q\}$, and similarly for $js$. Hence, we must have that $(i\;j)^s=(p\;q)$. Now if we assume that $\phi_g\Delta=\phi_{(i\;j)}$, since $n\geq 7$ we get that
    	\[\phi_h\Delta=(\phi_g\psi_s)\Delta=\phi_g\Delta\psi_s\Delta=\phi_{(i\;j)}\psi_s=\phi_{(i\;j)^s}=\phi_{(p\;q)}\neq\phi_h.\]
    Similarly, if we know that $\phi_h\Delta\neq\phi_h$, then by the first part we must have that $\phi_h\Delta=\phi_{(p\;q)}$, and using a similar computation with $g=h^{s^{-1}}$, we get that $\phi_g\Delta=\phi_{(p\;q)^{s^{-1}}}=\phi_{(i\;j)}$, as required.
\end{proof}
We now have enough information to build all the endomorphisms of $\en(\sn)$ that restrict to the identity on $\au(\sn)$.
\begin{theorem}\label{thm:endo-identityOnUnits}
	Let $\Delta$ be an endomorphism of $\en(\sn)$ such that $\Delta$ restricts to the identity map on $\au(\sn)$. Then the map $\Delta$ must have one of the following form.
	\begin{enumerate}[(i)]
	\item \label{item:thm:inj-elmt-singkernel} When $\sen(\sn)$ is a kernel class of $\Delta$, then we have
		\[\Delta:\omega\mapsto\begin{cases}
		\psi_g	&\text{if }\omega=\psi_g,\\
		\phi_1	&\text{otherwise.}
	\end{cases}\]
	\item \label{item:thm:inj-elmt-Efixed} When $\sen(\sn)$ is not a kernel class of $\Delta$ and all the elements of $E$ are fixed, then
		\[\Delta:\omega\mapsto\begin{cases}
		\psi_g	&\text{if }\omega=\psi_g,\\
		\phi_g	&\text{if }\omega=\phi_g\in E\cup (A\setminus X),\\
		\phi_1	&\text{if }\omega=\phi_g\in X,
	\end{cases}\]
	where $X$ is a union of conjugacy classes in $A\cup\{\phi_1\}$ which always includes $\phi_1$.
	\item \label{item:thm:inj-elmt-Esplit} When $\sen(\sn)$ is not a kernel class of $\Delta$, $n$ is divisible by $4$ and some elements of $E$ are not fixed, then
		\[\Delta:\omega\mapsto\begin{cases}
		\psi_g			&\text{if }\omega=\psi_g,\\
		\phi_{(i\;j)}	&\text{if }\omega=\phi_g\in F\text{ and }\Fix(g)=\{i,j\},\\
		\phi_g			&\text{if }\omega=\phi_g\in (E\setminus F)\cup(A\setminus X),\\
		\phi_1			&\text{if }\omega=\phi_g\in X,
	\end{cases}\]
	where $F=\{\phi_g\in E\mid |\Fix(g)|=2\}$ and $X$ is a union of conjugacy classes in $A\cup\{\phi_1\}$ which always contains $\phi_1$.
	\end{enumerate}

    Moreover, for any set $X\subseteq A\cup \{\phi_1\}$ that contains $\phi_1$ and is a union of conjugacy classes, any map of the form \ref{item:thm:inj-elmt-Efixed} and~\ref{item:thm:inj-elmt-Esplit} (whenever $n$ is divisible by $4$) created from $X$ is an endomorphism of $\en(\sn)$ which restricts to the identity map on $\au(\sn)$.
\end{theorem}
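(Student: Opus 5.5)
The proof splits into necessity (every such $\Delta$ has one of the three forms) and sufficiency (every map of forms (ii) and (iii) is an endomorphism). Throughout, $\Delta$ restricts to the identity on $\au(\sn)$.

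\emph{Necessity.} If $\sen(\sn)$ is a kernel class of $\Delta$, Lemma~\ref{lem:inj-SEndKernel} gives $\phi_g\Delta=\phi_1$ for every singular $\phi_g$, which is form (i). Otherwise Lemma~\ref{lem:injprespart} gives $E\Delta\subseteq E$, $A\Delta\subseteq A\cup\{\phi_1\}$ and $\phi_1\Delta=\phi_1$.

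\begin{itemize}
\item \emph{Action on $A$.} By Lemma~\ref{lem:deltaonA}, each conjugacy class $\phi_g\au(\sn)\subseteq A$ is either fixed pointwise or sent entirely to $\phi_1$. Let $X$ be $\{\phi_1\}$ together with the union of the classes sent to $\phi_1$; this is a union of conjugacy classes containing $\phi_1$.
\item \emph{Action on $E$.} Lemma~\ref{lem:inj-DeltaonE} says each $\phi_g\in E$ is either fixed or sent to $\phi_{(i\;j)}$ with $\Fix(g)=\{i,j\}$. The second option can occur only when $4\mid n$, and it occurs for one element of $F$ if and only if it occurs for all of $F$.
\end{itemize}

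If nothing in $E$ moves, we have form (ii). If something moves, then all of $F$ moves and everything else in $E$ is fixed, which is form (iii).

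\emph{Sufficiency for form (ii).} Conditions (1)–(3) of Proposition~\ref{prop:SingPrev} hold, so products of two singular elements are preserved. Products inside $\au(\sn)$ are preserved because $\Delta$ is the identity there. Products $\psi_s\phi_h=\phi_h$ are preserved because $\psi_s\Delta=\psi_s$ is a left identity. For products $\phi_h\psi_s=\phi_{h^s}$:
\begin{itemize}
\item if $\phi_h$ is fixed, then so is $\phi_{h^s}$, since the fixed set is a union of conjugacy classes;
\item if $\phi_h\in X$, then $\phi_{h^s}\in X$ and both sides equal $\phi_1\psi_s=\phi_1$.
\end{itemize}

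\emph{Sufficiency for form (iii).} First check that when $4\mid n$, any $g$ with $|\Fix(g)|=2$ has $l(g)=(n-2)/2$ odd, so $F\subseteq E$ makes sense and $(i\;j)$ is odd, giving $F\Delta\subseteq E$. Proposition~\ref{prop:SingPrev} then again handles singular–singular products, and the unit cases are as before, except for $\phi_g\in F$ and the product $\phi_g\psi_s$. Here $\Fix(g^s)=\Fix(g)s=\{is,js\}$, so
\[(\phi_g\psi_s)\Delta=\phi_{g^s}\Delta=\phi_{(is\;js)}=\phi_{(i\;j)^s}=\phi_{(i\;j)}\psi_s=(\phi_g\Delta)(\psi_s\Delta),\]
as required.

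The main thing to get right is this equivariance for $F$, together with the parity check placing $(i\;j)$ in $E$. Proposition~\ref{prop:SingPrev} only needs the parity and zero conditions, which are immediate.
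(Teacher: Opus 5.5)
Your proposal is correct and follows the paper's proof essentially step for step. For necessity you use the same Lemmas~\ref{lem:inj-SEndKernel}, \ref{lem:injprespart}, \ref{lem:deltaonA} and~\ref{lem:inj-DeltaonE}; for sufficiency you use Proposition~\ref{prop:SingPrev} together with the equivariance computation $\phi_{g^s}\Delta=\phi_{(is\;js)}=\phi_{(i\;j)}\psi_s$. Your explicit check that elements of $F$ and the transposition $(i\;j)$ are odd when $4\mid n$ (so that $F\Delta\subseteq E$) spells out a detail the paper simply calls immediate.
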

\begin{proof}
    Clearly, since $\Delta$ restricts to the identity map on $\au(\sn)$ we have that $\psi_g\Delta=\psi_g$ for all $\psi_g\in\au(\sn)$. Additionally, if $\sen(\sn)$ is a kernel class of $\Delta$, then we are in the situation of Lemma~\ref{lem:inj-SEndKernel} and $\Delta$ is exactly described by part~\ref{item:thm:inj-elmt-singkernel}.

    So we now assume that $\sen(\sn)$ is not a kernel class of $\Delta$. By Lemma~\ref{lem:injprespart}, we know that $E\Delta\subseteq E$, $A\Delta\subseteq A\cup\{\phi_1\}$ and $\phi_1\Delta=\phi_1$. Let $X:=\{\phi_g\in\en(\sn)\mid\phi_g\Delta=\phi_1\}$, so that $X\subseteq A\cup\{\phi_1\}$ by definition. Moreover, by Lemma~\ref{lem:deltaonA} we have that $\phi_g\in X$ if and only if $\phi_g\au(\sn)\subseteq X$, so that $X$ is a union of conjugacy classes. Additionally, for all $\phi_g\in A\setminus X$, the same lemma gives us that $\phi_g\Delta=\phi_g$. Hence, the action of $\Delta$ on the elements of $A\cup\{\phi_1\}$ corresponds to the (same) description given on the elements of this set in parts~\ref{item:thm:inj-elmt-Efixed} and~\ref{item:thm:inj-elmt-Esplit}.

    All that is left to do is to understand how $\Delta$ acts on $E$. Suppose first that $n$ is not divisible by $4$. Then, by Lemma~\ref{lem:inj-DeltaonE}, we must have that $\phi_g\Delta=\phi_g$ for all $\phi_g\in E$, and we get the map described in part~\ref{item:thm:inj-elmt-Efixed}. On the other hand, if $n$ is divisible by $4$, then the set $F:=\{\phi_g\in E\mid |\Fix(g)|=2\}$ is non-empty. By Lemma~\ref{lem:inj-DeltaonE} once again, either $\phi_g\Delta=\phi_g$ for all $\phi_g\in F$ (and thus also for the whole of $E$), and we get the map described in part~\ref{item:thm:inj-elmt-Efixed}, or there exists $\phi_h\in F$ such that $\phi_h\Delta \neq \phi_h$. In this latter case, we also know that for all $\phi_g\in F$ we must have $\phi_g\Delta\neq\phi_g$ and $\phi_g\Delta=\phi_{(i\;j)}$ where $\{i,j\}=\Fix(g)$. Since $\Delta$ also fixes all elements of $E\setminus F$, it follows that $\Delta$ is then as given in part~\ref{item:thm:inj-elmt-Esplit}.

    Conversely, let $X \subseteq A\cup\{\phi_1\}$ be a set containing $\phi_1$ and consisting of a union of conjugacy classes. Let $\Delta$ be a map described in parts~\ref{item:thm:inj-elmt-Efixed} or~\ref{item:thm:inj-elmt-Esplit} by using $X$. We aim to show that this is an endomorphism. It is immediate to see that $\au(\sn)\Delta=\au(\sn)$, $E\Delta\subseteq E$, $A\Delta\subseteq A\cup\{\phi_1\}$ and $\phi_1\Delta=\phi_1$. Therefore, by Proposition~\ref{prop:SingPrev}, we have that $(\phi_g\phi_h)\Delta=\phi_g\Delta\phi_h\Delta$ for all $\phi_g,\phi_h\in\sen(\sn)$. Moreover, all products only involving elements of $\au(\sn)$ or of the form $\psi_s\phi_g$ are trivially verified, while those of the form $\phi_g\psi_s$ can be easily checked whenever $\phi_g\Delta=\phi_g$ or $\phi_g\Delta=\phi_1$. The only remaining product to verify is when $\Delta$ is of type~\ref{item:thm:inj-elmt-Esplit} in the case where $\phi_g\in F$ and $\phi_g\Delta=\phi_{(i\;j)}$ with $\{i,j\}=\Fix(g)$. So let $\psi_s\in\au(\sn)$ and define $h=g^s$. Then $h$ is such that $\Fix(h)=\{is,js\}$ and by definition of our map $\Delta$ we get that
    \[(\phi_g\psi_s)\Delta=\phi_h\Delta=\phi_{(is\;js)}=\phi_{(i\;j)^s}=\phi_{(i\;j)}\psi_s=\phi_g\Delta\psi_s\Delta.\]
Therefore, $\Delta$ is an endomorphism.
\end{proof}
Combining the previous theorem with the remarks at the beginning of this section, we immediately get the following corollary.
\begin{corollary}
    Let $\Delta$ be an endomorphism of $\en(\sn)$ such that $\Delta$ restricts to a bijection on $\au(\sn)$. Then $\Delta=\Delta'\Psi_g$ for some $g\in\sn$ where $\Delta'$ is one of the map described in Theorem~\ref{thm:endo-identityOnUnits}.
\end{corollary}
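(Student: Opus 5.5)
The corollary follows from the reduction at the start of Subsection~\ref{subsec:injectonunit} combined with Theorem~\ref{thm:endo-identityOnUnits}.

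Let $\Delta$ restrict to a bijection on $\au(\sn)$. Then $\Delta|_{\au(\sn)}$ is an injective endomorphism of the finite group $\au(\sn)$, hence an automorphism of it. Since $\au(\sn)\cong\sn$ is complete for $n\geq 7$, this automorphism is inner. So there is $g\in\sn$ with $\psi_s\Delta=\psi_g^{-1}\psi_s\psi_g=\psi_s\Psi_g$ for every $s\in\sn$.

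Set $\Delta'=\Delta\Psi_{g^{-1}}$. By Theorem~\ref{tm:StructureOfAutEndSn}, $\Psi_{g^{-1}}$ is an automorphism of $\en(\sn)$, so $\Delta'\in\en_2(\sn)$ as a composite of endomorphisms. For each $s$ we have
\[\psi_s\Delta'=(\psi_s\Psi_g)\Psi_{g^{-1}}=\psi_s,\]
because $\Psi_g\Psi_{g^{-1}}=\Psi_{gg^{-1}}$ is the identity map. Thus $\Delta'$ restricts to the identity on $\au(\sn)$.

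Theorem~\ref{thm:endo-identityOnUnits} now applies to $\Delta'$: either $\sen(\sn)$ is a kernel class of $\Delta'$ or it is not, and in each case $\Delta'$ is one of the maps \ref{item:thm:inj-elmt-singkernel}, \ref{item:thm:inj-elmt-Efixed} or \ref{item:thm:inj-elmt-Esplit}. Finally,
\[\Delta=\Delta'\Psi_g,\]
since $\Delta'\Psi_g=\Delta\Psi_{g^{-1}}\Psi_g$ and $\Psi_{g^{-1}}\Psi_g$ is the identity, which is the required decomposition.

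The only point needing care is the composition convention. Maps act on the right and compose left to right, and $\Psi_a\Psi_b=\Psi_{ab}$ because $\psi_b^{-1}\psi_a^{-1}\omega\psi_a\psi_b=\psi_{ab}^{-1}\omega\psi_{ab}$. With this, both cancellations above hold as written.
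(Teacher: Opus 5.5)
Your proof is correct and takes the same route as the paper. The paper obtains the corollary by combining Theorem~\ref{thm:endo-identityOnUnits} with the reduction at the start of Subsection~\ref{subsec:injectonunit}, where $\Delta':=\Delta\Psi_{h^{-1}}$ fixes $\au(\sn)$ pointwise and $\Delta=\Delta'\Psi_h$ is recovered; you additionally justify that the bijection on $\au(\sn)$ is conjugation by some $\psi_g$, using completeness of $\au(\sn)\cong\sn$, and you check that $\Psi_a\Psi_b=\Psi_{ab}$, both of which the paper leaves implicit.
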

\subsection{Summary}
We conclude this section by providing a complete list of the elements of $\en_2(\sn)$. This notation will be heavily used in the next section where we determine the automorphism group of $\en_2(\sn)$.

In all that follows  $\omega\in\en(\sn)$. We list the automorphisms of $\en(\sn)$ followed by the singular endomorphisms, grouped by common features.
\begin{enumerate}[(i)]
	\item The automorphisms:
		\[\Psi_g:\omega\mapsto\psi_g^{-1}\omega\psi_g\text{ where }g\in\sn.\]
	\item The endomorphisms that are bijective on $\au(\sn)$ and collapse two conjugacy classes of $E$ into one (these exist precisely when $n$ is divisible by $4$):
		\[\Xi^g_X:\omega\mapsto\begin{cases}
			\phi_{(i\;j)^g}			&\text{if }\omega=\phi_t\in E\text{ and }\Fix(t)=\{i,j\},\\
			\phi_1					&\text{if }\omega\in X,\\
			\psi_g^{-1}\omega\psi_g	&\text{otherwise},
		\end{cases}\]
		where $g\in\sn$ and $X$ is either the empty set or is a union of conjugacy classes in $A\cup\{\phi_1\}$ that must always contain $\phi_1$.
	\item The endomorphisms that are a bijection on $\au(\sn)$ and either a bijection on $E$ or send $E$ to $\{\phi_1\}$:
		\[\Delta^g_X:\omega\mapsto\begin{cases}
			\phi_1					&\text{if }\omega\in X,\\
			\psi_g^{-1}\omega\psi_g	&\text{otherwise},
		\end{cases}\]
		where $g$ is in $\sn$ and $X$ is either the set $\sen(\sn)$ or is a non-empty union of conjugacy classes in $A\cup\{\phi_1\}$ that always contains $\phi_1$ and at least one conjugacy class of $A$.
	\item The endomorphisms that restrict to singular endomorphisms of $\au(\sn)$:
		\[\Phi_S^g:\omega\mapsto\begin{cases}
        \psi_g	&\text{if }\omega=\psi_h\text{ for }h\in\sn\setminus\an,\\
        \psi_1	&\text{if }\omega=\psi_h\text{ for }h\in\an,\\
        \phi_kS	&\text{if }\omega=\phi_k\in\sen(\sn),
    \end{cases}\]
	where $g$ is an element of $\sn$ such that $g^2=1$ and $S$ is a map satisfying one of the following:
    \begin{enumerate}
    	\item\label{PhiMapZ} $\im(S)=\{\phi_t\}$ for some $\phi_t$ in $E\cup\{\phi_1\}$ such that $t^g=t$; or
		\item $ES\subseteq E$, $AS\subseteq A\cup\{\phi_1\}$, $\phi_1S=\phi_1$, the kernel classes of $S$ are unions of conjugacy classes, and for all $\phi_t$ in $\im(S)$ we have $t^g=t$.
    \end{enumerate}
    In the special case of \ref{PhiMapZ} where the image contains only $\phi_1$, we will denote the map $S$ by $Z$. 
    \item The rank $2$ endomorphisms with kernel class $\au(\sn)\cup E$ mapping to the identity:
    	\[\Omega_{1,g}:\omega\mapsto\begin{cases}
                \psi_1&\text{if }\omega\in\au(\sn)\cup E,\\
                \phi_g&\text{if }\omega\in A\cup\{\phi_1\},
            \end{cases}\]
       	where $\phi_g$ is in $E\cup\{\phi_1\}$.
   	\item The rank $2$ endomorphisms with kernel class $\au(\sn)\cup E$ mapping to an idempotent other than the identity:
		\[\Omega_{2,g}:\omega\mapsto\begin{cases}
			\phi_g	&\text{if }\omega\in\au(\sn)\cup E,\\
			\phi_1	&\text{if }\omega\in A\cup\{\phi_1\},
			\end{cases}\]
		where $\phi_g$ is in $E$.
	\item The rank $2$ endomorphisms with kernel class $\sen(\sn)$ mapping to $\phi_1$:
		\[\Omega_{3,g}:\omega\mapsto\begin{cases}
		\phi_g	&\text{if }\omega\in\au(\sn),\\
		\phi_1	&\text{if }\omega\in\sen(\sn),
	\end{cases}\]
	where $\phi_g$ is in $E$.
   \item The constant maps to an idempotent $\varepsilon$, denoted $\Gamma_\varepsilon$.
\end{enumerate}
\begin{remark}
In the above table, rank $2$ endomorphisms with kernel class $\sen(\sn)$ mapping to an idempotent $\phi_g$ are the elements of the form $\Phi^1_S$ where $S$ maps $\sen(\sn)$ to $\phi_g$. One can  easily verify using the previous results of this section that no duplicate appears in this summary, and that any choice of $g\in\sn$, $X\subseteq \sen(\sn)$ or map $S:\sen(\sn)\to\sen(\sn)$ satisfying the conditions given in each case gives rise to an endomorphism.
\end{remark}
\begin{remark}\label{rem:Gamma_min_ideal}
It is clear that the set $\Gamma:=\{\Gamma_\varepsilon\mid\varepsilon\in\en(\sn)\text{ is idempotent}\}$ is the minimal ideal of $\en_2(\sn)$ as it contains all rank 1 maps. Moreover, from Example~\ref{ex:ch}, we get that this is also a characteristic subset of $\en_2(\sn)$.
\end{remark}
\section{Endomorphisms of \texorpdfstring{$\en(\sn)$}{} - the monoid \texorpdfstring{$\en_2(\sn)$}{}}
\label{sec:endomonoid}
The following is the composition table of $\en_2(\sn)$. We abuse notation on the $\Omega$ elements and (if $S$ is a map) write $\Omega_{i,gS}$ for $\Omega_{i,h}$ where $\phi_gS=\phi_h$. Additionally, we define $S(g)$ to be the constant map of $\sen(\sn)$ that has image $\{\phi_g\}$ and in the case where $g=1$ we put $Z=S(1)$. We then define $X^\flat $ and $X^\sharp$ to be the maps ${\Xi_X\vert}_{\sen(\sn)}$ and ${\Delta_X\vert}_{\sen(\sn)}$ respectively. Again abusing notation, in the case of $\Delta_X^g$ where $X=\sen(\sn)$, so that $X^\sharp=Z$, we will denote this element by $\Delta^g_Z$. These conventions reflect the fact that $\Delta^g_Z$ and $\Phi^g_Z$ restrict to the same map of $\sen(\sn)$.
	\begin{table}[H]
	\[\setlength{\extrarowheight}{2mm}
    \begin{array}{l|*{9}{l}}
        		  & \Psi_g   			 & \Xi^g_X			    	   	& \Delta^g_X   			 	  & \Phi^g_S 			   & \Omega_{1,g} 				& \Omega_{2,g}			& \Omega_{3,g} 	& \Gamma_\omega\\
    \hline
    \Psi_h  	  & \Psi_{hg} 		     & \Xi^{hg}_X		    	   	& \Delta^{hg}_X 			  & \Phi^g_S	 		   & \Omega_{1,g} 				& \Omega_{2,g}			& \Omega_{3,g}	& \Gamma_\omega\\
    \Xi^h_Y  	  & \Xi^{hg}_Y		     & \Xi^{hg}_{X\cup Y}		   	& (\ref{XiDelta})		 	  & \Phi^g_{Y^\flat S}	   & \Omega_{1,g} 				& \Omega_{2,g}			& \Omega_{3,g}	& \Gamma_\omega\\
    \Delta^h_Y 	  & \Delta^{hg}_Y 	     & (\ref{DeltaXi})			   	& \Delta^{hg}_{Y\cup X}	  	  & \Phi^g_{Y^\sharp S}    & (\ref{DeltaOmega1}) 		& (\ref{DeltaOmega2})	& \Omega_{3,g}	& \Gamma_\omega\\
    \Phi^h_{T} 	  & \Phi^{h^g}_{T\Psi_g} & \Phi^{h^g}_{TX^\flat \Psi_g}	& \Phi^{h^g}_{TX^\sharp\Psi_g}& (\ref{PhiPhi})		   & (\ref{PhiOmega1})			& (\ref{PhiOmega2})		& \Omega_{3,g}	& \Gamma_\omega\\
    \Omega_{1,h}  & \Omega_{1,h^g}	     & (\ref{Omega1Xi})		  	   	& \Omega_{1,h^g}			  & \Omega_{1,hS}		   & (\ref{Omega1Omega1})		& (\ref{Omega1Omega2})	& \Omega_{2,g}	& \Gamma_\omega\\
    \Omega_{2,h}  & \Omega_{2,h^g}	     & (\ref{Omega2Xi})		  	   	& (\ref{Omega2Delta})		  & (\ref{Omega2Phi})	   & \Omega_{1,g} 				& \Omega_{2,g}   		&\Gamma_{\phi_1}& \Gamma_\omega\\
    \Omega_{3,h}  & \Omega_{3,h^g}	     & (\ref{Omega3Xi})		  	   	& (\ref{Omega3Delta})		  & (\ref{Omega3Phi})	   & \Phi^1_{S(g)}	   			& \Omega_{3,g}  		&\Gamma_{\phi_1}& \Gamma_\omega\\
    \Gamma_\lambda&\Gamma_{\lambda\Psi_g}&\Gamma_{\lambda\Xi^g_X} 		&\Gamma_{\lambda\Delta^g_X}   &\Gamma_{\lambda\Phi^g_S}&\Gamma_{\lambda\Omega_{1,g}}&\Gamma_{\lambda\Omega_{2,g}}&\Gamma_{\lambda\Omega_{3,g}}&\Gamma_\omega\\
    \end{array}\]
    \caption{Table for $\en_2(\sn)$}\label{tab:table}
    \end{table}
The cases $(1)$ to $(16)$ are as follows:
\begin{multicols}{2}
\begin{enumerate}
	\item \label{XiDelta}		$\begin{cases}
		\Xi^{hg}_{Y\cup X}		&\text{if }X\neq\sen(\sn)\\
		\Delta^{hg}_Z			&\text{if }X=\sen(\sn),
		\end{cases}$
	\item \label{DeltaXi}		$\begin{cases}
		\Xi^{hg}_{Y\cup X}		&\text{if }Y\neq\sen(\sn)\\
		\Delta^{hg}_Z			&\text{if }Y=\sen(\sn),
	\end{cases}$
	\item \label{DeltaOmega1} 	$\begin{cases}
		\Omega_{1,g}			& Y\neq\sen(\sn)\\
		\Phi^1_{S(g)}			& Y=\sen(\sn),
		\end{cases}$
	\item \label{DeltaOmega2} 	$\begin{cases}
		\Omega_{2,g}			& Y\neq\sen(\sn)\\
		\Omega_{3,g}			& Y=\sen(\sn),
		\end{cases}$
	\item \label{PhiPhi} 		$\begin{cases}
		\Phi^g_{TS}				& h\in\sn\setminus\an\\
		\Phi^1_{TS}				& h\in\an,
		\end{cases}$
	\item \label{PhiOmega1} 	$\begin{cases}
		\Omega_{1,g}			&\text{if }\rank(T)\neq 1\\
		\Gamma_{\psi_1}			&\text{if }\im(T)\subseteq E\\
		\Omega_{3,g}			&\text{if }T=Z,
		\end{cases}$
	\item \label{PhiOmega2} 	$\begin{cases}
		\Omega_{2,g}			&\text{if }\rank(T)\neq 1\\
		\Gamma_{\phi_g}			&\text{if }\im(T)\subseteq E\\
		\Omega_{3,g}			&\text{if }T=Z,
		\end{cases}$
	\item \label{Omega1Xi} 		$\begin{cases}
		\Omega_{1,(i\;j)^g}		&\text{if }\Fix(h)=\{i,j\}\\
		\Omega_{1,h^g}			&\text{otherwise,}
	\end{cases}$
	\item \label{Omega1Omega1} 	$\begin{cases}
		\Gamma_{\psi_1}			&\text{if }\phi_h\in E\\
		\Omega_{1,g}			&\text{if }\phi_h=\phi_1,
		\end{cases}$
	\item \label{Omega1Omega2} 	$\begin{cases}
		\Gamma_{\phi_g}			&\text{if }\phi_h\in E\\
		\Omega_{2,g}			&\text{if }\phi_h=\phi_1,
		\end{cases}$
	\item \label{Omega2Xi} 		$\begin{cases}
		\Omega_{2,(i\;j)^g}		&\text{if }\Fix(h)=\{i,j\}\\
		\Omega_{2,h^g}			&\text{otherwise,}
	\end{cases}$
	\item \label{Omega2Delta} 	$\begin{cases}
		\Omega_{2,h^g}			&\text{if }X\neq\sen(\sn)\\
		\Gamma_{\phi_1}			&\text{if }X=\sen(\sn),
		\end{cases}$
	\item \label{Omega2Phi}		$\begin{cases}
		\Gamma_{\phi_s}			&\text{if }S=S(s)\\
		\Omega_{2,hS}			&\text{if }\rank(S)\neq 1,
		\end{cases}$
	\item \label{Omega3Xi} 		$\begin{cases}
		\Omega_{3,(i\;j)^g}		&\text{if }\Fix(h)=\{i,j\}\\
		\Omega_{3,h^g}			&\text{otherwise,}
	\end{cases}$
	\item \label{Omega3Delta} 	$\begin{cases}
		\Omega_{3,h^g}			&\text{if }X\neq\sen(\sn)\\
		\Gamma_{\phi_1}			&\text{if }S=\sen(\sn),
		\end{cases}$
	\item \label{Omega3Phi}		$\begin{cases}
		\Omega_{3,hS}			&\text{if }\rank(S)\neq 1\\
		\Gamma_{\phi_s}			&\text{if }S=S(s),
	\end{cases}$
\end{enumerate}
\end{multicols}
\section{Automorphisms of \texorpdfstring{$\en_2(\sn)$}{} - the group of units of \texorpdfstring{$\en_3(\sn)$}{}}\label{sec:AutEnd2}
Now that we have determined the elements of $M:=\en_2(\sn)$, and laid out the multiplication table, we move on to finding the automorphisms of $M$. We saw in Theorem~\ref{tm:StructureOfAutEndSn} that the group of units of $M$ is the automorphism group $\au(\en(\sn))=\{\Psi_g\mid g\in\sn\}$ and is isomorphic to $\sn$. Thus, by Theorem~\ref{Tm:AutCompleteMonoids}, we have
	\[\au(\en_2(\sn))\cong\Ker(\mathfrak{R})\times\sn,\]
where $\Ker(\mathfrak{R})$ consists of all automorphisms of $\en_2(\sn)$ that restrict to the identity of $\au(\sn)$.

We will make use of the following convention: for $\Theta\in\{\Psi,\Xi,\Delta,\Phi,\Omega,\Gamma\}$ we will use $\Theta$ to denote the set of all elements of $\en_2(\sn)$ that are labelled by (some decoration of) $\Theta$. We will use the Anglo-Saxon rune $\os$ (pronounced \'{o}s) to denote elements of $\au(\en_2(\sn))$. Note that $\os$ induces an isomorphism of the group of units of $\en(\sn)$, that is, of $\{\Psi_g\mid g\in\sn\}$.
\begin{lemma}\label{lem:SetOfIdempotents}
The idempotents $E(\en_2(\sn))$ of $\en_2(\sn)$ may be listed as follows. We do not repeat the inherent conditions on $X,S$, etc.
	\begin{enumerate}
		\item $\Psi_1$;
		\item $\Xi^1_X$ for all $X$;
		\item $\Delta^1_X$ for all $X$;
		\item $\Phi^g_S$ such that $g\in(\sn\setminus\an)\cup\{1\}$ and $S^2=S$;
		\item $\Omega_{1,1}$;
		\item $\Omega_{2,g}$ for all $g$;
		\item $\Gamma_\lambda$ for all $\lambda$.
	\end{enumerate}
\end{lemma}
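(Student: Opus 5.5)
The plan is to go through the families in the summary of Section~\ref{sec:end2} one at a time. For each family I will compute its square, using either the diagonal entries of Table~\ref{tab:table} or a direct evaluation of the map, and read off when the element is idempotent. Since $E(\en_2(\sn))$ is the disjoint union of the idempotents of the families, the list follows once each family is done.

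\emph{Units and the $\Gamma$ family.} For the units, $\Psi_g^2=\Psi_{g^2}$, and this equals $\Psi_g$ only when $g=1$. For the constant maps, $\Gamma_\lambda\Gamma_\lambda=\Gamma_{\lambda\Gamma_\lambda}=\Gamma_\lambda$, because $\lambda$ is idempotent. So every $\Gamma_\lambda$ is idempotent.

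\emph{The $\Xi$ and $\Delta$ families.} The table gives $\Xi^h_X\Xi^h_X=\Xi^{h^2}_X$ and $\Delta^h_X\Delta^h_X=\Delta^{h^2}_X$. The decorations are unique, as noted in the Remark after the summary. Hence these squares equal the original element exactly when $h^2=h$, that is, when $h=1$. For $\Delta$ with $X=\sen(\sn)$ we use the convention $\Delta^g_Z$, and the same argument applies.

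\emph{The $\Phi$ family.} Entry (\ref{PhiPhi}) gives $\Phi^g_S\Phi^g_S=\Phi^g_{S^2}$ when $g\in\sn\setminus\an$, and $\Phi^1_{S^2}$ when $g\in\an$. In the first case the element is idempotent if and only if $S^2=S$. In the second case we need $g=1$ and $S^2=S$, since $\Phi^g_S$ with $g\in\an\setminus\{1\}$ has $\psi_g$ in its image and so differs from $\Phi^1_{S^2}$. This is the point to check most carefully. The entry (\ref{PhiPhi}) was written for a product $\Phi^h_T\Phi^g_S$, so I will verify it directly in this diagonal case. An odd unit is sent first to $\psi_g$ and then to $\psi_g\Phi^g_S$. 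This equals $\psi_g$ if $g$ is odd and $\psi_1$ if $g$ is even. Also $\sen(\sn)$ is sent by $S$ into itself and then by $S$ again. This confirms the condition $g\in(\sn\setminus\an)\cup\{1\}$ together with $S^2=S$. When $S$ is constant with image $\phi_t$ the same reasoning applies, because $S$ is then automatically idempotent.

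\emph{The $\Omega$ families.} For $\Omega_{1,g}$, entry (\ref{Omega1Omega1}) gives the square $\Gamma_{\psi_1}$ when $\phi_g\in E$, which is not $\Omega_{1,g}$. When $g=1$ the square is $\Omega_{1,1}$. For $\Omega_{2,g}$, the table row gives $\Omega_{2,h}\Omega_{2,g}=\Omega_{2,g}$, so every $\Omega_{2,g}$ is idempotent. For $\Omega_{3,g}$, the square is $\Gamma_{\phi_1}\neq\Omega_{3,g}$, so none of these is idempotent. Collecting the cases gives exactly the list in the statement.
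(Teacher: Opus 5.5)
Your proposal is correct and follows the paper's proof: a family-by-family check of the diagonal entries of Table~\ref{tab:table}, ending with the observation that $\Omega_{3,g}^2=\Gamma_{\phi_1}$. Your direct check of the diagonal case of entry (\ref{PhiPhi}) makes explicit what the paper simply reads off from the table. That check excludes $g\in\an\setminus\{1\}$ because $\psi_g$ lies in the image of $\Phi^g_S$ but not in that of $\Phi^1_{S^2}$.
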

\begin{proof}
	We conduct the search for idempotents case by case depending on type.
	\begin{enumerate}
		\item Clearly $\Psi_1$ is the identity of $\en_2(\sn)$ and so the only idempotent in the group of units.
		\item If any $\Xi^g_X$ is an idempotent then $(\Xi^g_X)^2=\Xi^g_X$. Consulting Table~\ref{tab:table}, we have
			\[(\Xi^g_X)^2=\Xi^{g^2}_X.\]
			So $\Xi^g_X$ is idempotent if and only if $g^2=g$, that is, if and only if $g$ is the identity $1$ of $\sn$.
		\item A similar argument gives that $\Delta^g_X$ is idempotent if and only if $g=1$.
		\item Assume that $\Phi^g_S$ is an idempotent. Consulting Table~\ref{tab:table} we see that $g$ must either be in $\sn\setminus\an$ or be the identity of $\sn$. If $g$ satisfies these conditions then $(\Phi^g_S)^2=\Phi^g_{S^2}$. We deduce that $\Phi^g_S$ is idempototent if and only if the stated conditions hold.
		\item From Table~\ref{tab:table} it is clear that the only $g$ for which $\Omega_{1,g}$ is idempotent is $g=1$.
		\item From Table~\ref{tab:table} it is clear that $(\Omega_{2,g})^2=\Omega_{2,g}$ for all $g$.
		\item Clearly all $\Gamma_{\lambda}$ are idempotent as they are precisely the constant endomorphisms.
	\end{enumerate}
	
		We remark that from Table~\ref{tab:table} it is clear that $(\Omega_{3,g})^2=\Gamma_{\phi_1}$ for all $g$, so that no $\Omega_{3,g}$ is idempotent. This ends the discussion of the possible types of elements.
\end{proof}
In order to fully describe the automorphisms of $\en_2(\sn)$, we will show that some well-chosen sets are characteristic, and that some specific idempotents are fixed by all automorphisms. This will allow us to gradually show that all endomorphisms are fixed by elements of $\Ker(\mathfrak{R})$. We recall that for an element $m$ of monoid $M$ with group of units $G$, we denote by $C(m)$ the set of {\em elements of $G$} that commute with $m$. If $m$ is the identity $e$ of $M$, then it follows that $C(e)=G$.
\begin{lemma}\label{lem:SameCentIdent}
	Let $\Lambda \in \en_2(\sn)$. Then
    \[\mathcal{C}:=\{\Lambda\mid C(\Lambda)=\au(\en(\sn))\}=\{\Lambda\mid C(\Lambda)=C(\Psi_1)\}\]
    is a characteristic subset.
\end{lemma}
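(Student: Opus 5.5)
We need to show that for every automorphism $\os$ of $\en_2(\sn)$, $\mathcal{C}\os=\mathcal{C}$. The plan is to use the decomposition $\au(\en_2(\sn))\cong\Ker(\mathfrak{R})\times\sn$ coming from Theorem~\ref{Tm:AutCompleteMonoids}. This lets us write any $\os$ as a product of an inner automorphism (conjugation by some $\Psi_h$) and an element $\theta$ of $\Ker(\mathfrak{R})$, which fixes every $\Psi_g$. It therefore suffices to check that $\mathcal{C}$ is preserved by each of these two kinds of automorphism separately.

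\emph{Inner automorphisms.} Let $\Lambda\in\mathcal{C}$, so $\Lambda$ commutes with every unit $\Psi_g$. Then $\Lambda^{\Psi_h}=\Psi_h^{-1}\Lambda\Psi_h=\Lambda$, so $\Lambda$ is fixed by every inner automorphism. Hence $\mathcal{C}$ is fixed pointwise, and in particular setwise, by $\inn(\en_2(\sn))$.

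\emph{Automorphisms in $\Ker(\mathfrak{R})$.} For $\theta\in\Ker(\mathfrak{R})$, Lemma~\ref{lem:SameCentUnderAut}, applied with $M=\en_2(\sn)$ and $G=\au(\en(\sn))$, gives $C(\Lambda\theta)=C(\Lambda)$ for every $\Lambda$. So if $C(\Lambda)=\au(\en(\sn))$, then $C(\Lambda\theta)=\au(\en(\sn))$ as well, and $\mathcal{C}\theta\subseteq\mathcal{C}$. The same argument applied to $\theta^{-1}$ gives the reverse inclusion, so $\mathcal{C}\theta=\mathcal{C}$.

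\emph{Combining.} Since every $\os$ is the composite of maps of these two kinds, and each preserves $\mathcal{C}$, we get $\mathcal{C}\os=\mathcal{C}$. The equality $\{\Lambda\mid C(\Lambda)=\au(\en(\sn))\}=\{\Lambda\mid C(\Lambda)=C(\Psi_1)\}$ is immediate, because $C(\Psi_1)$ is the whole group of units.

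There is also a direct argument that avoids the decomposition. For arbitrary $\os$, we know $\au(\en(\sn))\os=\au(\en(\sn))$ because the group of units is characteristic. If $\Lambda$ commutes with all units, then $\Lambda\os$ commutes with all $\Psi_g\os$, which are again all the units. So $\Lambda\os\in\mathcal{C}$, and applying this to $\os^{-1}$ gives equality. I would present this short version, since it needs only that the group of units is characteristic. I do not anticipate any real obstacle in either argument.
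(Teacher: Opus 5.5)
Your proposal is correct, and the short direct argument you say you would present is essentially the paper's proof: any automorphism maps the group of units $\{\Psi_g\mid g\in\sn\}$ bijectively onto itself, so $\Lambda$ commutes with every unit if and only if its image does. The paper writes this as a single chain of equivalences, so both inclusions come at once rather than by passing to the inverse. Your first route through the decomposition $\Ker(\mathfrak{R})\times\sn$ and Lemma~\ref{lem:SameCentUnderAut} is also valid, but it relies on completeness of the group of units, which the direct argument does not need.
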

\begin{proof}
We have already remarked that any $\os\in\au(\en_2(\sn))$ induces an automorphism of $\{\Psi_g\mid g\in\sn\}$ and so in particular is a bijection. Hence
	\[\begin{array}{rcll}
		C(\Lambda)=\au(\en(\sn))	&\Leftrightarrow& \Lambda\Psi_g=\Psi_g\Lambda						&\text{ for all }\Psi_g\\
									&\Leftrightarrow& (\Lambda\Psi_g)\os=(\Psi_g\Lambda)\os				&\text{ for all }\Psi_g\\
									&\Leftrightarrow& (\Lambda\os)(\Psi_g\os)=(\Psi_g\os)(\Lambda\os)	&\text{ for all }\Psi_g\\
									&\Leftrightarrow& (\Lambda\os)\Psi_h=\Psi_h(\Lambda\os)				&\text{ for all }\Psi_h\\
									&\Leftrightarrow& C(\Lambda\os)=\au(\en(\sn)).
	\end{array}\]
The result follows.
\end{proof}
\begin{lemma}\label{lem:int}\label{CharIdempts}
	The set $E(\en_2(\sn))\cap\mathcal{C}$ is characteristic. Further, the elements of $E(\en_2(\sn))\cap\mathcal{C}$ may be listed as follows (with the same enumeration as in Lemma~\ref{lem:SetOfIdempotents}):
	\begin{enumerate}
		\item $\Psi_1$;
		\item $\Xi^1_X$ for any $X$;
		\item $\Delta^1_X$ for any $X$;
		\item $\Phi^1_Z$;
		\item $\Omega_{1,1}$;
		\item[(7)] $\Gamma_\lambda$ where $\lambda$ is either $\psi_1$ or $\phi_1$.
	\end{enumerate}
\end{lemma}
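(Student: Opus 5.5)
The first assertion is immediate. By Example~\ref{ex:ch}, $E(\en_2(\sn))$ is characteristic, and by Lemma~\ref{lem:SameCentIdent} so is $\mathcal{C}$. Example~\ref{ex:ch} also shows that an intersection of characteristic subsets is characteristic. The substance of the lemma is therefore the list. I would obtain it by going through the list of idempotents in Lemma~\ref{lem:SetOfIdempotents} type by type, and for each type deciding which members commute with every unit $\Psi_h$, $h\in\sn$. For this I would read off the products $\Psi_h\Lambda$ and $\Lambda\Psi_h$ from the row and column of $\Psi$ in Table~\ref{tab:table}.

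The routine cases go as follows.
\begin{itemize}
\item For $\Psi_1$ there is nothing to check.
\item For $\Xi^1_X$ the table gives $\Psi_h\Xi^1_X=\Xi^h_X=\Xi^1_X\Psi_h$, so every $\Xi^1_X$ lies in $\mathcal{C}$.
\item The same holds for $\Delta^1_X$.
\item For $\Gamma_\lambda$ we have $\Psi_h\Gamma_\lambda=\Gamma_\lambda$ and $\Gamma_\lambda\Psi_h=\Gamma_{\lambda\Psi_h}=\Gamma_{\psi_h^{-1}\lambda\psi_h}$. So $\Gamma_\lambda\in\mathcal{C}$ exactly when $\lambda$ is central in the sense $C(\lambda)=\au(\sn)$. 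By Remark~\ref{rem:centraliserendsn} and the fact that $\sn$ is centreless, this forces the idempotent $\lambda$ to be $\psi_1$ or $\phi_1$.
\item For $\Omega_{2,g}$ the table gives $\Psi_h\Omega_{2,g}=\Omega_{2,g}$ but $\Omega_{2,g}\Psi_h=\Omega_{2,g^h}$. Since $g\neq 1$ is centreless in $\sn$, no $\Omega_{2,g}$ lies in $\mathcal{C}$.
\item For $\Omega_{1,1}$ we have $\Omega_{1,1}\Psi_h=\Omega_{1,1^h}=\Omega_{1,1}=\Psi_h\Omega_{1,1}$, so it lies in $\mathcal{C}$.
\end{itemize}

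The remaining case is $\Phi^g_S$ with $g\in(\sn\setminus\an)\cup\{1\}$ and $S^2=S$. The table gives $\Psi_h\Phi^g_S=\Phi^g_S$ and $\Phi^g_S\Psi_h=\Phi^{g^h}_{S\Psi_h}$. Commuting with every unit therefore requires $g^h=g$ for all $h$, which forces $g=1$. It also requires $S\Psi_h=S$ for all $h$, that is, $\phi_kS$ is fixed under conjugation by every $\psi_h$ for every $\phi_k$.

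By Remark~\ref{rem:centraliserendsn} and centrelessness, the only element of $\sen(\sn)$ fixed by all $\psi_h$ is $\phi_1$. Hence $\im(S)=\{\phi_1\}$, that is, $S=Z$. Conversely, $\Phi^1_Z$ clearly commutes with all units.

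One point needs care here. Conjugation of $\Phi^g_S$ by $\Psi_h$ acts on the $\sen(\sn)$-part by postcomposing $S$ with $\Psi_h$, because the kernel classes of $S$ are unions of conjugacy classes and precomposing with $\Psi_h$ does nothing. I would verify this directly from Definition~\ref{def:Phi} rather than rely only on the table entry.

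I expect the main obstacle to be bookkeeping rather than ideas: making sure the table entries used in the $\Phi$ and $\Gamma$ cases are correct, in particular that $\Gamma_\lambda\Psi_h=\Gamma_{\lambda\Psi_h}$. Also, to show that an element does \emph{not} lie in $\mathcal{C}$, it suffices to exhibit a single $h$ with $g^h\neq g$, and any $g\neq1$ admits such an $h$.
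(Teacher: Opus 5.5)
Your proposal is correct and matches the paper's proof essentially step for step. Characteristicness comes from Example~\ref{ex:ch} together with Lemma~\ref{lem:SameCentIdent}. The list then comes from a type-by-type check of the idempotents of Lemma~\ref{lem:SetOfIdempotents} against the $\Psi$ row and column of Table~\ref{tab:table}, where the fact that $\sn$ is centreless forces $g=1$ and $S=Z$ for $\Phi^g_S$, excludes every $\Omega_{2,g}$, and restricts $\Gamma_\lambda$ to $\lambda\in\{\psi_1,\phi_1\}$.
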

\begin{proof}
The first statement follows from Example~\ref{ex:ch}. It remains to find the idempotents in $\mathcal{C}$. We search our list of idempotents by type. The element $\Psi_1$ lies in $\mathcal{C}$ since it is the monoid identity, so that (1) holds. It is immediate from the multiplication table that (2), (3) and (5) hold. For (4), consider an idempotent  $\Phi^g_S$, so that $g\in \sn\setminus\an$ or $g=1$, and $S=S^2$.  We have that $\Phi^g_S\Psi_t=\Psi_t\Phi^g_S=\Phi^g_S$ if and only if $g^t=g$ and $S\Psi_t=S$ for all $\Psi_t$. The latter condition says that  for all $\phi_k$ lying in $\im(S)$, we require $\phi_k\Psi_t=\phi_k$. We deduce that $\Phi^g_S\in\mathcal{C}$ if and only if $g^t=g$ and $k^t=k$ for all $t\in\sn$, that is, if and only if $g$ and $k$ lie in the centre of $\sn$. Now (4) follows since $\sn$ has trivial centre.

We now show that no $\Omega_{2,g}$ lies in $\mathcal{C}$. Suppose to the contrary that $\Omega_{2,g}\in\mathcal{C}$, that is, $\Omega_{2,g}$ commutes with every $\Psi_t$. It follows from Table~\ref{tab:table} that $\Omega_{2,g}=\Omega_{2,g^t}$ for all $t\in\en(\sn)$ and so $g^t=g$ for all $t\in\en(\sn)$. But here $g$ must be odd, and so there are no possibilities for $g$. Similarly, if $\Gamma_\lambda$ commutes with every $\Psi_t$, then from Table~\ref{tab:table} it is clear we require that $\lambda=\lambda\Psi_t$ for all $t\in\en(\sn)$. This is true if $\lambda=\psi_1$. Otherwise, if $\lambda=\phi_g$, then we would require $g=g^t$ for all $t$ and hence $g=1$ as required.
\end{proof}
\begin{lemma}\label{IdempotentHasse}
	Any $\os\in\au(\en_2(\sn))$ fixes $\Omega_{1,1}$, $\Delta^1_Z$ and $\Phi^1_Z$.
\end{lemma}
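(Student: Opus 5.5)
We use that every automorphism $\os$ of $\en_2(\sn)$ permutes the characteristic set $\mathcal{E}:=E(\en_2(\sn))\cap\mathcal{C}$ of Lemma~\ref{CharIdempts}. It also preserves the natural order on idempotents, and Green's relations strongly. So $\os$ induces an automorphism of the finite poset $(\mathcal{E},\leq)$, and it preserves every order- or Green-theoretic invariant of an element of $\mathcal{E}$. The plan is to compute enough of this poset from Table~\ref{tab:table} to single out $\Omega_{1,1}$, $\Delta^1_Z$ and $\Phi^1_Z$ by such invariants.

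First we compute the relevant products among elements of $\mathcal{E}$ using the table.
\begin{itemize}
\item The minimal idempotents are $\Gamma_{\psi_1}$ and $\Gamma_{\phi_1}$, since $\Gamma$ is the minimal ideal (Remark~\ref{rem:Gamma_min_ideal}).
\item From case~(\ref{Omega1Omega1}) and the $\Gamma$ column, $\Omega_{1,1}\Gamma_{\psi_1}=\Gamma_{\psi_1}$. Also $\Gamma_{\psi_1}\Omega_{1,1}=\Gamma_{\psi_1\Omega_{1,1}}=\Gamma_{\psi_1}$. Hence $\Gamma_{\psi_1}<\Omega_{1,1}$.
\item $\Gamma_{\psi_1}$ lies below no $\Xi^1_X$, $\Delta^1_X$ or $\Phi^1_Z$, because $\Gamma_{\psi_1}\Lambda=\Gamma_{\psi_1\Lambda}$ and each of these sends $\psi_1$ to $\psi_1$ except $\Phi^1_Z$. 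One checks $\psi_1\Phi^1_Z=\psi_1$ as well.
\item We therefore compare elements via right multiplication instead. Note $\Lambda\Gamma_\omega=\Gamma_\omega$ for all $\Lambda$. For $e\leq f$ we need $ef=fe=e$.
\end{itemize}

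A cleaner invariant is the size of the set of idempotents of $\mathcal{E}$ lying above or below a given element. The identity $\Psi_1$ is the maximum, and $\os$ fixes it. We expect the following picture:
\begin{itemize}
\item $\Omega_{1,1}$ lies above only $\Gamma_{\psi_1}$ and $\Gamma_{\phi_1}$. Indeed $\Omega_{1,1}\Gamma_{\phi_1}=\Gamma_{\phi_1}$ and $\Gamma_{\phi_1}\Omega_{1,1}=\Gamma_{\phi_1\Omega_{1,1}}=\Gamma_{\phi_1}$. It lies below only $\Psi_1$.
\item $\Phi^1_Z$ lies above $\Gamma_{\psi_1}$? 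We have $\Gamma_{\psi_1}\Phi^1_Z=\Gamma_{\psi_1}$ and $\Phi^1_Z\Gamma_{\psi_1}=\Gamma_{\psi_1}$, so yes. It also lies above $\Gamma_{\phi_1}$.
\item $\Phi^1_Z$ lies below every $\Delta^1_X$: $\Delta^1_X\Phi^1_Z=\Phi^1_{X^\sharp Z}=\Phi^1_Z$, and $\Phi^1_Z\Delta^1_X=\Phi^1_{ZX^\sharp\Psi_1}=\Phi^1_Z$. By the $\Xi$ row of the table it also lies below every $\Xi^1_X$.
\item $\Delta^1_Z$ lies below all $\Delta^1_X$ and $\Xi^1_X$ (cases (\ref{XiDelta}), (\ref{DeltaXi})), and above $\Phi^1_Z$ and the $\Gamma$'s.
\item $\Omega_{1,1}$ versus $\Phi^1_Z$: from case~(\ref{PhiOmega1}) with $T=Z$, $\Phi^1_Z\Omega_{1,1}=\Omega_{3,1}$, which is not $\Phi^1_Z$. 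So they are incomparable.
\end{itemize}

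So inside $\mathcal{E}$ the chain $\Gamma\subset\{\Phi^1_Z\}\subset\{\Delta^1_Z\}\subset\{\Delta^1_X,\Xi^1_X\}\subset\{\Psi_1\}$ sits alongside the isolated $\Omega_{1,1}$. The three elements are then characterised as follows:
\begin{itemize}
\item $\Omega_{1,1}$ is the unique non-minimal element of $\mathcal{E}\setminus\{\Psi_1\}$ whose only strict upper bound in $\mathcal{E}$ is $\Psi_1$ and which covers minimal elements directly, unless some $\Delta^1_X$ or $\Xi^1_X$ is also of that form (see below).
\item $\Phi^1_Z$ is the unique element covering the minimal elements and lying below $\Delta^1_Z$, equivalently the unique element whose down-set has size $3$ while its up-set is large.
\item $\Delta^1_Z$ is the unique element covering $\Phi^1_Z$ (if needed, the unique element whose down-set has size exactly $4$).
\end{itemize}
The main obstacle is verifying that the $\Xi^1_X$ and $\Delta^1_X$, for $X\neq\sen(\sn)$, are all comparable with $\Delta^1_Z$ and never produce another element with the same down-set size. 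By cases (\ref{XiDelta}), (\ref{DeltaXi}) and the $\Delta\Delta$, $\Xi\Xi$ entries, the products are $\Delta^1_{X\cup Y}$ or $\Xi^1_{X\cup Y}$, so the order is governed by inclusion of the sets $X$, with $\sen(\sn)$ at the bottom. Thus every such element has down-set containing $\Delta^1_Z$, $\Phi^1_Z$ and both $\Gamma$'s, hence of size at least $4$, with equality only for $\Delta^1_Z$ itself. That case check, together with confirming that $\Omega_{1,1}$ is comparable with none of them, is where the care lies. Since order automorphisms preserve up-set and down-set sizes, each of the three elements is fixed by $\os$.
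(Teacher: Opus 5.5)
Your treatment of $\Delta^1_Z$ and $\Phi^1_Z$ follows the paper's argument. Inside the characteristic set $E(\en_2(\sn))\cap\mathcal{C}$, $\Delta^1_Z$ is the only element whose down-set has exactly four elements, and $\Phi^1_Z$ is then the only non-minimal element strictly below it. The argument for $\Omega_{1,1}$, however, rests on a false picture of the poset, because $\Omega_{1,1}$ is not isolated. Take any $X\neq\sen(\sn)$ and let $\Lambda=\Delta^1_X$ or $\Lambda=\Xi^1_X$. Then $\Lambda$ fixes $\psi_1$ and $\phi_1$, and it maps $\au(\sn)\cup E$ into itself and $A\cup\{\phi_1\}$ into itself. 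Hence $\Omega_{1,1}\Lambda=\Lambda\Omega_{1,1}=\Omega_{1,1}$, which is also what the composition table gives (the $\Omega_{1,h}$ row against the $\Delta^g_X$ and $\Xi^g_X$ columns, and case (3) with $Y\neq\sen(\sn)$). So $\Omega_{1,1}<\Delta^1_X$ and $\Omega_{1,1}<\Xi^1_X$ for every such $X$; this is the arrow from $\Lambda$ to $\Omega_{1,1}$ in the paper's down-set diagram. The check you single out as the main obstacle, that $\Omega_{1,1}$ is comparable with none of the $\Delta^1_X,\Xi^1_X$, is therefore false. Your characterisation of $\Omega_{1,1}$ (covering only minimal elements, with $\Psi_1$ as its only strict upper bound) is satisfied by no element of $\mathcal{E}$, so it identifies nothing.

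The repair is the paper's closing step. In $\mathcal{E}$, the down-sets of $\Omega_{1,1}$ and $\Phi^1_Z$ are $\{\Omega_{1,1},\Gamma_{\psi_1},\Gamma_{\phi_1}\}$ and $\{\Phi^1_Z,\Gamma_{\psi_1},\Gamma_{\phi_1}\}$. Each $\Delta^1_X$ or $\Xi^1_X$ with $X\neq\sen(\sn)$ has a down-set containing itself, $\Omega_{1,1}$, $\Delta^1_Z$, $\Phi^1_Z$ and both $\Gamma$'s, so of size at least $6$. Thus $\Omega_{1,1}$ and $\Phi^1_Z$ are the only elements with down-set of size $3$, and once $\Phi^1_Z$ is fixed, $\Omega_{1,1}$ is fixed too. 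Equivalently, $\Omega_{1,1}$ is the size-$3$ element not below $\Delta^1_Z$, since $\Delta^1_Z\Omega_{1,1}=\Phi^1_Z$.

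Two smaller slips also need correcting. First, your early claim that $\Gamma_{\psi_1}$ lies below none of $\Xi^1_X,\Delta^1_X,\Phi^1_Z$ is false: each of them fixes $\psi_1$, so $\Gamma_{\psi_1}$ lies below all of them, as you later use. Second, a direct computation gives $\Phi^1_Z\Omega_{1,1}=\Phi^1_Z$, since it sends $\au(\sn)$ to $\psi_1$ and $\sen(\sn)$ to $\phi_1$; it is not an element different from $\Phi^1_Z$. To exclude $\Phi^1_Z\leq\Omega_{1,1}$ you must instead use $\Omega_{1,1}\Phi^1_Z=\Omega_{1,1}\neq\Phi^1_Z$.
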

\begin{proof}
	All three elements lie in the characteristic subset described in Lemma \ref{CharIdempts}. Now let $\Lambda$ be any element of $\{\Psi_1,\Xi^1_X,\Delta^1_X\mid X\neq\sen(\sn)\}$. The down set of $\Lambda$ (with respect to the natural partial order on idempotents) in the set $E(\en_2(\sn))\cap\mathcal{C}$ is given in Figure~\ref{fig:down set}, as may be verified using Table~\ref{tab:table}.
	\begin{figure}[H]
	\begin{center}
		\begin{tikzcd}
                                      		 		& \Lambda \arrow[ld] \arrow[rd] &                                \\
			{\Omega_{1,1}} \arrow[rrdd] \arrow[dd] 	&                               & \Delta^1_Z \arrow[d]           \\
                                       				&                               & \Phi^1_Z \arrow[d] \arrow[lld] \\
			\Gamma_{\psi_1}                        	&                               & \Gamma_{\phi_1}
		\end{tikzcd}
	\end{center}\caption{The down set of $\Lambda$ in $E(\en_2(\sn))\cap\mathcal{C}$}\label{fig:down set}\end{figure}
	So, any such $\Lambda$ has a down set in $E(\en_2(\sn))\cap\mathcal{C}$ of size at least 6. Comparing this to the other elements of $E(\en_2(\sn))\cap\mathcal{C}$, we have
	\[\setlength{\extrarowheight}{2mm}
    \begin{array}{l|*{1}{l}}
	\text{Idempotent}	& \text{Size of down set}\\
	\hline
	\Lambda				& \geq 6\\
	\Omega_{1,1}		& 3\\
	\Delta^1_Z 			& 4\\
	\Phi^1_Z			& 3\\
	\Gamma_{\phi_1}		& 1\\
	\Gamma_{\psi_1}		& 1
    \end{array}\]

By Lemma~\ref{lem:imageidentityzero}, any automorphism must preserve the natural order of idempotents. Thus, the down set of any such idempotent $\Lambda$ is the same size as the down set of any $\Lambda\os$ that lies in $E(\en_2(\sn))\cap\mathcal{C}$. Thus, $\Delta^1_Z$ must be fixed by all $\os$ in $\au(\en_2(\sn))$. It follows that $\Phi^1_Z\os$ must have a down set of size 3 and be below $\Delta^1_Z$. Thus, $\Phi^1_Z\os=\Phi^1_Z$ for all $\os$ in $\au(\en_2(\sn))$. Finally, as the down set of $\Omega_{1,1}$ is 3 and the only other element with equal sized down set is $\Phi^1_Z$ (which is fixed), we must have that $\Omega_{1,1}\os=\Omega_{1,1}$ for all $\os$ in $\au(\en_2(\sn))$.
\end{proof}
We now focus on showing that any $\os$ in $\Ker(\mathfrak{R})$, that is, any automorphism of $\en_2(\sn)$ that restricts to the identity on $\au_2(\sn)$, is the identity map.
\begin{lemma}\label{lem:Omega1Fixed}
	Let $\os$ be in $\Ker(\mathfrak{R})$. Then $\os$ fixes all elements $\Omega_{1,g}$.
\end{lemma}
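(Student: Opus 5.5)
The plan is to show that the set $\{\Omega_{1,g}\mid \phi_g\in E\cup\{\phi_1\}\}$ is mapped to itself by every $\os\in\Ker(\mathfrak{R})$. Then I will use centralisers, via Lemma~\ref{lem:SameCentUnderAut}, to pin down each element individually.

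\emph{Step 1: the set $\Omega_1:=\{\Omega_{1,g}\}$ is characteristic.} By Lemma~\ref{IdempotentHasse}, $\Omega_{1,1}$ is fixed by every automorphism. Hence the principal right ideal $\Omega_{1,1}\en_2(\sn)$ is characteristic, and so is the intersection of this ideal with the complement of $E(\en_2(\sn))$ (Example~\ref{ex:ch}). I will compute the row of $\Omega_{1,1}$ in Table~\ref{tab:table}, reading $\phi_h=\phi_1$ throughout.
\begin{itemize}
\item $\Omega_{1,1}\Psi_t=\Omega_{1,1}$ and $\Omega_{1,1}\Delta^g_X=\Omega_{1,1}$.
\item $\Omega_{1,1}\Xi^g_X=\Omega_{1,1}$, by case (\ref{Omega1Xi}), since $\Fix(1)=[1,n]$ is not a $2$-set.
\item $\Omega_{1,1}\Phi^g_S=\Omega_{1,\phi_1S}=\Omega_{1,1}$.
\item $\Omega_{1,1}\Omega_{1,g}=\Omega_{1,g}$, by case (\ref{Omega1Omega1}).
\item $\Omega_{1,1}\Omega_{2,g}=\Omega_{2,g}$ and $\Omega_{1,1}\Omega_{3,g}=\Omega_{2,g}$.
\item $\Omega_{1,1}\Gamma_\omega=\Gamma_\omega$.
\end{itemize}
So $\Omega_{1,1}\en_2(\sn)=\Omega_1\cup\{\Omega_{2,g}\}\cup\Gamma$. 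By Lemma~\ref{lem:SetOfIdempotents}, the idempotents in this set are exactly $\Omega_{1,1}$, all the $\Omega_{2,g}$, and all of $\Gamma$. Removing the idempotents therefore leaves $\Omega_1\setminus\{\Omega_{1,1}\}$, which is characteristic. Adding back the fixed element $\Omega_{1,1}$ shows that $\Omega_1$ is characteristic.

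\emph{Step 2: centralisers.} From the table, $\Psi_t\Omega_{1,g}=\Omega_{1,g}$ and $\Omega_{1,g}\Psi_t=\Omega_{1,g^t}$. Moreover $g\mapsto\Omega_{1,g}$ is injective, since $\Omega_{1,g}$ sends $\phi_1$ to $\phi_g$. Hence $\Psi_t\in C(\Omega_{1,g})$ if and only if $t\in C(g)$. Now let $\os\in\Ker(\mathfrak{R})$ and $g\ne 1$. Then $\Omega_{1,g}\os=\Omega_{1,k}$ for some $k$, by Step 1 and since $\os$ fixes $\Omega_{1,1}$. Lemma~\ref{lem:SameCentUnderAut} gives $C(\Omega_{1,g})=C(\Omega_{1,k})$, hence $C(g)=C(k)$ in $\sn$. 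In particular $k\neq1$, since $C(g)\neq\sn$. So $g$ and $k$ are both odd elements of order two. Corollary~\ref{cor:centralisers} then gives $g=k$.

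The main point needing care is Step 1. I must check the row of $\Omega_{1,1}$ against every case of the table, especially (\ref{Omega1Xi}), the $\Phi$ column and the $\Omega_{3}$ column. I must also confirm that no non-idempotent outside $\Omega_1$ (for example some $\Omega_{3,g}$) appears in $\Omega_{1,1}\en_2(\sn)$. Step 2 is then immediate from the earlier lemmas.
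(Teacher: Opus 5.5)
Your proof is correct and is essentially the paper's argument. The paper also uses that the fixed idempotent $\Omega_{1,1}$ is a left identity for each $\Omega_{1,g}$, so that $\Omega_{1,g}\os$ lies in $\Omega_{1,1}\en_2(\sn)$; it cuts this down using a rank bound and idempotency rather than computing the whole row, and then finishes with Lemma~\ref{lem:SameCentUnderAut} and Corollary~\ref{cor:centralisers} exactly as in your Step 2. One small slip in your table check: for constant $S=S(t)$ with $\phi_t\in E$ you have $\phi_1S=\phi_t$, so $\Omega_{1,1}\Phi^g_S=\Omega_{1,t}$ rather than $\Omega_{1,1}$, but this still lies in $\Omega_1$, so your description of $\Omega_{1,1}\en_2(\sn)$ is unaffected.
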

\begin{proof}
	We saw in Lemma~\ref{IdempotentHasse} that $\Omega_{1,1}$ must be fixed by any $\os$ in $\au(\en_2(\sn))$. From Table~\ref{tab:table} we have that $\Omega_{1,1}$ is a left identity for any $\Omega_{1,g}$. Thus, we must have $\Omega_{1,1}\os\Omega_{1,g}\os=\Omega_{1,g}\os$. That is, $\Omega_{1,1}$ must also be a left identity for $\Omega_{1,g}\os$.
	
	As $\Omega_{1,1}$ is rank 2, it cannot be the left identity of any element of rank greater than 2. Since $\Gamma$ is characteristic by Remark~\ref{rem:Gamma_min_ideal}, it follows that $\Omega_{1,g}\os$ must also have rank 2. The only rank 2 elements are maps of the form $\Phi^1_S$ for $\rank(S)=1$, $\Omega_{1,h}$, $\Omega_{2,h}$, or $\Omega_{3,h}$.
	
	The elements $\Phi^1_S$ and $\Omega_{2,h}$ are idempotents while those of the form $\Omega_{1,g}$ are not whenever $g\neq 1$. As automorphisms take only idempotents to idempotents, the only possibilities remaining are $\Omega_{1,g}\os=\Omega_{1,h}$ or $\Omega_{1,g}\os=\Omega_{3,h}$ for some $h$. But $\Omega_{1,1}$ is not a left identity for any $\Omega_{3,h}$. This leaves us with the only possibility being $\Omega_{1,g}\os=\Omega_{1,h}$ for some $h$. Using Lemma~\ref{lem:SameCentUnderAut} in the situation where $M=\en_2(\sn)$ gives us that $C(\Omega_{1,g})=C(\Omega_{1,h})$, which in turn implies that $C(g)=C(h)$. From Corollary~\ref{cor:centralisers} we deduce that $g=h$.
\end{proof}
\begin{lemma}\label{lem:Omega2Fixed}
	Let $\os$ be in $\Ker(\mathfrak{R})$. Then $\os$ fixes all elements $\Omega_{2,g}$.
\end{lemma}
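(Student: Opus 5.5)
The plan mirrors the proof of Lemma~\ref{lem:Omega1Fixed}, now using the fixed element $\Omega_{1,1}$ (Lemma~\ref{IdempotentHasse}) together with the already-fixed elements $\Omega_{1,g}$ (Lemma~\ref{lem:Omega1Fixed}). Fix $\os\in\Ker(\mathfrak{R})$ and $\Omega_{2,g}$ with $\phi_g\in E$.

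\textbf{Step 1: the image has rank $2$.} From Table~\ref{tab:table}, $\Omega_{2,g}\Omega_{1,1}=\Omega_{1,1}$, so $\Omega_{2,g}\os$ is also a left identity for $\Omega_{1,1}$. Similarly, $\Omega_{2,g}\Omega_{2,h}=\Omega_{2,h}$ and $\Omega_{1,1}\Omega_{2,g}=\Gamma_{\phi_g}$. The cleanest invariant, however, is the pair of products with $\Omega_{1,1}$:
\[\Omega_{2,g}\Omega_{1,1}=\Omega_{1,1},\qquad \Omega_{1,1}\Omega_{2,g}=\Gamma_{\phi_g}\in\Gamma.\]
Since $\Gamma$ is characteristic (Remark~\ref{rem:Gamma_min_ideal}), the element $\Omega_{2,g}\os$ satisfies the same two relations. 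It is idempotent, not in $\Gamma$, and its product with $\Omega_{1,1}$ on the left lands in $\Gamma$. I would argue that this forces rank $2$. An element $\Lambda$ with $\Omega_{1,1}\Lambda$ constant must be constant on $\{\psi_1,\phi_1\}$. By the remarks before Lemma~\ref{lem:EndosRank1}, such a $\Lambda$ is then a constant map unless this is handled carefully. A cleaner route is to check directly against the rows of Table~\ref{tab:table} and list the idempotents $\Lambda\notin\Gamma$ with $\Omega_{1,1}\Lambda\in\Gamma$. From entries (\ref{Omega1Omega2}) and (\ref{PhiOmega1}), together with the $\Omega_{1,h}$ row, these should be exactly the $\Omega_{2,h}$ and the $\Phi^1_S$ with $\im(S)\subseteq E$ of rank $1$ (compare case (\ref{Omega1Omega1}) and $\Omega_{1,h}\Phi^g_S=\Omega_{1,hS}$). \emph{This table-checking step is where I expect the main work to lie.}

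\textbf{Step 2: excluding the $\Phi$ candidates.} Among the candidates, the idempotents $\Phi^1_{S(t)}$ satisfy $\Phi^1_{S(t)}\Omega_{1,1}=\Omega_{1,1}$ by the $\Phi$ row with $T=S(t)$, so that relation alone does not separate them. I would separate them instead by the relation $\Omega_{2,g}\Omega_{3,h}=\Gamma_{\phi_1}$ versus $\Phi^1_{S(t)}\Omega_{3,h}=\Omega_{3,h}$ from the table. To use this, I first need $\Omega_{3}$ to be mapped into itself. Alternatively, I can use $\Omega_{2,g}\Omega_{1,h}=\Omega_{1,h}$ together with $\Omega_{1,h}\Omega_{2,g}=\Gamma_{\phi_g}$, where each $\Omega_{1,h}$ is fixed. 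Testing with $\Omega_{1,h}$ for $h=1$ against $\Phi^1_{S(t)}$ gives $\Omega_{1,1}\Phi^1_{S(t)}=\Omega_{1,t}$, which is not in $\Gamma$ when $t\ne1$. By contrast, $\Omega_{1,1}\Omega_{2,g}=\Gamma_{\phi_g}\in\Gamma$. This excludes the $\Phi$ candidates, so $\Omega_{2,g}\os=\Omega_{2,k}$ for some $\phi_k\in E$.

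\textbf{Step 3: identifying $k$.} By Lemma~\ref{lem:SameCentUnderAut}, $C(\Omega_{2,g})=C(\Omega_{2,k})$. The $\Psi$ row of the table gives $\Omega_{2,h}^{\Psi_t}=\Omega_{2,h^t}$, so $C(\Omega_{2,h})$ corresponds to $C(h)$. Hence $C(g)=C(k)$, and as $g$ and $k$ are both odd, Corollary~\ref{cor:centralisers} gives $g=k$.

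As a quick alternative to Steps 1--2, I could use $\Omega_{1,1}\Omega_{2,g}=\Gamma_{\phi_g}$ to pin down $\Gamma_{\phi_g}\os$ directly. This would require first knowing that $\os$ fixes the $\Gamma$'s, which I do not yet have, so the route above is the one I would follow.
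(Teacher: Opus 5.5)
\textbf{Genuine gap: Steps 1 and 2 rest on a misreading of Table~\ref{tab:table}.} Your Step 3 is correct and is exactly how the paper finishes. The problem is earlier.

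Entry (\ref{Omega1Omega2}) gives $\Omega_{1,h}\Omega_{2,g}=\Gamma_{\phi_g}$ only when $\phi_h\in E$. For $h=1$ it gives $\Omega_{1,1}\Omega_{2,g}=\Omega_{2,g}$. A direct check confirms this: $\Omega_{1,1}$ sends $\au(\sn)\cup E$ to $\psi_1$ and $A\cup\{\phi_1\}$ to $\phi_1$, and $\Omega_{2,g}$ sends these to $\phi_g$ and $\phi_1$.

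So $\Omega_{2,g}$ does not satisfy your invariant $\Omega_{1,1}\Lambda\in\Gamma$. In fact, as you yourself note, $\Omega_{1,1}\Lambda\in\Gamma$ forces $\psi_1\Lambda=\phi_1\Lambda$, and hence $\Lambda\in\Gamma$. Your candidate set in Step 1 is therefore empty. The contrast in Step 2 also collapses, because both $\Omega_{1,1}\Omega_{2,g}$ and $\Omega_{1,1}\Phi^1_{S(t)}=\Omega_{1,t}$ lie outside $\Gamma$.

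You have also misread entry (\ref{PhiOmega1}). For $\phi_t\in E$ one has $\Phi^1_{S(t)}\Omega_{1,1}=\Gamma_{\psi_1}$, not $\Omega_{1,1}$. As written, nothing in your proposal shows that $\Omega_{2,g}\os$ has rank $2$, or that it is some $\Omega_{2,h}$.

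\textbf{The repair is to use the correct relation, which is the paper's route.}
\begin{enumerate}[(i)]
\item $\Omega_{1,1}$ is a left identity for $\Omega_{2,g}$. Since $\os$ fixes $\Omega_{1,1}$ (Lemma~\ref{IdempotentHasse}), $\Omega_{1,1}$ is also a left identity for $\Omega_{2,g}\os$.
\item Hence $\rank(\Omega_{2,g}\os)\le\rank(\Omega_{1,1})=2$. Equality holds because $\Gamma$ is characteristic and $\Omega_{2,g}\notin\Gamma$.
\item A rank-$2$ element has $\Omega_{1,1}$ as a left identity exactly when it is constant on $\au(\sn)\cup E$ and on $A\cup\{\phi_1\}$. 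These are precisely the $\Omega_{1,h}$ and the $\Omega_{2,h}$; the $\Phi^1_{S(t)}$ and the $\Omega_{3,h}$ fail this.
\item The $\Omega_{1,h}$ are fixed by $\os^{-1}\in\Ker(\mathfrak{R})$ (Lemma~\ref{lem:Omega1Fixed}), so none of them can be the image of $\Omega_{2,g}$.
\item Therefore $\Omega_{2,g}\os=\Omega_{2,h}$ for some $h$, and your Step 3 gives $h=g$.
\end{enumerate}

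Your correct relation $\Omega_{2,g}\Omega_{1,1}=\Omega_{1,1}$ would also exclude the $\Phi^1_{S(t)}$ once rank $2$ is known. It cannot supply the rank bound itself, since every $\Psi_t$ also satisfies it.
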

\begin{proof}
	All $\Omega_{2,g}$ elements are idempotents and from Table~\ref{tab:table} we have $\Omega_{1,1}\Omega_{2,g}=\Omega_{2,g}$. Thus, as $\os$ fixes $\Omega_{1,1}$ and sends idempotents to idempotents, $\Omega_{2,g}\os$ must be some idempotent such that $\Omega_{1,1}(\Omega_{2,g}\os)=\Omega_{2,g}\os$. As in Lemma~\ref{lem:Omega1Fixed} we deduce that $\Omega_{2,g}\os$ has rank 2. Looking at our multiplication table, the only rank 2 elements that have $\Omega_{1,1}$ as a left identity are those of the form $\Omega_{1,h}$ or $\Omega_{2,h}$. From Lemma~\ref{lem:Omega1Fixed} (applied to $\os^{-1}$) we deduce that $\Omega_{2,g}\os$ cannot equal any $\Omega_{1,h}$. Thus $\Omega_{2,g}\os=\Omega_{2,h}$ for some $h$. Calling upon the general theory of Lemma~\ref{lem:SameCentUnderAut} with $M=\en_2(\sn)$ gives us that $C(\Omega_{2,g})=C(\Omega_{2,h})$, which in turn implies that $C(g)=C(h)$. From Corollary~\ref{cor:centralisers} we deduce that $g=h$.
\end{proof}
\begin{lemma}\label{lem:Omega3Fixed}
Let $\os$ be in $\Ker(\mathfrak{R})$. Then $\os$ fixes all elements $\Omega_{3,g}$.
\end{lemma}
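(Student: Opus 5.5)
The plan is to exploit the fact that $\Omega_{1,1}$ and every $\Omega_{2,g}$ are already known to be fixed, by Lemmas~\ref{IdempotentHasse} and~\ref{lem:Omega2Fixed}. We then pin down $\Omega_{3,g}\os$ by a product identity read off from Table~\ref{tab:table}. The relevant entry is the row $\Omega_{1,h}$, column $\Omega_{3,g}$, which gives
\[\Omega_{1,1}\,\Omega_{3,g}=\Omega_{2,g}\]
for every $\phi_g\in E$.

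Fix $\os\in\Ker(\mathfrak{R})$ and set $\Lambda:=\Omega_{3,g}\os$. Applying $\os$ to the identity above and using that $\os$ fixes both $\Omega_{1,1}$ and $\Omega_{2,g}$, we get $\Omega_{1,1}\Lambda=\Omega_{2,g}$. So it suffices to determine every $\Lambda\in\en_2(\sn)$ with $\Omega_{1,1}\Lambda=\Omega_{2,g}$. We do this by reading along the row $\Omega_{1,1}$ of Table~\ref{tab:table} (so $h=1$, and $\phi_h=\phi_1$ in the case distinctions (\ref{Omega1Xi}), (\ref{Omega1Omega1}), (\ref{Omega1Omega2})):
\begin{itemize}
\item left-multiplying any $\Psi$, $\Xi$ or $\Delta$ element gives an element of the form $\Omega_{1,\cdot}$;
\item multiplying $\Phi^{k}_S$ gives $\Omega_{1,1S}$;
\item multiplying $\Omega_{1,k}$ gives $\Omega_{1,k}$;
\item multiplying $\Omega_{2,k}$ gives $\Omega_{2,k}$;
\item multiplying $\Omega_{3,k}$ gives $\Omega_{2,k}$;
\item multiplying $\Gamma_\omega$ gives $\Gamma_\omega$.
\end{itemize}
The sets labelled $\Omega_1$, $\Omega_2$ and $\Gamma$ are pairwise disjoint. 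Hence the only solutions are $\Lambda=\Omega_{2,g}$ and $\Lambda=\Omega_{3,g}$.

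To exclude $\Lambda=\Omega_{2,g}$, note that $\os$ is an automorphism and $\os^{-1}$ maps idempotents to idempotents by Lemma~\ref{lem:imageidentityzero}. Since $\Omega_{2,g}$ is idempotent, $\Omega_{3,g}=\Omega_{2,g}\os^{-1}$ would then be idempotent. This contradicts Lemma~\ref{lem:SetOfIdempotents}, which gives $(\Omega_{3,g})^2=\Gamma_{\phi_1}$. The same contradiction also follows more directly: $\Omega_{2,g}$ is fixed, so $\os$ cannot send a second element to it. Therefore $\Omega_{3,g}\os=\Omega_{3,g}$.

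The only step needing care is the row scan, specifically confirming that no $\Xi$, $\Delta$ or $\Phi$ element is sent into $\Omega_2$ by left multiplication by $\Omega_{1,1}$. For $\Phi^k_S$ the entry is $\Omega_{1,1S}$, which is of type $\Omega_1$ whatever $S$ is, so this is fine. If the table entries were in any doubt, a direct computation suffices: $\Omega_{1,1}$ sends $\au(\sn)\cup E$ to $\psi_1$, and composing with any unit-preserving map keeps $\psi_1$ in the image. So the product has $\psi_1$ in its image, whereas $\Omega_{2,g}$ does not.
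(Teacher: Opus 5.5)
Your proof is correct, but it takes a different route from the paper's.

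The paper argues via squares. Since $\Omega_{3,g}^2=\Gamma_{\phi_1}$ and $\Gamma$ is characteristic, $(\Omega_{3,g}\os)^2\in\Gamma$. The table shows that the only elements outside $\Gamma$ squaring into $\Gamma$ are the $\Omega_{1,h}$ and the $\Omega_{3,h}$. The $\Omega_{1,h}$ are already fixed by Lemma~\ref{lem:Omega1Fixed}, so $\Omega_{3,g}\os=\Omega_{3,h}$. Lemma~\ref{lem:SameCentUnderAut} then gives $C(g)=C(h)$, and Corollary~\ref{cor:centralisers} gives $g=h$.

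You instead use the identity $\Omega_{1,1}\Omega_{3,g}=\Omega_{2,g}$, together with the facts that $\Omega_{1,1}$ and $\Omega_{2,g}$ are already fixed (Lemmas~\ref{IdempotentHasse} and~\ref{lem:Omega2Fixed}). You then show that the fibre of left multiplication by $\Omega_{1,1}$ over $\Omega_{2,g}$ is exactly $\{\Omega_{2,g},\Omega_{3,g}\}$. Equivalently, $\Omega_{1,1}\Lambda=\Omega_{2,g}$ just says $\psi_1\Lambda=\phi_g$ and $\phi_1\Lambda=\phi_1$, and Theorem~\ref{thm:rank2} leaves only those two maps.

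What your route buys is that it fixes the index $g$ and the type simultaneously, so no centraliser argument is needed at this step. The cost is a dependence on Lemma~\ref{lem:Omega2Fixed}; the paper's proof needs only Lemma~\ref{lem:Omega1Fixed} and the characteristic ideal $\Gamma$.

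Your scan of row $\Omega_{1,1}$ of Table~\ref{tab:table} checks out. With $h=1$, cases (\ref{Omega1Xi})--(\ref{Omega1Omega2}) give $\Omega_{1,1}$, $\Omega_{1,k}$ and $\Omega_{2,k}$. The $\Phi$ column gives $\Omega_{1,1S}$, which is always of type $\Omega_1$. Your final exclusion of $\Omega_{2,g}$, using that $\os$ is injective and fixes $\Omega_{2,g}$, is sound.
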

\begin{proof}
	For any $\Omega_{3,g}$ we have that $\Omega_{3,g}^{2}=\Gamma_{\phi_1}$. Thus, as $\Gamma$ is characteristic, $(\Omega_{3,g}\os)^2=\Gamma_\lambda$ for some $\lambda$. Looking at our composition table, the only elements for which the square lies in $\Gamma$ are those of the form $\Omega_{1,h}$ or $\Omega_{3,h}$. As the elements $\Omega_{1,h}$ are fixed by $\os$, we must have that $\Omega_{3,g}\os=\Omega_{3,h}$ for some $h$.

Similarly to Lemma~\ref{lem:Omega1Fixed} we obtain that $C(g)=C(h)$ and then $g=h$. Thus $\Omega_{3,g}\os=\Omega_{3,g}$, as required.
\end{proof}
\begin{lemma}\label{lem:GammaFixed}
	Let $\os$ be in $\Ker(\mathfrak{R})$. Then $\os$ fixes all elements in the minimal ideal $\Gamma$.
\end{lemma}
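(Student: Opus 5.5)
We need to show that any $\os\in\Ker(\mathfrak{R})$ fixes every $\Gamma_\lambda$, where $\lambda$ ranges over the idempotents $\psi_1$, $\phi_1$ and $\phi_g\in E$. By Remark~\ref{rem:Gamma_min_ideal}, $\Gamma$ is characteristic, so $\Gamma_\lambda\os=\Gamma_\mu$ for some idempotent $\mu$ of $\en(\sn)$. The plan is to recover $\lambda$ from products of $\Gamma_\lambda$ with elements already known to be fixed. The key observation is that in Table~\ref{tab:table} we have $\Gamma_\lambda\Theta=\Gamma_{\lambda\Theta}$ for every $\Theta\in\en_2(\sn)$. So for any $\Theta$ fixed by $\os$ we obtain $\Gamma_{\lambda\Theta}\os=\Gamma_{\mu\Theta}$.

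\textbf{Step 1: the elements $\Gamma_{\phi_g}$ with $\phi_g\in E$.} These are exactly the products $\Gamma_\lambda\Omega_{2,g}$ with $\lambda\in\au(\sn)\cup E$. Independently, by the $\Omega_{2,h}\Omega_{3,g}$ entry of Table~\ref{tab:table}, $\Gamma_{\phi_1}=\Omega_{2,h}\Omega_{3,g}$. By Lemmas~\ref{lem:Omega2Fixed} and~\ref{lem:Omega3Fixed} both factors are fixed by $\os$, so $\Gamma_{\phi_1}$ is fixed.

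\textbf{Step 2: $\Gamma_{\psi_1}$.} The $\Omega_{1,h}\Omega_{1,g}$ entry gives $\Gamma_{\psi_1}=\Omega_{1,h}\Omega_{1,g}$ whenever $\phi_h\in E$. By Lemma~\ref{lem:Omega1Fixed} both factors are fixed, hence so is $\Gamma_{\psi_1}$. Alternatively, Lemma~\ref{IdempotentHasse} already shows that $\os$ permutes $\{\Gamma_{\psi_1},\Gamma_{\phi_1}\}$, so once $\Gamma_{\phi_1}$ is fixed, $\Gamma_{\psi_1}$ is fixed too.

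\textbf{Step 3: $\Gamma_{\phi_g}$ for $\phi_g\in E$.} The table entry $\Omega_{2,h}\Omega_{2,g}=\Omega_{2,g}$ does not produce a $\Gamma$, so I instead use $\Omega_{1,h}\Omega_{2,g}=\Gamma_{\phi_g}$ with $\phi_h\in E$ (case~(\ref{Omega1Omega2})). Both factors are fixed by Lemmas~\ref{lem:Omega1Fixed} and~\ref{lem:Omega2Fixed}, hence $\Gamma_{\phi_g}\os=\Gamma_{\phi_g}$. As a backup argument, $\Gamma_{\phi_g}=\Gamma_{\psi_1}\Omega_{2,g}$, which is a product of fixed elements by Step~2. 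A third route is via centralisers: Lemma~\ref{lem:SameCentUnderAut} gives $C(\Gamma_{\phi_g})=C(\Gamma_\mu)$. Since conjugation gives $\Gamma_\lambda^{\Psi_t}=\Gamma_{\lambda\Psi_t}$, this reduces to $C(g)=C(k)$, and then Corollary~\ref{cor:centralisers} applies once we know the parity matches.

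\textbf{Expected obstacle.} The only real risk is that I am misreading the table entries (\ref{Omega1Omega1}) and (\ref{Omega1Omega2}). In particular, I need to confirm that the condition ``$\phi_h\in E$'' refers to the left factor's index, so that choosing $\phi_h\in E$ is legitimate. Since $E$ is non-empty, such $h$ exist, and each $\Gamma_\lambda$ is then expressed as a product of elements fixed by Lemmas~\ref{lem:Omega1Fixed}--\ref{lem:Omega3Fixed}, which completes the proof.
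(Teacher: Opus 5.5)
Your proof is correct, but it takes a different route from the paper's.

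\textbf{The paper's route.} It argues via centralisers. Since $\Gamma$ is characteristic, $\Gamma_\lambda\os=\Gamma_{\lambda'}$, and Lemma~\ref{lem:SameCentUnderAut} gives $C(\lambda)=C(\lambda')$. Because $\sn$ is centreless, this shows $\os$ either fixes or swaps $\Gamma_{\psi_1}$ and $\Gamma_{\phi_1}$. The swap is excluded using $\Gamma_{\phi_1}=\Omega_{3,g}^2$ together with Lemma~\ref{lem:Omega3Fixed}. For $\lambda=\phi_g\in E$, Corollary~\ref{cor:centralisers} then forces $\lambda'=\lambda$.

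\textbf{Your route.} You factor every element of $\Gamma$ as a product of two elements already known to be fixed: $\Gamma_{\phi_1}=\Omega_{2,h}\Omega_{3,g}$, $\Gamma_{\psi_1}=\Omega_{1,h}\Omega_{1,g}$ and $\Gamma_{\phi_g}=\Omega_{1,h}\Omega_{2,g}$, with $\phi_h\in E$. Your reading of cases (\ref{Omega1Omega1}) and (\ref{Omega1Omega2}) is right: the condition refers to the index of the left factor. A direct computation confirms that both products are constant, since $\Omega_{1,h}$ sends everything into $\{\psi_1,\phi_h\}\subseteq\{\psi_1\}\cup E$. Because $\{\psi_1\}\cup E\cup\{\phi_1\}$ is the full set of idempotents of $\en(\sn)$, this covers all of $\Gamma$.

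\textbf{Comparison.} Your argument needs no centraliser computation and does not directly use that $\Gamma$ is characteristic. The cost is that it invokes Lemmas~\ref{lem:Omega1Fixed} and~\ref{lem:Omega2Fixed} as well as Lemma~\ref{lem:Omega3Fixed}, whereas the paper needs only the last. This is harmless, since none of those lemmas depends on the present one.

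\textbf{Minor presentational points.} Neither affects the main argument.
\begin{itemize}
\item The heading of your Step~1 is mislabelled: that step proves $\Gamma_{\phi_1}$ is fixed, not the elements $\Gamma_{\phi_g}$ with $\phi_g\in E$.
\item The ``alternative'' in Step~2 does not follow from Lemma~\ref{IdempotentHasse} as stated. It follows from Lemma~\ref{CharIdempts} combined with $\Gamma$ being characteristic.
\end{itemize}
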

\begin{proof}
By Remark~\ref{rem:Gamma_min_ideal}, we know that $\Gamma$ is characteristic. So consider $\Gamma_\lambda$ and set $\Gamma_\lambda\os=\Gamma_{\lambda'}$. Hence, by Lemma~\ref{lem:SameCentUnderAut} we have that $C(\Gamma_{\lambda})=C(\Gamma_{\lambda'})$, and it follows by a now standard argument that $C(\lambda)=C(\lambda')$.
	
	Recall that $\lambda$ is one of: $\psi_1$, $\phi_1$, or $\phi_g$ for some odd $g$ of order 2 such that $g\neq 1$. Notice that $C(\psi_1)=\{\psi_t\mid t\in\sn\}$ and that $C(\phi_{g})=\{\psi_t\mid t\in C(k)\}$. Since $\sn$ is centreless, it follows that $\os$ fixes both $\Gamma_{\phi_1}$ and $\Gamma_{\psi_1}$, or else transposes them. We use Lemma~\ref{lem:Omega3Fixed} to show the former must hold.

For any $\Omega_{3,g}$ we have
	\[\Gamma_{\phi_1}\os=(\Omega_{3,g}^2)\os=(\Omega_{3,g}\os)^2=\Omega_{3,g}^2=\Gamma_{\phi_1}.\]
So $\Gamma_{\phi_1}$ and, consequently, $\Gamma_{\psi_1}$ are both fixed by $\os$.
	
	The remaining option is that $\lambda=\phi_g$ and $\lambda'=\phi_h$ for some odd $g,h$ of order 2. But then we obtain $C(g)=C(h)$ and $g=h$ as before.

	Thus, all $\Gamma_\lambda$ are fixed by $\os$.
\end{proof}
\begin{lemma}\label{lem:DeltaXiInvatiant}
    For any $\os$ in $\Ker(\mathfrak{R})$ we have that $\Xi\os=\Xi$ and $\Delta\os=\Delta$. Further,
 	\[\Xi^1_X\os=\Xi^1_Y\Leftrightarrow\Xi^g_X\os=\Xi^g_Y\text{ and }\Delta^1_X\os=\Delta^1_Y\Leftrightarrow\Delta^g_X\os=\Delta^g_Y\]
for all $g$ in $\sn$.
\end{lemma}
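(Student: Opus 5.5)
The argument has two parts: first show that $\Xi\cup\Delta$ is mapped into itself, then separate $\Xi$ from $\Delta$ and propagate along $g$ using the units.

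\textbf{Step 1: $(\Xi\cup\Delta)\os=\Xi\cup\Delta$.} The elements of $\Psi\cup\Xi\cup\Delta$ are exactly those of $\en_2(\sn)$ whose restriction to $\au(\sn)$ is bijective. I would characterise this set intrinsically in $M=\en_2(\sn)$. By Lemmas~\ref{lem:Omega1Fixed}, \ref{lem:Omega2Fixed}, \ref{lem:Omega3Fixed} and \ref{lem:GammaFixed}, every $\os\in\Ker(\mathfrak{R})$ fixes $\Omega\cup\Gamma$ pointwise. Since $\os$ fixes the units pointwise, it permutes the complement $\Xi\cup\Delta\cup\Phi$, so it suffices to show that $\Phi$ is sent to $\Phi$.

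For this I would use a product test with a fixed element, for instance $\Omega_{3,h}$. From the table, $\Lambda\Omega_{3,h}=\Omega_{3,h}$ for all $\Lambda\in\Psi\cup\Xi\cup\Delta\cup\Phi$, so that test is useless. Left products instead distinguish: by the table, $\Omega_{1,1}\Xi^g_X=\Omega_{1,1}\Delta^g_X=\Omega_{1,1^g}=\Omega_{1,1}$, whereas $\Omega_{1,1}\Phi^g_S=\Omega_{1,\phi_1 S}=\Omega_{1,1}$ since $\phi_1S=\phi_1$. That also fails. A better test is $\Omega_{3,h}\Lambda$. For $\Lambda\in\Xi\cup\Delta$ with $X\ne\sen(\sn)$ this gives $\Omega_{3,h^g}$ or $\Omega_{3,(i\;j)^g}$, which ranges over many values as $h$ varies. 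For $\Lambda=\Phi^g_S$ it gives $\Omega_{3,hS}$, whose index is fixed by $g$ (condition (5) of Definition~\ref{def:Phi}).

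Cleanest is the map $h\mapsto \Omega_{3,h}\Lambda$ on the fixed set $\{\Omega_{3,h}\}$: it is injective for $\Lambda\in\Xi\cup\Delta$ with $X\neq \sen(\sn)$, apart from the merging of the two classes of $E$ under $\Xi$. Since $\os$ fixes every $\Omega_{3,h}$, we have $(\Omega_{3,h}\Lambda)\os=\Omega_{3,h}(\Lambda\os)$, so the number of distinct values of this map is $\os$-invariant. For $\Phi^g_S$ the image is contained in $\{\Omega_{3,k}: k^g=k\}$, which is strictly smaller, since $g\ne 1$ when $\Phi^g_S$ is not of rank $\le 2$; the case $g=1$ with $S$ injective on classes needs a separate look. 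The elements with $X=\sen(\sn)$, namely $\Delta^g_Z$, send everything in this test to $\Gamma_{\phi_1}$, like $\Phi^g_Z$. For these I would use instead that $\Delta^1_Z$ is fixed (Lemma~\ref{IdempotentHasse}) and $\Delta^g_Z=\Psi_g\Delta^1_Z$.

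Distinguishing $\Xi$ from $\Delta$ then uses the same test: under $\Xi$ the map $h\mapsto\Omega_{3,h}\Lambda$ identifies $\Omega_{3,h}$, $|\Fix(h)|=2$, with $\Omega_{3,(i\;j)}$, whereas under $\Delta$ it is injective. This distinction is $\os$-invariant.

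\textbf{Step 2: the equivalences.} From the table, $\Xi^g_X=\Psi_g\Xi^1_X$ and $\Delta^g_X=\Psi_g\Delta^1_X$. Since $\os$ fixes $\Psi_g$, we get $\Xi^g_X\os=\Psi_g(\Xi^1_X\os)$. So $\Xi^1_X\os=\Xi^1_Y$ implies $\Xi^g_X\os=\Psi_g\Xi^1_Y=\Xi^g_Y$. Conversely, multiplying by $\Psi_{g^{-1}}$ recovers the $g=1$ case; the $\Delta$ statement is identical.

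The main obstacle is Step 1, namely excluding $\Phi^g_S\os\in\Xi\cup\Delta$ in the degenerate cases ($g=1$, or $X=\sen(\sn)$). There I would fall back on idempotent and order data. If $\Phi^1_S\os=\Delta^1_Y$ or $\Xi^1_Y$, with $\Delta^1_Y,\Xi^1_Y\in\mathcal{C}$ (Lemma~\ref{CharIdempts}), then by Lemma~\ref{lem:SameCentIdent} we would need $\Phi^1_S\in\mathcal{C}$. That forces $S=Z$, and $\Phi^1_Z$ is fixed by Lemma~\ref{IdempotentHasse}.
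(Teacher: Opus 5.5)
Your Step~2 matches the paper's unit-transport argument and is correct. Separating $\Xi$ from $\Delta$ with the test map $h\mapsto\Omega_{3,h}\Lambda$ is also valid; it is a variant of the paper's product with $\Omega_{1,g}$, $|\Fix(g)|=2$.

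The gap is in Step~1, which as written does not rule out $\Phi^g_S\os\in\Xi\cup\Delta$. There are three separate problems.
\begin{enumerate}[(i)]
\item The assertion that $g\neq 1$ whenever $\Phi^g_S$ has rank at least $3$ is false. Every $\Phi^1_S$ with $S$ non-constant (for instance $\Phi_{g,k}$) has rank $3$ or $4$, so $g=1$ is a whole family, not a degenerate case.
\item For $g\neq 1$ you only compare numbers of values, and the bound $|\{\phi_k\in E: k^g=k\}|$ is not below the number $|E|-|F|$ of values taken by the test map of a $\Xi$-element. For $n=8$ and $g=(1\;2)$ these numbers are $76$ and $28$. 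So $\Phi^g_S\os\in\Xi$ is not excluded.
\item Your fallback via $\mathcal{C}$ only excludes $\Phi^1_S\os\in\{\Delta^1_Y,\Xi^1_Y\}$. The targets $\Delta^{g'}_Y$ and $\Xi^{g'}_Y$ with $g'\neq 1$ lie outside $\mathcal{C}$ and are not addressed.
\end{enumerate}

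Either of two repairs closes the gap using only what you already have.

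\emph{Repair (a).} Since $\os$ fixes $\Omega\cup\Gamma$ pointwise and every product $\Omega_{3,h}\Lambda$ lies there, the test map is invariant itself, not just its number of values: $\Omega_{3,h}(\Lambda\os)=\Omega_{3,h}\Lambda$. Kernel classes of $S$ are unions of conjugacy classes. So for every $\Phi^g_S$ the test map is constant on the transpositions $t$: its value is $\Omega_{3,tS}$, or $\Gamma_{\phi_s}$ if $S=S(s)$. For $\Xi^{g'}_X$, and for $\Delta^{g'}_X$ with $X\neq\sen(\sn)$, it sends $t$ to $\Omega_{3,t^{g'}}$, which is injective. Together with the fact that every $\Delta^g_Z=\Psi_g\Delta^1_Z$ is fixed, this settles Step~1 for every $g$ at once.

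\emph{Repair (b).} Run your $\mathcal{C}$-argument forwards instead.
\begin{enumerate}[(i)]
\item $E(\en_2(\sn))\cap\mathcal{C}$ is characteristic, and its members outside $\Xi\cup\Delta$ are $\Psi_1,\Phi^1_Z,\Omega_{1,1},\Gamma_{\psi_1},\Gamma_{\phi_1}$. All of these are fixed, by Lemmas~\ref{IdempotentHasse} and~\ref{lem:GammaFixed}.
\item Hence $\os$ permutes $\{\Xi^1_X,\Delta^1_X\}$.
\item The identities $\Xi^g_X\os=\Psi_g(\Xi^1_X\os)$ and $\Delta^g_X\os=\Psi_g(\Delta^1_X\os)$ carry this to all top indices.
\end{enumerate}
Repair~(b) is essentially the paper's proof. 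The paper singles out $\{\Xi^1_X,\Delta^1_X\mid X\neq\sen(\sn)\}$, together with $\Psi_1$, as the elements of $E(\en_2(\sn))\cap\mathcal{C}$ whose down set has size at least $6$.
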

\begin{proof}
	The elements $\Xi^g_X$ and $\Delta^g_X$ are the products $\Xi^1_X\Psi_g$ and $\Delta^1_X\Psi_g$ respectively. As $\os$ is in $\Ker(\mathfrak{R})$, it must fix $\Psi_g$. Thus,
	\[\Xi^g_X\os=(\Xi^1_X\Psi_g)\os=\Xi^1_X\os\Psi_g\os=\Xi^1_X\os\Psi_g,\]
and similarly,
	\[\Delta^g_X\os=\Delta^1_X\os\Psi_g\os=\Delta^1_X\os\Psi_g.\]
Thus the second statement follows, and in order to determine $\Xi^g_X\os$ or $\Delta^g_X\os$, we only need determine $\Xi^1_X\os$ and $\Delta^1_X\os$.
	
From Lemma~\ref{IdempotentHasse} we know that $\Delta^1_Z\os=\Delta^1_Z$ so we take $X\neq Z$. In the proof of Lemma~\ref{IdempotentHasse} we remarked that $\Delta^1_X$ and $\Xi^1_X$ lie in $E(\en_2(\sn))\cap \mathcal{C}$ and they lie at the point $\Lambda$ on our Hasse diagram from Figure~\ref{fig:down set}, and they are precisely the elements that have a down set in $E(\en_2(\sn))\cap \mathcal{C}$ of size at least 6. Since $E(\en_2(\sn))\cap\mathcal{C}$ is characteristic, $\os$ must permute the set $\{\Delta^1_X,\Xi^1_X\mid X\neq\sen(\sn)\}$, and so it follows from the first paragraph that $\Delta\cup\Xi$ is characteristic in $\en_2(\sn)$.
	
Suppose that $\Delta^1_X\os=\Xi^1_Y$. Note that in this case, $n$ is a multiple of 4 and so we can pick a $g\in\sn\setminus\an$ such that $\Fix(g)=\{i,j\}$. Then, using Lemma~\ref{lem:Omega1Fixed}, we have
	\[(\Omega_{1,g}\Delta^1_X)\os=\Omega_{1,g}\os=\Omega_{1,g}.\]
However,
	\[\Omega_{1,g}\os\Delta^1_X\os=\Omega_{1,g}\Xi^1_Y=\Omega_{1,(i\;j)},\]
a contradiction. Thus, no $\Delta^1_X$ is sent to any $\Xi^1_Y$. It follows that each $\Delta^1_X$ is sent to some $\Delta^1_Y$. Thus, $\Delta\os=\Delta$ and, as $\Delta\cup\Xi$ is characteristic, $\Xi\os=\Xi$, as required.
\end{proof}
\begin{lemma}\label{lem:PhisCharUnderOs}
	For any $\os$ in $\Ker(\mathfrak{R})$ we have that $\Phi\os=\Phi$.
\end{lemma}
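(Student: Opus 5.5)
The plan is to argue by elimination. Since $\os$ is a bijection, it suffices to show $\Phi\os\subseteq\Phi$; applying the same argument to $\os^{-1}\in\Ker(\mathfrak{R})$ then gives equality. The set $\en_2(\sn)$ is the disjoint union of the type sets $\Psi,\Xi,\Delta,\Phi,\Omega,\Gamma$ (where $\Omega$ collects the $\Omega_{1,g},\Omega_{2,g},\Omega_{3,g}$). We already know how $\os$ acts on the other types:
\begin{enumerate}[(i)]
\item $\os$ fixes every $\Psi_g$, by definition of $\Ker(\mathfrak{R})$;
\item $\os$ fixes every element of $\Omega$, by Lemmas~\ref{lem:Omega1Fixed}, \ref{lem:Omega2Fixed} and~\ref{lem:Omega3Fixed};
\item $\os$ fixes every element of $\Gamma$, by Lemma~\ref{lem:GammaFixed};
\item $\Xi\os=\Xi$ and $\Delta\os=\Delta$, by Lemma~\ref{lem:DeltaXiInvatiant}.
\end{enumerate}

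Now let $\Phi^g_S\in\Phi$ and suppose $\Phi^g_S\os\notin\Phi$. Then $\Phi^g_S\os$ lies in one of $\Psi,\Xi,\Delta,\Omega,\Gamma$. Each of these sets $T$ satisfies $T\os=T$, so $T\os^{-1}=T$ as well. Hence $\Phi^g_S\in T$, which contradicts the disjointness of the type sets.

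For $\Psi,\Omega,\Gamma$ this is immediate, because every element of these sets is fixed by $\os$. For $\Xi$ and $\Delta$ it uses Lemma~\ref{lem:DeltaXiInvatiant}: since $\os$ is a bijection, $\Xi\os=\Xi$ forces $\Xi\os^{-1}=\Xi$, and similarly for $\Delta$. Concretely, if $\Phi^g_S\os=\Xi^h_Y$, then surjectivity of $\os$ restricted to $\Xi$ gives some $\Xi^{h'}_{Y'}$ with $\Xi^{h'}_{Y'}\os=\Xi^h_Y$. Injectivity then forces $\Phi^g_S=\Xi^{h'}_{Y'}$, which is impossible.

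The one point needing care is whether the type labels really partition $\en_2(\sn)$. Some rank-two elements of the form $\Phi^1_{S}$ (with $\rank(S)=1$) could coincide with elements written elsewhere, and the notation $\Delta^g_Z$ versus $\Phi^g_Z$ also needs checking. The remark following the summary list states that no duplicates occur, so $\Delta^g_Z$ and $\Phi^g_Z$ are distinct elements, differing on $\au(\sn)$ even though their restrictions to $\sen(\sn)$ agree. Granting that the labels are disjoint, the argument above is complete.

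As a fallback, one could also argue directly. The $\Phi$ elements are exactly those whose restriction to $\au(\sn)$ has image of size at most $2$ without being a rank-$1$ or $\Omega$-type map. This property is detected algebraically: $\Phi^g_S\Psi_t=\Phi^{g^t}_{S\Psi_t}$ and $\Psi_t\Phi^g_S=\Phi^g_S$ for all $t$, so each $\Phi$ element is fixed under left multiplication by every unit. By contrast, $\Psi_t\Xi^g_X=\Xi^{tg}_X$ and $\Psi_t\Delta^g_X=\Delta^{tg}_X$ are not fixed in general. That observation alone already separates $\Phi$ from $\Xi\cup\Delta$ in a characteristic way.
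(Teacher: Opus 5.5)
Your proposal is correct and is essentially the paper's proof: $\os$ fixes $\Psi$, $\Omega$ and $\Gamma$ elementwise and stabilises $\Xi$ and $\Delta$ by Lemma~\ref{lem:DeltaXiInvatiant}, so, being a bijection, it must stabilise the complementary set $\Phi$. Your explicit check that the type labels are pairwise disjoint (e.g.\ $\Delta^g_Z\neq\Phi^g_Z$) makes precise a point the paper leaves implicit. Your fallback is also valid: the property $\Psi_t\Lambda=\Lambda$ for all $t$ is preserved by $\os$ and singles out exactly $\Phi\cup\Omega\cup\Gamma$, so that route would even avoid Lemma~\ref{lem:DeltaXiInvatiant}.
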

\begin{proof}
	Our monoid $\en_2(\sn)$ is the union of $\Psi\cup\Xi\cup\Delta\cup\Phi\cup\Omega\cup\Gamma$. By definition $\os$ fixes $\Psi$, and we have shown in Lemmas~\ref{lem:Omega1Fixed} to~\ref{lem:Omega3Fixed} that $\os$ fixes $\Omega$, in Lemma~\ref{lem:GammaFixed} that $\os$ fixes $\Gamma$ and then in Lemma~\ref{lem:DeltaXiInvatiant} that $\os$ fixes $\Delta$ and $\Xi$. Consequently, $\os$ fixes the remaining set of elements, namely $\Phi$.
\end{proof}
\begin{lemma}\label{lem:PhiTopIndexFixed}
Let $\os$ be in $\Ker(\mathfrak{R})$. Then $\os$ fixes all elements in the set
	\[\{\Phi^g_Z\mid g^2=1\}.\]
\end{lemma}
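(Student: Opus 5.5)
The plan is to pin down the ``$S$-part'' of the image using the fixed idempotent $\Delta^1_Z$. Then I will pin down the ``$g$-part'' with the centraliser argument already used for the $\Omega$ elements.

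Let $\os\in\Ker(\mathfrak{R})$ and let $g\in\sn$ with $g^2=1$. By Lemma~\ref{lem:PhisCharUnderOs} we have $\Phi^g_Z\os=\Phi^h_S$ for some $h$ with $h^2=1$ and some admissible $S$. From Table~\ref{tab:table}, $\Delta^1_Z\Phi^g_S=\Phi^g_{ZS}$. Since $Z$ is the constant map to $\phi_1$ and $\phi_1S=\phi_1$, we get $ZS=Z$ for every admissible $S$. The one exception is the constant case \ref{PhiMapZ}, where $\phi_1S$ might be some $\phi_t\in E$; I will need to check that case separately, but there $ZS$ is still a constant map. In particular $\Delta^1_Z\Phi^g_Z=\Phi^g_Z$. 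By Lemma~\ref{IdempotentHasse}, $\Delta^1_Z$ is fixed by $\os$, so applying $\os$ gives $\Phi^h_S=\Delta^1_Z\Phi^h_S=\Phi^h_{ZS}$.

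If $ZS=Z$, this forces $S=Z$ and so $\Phi^g_Z\os=\Phi^h_Z$. If instead $S=S(t)$ with $\phi_t\in E$, the equation $\Phi^h_{S(t)}=\Phi^h_{ZS(t)}=\Phi^h_{S(t)}$ gives no contradiction. To rule this case out, I would use instead that $\Phi^1_Z$ is fixed by Lemma~\ref{IdempotentHasse}. By (\ref{PhiPhi}), $\Phi^g_Z\Phi^1_Z$ equals $\Phi^1_{ZZ}=\Phi^1_Z$ in either parity case. Applying $\os$ gives $\Phi^h_{S(t)}\Phi^1_Z=\Phi^1_Z$. The left-hand side is $\Phi^{\ast}_{S(t)Z}=\Phi^{\ast}_Z$, so this is not yet decisive. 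The cleaner route is the identity $\Phi^g_Z\Delta^1_Z=\Phi^{g}_{ZZ}=\Phi^g_Z$ from the table entry $\Phi^h_T\Delta^g_X=\Phi^{h^g}_{TX^\sharp\Psi_g}$. Applying $\os$ gives $\Phi^h_S=\Phi^h_{SZ}=\Phi^h_Z$, because $SZ=Z$ for every $S$. This settles $S=Z$ outright, and it is the step I would write first.

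Next, $\Phi^g_Z\Psi_t=\Phi^{g^t}_Z$ and $\Psi_t\Phi^g_Z=\Phi^g_Z$, so $C(\Phi^g_Z)=\{\Psi_t\mid t\in C(g)\}$. Lemma~\ref{lem:SameCentUnderAut} then gives $C(g)=C(h)$. If $g=1$ this forces $h$ to be central, hence $h=1$, and conversely; alternatively $\Phi^1_Z$ is fixed directly by Lemma~\ref{IdempotentHasse}. For $g\neq1$, I will obtain the parity condition needed for Corollary~\ref{cor:centralisers} from idempotency. By Lemma~\ref{lem:SetOfIdempotents}, $\Phi^g_Z$ is idempotent exactly when $g$ is odd or $g=1$, and $\os$ preserves idempotents in both directions. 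Hence $g$ and $h$ are both odd, or both even and nontrivial. Corollary~\ref{cor:centralisers} then gives $g=h$.

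The main obstacle is the first step: correctly reading off the table products that force $S=Z$, including the constant maps $S(t)$. The right-multiplication by $\Delta^1_Z$ handles this uniformly.
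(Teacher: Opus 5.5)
Your proposal is correct and follows essentially the same route as the paper: right-multiplying by the fixed idempotent $\Delta^1_Z$ gives $\Phi^h_S=\Phi^h_S\Delta^1_Z=\Phi^h_Z$, forcing $S=Z$; then idempotency gives matching parity (with $g=1$ handled separately), and the centraliser equality with Corollary~\ref{cor:centralisers} yields $g=h$. In a write-up, drop the abandoned left-multiplication and $\Phi^1_Z$ detours and lead with the $\Delta^1_Z$ identity.
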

\begin{proof}
	Consider any $\Phi^g_Z$. We saw in Lemma~\ref{lem:PhisCharUnderOs} that $\Phi\os=\Phi$, so $\Phi^g_Z\os=\Phi^h_S$ for some $h$ and some $S$. We also saw in Lemma~\ref{IdempotentHasse} that $\Delta^1_Z$ is fixed by $\os$.
	
As $\Phi^g_Z\Delta^1_Z=\Phi^g_Z$ we must have $\Phi^g_Z\os\Delta^1_Z\os=\Phi^g_Z\os$ and so $\Phi^h_S\Delta^1_Z=\Phi^h_S$. This implies that $S=Z$. So $\Phi^g_Z\os=\Phi^h_Z$ for some $h$.
	
By Lemma~\ref{lem:SetOfIdempotents}, $\Phi^g_Z$ is an idempotent if and only if $g\in(\sn\setminus\an)\cup\{1\}$, and from Lemma~\ref{IdempotentHasse} we know that $\Phi_Z^1$ is fixed by $\os$. Since $\os$ must send idempotents to idempotents, it follows that $g$ and $h$ are either both in $\sn\setminus\an$, or both in $\an\setminus\{1\}$, or $g=h=1$.
	
Using Lemma~\ref{lem:SameCentUnderAut} in the current situation, we get that $C(\Phi^g_Z)=C(\Phi^h_Z)$ which leads to $C(g)=C(h)$ and from Corollary~\ref{cor:centralisers} that $g=h$. Thus $\Phi^g_Z\os=\Phi^g_Z$ for all $g$, as required.
\end{proof}
\begin{lemma}\label{PhiTopIndex}
	Let $\os$ be in $\Ker(\mathfrak{R})$. Then for any $\Phi^g_S$ we have $\Phi^g_S\os=\Phi^g_T$ for some $T$. That is, the top index of an element in $\Phi$ stays the same.
\end{lemma}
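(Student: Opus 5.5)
We aim to recover the top index of $\Phi^g_S$ from a product with an element already known to be fixed. The natural candidate is $\Phi^g_Z$, which is fixed by Lemma~\ref{lem:PhiTopIndexFixed}.

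Fix $\os\in\Ker(\mathfrak{R})$ and an element $\Phi^g_S$. By Lemma~\ref{lem:PhisCharUnderOs} we may write $\Phi^g_S\os=\Phi^h_T$ for some $h$ with $h^2=1$ and some admissible $T$, and we must show $h=g$. We use the product rule (\ref{PhiPhi}) of Table~\ref{tab:table} with $\Phi^g_S$ on the right. For any $\Phi^k_R$ we have $\Phi^k_R\Phi^g_S=\Phi^g_{RS}$ when $k\in\sn\setminus\an$. Choose $k$ to be a transposition; then $\Phi^k_Z$ is fixed by $\os$ by Lemma~\ref{lem:PhiTopIndexFixed}, and $\Phi^k_Z\Phi^g_S=\Phi^g_{ZS}$. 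Since $Z$ maps everything to $\phi_1$ and $\phi_1S=\phi_1$ (or $\phi_1 S=\phi_t$ in the constant case (a)), the composite $ZS$ is the constant map to $\phi_1S$. Thus $\Phi^k_Z\Phi^g_S=\Phi^g_{S(u)}$ with $\phi_u=\phi_1S$, and in most cases this is simply $\Phi^g_Z$.

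Applying $\os$ gives
\[
\Phi^g_{ZS}\os=\Phi^k_Z\Phi^h_T=\Phi^h_{ZT}.
\]
If $\phi_1S=\phi_1$, which holds for every $S$ of type (b) and for type (a) with $t=1$, then the left-hand side is $\Phi^g_Z\os=\Phi^g_Z$ by Lemma~\ref{lem:PhiTopIndexFixed}. Hence $\Phi^h_{ZT}=\Phi^g_Z$. Evaluating both maps at an odd unit $\psi_s$ gives $\psi_h=\psi_g$, so $h=g$. One check is needed here: the case $g=1$ (and similarly $h=1$) requires $\Phi^1_Z$ to be read correctly. In that case the map sends all units to $\psi_1$, and the comparison on odd units still distinguishes $\psi_1$ from $\psi_g$ with $g\neq 1$. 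The conclusion therefore holds uniformly.

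The main obstacle is the remaining case, where $S=S(t)$ is constant with image $\phi_t\in E$, so that $\phi_1S=\phi_t\neq\phi_1$. In this case $\Phi^g_{S(t)}$ has rank $3$ (Theorem~\ref{thm:rank3}), and the product above gives $\Phi^g_{S(t)}$ itself, which yields no information. Here we would instead argue via centralisers. By Lemma~\ref{lem:SameCentUnderAut}, $C(\Phi^g_{S(t)})=C(\Phi^h_T)$. Rank is preserved by $\os$, since $\os$ preserves the number of $\mathscr{H}$-classes below, or more simply because $\os$ permutes $\Phi$ and preserves $\Gamma$. We therefore expect $T=S(t')$ with $\phi_{t'}\in E$ as well, although this step may need care.

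Note that $\Psi_s$ commutes with $\Phi^g_{S(t)}$ exactly when $s\in C(g)\cap C(t)$. So we need $C(g)\cap C(t)=C(h)\cap C(t')$. Right multiplication by the fixed element $\Delta^1_Z$ gives $\Phi^g_{S(t)}\Delta^1_Z=\Phi^g_Z$, since $S(t)$ followed by $\phi\mapsto\phi_1$ is $Z$. Hence $\Phi^g_Z=\Phi^h_T\Delta^1_Z=\Phi^h_Z$, and so $g=h$ directly. This multiplication by $\Delta^1_Z$ in fact handles every case at once: $\Phi^g_S\Delta^1_Z=\Phi^g_{SX^\sharp}=\Phi^g_Z$ using the table entry $\Phi^h_T\Delta^g_X$ with $X=\sen(\sn)$. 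I would therefore present this as the main argument, with the transposition product as a cross-check.
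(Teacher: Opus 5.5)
Your final argument is correct and is exactly the paper's proof: right-multiply by the fixed idempotent $\Delta^1_Z$, so that $\Phi^g_Z=(\Phi^g_S\Delta^1_Z)\os=\Phi^h_T\Delta^1_Z=\Phi^h_Z$ and hence $h=g$, using the same ingredients (Lemmas~\ref{lem:PhisCharUnderOs}, \ref{IdempotentHasse} and~\ref{lem:PhiTopIndexFixed}). The preliminary transposition-product route and the unsupported remark that $\os$ preserves rank are unnecessary and can be dropped.
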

\begin{proof}
	Recall from Lemma~\ref{lem:PhiTopIndexFixed} that all $\Phi^g_Z$ are fixed and from Lemma~\ref{IdempotentHasse} that $\Delta^1_Z$ is fixed. Let $\Phi^g_S$ be in $\en_2(\sn)$. Then $\Phi^g_S\Delta^1_Z=\Phi^g_Z$. From Lemma~\ref{lem:PhisCharUnderOs} $\Phi^g_S\os=\Phi^h_T$, for some $\Phi^h_T$. Then
	\[(\Phi^g_S\Delta^1_Z)\os=\Phi^g_Z\os=\Phi^g_Z\]
and
	\[\Phi^g_S\os\Delta^1_Z\os=\Phi^h_T\Delta^1_Z=\Phi^h_Z.\]
So $\Phi^g_Z=\Phi^h_Z$ which implies that $g=h$. That is, $\Phi^g_S\os$ must be $\Phi^g_T$ for some $T$.
\end{proof}
\begin{lemma}\label{PhiRank1}
	Let $\os$ be in $\Ker(\mathfrak{R})$. Then $\os$ fixes all $\Phi^g_S$ where $\rank(S)=1$.
\end{lemma}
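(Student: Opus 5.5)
The plan is to exploit the fact that a map $S$ of rank one is exactly $S(t)$ for some $\phi_t\in E\cup\{\phi_1\}$, with $t^g=t$ by Theorem~\ref{thm:rank3}. The case $t=1$ is Lemma~\ref{lem:PhiTopIndexFixed}, so we assume $\phi_t\in E$. By Lemma~\ref{PhiTopIndex} we already know that $\Phi^g_{S(t)}\os=\Phi^g_T$ for some admissible $T$. It therefore suffices to show first that $T$ has rank one, and then that its image is $\{\phi_t\}$.

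\textbf{Step 1: $\rank(T)=1$.} I would use entry (\ref{Omega2Phi}) of Table~\ref{tab:table}, which gives
\[\Omega_{2,h}\Phi^g_S=\Gamma_{\phi_s}\ \text{ if } S=S(s), \qquad \Omega_{2,h}\Phi^g_S=\Omega_{2,hS}\ \text{ otherwise}.\]
Take any $\Omega_{2,h}$; it is fixed by $\os$ by Lemma~\ref{lem:Omega2Fixed}. Then
\[\Gamma_{\phi_t}\os=(\Omega_{2,h}\Phi^g_{S(t)})\os=\Omega_{2,h}\Phi^g_T.\]
Since $\Gamma$ is characteristic (Remark~\ref{rem:Gamma_min_ideal}), the right-hand side lies in $\Gamma$. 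This forces $T$ to be some $S(s)$, because otherwise the product would be $\Omega_{2,hT}\notin\Gamma$. If one prefers to avoid the characteristic-subset argument, Lemma~\ref{lem:GammaFixed} gives directly that $\Gamma_{\phi_t}\os=\Gamma_{\phi_t}$, which is even stronger.

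\textbf{Step 2: $s=t$.} By Lemma~\ref{lem:GammaFixed}, $\Gamma_{\phi_t}$ is fixed, so the computation above gives $\Gamma_{\phi_t}=\Omega_{2,h}\Phi^g_{S(s)}=\Gamma_{\phi_s}$. Hence $s=t$ and $\Phi^g_{S(t)}\os=\Phi^g_{S(t)}$.

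There is one caveat to check. The case of (\ref{Omega2Phi}) with $S=S(s)$ has $\phi_s$ ranging over $E\cup\{\phi_1\}$, so $s=1$ is allowed. If $s=1$ then $T=Z$, and this is excluded because $\Phi^g_Z$ is fixed (Lemma~\ref{lem:PhiTopIndexFixed}) and $\os$ is injective. This case is also covered directly by $\Gamma_{\phi_1}\neq\Gamma_{\phi_t}$.

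\textbf{Main obstacle.} The main risk is verifying table entry (\ref{Omega2Phi}) directly from the definitions. For $\omega\in\au(\sn)\cup E$ we have $\omega\Omega_{2,h}=\phi_h$, and for the other $\omega$ we have $\omega\Omega_{2,h}=\phi_1$. Then $\phi_h\Phi^g_{S(s)}=\phi_s=\phi_1\Phi^g_{S(s)}$ when $S=S(s)$ has constant image $\phi_s$, so the composite is indeed constant. Since $\phi_1 S(s)=\phi_s$ may differ from $\phi_1$, I would confirm which maps $S$ of rank one are admissible in the Summary, namely case (a): image $\{\phi_t\}$ on all of $\sen(\sn)$. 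With that confirmed, the composite is $\Gamma_{\phi_s}$ as the table states, and the argument goes through.
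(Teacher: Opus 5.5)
Your proof is correct. It shares the reduction via Lemma~\ref{PhiTopIndex} with the paper, but pins down $T$ using a different product.

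The paper left-multiplies by the elements $\Omega_{1,t}$, which are fixed by Lemma~\ref{lem:Omega1Fixed}. From $\Omega_{1,1}\Phi^g_{S(h)}=\Omega_{1,h}$ it obtains $\Omega_{1,1}\Phi^g_T=\Omega_{1,h}$, that is, $\phi_1T=\phi_h\in E$. This already excludes every non-constant admissible $T$, since all of these fix $\phi_1$, and it also identifies the constant value.

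You instead left-multiply by a single $\Omega_{2,h}$, fixed by Lemma~\ref{lem:Omega2Fixed}. The product $\Omega_{2,h}\Phi^g_S$ collapses into the minimal ideal $\Gamma$ exactly when $S$ has rank one. Combined with Lemma~\ref{lem:GammaFixed}, this gives $\Gamma_{\phi_t}=\Omega_{2,h}\Phi^g_T$.

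Your Step~1 is in fact redundant. Once you use Lemma~\ref{lem:GammaFixed}, that single equation together with table entry (\ref{Omega2Phi}) rules out $T$ of rank at least two, since the product would then be $\Omega_{2,hT}\notin\Gamma$. It also rules out $T=S(s)$ with $s\neq t$, including $s=1$. So $T=S(t)$ follows in one step.

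The trade-off is minor. The paper's route needs only the fixedness of $\Omega_{1,1}$ and $\Omega_{1,h}$. Yours relies on two further earlier results, Lemma~\ref{lem:Omega2Fixed} and Lemma~\ref{lem:GammaFixed} (the latter via Lemma~\ref{lem:Omega3Fixed}). Neither depends on the present lemma, so there is no circularity. In exchange, your route detects rank one transparently, as collapse into $\Gamma$.
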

\begin{proof}
	Recall from Lemma~\ref{lem:PhiTopIndexFixed} that $\Phi^g_Z$ is fixed for all $g$, so we only need focus on those $\Phi^g_S$ such that $\rank(S)=1$ but $S\neq Z$. So consider such $\Phi^g_S$, that is, $S=S(h)$ for some $\phi_h\in E$. From Lemma~\ref{PhiTopIndex}, we have that $\Phi^g_S\os=\Phi^g_T$ for some $T\neq Z$.
	
For all $\Omega_{1,t}$ in $\en_2(\sn)$, we have
	\[\Omega_{1,1}\Phi^g_S=\Omega_{1,t}\Phi^g_S=\Omega_{1,h}.\]
Recall from Lemma~\ref{lem:Omega1Fixed} that all $\Omega_{1,t}$ are fixed by $\os$. Thus, applying $\os$ yields the following for all $\Omega_{1,t}$ in $\en_2(\sn)$.
	\[\Omega_{1,1}\Phi^g_T=\Omega_{1,t}\Phi^g_T=\Omega_{1,h}\]
so that $\phi_1T=\phi_tT$ for all $\phi_t\in E$. Looking at the conditions on $T$, we must have that $\im(T)=\{\phi_h\}$. We deduce that $S=T$ and $\Phi^g_S=\Phi^g_T$ as required.
\end{proof}
\begin{corollary}\label{cor:phis}
	Let $\os$ be in $\Ker(\mathfrak{R})$. Let $\mathcal{T}$ denote the set of all $\Phi^1_S$ and let $\mathcal{S}$ denote the set of all $\Phi^1_S$ where $rank(S)\neq 1$. Then $\mathcal{S}$ is a subsemigroup of $\mathcal{T}$. Further, $\mathcal{T}\os=\mathcal{T}$ and $\mathcal{S}\os=\mathcal{S}$.
\end{corollary}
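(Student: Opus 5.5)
The statement has three parts: $\mathcal{S}$ is a subsemigroup of $\mathcal{T}$, $\mathcal{T}$ is invariant under $\os$, and $\mathcal{S}$ is invariant under $\os$. I will take them in that order.

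\emph{$\mathcal{S}$ is a subsemigroup of $\mathcal{T}$.} Since the top index $1$ lies in $\an$, case (\ref{PhiPhi}) of Table~\ref{tab:table} gives
\[\Phi^1_T\Phi^1_S=\Phi^1_{TS},\]
so $\mathcal{T}$ is closed under multiplication. For $\mathcal{S}$, take $S,T$ of the second kind in the summary, so that neither is constant. Then both preserve $E$, send $A$ into $A\cup\{\phi_1\}$ and fix $\phi_1$, and $TS$ inherits these properties. Hence $TS$ maps $E$ into $E$ and fixes $\phi_1\notin E$. It is therefore not constant, and $\rank(TS)\neq 1$. The remaining conditions on $TS$ (kernel classes are unions of conjugacy classes, and the image condition, which is vacuous for top index $1$) are inherited directly.

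\emph{$\mathcal{T}\os=\mathcal{T}$.} By Lemma~\ref{PhiTopIndex}, each $\Phi^1_S$ is sent to some $\Phi^1_T$, so $\mathcal{T}\os\subseteq\mathcal{T}$. Since $\os^{-1}\in\Ker(\mathfrak{R})$, the same lemma applies to it, and since $\mathcal{T}$ is finite and $\os$ is a bijection, equality follows.

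\emph{$\mathcal{S}\os=\mathcal{S}$.} Lemma~\ref{PhiRank1} says that $\os$ fixes every $\Phi^1_S$ with $\rank(S)=1$. Thus $\os$ restricts to the identity on $\mathcal{T}\setminus\mathcal{S}$. As $\os$ is a bijection of $\mathcal{T}$ fixing $\mathcal{T}\setminus\mathcal{S}$ pointwise, it must permute the complement $\mathcal{S}$, which gives $\mathcal{S}\os=\mathcal{S}$.

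The only step needing genuine care is the closure of $\mathcal{S}$ in the first part: one must confirm that $TS$ is still non-constant and satisfies the defining conditions. The observation that $E$ is mapped into $E$ while $\phi_1$ is fixed settles this.
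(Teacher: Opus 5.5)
Your proof is correct and follows the same approach as the paper. You get closure from $\Phi^1_T\Phi^1_S=\Phi^1_{TS}$, obtain $\mathcal{T}\os=\mathcal{T}$ from Lemma~\ref{PhiTopIndex}, and get $\mathcal{S}\os=\mathcal{S}$ because Lemma~\ref{PhiRank1} fixes $\mathcal{T}\setminus\mathcal{S}$ pointwise; you are simply filling in the details the paper leaves as ``easy to check''.
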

\begin{proof}
	It is easy to check that $\mathcal{T}$ is a semigroup with subsemigroup $\mathcal{S}$. That $\mathcal{T}\os=\mathcal{T}$ follows from Lemma~\ref{PhiTopIndex}. Since we know from Lemma~\ref{PhiRank1} that $\Phi^1_S\os=\Phi^1_S$ for all $S$ with $\rank(S)=1$, the second statement follows.
\end{proof}
\begin{lemma}\label{ResToE}
	Let $\os$ be in $\Ker(\mathfrak{R})$ and let $\Lambda$ be in $\Delta\cup\Xi\cup\Phi$. Then for any $\phi_g\in E$,
	\[\phi_g\Lambda=\phi_h\Leftrightarrow\phi_g(\Lambda\os)=\phi_h.\]
That is, $\Lambda\mid_E=(\Lambda\os)\mid_E$.
\end{lemma}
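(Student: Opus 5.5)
The plan is to read off the restriction of $\Lambda$ to $E$ from products with the elements $\Omega_{2,g}$, which are all fixed by $\os$ (Lemma~\ref{lem:Omega2Fixed}). So the first step is to compute $\Omega_{2,g}\Lambda$ directly from the definitions. The map $\Omega_{2,g}$ sends $\au(\sn)\cup E$ to $\phi_g$ and $A\cup\{\phi_1\}$ to $\phi_1$. Composing (left to right) with $\Lambda$ therefore gives the map sending $\au(\sn)\cup E$ to $\phi_g\Lambda$ and $A\cup\{\phi_1\}$ to $\phi_1\Lambda$.

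For $\Lambda\in\Delta\cup\Xi\cup\Phi$ we have $\phi_1\Lambda=\phi_1$ in every case of the summary. We also have $\phi_g\Lambda\in E\cup\{\phi_1\}$: it lies in $E$ except for $\Delta^g_Z$ or $\Phi^g_S$ with $S$ sending $\phi_g$ to $\phi_1$, which only happens for $S=Z$. Hence
\[\Omega_{2,g}\Lambda=\begin{cases}\Omega_{2,h} & \text{if } \phi_g\Lambda=\phi_h\in E,\\ \Gamma_{\phi_1} & \text{if } \phi_g\Lambda=\phi_1.\end{cases}\]
This agrees with the table entries (\ref{Omega2Xi}), (\ref{Omega2Delta}) and (\ref{Omega2Phi}). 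There is a small bookkeeping check that $\Phi^g_{S(s)}$ with $\phi_s\in E$ gives $\Omega_{2,s}$ rather than a $\Gamma$; if the table's notation differs here, I will trust the direct computation. In every case the product $\Omega_{2,g}\Lambda$ determines $\phi_g\Lambda$ uniquely, because the indices $h$ are distinct for distinct $\Omega_{2,h}$ and the $\Gamma_{\phi_1}$ case corresponds exactly to $\phi_g\Lambda=\phi_1$.

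Next, apply $\os$. Since $\os$ fixes every $\Omega_{2,g}$, every $\Omega_{2,h}$ and $\Gamma_{\phi_1}$ (Lemmas~\ref{lem:Omega2Fixed} and~\ref{lem:GammaFixed}), we get
\[\Omega_{2,g}(\Lambda\os)=(\Omega_{2,g}\Lambda)\os=\Omega_{2,g}\Lambda.\]
Lemmas~\ref{lem:DeltaXiInvatiant} and~\ref{lem:PhisCharUnderOs} give $\Lambda\os\in\Delta\cup\Xi\cup\Phi$, so the same formula applies to $\Lambda\os$. Comparing the two sides then yields $\phi_g(\Lambda\os)=\phi_g\Lambda$ for every $\phi_g\in E$, which is both directions of the stated equivalence.

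The main obstacle is the case analysis showing that $\phi_1\Lambda=\phi_1$ and $\phi_g\Lambda\in E\cup\{\phi_1\}$ uniformly over all members of $\Delta\cup\Xi\cup\Phi$. That $\Lambda\os$ again lies in this union is handled by the earlier lemmas. The delicate points are the $\Phi^g_S$ with $\rank(S)=1$ and the $\Xi$ elements, where $\phi_g$ with $|\Fix(g)|=2$ is sent to a transposition, still in $E$. I will check these against the definitions in the summary rather than relying solely on the table.
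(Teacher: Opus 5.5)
Your strategy is essentially the paper's: you encode $\Lambda|_E$ in a left product with an element already known to be fixed by $\os$. The paper multiplies by $\Omega_{1,g}$, whereas you use $\Omega_{2,g}$, and with that choice one step is false. It is not true that $\phi_1\Lambda=\phi_1$ for every $\Lambda\in\Delta\cup\Xi\cup\Phi$. Take $\Lambda=\Phi^h_{S(s)}$ with $\phi_s\in E$, which is case (a) of the summary. Then $\phi_1\Lambda=\phi_1S(s)=\phi_s$, so $\Omega_{2,g}\Lambda$ is the constant map with value $\phi_s$, that is, $\Omega_{2,g}\Lambda=\Gamma_{\phi_s}$. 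This is exactly what entry~(\ref{Omega2Phi}) of Table~\ref{tab:table} records. So the ``direct computation'' you said you would trust over the table is what is wrong here. Your displayed case formula misses this case. So does your justification that $\os$ fixes $\Omega_{2,g}\Lambda$ because the product always lies in $\{\Omega_{2,h}\}\cup\{\Gamma_{\phi_1}\}$.

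The repair is short and uses only lemmas you already cite. $\Gamma_{\phi_s}$ is fixed by $\os$ by Lemma~\ref{lem:GammaFixed}, which covers all of $\Gamma$. Hence $\Omega_{2,g}(\Lambda\os)=(\Omega_{2,g}\Lambda)\os=\Omega_{2,g}\Lambda$ holds for every $\Lambda$. Evaluating both sides at $\psi_1$, and using $\psi_1\Omega_{2,g}=\phi_g$, gives $\phi_g(\Lambda\os)=\phi_g\Lambda$ directly. You then no longer need a case formula for $\Lambda\os$, nor the fact that $\Lambda\os\in\Delta\cup\Xi\cup\Phi$.

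The paper's choice avoids the issue entirely. Since $\psi_1\Lambda=\psi_1$ for all $\Lambda\in\Delta\cup\Xi\cup\Phi$, one gets $\Omega_{1,g}\Lambda=\Omega_{1,g\Lambda}$ uniformly. The index ranges over $E\cup\{\phi_1\}$, with $\Omega_{1,1}$ covering the case $\phi_g\Lambda=\phi_1$. So only Lemma~\ref{lem:Omega1Fixed} is needed and no $\Gamma$ elements arise.
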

\begin{proof}
	Any $\Lambda$ in these sets is such that $E\Lambda\subseteq E\cup\{\phi_1\}$. Thus, for any $\phi_g\in E$,
		\[\phi_g\Lambda=\phi_h\Leftrightarrow\Omega_{1,g}\Lambda=\Omega_{1,h}.\]
	From Lemma~\ref{lem:Omega1Fixed}, we have that $\os$ fixes $\Omega_{1,g}$ and $\Omega_{1,h}$. So if $\Omega_{1,g}\Lambda=\Omega_{1,h}$ then applying $\os$ yields
		\[\Omega_{1,g}(\Lambda\os)=\Omega_{1,h}.\]
    Since we also know that $\Lambda\os\in\Delta\cup\Xi\cup\Phi$ we deduce that this is equivalent to $\phi_g(\Lambda\os)=\phi_h$, as required.
\end{proof}
Notice that Lemma~\ref{ResToE} comes from a representation of how $\Lambda$ acts on $E$ using particular products in $\en_2(\sn)$. This gave us information on how $\Lambda\os$ acts on $E$. In order to invoke a similar trick for how $\Lambda\os$ acts on $A$ we will utilise a particularly `simple' set of elements in $\Phi$. These will be idempotent elements $\Phi^1_S$ where $S$ has a very restricted form (mapping all of $E$ to a single element and mapping all of $A$ to a single element). To simplify our notation we define them as follows:
\[\Phi_{g,k}:\omega\mapsto\begin{cases}
	\psi_1	&\text{if }\omega\in\au(\sn),\\
	\phi_g	&\text{if }\omega\in E,\\
	\phi_k	&\text{if }\omega\in A,\\
	\phi_1	&\text{if }\omega=\phi_1,
\end{cases}\]
where $\phi_g$ is in $E$ and $\phi_k$ is in $A\cup\{\phi_1\}$. Note that every such element has image with 4 elements (if $k\neq 1$) and 3 elements (if $k=1$).

By abuse of notation we will write the product $\Phi_{g,k}\Phi^t_S$, where $S$ is not constant, as $\Phi_{gS,kS}$ where $\Phi_{gS,kS}=\Phi_{g',k'}$ if and only if $\phi_gS=\phi_{g'}$ and $\phi_kS=\phi_{k'}$.
\begin{lemma}\label{lem:PhigkOsIsPhigk'}
	For all $\os$ in $\Ker(\mathfrak{R})$ we have that $\Phi_{g,k}\os=\Phi_{g,k'}$ for some $\phi_{k'}\in A$. Further, for all $\phi_g\in E$, we have $\Phi_{g,1}\os=\Phi_{g,1}$.
\end{lemma}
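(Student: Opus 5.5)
The plan is to pin down $\Phi_{g,k}\os$ step by step, using only elements already known to be fixed by every $\os\in\Ker(\mathfrak{R})$. Write $\Phi_{g,k}=\Phi^1_{S}$, where $S=S_{g,k}$ sends $E$ to $\phi_g$, $A$ to $\phi_k$ and $\phi_1$ to $\phi_1$. This $S$ is idempotent and has rank at least $2$.

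\emph{Step 1: reduce to $\Phi^1_T$ with $T$ constant on $E$.} By Lemma~\ref{PhiTopIndex}, $\Phi_{g,k}\os=\Phi^1_T$ for some admissible $T$. Since $\rank(S)\neq 1$, Lemma~\ref{PhiRank1} applied to $\os^{-1}$ shows $\rank(T)\neq1$. Indeed, otherwise $\Phi^1_T$ would be fixed, and then $\Phi_{g,k}=\Phi^1_T$, which is absurd. So by Corollary~\ref{cor:phis}, $T$ is of the non-constant type: $ET\subseteq E$, $AT\subseteq A\cup\{\phi_1\}$, $\phi_1T=\phi_1$, and the kernel classes of $T$ are unions of conjugacy classes. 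Lemma~\ref{ResToE} then gives $T|_E=S|_E$, so $T$ maps all of $E$ to $\phi_g$. Since automorphisms preserve idempotents, $\Phi^1_T$ is idempotent, so $T^2=T$. Thus the only remaining unknown is the restriction $T|_A$.

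\emph{Step 2: show $T$ is constant on $A$.} This is the main obstacle, because the fixed elements used so far ($\Omega$, $\Gamma$, $\Phi^g_Z$, $\Phi^1_{S(h)}$) only see how an element acts on $E\cup\{\psi_1,\phi_1\}$. I would first use Lemma~\ref{lem:SameCentUnderAut}, which gives $C(\Phi_{g,k})=C(\Phi^1_T)$. Computing with Table~\ref{tab:table}, $\Psi_t$ commutes with $\Phi^1_U$ exactly when $U$ commutes with conjugation by $t$. For $U=S_{g,k}$ this means $t\in C(g)\cap C(k)$. For $U=T$ it means $t$ centralises every $y$ with $\phi_y\in\im(T)$; because $T$ is constant on conjugacy classes, every such condition takes this form. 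Hence $C(g)\cap C(k)=C(g)\cap\bigcap_{\phi_y\in AT}C(y)$.

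To upgrade this to constancy, I would exploit the semigroup $\mathcal{T}$ of Corollary~\ref{cor:phis}, which $\os$ preserves setwise. The set $P_g:=\{\Phi_{g,k'}\}$ should have an intrinsic description inside $\mathcal{T}$. My candidate is the idempotents $\Lambda$ that act as $\phi_g$ on $E$ and satisfy $\Lambda\mathcal{T}_g\subseteq \Lambda\,\mathcal{T}$ with smallest possible principal right ideal, where $\mathcal{T}_g$ denotes the elements of $\mathcal{T}$ with $E$-part $\phi_g$. The reason is that $\Phi_{g,k}\Phi^1_U=\Phi_{g,kU}$, so right multiples of $\Phi_{g,k}$ stay inside $P_g$. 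By contrast, an idempotent $T$ that takes two or more values on $A$ has strictly more right multiples, and this count is an $\os$-invariant. If this count fails to separate the cases, my fallback is to use Lemma~\ref{lem:each_g_has_its_nice_k}. The idea is to choose a transposition factor of the relevant $k$ and combine it with the centraliser identity above, which should force every $y$ with $\phi_y\in AT$ to coincide.

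\emph{Step 3: the case $k=1$ and membership in $A$.} If $k\neq1$, Step 2 gives $T=S_{g,k'}$ with $\phi_{k'}\in A\cup\{\phi_1\}$. Applying the same argument to $\os^{-1}$ rules out $k'=1$, since $\Phi_{g,1}$ and $\Phi_{g,k}$ have images of different sizes (3 versus 4), and $\os$ preserves the size of the image via the fixed $\Gamma$'s. For $k=1$ the centraliser identity gives $C(g)=C(g)\cap\bigcap_y C(y)$, and the image-size invariant forces $AT=\{\phi_1\}$. Hence $\Phi_{g,1}\os=\Phi_{g,1}$.
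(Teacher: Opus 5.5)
Your proposal has a genuine gap: nothing in it excludes $T$ sending part of $A$ to $\phi_1$.

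\textbf{The mixed case.} Take $n\ge 8$, so that $A$ contains at least two conjugacy classes. Let $T$ send $E$ to $\phi_g$, the conjugacy class $\phi_k\au(\sn)$ to $\phi_k$, and the rest of $A\cup\{\phi_1\}$ to $\phi_1$. Then $\Phi^1_T$ is an idempotent of $\mathcal{S}$ that agrees with $\Phi_{g,k}$ on $E$. It has the same image $\{\psi_1,\phi_g,\phi_k,\phi_1\}$, hence the same centraliser $C(g)\cap C(k)$. A direct count gives $|\Phi^1_T\mathcal{T}|=|\Phi_{g,k}\mathcal{T}|=(|E|+1)+|E|(|A|+1)$. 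So your claim that a $T$ taking two values on $A$ has strictly more right multiples is false. The fallback via Lemma~\ref{lem:each_g_has_its_nice_k} cannot help either. The $g$ in $\Phi_{g,k}$ is given, not chosen as a transposition factor of $k$, and this $T$ has exactly the same centraliser anyway.

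\textbf{The case $k=1$.} There is no ``image-size invariant via the fixed $\Gamma$'s''. $\Gamma_\varepsilon$ exists only for idempotent $\varepsilon$, so $\Gamma_\varepsilon\Lambda=\Gamma_{\varepsilon\Lambda}$ only records $\Lambda$ on $\{\psi_1\}\cup E\cup\{\phi_1\}$. On that set $\Phi_{g,1}$ and $\Phi_{g,k}$ coincide. Centralisers do not replace it: if $4\mid n$, $\Fix(g)=\{i,j\}$ and $y=g(i\;j)$, then $C(g)\le C(y)$, so $\Phi_{g,y}$ and $\Phi_{g,1}$ have the same centraliser.

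\textbf{What is missing.} You need the paper's order of attack, which uses $\Phi_{g,1}$ as an anchor.
\begin{enumerate}
\item First fix $\Phi_{g,1}$ structurally. Since $\Phi^1_U\Phi_{g,1}\Phi^1_V=\Phi_{gV,1}$, the elements $\Phi_{h,1}$ form the minimal ideal of $\mathcal{S}$. This ideal is preserved by $\os$, and Lemma~\ref{ResToE} then gives $h=g$. Your own count would also do this step: once $AT\neq\{\phi_1\}$, $\Phi^1_T\mathcal{T}$ properly contains $\Phi_{g,1}\mathcal{T}$, because it also contains $\Phi^1_T$.
\item Next prove $AT\subseteq A$. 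If $\phi_sT=\phi_1$ for some $\phi_s\in A$, then $\Phi_{g,s}\Phi^1_T=\Phi_{g,1}$. Applying $\os^{-1}$ gives $(\Phi_{g,s}\os^{-1})\Phi_{g,k}=\Phi_{g,1}$. This forces $\Phi_{g,s}\os^{-1}=\Phi_{g,1}=\Phi_{g,1}\os^{-1}$, a contradiction.
\item Only after this does a size argument give $|AT|=1$. At that stage your right-ideal count is a legitimate substitute for the paper's down-set count. If $|AT|\ge 2$ and $AT\subseteq A$, then $\Phi^1_T\mathcal{T}$ contains every constant and every $\Phi_{e,a}$, and also $\Phi^1_T$ itself, so it is strictly larger than $\Phi_{g,k}\mathcal{T}$.
\end{enumerate}
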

\begin{proof}
	The element $\Phi_{g,k}$ is $\Phi^1_S$ for some $S$ and from Lemma \ref{PhiTopIndex} we know that $\Phi^1_S\os=\Phi^1_T$ for some $T$. Further, from Lemma \ref{PhiRank1}, we know that as $\rank(S)$ is not 1, neither is $\rank(T)$.  This can only happen when $ET\subseteq E$, $AT\subseteq A\cup\{\phi_1\}$ and $\phi_1T=\phi_1$. Using Lemma \ref{ResToE}, we also know that $\Phi_{g,k}$ and $\Phi^1_T$ must restrict to the same map on $E$. That is, $ET=\{\phi_g\}$. It remains only to show that $AT=\{\phi_{k'}\}$ for some $\phi_{k'}$.
	
	We start by letting $k=1$ and show that $\Phi_{g,1}$ is fixed by $\os$ for all $g$. First notice that $\mathcal{S}$ contains all the elements $\Phi_{h,s}$. Now observe that for any $\Phi^1_A,\Phi^1_B\in\mathcal{S}$ we have that $\Phi^1_A\Phi_{g,1}\Phi^1_B=\Phi_{gB,1}$. Thus, the set $\{\Phi_{g,k}\mid\phi_g\in E,k=1\}$ is the minimal ideal of $\mathcal{S}$. From Corollary~\ref{cor:phis}, we have that $\mathcal{S}\os=\mathcal{S}$, and it follows from Example~\ref{ex:ch} that $\{\Phi_{g,k}\mid\phi_g\in E,k=1\}\os=\{\Phi_{g,k}\mid\phi_g\in E,k=1\}$. Now, if $\Phi_{g,1}\os=\Phi_{h,1}$ then we use Lemma~\ref{lem:SameCentUnderAut} and notice that $C(\Phi_{g,1})=C(\Phi_{h,1})$ implies that $C(g)=C(h)$, which from Corollary~\ref{cor:centralisers} yields that $g=h$. Thus, $\Phi_{g,1}$ is fixed by $\os$.
	
	Suppose now that $k$ is not 1: we will show that $AT\subseteq A$. If there is some $\phi_{s}$ in $A$ such that $\phi_{s}T=\phi_1$ then
		\[\Phi_{g,s}\Phi^1_T=\Phi_{g,1}\]
	and applying $\os^{-1}$ (recalling that $\os^{-1}\in\Ker(\mathfrak{R})$) yields
		\[\Phi_{g,s}\os^{-1}\Phi_{g,k}=\Phi_{g,1}.\]

	Thus, $\Phi_{g,s}\os^{-1}$ must be equal to $\Phi_{g,1}$, implying that $s=1$, a contradiction. Thus, $AT\subseteq A$.
	
	Now we only need to show that $\rank(T)=3$, which forces $T$ to restrict to a constant map of $A$. We will do this by using the down set of $\Phi_{g,k}$ in the natural partial order of idempotents in $\Phi$. We will restrict our search to $\mathcal{S}=\mathcal{S}\os$ and denote the down set of $\Phi^1_B$, where $B=B^2$, in the idempotents of $\mathcal{S}$ by \[\Phi^{1\hspace{1mm}\downarrow}_B=\{\Phi^1_A\mid \Phi^1_A\Phi^1_B=\Phi^1_B\Phi^1_A=\Phi^1_A\}.\]
	
Recall that $\Phi_{g,k}\os=\Phi^1_T$ where $T$ is not constant and we must have that $T$ is idempotent. As $\os$ is an automorphism, the sizes of $\Phi_{g,k}^{\hspace{3mm}\downarrow}$ and $\Phi^{1\hspace{1mm}\downarrow}_T$ must be the same. Let $\Phi^1_B$ be strictly below $\Phi_{g,k}$. It is easy to see that $\rank(B)$ must be strictly less than 3, so that as $\rank(B)\neq 1$ we have $\rank(B)=2$. So $\Phi^1_B$ has the form $\Phi_{h,1}$ for some $h$. Further, as $\Phi_{h,1}\Phi_{g,k}=\Phi_{h,1}$, we must have $h=g$. That is, the down set of any $\Phi_{g,k}$ in the natural order of idempotents is exactly $\{\Phi_{g,k},\Phi_{g,1}\}$. Thus, if $\Phi_{g,k}\os=\Phi^1_T$ then the down set of $\Phi^1_T$ must be $\{\Phi^1_T,\Phi_{g,1}\}$. If $\rank(T)$ is strictly greater than 3, then there exists $\phi_u$ and $\phi_v$ in $A$ that are fixed by $T$ (since $T$ is idempotent) and then
	\[\Phi_{g,u}\Phi^1_T=\Phi_{g,u}=\Phi^1_T\Phi_{g,u}\quad\text{ and }\Phi_{g,v}\Phi^1_T=\Phi_{g,v}=\Phi^1_T\Phi_{g,v}.\]
So $\Phi_{g,u}$ and $\Phi_{g,v}$ are in the down set of $\Phi^1_T$ and since $\Phi_{g,1}$ must also be in the down set of $\Phi^1_T$, the size of this down set is strictly greater than 3, a contradiction. We deduce that $\rank(T)=3$. This leaves us with the only possibility, that $AT=\{\phi_{k'}\}$ for some $\phi_{k'}$ in $A$, as required.
\end{proof}
\begin{lemma}\label{lem:Phigh-Char}
	Let $\phi_k\in A$. Then there is some $\phi_g\in E$ such that $\Phi_{g,k}\os=\Phi_{g,k}$ for all $\os$ in $\Ker(\mathfrak{R})$.
\end{lemma}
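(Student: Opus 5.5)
Given $\phi_k\in A$, I take $g$ to be a transposition that is a factor of $k$. Then $\phi_g\in E$ because $g$ is odd of order two. This is exactly the choice provided by Lemma~\ref{lem:each_g_has_its_nice_k}.

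By Lemma~\ref{lem:PhigkOsIsPhigk'} we have $\Phi_{g,k}\os=\Phi_{g,k'}$ for some $\phi_{k'}\in A$, so the task is to show $k'=k$. The tool is centralisers: since $\os\in\Ker(\mathfrak{R})$, Lemma~\ref{lem:SameCentUnderAut} gives $C(\Phi_{g,k})=C(\Phi_{g,k'})$ in $\au(\en(\sn))=\{\Psi_t\}$.

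Next I compute this centraliser. For $t\in\sn$, the product $\Psi_t^{-1}\Phi_{g,k}\Psi_t$ sends $\au(\sn)$ to $\psi_1$. Because conjugation by $\psi_t$ preserves $E$, $A$ and $\phi_1$, it sends $E$ to $\phi_{g^t}$, $A$ to $\phi_{k^t}$, and $\phi_1$ to $\phi_1$. So $\Psi_t^{-1}\Phi_{g,k}\Psi_t=\Phi_{g^t,k^t}$, and $\Psi_t\in C(\Phi_{g,k})$ if and only if $g^t=g$ and $k^t=k$. Hence
\[C(\Phi_{g,k})=\{\Psi_t\mid t\in C(g)\cap C(k)\}.\]
Since the Table entry $\Phi^{h^g}_{T\Psi_g}$ already tells us how $\Psi$ acts on $\Phi$, this is consistent with the multiplication table.

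Equality of the centralisers therefore gives $C(g)\cap C(k)=C(g)\cap C(k')$, where $k,k'\in\an$ both have order two because $\phi_k,\phi_{k'}\in A$. Lemma~\ref{lem:each_g_has_its_nice_k} applied with this $g$ then forces $k'=k$, so $\Phi_{g,k}\os=\Phi_{g,k}$.

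The choice of $g$ depends only on $k$, not on $\os$, as the statement requires. The one point needing care is the conjugation identity $\Psi_t^{-1}\Phi_{g,k}\Psi_t=\Phi_{g^t,k^t}$, which comes from $\phi_g\psi_t=\phi_{g^t}$ and the fact that $\Psi_t$ preserves the partition. Given Lemmas~\ref{lem:each_g_has_its_nice_k} and~\ref{lem:PhigkOsIsPhigk'}, I expect no real obstacle.
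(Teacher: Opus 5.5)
Your proposal is correct and follows the paper's proof essentially step for step: choose $g$ as a transposition factor of $k$ via Lemma~\ref{lem:each_g_has_its_nice_k}, write $\Phi_{g,k}\os=\Phi_{g,k'}$ by Lemma~\ref{lem:PhigkOsIsPhigk'}, and use Lemma~\ref{lem:SameCentUnderAut} together with $C(\Phi_{g,k})=\{\Psi_t\mid t\in C(g)\cap C(k)\}$ to force $k'=k$. Your explicit check that $\Psi_t^{-1}\Phi_{g,k}\Psi_t=\Phi_{g^t,k^t}$ is a correct and slightly more detailed version of the paper's appeal to Table~\ref{tab:table}.
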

\begin{proof}
	Since $k\in\an$ and is of order 2 we may invoke Lemma~\ref{lem:each_g_has_its_nice_k} to choose an odd $g\in\sn$ of order 2 such that whenever $C(g)\cap C(k)=C(g)\cap C(k')$ with $k'\in\an$ of order two, we must have $k'=k$.
Moreover, $g$ can be taken to be any transposition that is a factor of $k$.

We have that $\phi_g\in E$ and by Lemma~\ref{lem:PhigkOsIsPhigk'}, we know that for $\os$ in $\Ker(\mathfrak{R})$ we have $\Phi_{g,k}\os=\Phi_{g,k'}$ for some $\phi_{k'}\in A$. Additionally, from Lemma~\ref{lem:SameCentUnderAut} with $M=\en_2(\sn)$, we must have that $C(\Phi_{g,k})=C(\Phi_{g,k'})$. Let $T:\sen(\sn)\rightarrow\{\phi_g,\phi_k,\phi_1\}$ and let $S:\sen(\sn)\rightarrow\{\phi_g,\phi_{k'},\phi_1\}$ be such that $\Phi_{g,k}=\Phi^1_T$ and $\Phi_{g,k'}=\Phi^1_S$. Then using Table~\ref{tab:table} we see that $\Psi_h\in C(\Phi_{g,k})$ if and only if $\Phi_{g,k}=\Phi_{g^h,k^h}$, that is, $h\in C(g)\cap C(k)$. Thus $C(\Phi^1_T)=C(\Phi^1_S)$ leads to $C(g)\cap C(k)=C(g)\cap C(k')$. Consequently, $k'=k$, which shows that the element $\Phi_{g,k}$ is fixed by $\os$.
\end{proof}
\begin{corollary}\label{lem:DeltasFixed}
	Let $\os$ in $\Ker(\mathfrak{R})$. Then $\os$ fixes all $\Delta^g_X$.
\end{corollary}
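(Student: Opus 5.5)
By Lemma~\ref{lem:DeltaXiInvatiant}, $\Delta\os=\Delta$, and $\Delta^g_X\os$ is determined by $\Delta^1_X\os$ via $\Delta^g_X\os=(\Delta^1_X\os)\Psi_g$. It therefore suffices to show that $\Delta^1_X\os=\Delta^1_X$ for every admissible $X$. The case $X=\sen(\sn)$, that is $\Delta^1_Z$, is already covered by Lemma~\ref{IdempotentHasse}. So fix $X\neq\sen(\sn)$, a union of conjugacy classes of $A\cup\{\phi_1\}$ containing $\phi_1$ and at least one class of $A$. Write $\Delta^1_X\os=\Delta^1_Y$ for some $Y$ of the same kind. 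Note that $Y\neq\sen(\sn)$, since $\Delta^1_Z$ is fixed and $\os$ is a bijection. The task is to show $Y=X$.

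The plan is to probe $\Delta^1_X$ on $A$ using the elements $\Phi_{g,k}$, in the same way that Lemma~\ref{ResToE} used the $\Omega_{1,g}$ to probe $E$. The key computation, from Table~\ref{tab:table} (entry $\Phi^h_T\Delta^g_X=\Phi^{h^g}_{TX^\sharp\Psi_g}$ with $g=h=1$), is
\[\Phi_{g,k}\Delta^1_X=\Phi^1_{TX^\sharp}=\begin{cases}\Phi_{g,1}&\text{if }\phi_k\in X,\\ \Phi_{g,k}&\text{if }\phi_k\notin X,\end{cases}\]
where $\Phi_{g,k}=\Phi^1_T$. This holds because $\Delta^1_X$ fixes $E$, fixes $A\setminus X$, and sends $X$ to $\phi_1$.

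Now take any $\phi_k\in A$. Use Lemma~\ref{lem:Phigh-Char} to choose $\phi_g\in E$, for instance a transposition factor of $k$, with $\Phi_{g,k}\os=\Phi_{g,k}$. By Lemma~\ref{lem:PhigkOsIsPhigk'} we also have $\Phi_{g,1}\os=\Phi_{g,1}$. Applying $\os$ to the displayed identity gives $\Phi_{g,k}\Delta^1_Y=\Phi_{g,1}$ when $\phi_k\in X$, and $\Phi_{g,k}\Delta^1_Y=\Phi_{g,k}$ when $\phi_k\notin X$. Evaluating the same formula with $Y$ in place of $X$, and using $\Phi_{g,1}\neq\Phi_{g,k}$ since $k\neq1$, we get $\phi_k\in Y$ if and only if $\phi_k\in X$. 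Since this holds for every $\phi_k\in A$, and both sets contain $\phi_1$, we conclude $X=Y$.

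The main point to verify carefully is the table computation: that the product $\Phi^1_T\Delta^1_X$ really is $\Phi^1_{TX^\sharp}$, with $X^\sharp$ acting as just described on $\im(T)=\{\phi_g,\phi_k,\phi_1\}$. It is also worth double-checking that $\Phi_{g,1}\ne\Phi_{g,k}$ for $\phi_k\in A$, which is immediate from their images. Everything else is a direct combination of the earlier lemmas.
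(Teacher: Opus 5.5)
Your proposal is correct and uses the same mechanism as the paper: the $\os$-fixed probes $\Phi_{g,k}$ from Lemma~\ref{lem:Phigh-Char} and $\Phi_{g,1}$ from Lemma~\ref{lem:PhigkOsIsPhigk'}, together with the product $\Phi_{g,k}\Delta^1_X\in\{\Phi_{g,1},\Phi_{g,k}\}$. The only difference is that the paper first writes $\Delta^1_X=\Delta^1_{X_1}\cdots\Delta^1_{X_m}$ and uses the semilattice structure to reduce to $X$ a single conjugacy class, and then needs only the direction $\phi_k\in X\Rightarrow\phi_k\in X'$; you treat arbitrary $X$ directly by testing both $\phi_k\in X$ and $\phi_k\notin X$, which makes that reduction unnecessary. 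One small step should be stated explicitly: $\Delta^1_X\os$ has top index $1$ because $\os$ sends idempotents to idempotents (Lemma~\ref{lem:SetOfIdempotents}).
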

\begin{proof}
	From Lemma~\ref{lem:DeltaXiInvatiant} it is enough to show that $\os$ fixes any $\Delta^1_X$. From Lemma~\ref{lem:SetOfIdempotents} the elements $\Delta^1_X$ are precisely the elements of $\Delta$ that are idempotent and certainly $\os$ maps idempotents to idempotents. We know that $\Delta\os=\Delta$ and consequently for any $X$ we have $\Delta^1_X\os=\Delta^1_{X'}$ for some $X'$. We know that $\os$ fixes $\Delta^1_Z$ so we may assume that $X\neq\sen(\sn)$. It follows that $X$ is a union of conjugacy classes of $A\cup\{\phi_1\}$, containing $\phi_1$. From Table~\ref{tab:table} we see that if $X=X_1\cup X_2\cup\cdots\cup X_m$, where all $X_i$ are conjugacy classes in $A$, then $\Delta^1_X=\Delta^1_{X_1}\Delta^1_{X_2}\cdots\Delta^1_{X_m}$. We now observe that the set of elements of the form $\Delta^1_X$ is a finite semilattice, isomorphic to the set of conjugacy classes of $A$ under union. Since $\os$ is a homomorphism, for any conjugacy class $Y$ of $A$ we have that $\Delta^1_Y\os=\Delta^1_{Y'}$ for some conjugacy class $Y'$ of $A$ and it is enough to show $\os$ fixes each such $\Delta^1_Y$.

  To this end let $Y=\phi_k\au(\sn)$ for some $\phi_k\in A$. By Lemma~\ref{lem:Phigh-Char} there exists $\phi_g\in E$ such that $\Phi_{g,k}\os=\Phi_{g,k}$ and a calculation gives $\Phi_{g,k}\Delta^1_Y=\Phi_{g,1}$. Also from Lemma~\ref{lem:PhigkOsIsPhigk'}, we know that $\Phi_{g,1}$ is fixed by $\os$. Thus,
	\[\Phi_{g,1}=\Phi_{g,1}\os=(\Phi_{g,k}\Delta^1_Y)\os=\Phi_{g,k}\Delta^1_{Y'}.\]
That is, $\phi_k$ must be in $Y'$ and so $Y'=\phi_k\au(\sn)=Y$. This completes the proof of the corollary.
\end{proof}
\begin{corollary}
	Let $\os$ in $\Ker(\mathfrak{R})$. Then $\os$ fixes all $\Xi^g_X$.
\end{corollary}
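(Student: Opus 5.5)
The plan mirrors the proof of Corollary~\ref{lem:DeltasFixed}. By Lemma~\ref{lem:DeltaXiInvatiant} we have $\Xi\os=\Xi$ and $\Xi^g_X\os=\Xi^1_X\os\Psi_g$. So it suffices to show that $\os$ fixes every idempotent $\Xi^1_X$, where $X$ is either empty or a union of conjugacy classes of $A\cup\{\phi_1\}$ containing $\phi_1$. By Lemma~\ref{lem:SetOfIdempotents} the idempotents of $\Xi$ are exactly the $\Xi^1_X$, and $\os$ preserves idempotents. Hence $\Xi^1_X\os=\Xi^1_{X'}$ for some admissible $X'$, and the task reduces to proving $X'=X$.

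First I would treat the smallest case $X=\emptyset$. From Table~\ref{tab:table} we have $\Xi^1_Y\Xi^1_X=\Xi^1_{X\cup Y}$, so $\Xi^1_\emptyset$ is the identity of the semilattice $\{\Xi^1_X\}$, and it is the unique element $\Lambda$ with $\Lambda\Xi^1_Y=\Xi^1_Y$ for all $Y$. Since $\os$ permutes this semilattice, it fixes $\Xi^1_\emptyset$.

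Next, products with the already-fixed $\Delta$'s should transfer the rest of the information. Table case~(\ref{XiDelta}) gives
\[\Xi^1_\emptyset\Delta^1_X=\Xi^1_X\]
for every admissible $X\neq\sen(\sn)$ (with $\Xi^1_{\{\phi_1\}}$ obtained from $\Delta^1_{\{\phi_1\}}$ in the same way). Applying $\os$, and using that $\Xi^1_\emptyset$ is fixed and that $\Delta^1_X$ is fixed by Corollary~\ref{lem:DeltasFixed}, we get
\[\Xi^1_X\os=\Xi^1_\emptyset\os\,\Delta^1_X\os=\Xi^1_\emptyset\Delta^1_X=\Xi^1_X.\]

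The main obstacle is making sure the bookkeeping in Table~\ref{tab:table} really gives $\Xi^1_\emptyset\Delta^1_X=\Xi^1_X$ for all relevant $X$, including the conventions about when $X$ contains $\phi_1$. If that identity is awkward at the edges, I would instead argue as in Corollary~\ref{lem:DeltasFixed}. Write $\Xi^1_X$ as a product $\Xi^1_{\{\phi_1\}}\Delta^1_{X_1}\cdots\Delta^1_{X_m}$ over the conjugacy classes $X_i$ of $A$ in $X$. Then fix $\Xi^1_{\{\phi_1\}}$ by a minimality argument in the semilattice, or by its down-set size, to conclude.
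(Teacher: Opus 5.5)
Your proof is correct, and it takes a genuinely different and shorter route than the paper.

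\textbf{The paper's route.} It repeats the argument used for $\Delta$. It uses the semilattice structure to say that $\os$ sends each $\Xi^1_Y$, for a single conjugacy class $Y=\phi_k\au(\sn)$, to some $\Xi^1_{Y'}$ of the same kind. It then takes the fixed element $\Phi_{g,k}$ of Lemma~\ref{lem:Phigh-Char} and multiplies it by $\Xi^1_Y$. This requires a case split on whether $|\Fix(g)|=2$: if so, the product is $\Phi_{(i\;j),1}$, otherwise it is $\Phi_{g,1}$. Both are fixed by Lemma~\ref{lem:PhigkOsIsPhigk'}, and this forces $Y'=Y$.

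\textbf{Your route.} You bootstrap from Corollary~\ref{lem:DeltasFixed} instead.
\begin{itemize}
\item Case~(\ref{XiDelta}) of Table~\ref{tab:table} gives the factorisation $\Xi^1_X=\Xi^1_\emptyset\Delta^1_X$. This is valid because every admissible $X\subseteq A\cup\{\phi_1\}$ with a class of $A$ satisfies $X\neq\sen(\sn)$.
\item That factorisation reduces the whole problem to the single element $\Xi^1_\emptyset$.
\item $\Xi^1_\emptyset$ is forced to be fixed, being the identity of the semilattice $\{\Xi^1_X\}$. This semilattice is $\os$-invariant by Lemmas~\ref{lem:DeltaXiInvatiant} and~\ref{lem:SetOfIdempotents}.
\end{itemize}
This avoids the $\Phi_{g,k}$ computations and the $|\Fix(g)|=2$ case split. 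It also avoids the implicit step that a semilattice automorphism sends single-class elements to single-class elements. The price is that the $\Xi$ result now rests on the $\Delta$ result rather than running in parallel with it.

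Two small inaccuracies, neither affecting the main argument:
\begin{itemize}
\item $\Xi^1_\emptyset$ and $\Xi^1_{\{\phi_1\}}$ are the same map, since both fix $\phi_1$. So the parenthetical about $\Delta^1_{\{\phi_1\}}$ is unnecessary; that map would be $\Psi_1$, and it is not among the paper's listed $\Delta$'s.
\item In your fallback, $\Xi^1_\emptyset$ is the top of the semilattice in the natural order, not a minimal element.
\end{itemize}
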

\begin{proof}
	Arguing just as in Corollary~\ref{lem:DeltasFixed} one finds that the elements $\Xi_X^1$ form a finite semilattice from which it follows that if $Y=\phi_k\au(\sn)$, then $\Xi^1_Y\os=\Xi^1_{Y'}$ where $Y'=\phi_{k'}\au(\sn)$ for some $\phi_{k'}$. It then again suffices to show that $\os$ fixes each $\Xi^1_Y$.
	
	To this end let $Y$ and $Y'$ be as above. By Lemma~\ref{lem:Phigh-Char}, we must have that there is some $g$ such that $\Phi_{g,k}\os=\Phi_{g,k}$. Either $|\Fix(g)|\neq 2$, yielding
	\[\Phi_{g,k}\Xi^1_Y=\Phi_{g,1},\]
or $\Fix(g)=\{i,j\}$ yielding
	\[\Phi_{g,k}\Xi^1_Y=\Phi_{(i\;j),1}.\]
From Lemma~\ref{lem:PhigkOsIsPhigk'}, the right hand side of both equations is fixed by $\os$. Thus,
	\[\Phi_{g,1}=\Phi_{g,1}\os=(\Phi_{g,k}\Xi^1_Y)\os=\Phi_{g,k}\Xi^1_{Y'}\]
or
	\[\Phi_{(i\;j),1}=\Phi_{(i\;j),1}\os=(\Phi_{g,k}\Xi^1_Y)\os=\Phi_{g,k}\Xi^1_{Y'}.\]
That is, $\phi_k$ must be in $Y'$ and so $Y'=\phi_k\au(\sn)=Y$. This completes the proof of the corollary.
\end{proof}
\begin{lemma}\label{lem:PhigkostoPhigk'conj}
	Let $\os$ in $\Ker(\mathfrak{R})$. Then for any $\Phi_{g,k}$ we have that $\Phi_{g,k}\os=\Phi_{g,k'}$ where $k'$ is conjugate to $k$.
\end{lemma}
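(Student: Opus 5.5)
By Lemma~\ref{lem:PhigkOsIsPhigk'} we already know that $\Phi_{g,k}\os=\Phi_{g,k'}$ for some $\phi_{k'}\in A$. It therefore only remains to show that $\phi_{k'}$ lies in the conjugacy class $Y:=\phi_k\au(\sn)$. I plan to use the elements $\Delta^1_Y$, which are now known to be fixed by every $\os\in\Ker(\mathfrak{R})$ by Corollary~\ref{lem:DeltasFixed}. I will also use the fact that $\Phi_{g,1}$ is fixed (Lemma~\ref{lem:PhigkOsIsPhigk'}).

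The first step is to compute, from Table~\ref{tab:table} (case $\Phi^h_T\Delta^g_X$ with $h=g=1$, which gives $\Phi^1_{TX^\sharp}$), the product $\Phi_{g,k}\Delta^1_X$ for a union $X$ of conjugacy classes of $A$ together with $\phi_1$. The composite $TX^\sharp$ sends $E$ to $\phi_g$. It sends $A$ to $\phi_1$ if $\phi_k\in X$, and to $\phi_k$ otherwise. Hence
\[\Phi_{g,k}\Delta^1_X=\begin{cases}\Phi_{g,1}&\text{if }\phi_k\in X,\\ \Phi_{g,k}&\text{if }\phi_k\notin X.\end{cases}\]

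Next I take $X=Y\cup\{\phi_1\}$. Then $\Phi_{g,k}\Delta^1_Y=\Phi_{g,1}$. Applying $\os$, and using that $\os$ fixes both $\Delta^1_Y$ and $\Phi_{g,1}$, gives
\[\Phi_{g,k'}\Delta^1_Y=\Phi_{g,1}.\]
By the computation above with $k'$ in place of $k$, this forces $\phi_{k'}\in Y$, that is, $k'=k^s$ for some $s\in\sn$. So $k'$ is conjugate to $k$.

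The main obstacle is only bookkeeping. I must check that the composition convention (maps act on the right) gives $TX^\sharp$ rather than $X^\sharp T$. I must also verify that $\Phi_{g,k'}\Delta^1_Y=\Phi_{g,1}$ genuinely distinguishes whether $\phi_{k'}\in Y$: if $\phi_{k'}\notin Y$, the product is $\Phi_{g,k'}$, and this differs from $\Phi_{g,1}$ because $k'\neq 1$.

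If the table entry turns out to be awkward, a fallback is to argue via centralisers instead. By Lemma~\ref{lem:SameCentUnderAut}, $C(g)\cap C(k)=C(g)\cap C(k')$. From this, a $3$-cycle argument like the one in Theorem~\ref{tm:StructureOfAutEndSn} would give $l(k)=l(k')$. I expect the $\Delta^1_Y$ argument to suffice, however.
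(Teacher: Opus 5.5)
Your proposal is correct and is essentially the paper's own proof: both take $\Phi_{g,k}\os=\Phi_{g,k'}$ and $\Phi_{g,1}\os=\Phi_{g,1}$ from Lemma~\ref{lem:PhigkOsIsPhigk'}, then apply $\os$ to the identity $\Phi_{g,k}\Delta^1_Y=\Phi_{g,1}$, using that $\Delta^1_Y$ is fixed by Corollary~\ref{lem:DeltasFixed}, to force $\phi_{k'}\in Y=\phi_k\au(\sn)$. Your centraliser fallback is not needed, and it would need more care than you suggest, because the $3$-cycle would also have to commute with $g$.
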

\begin{proof}
	By Lemma~\ref{lem:PhigkOsIsPhigk'}, we know that $\Phi_{g,k}\os=\Phi_{g,k'}$ for some $\phi_{k'}$ in $A$ and that $\Phi_{g,1}\os=\Phi_{g,1}$. Let $Y=\phi_k\au(\sn)$. Since $\Phi_{g,k}\Delta^1_{Y}=\Phi_{g,1}$, and (by Lemma~\ref{lem:DeltasFixed}) $\Delta^1_{Y}$ is fixed by $\os$, we get that
    \[\Phi_{g,1}=\Phi_{g,1}\os=(\Phi_{g,k}\Delta^1_{Y})\os=\Phi_{g,k}\os\Delta^1_{Y}\os=\Phi_{g,k'}\Delta^1_{Y}.\]
Thus, $\Phi_{g,k'}\Delta^1_{Y}=\Phi_{g,1}$, which implies that $\phi_{k'}$ is in $Y=\phi_k\au(\sn)$, so that $k$ and $k'$ are conjugate.
\end{proof}
\begin{lemma}\label{lem:AllPhigkFixed}
	Let $\os\in\Ker(\mathfrak{R})$. Then $\os$ fixes any element $\Phi_{g,k}$.
\end{lemma}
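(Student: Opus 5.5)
By Lemma~\ref{lem:PhigkostoPhigk'conj} we already know that $\Phi_{g,k}\os=\Phi_{g,k'}$ with $k'$ conjugate to $k$. The plan is to use the centraliser argument of Lemma~\ref{lem:SameCentUnderAut} to identify $k'$. The complication is that $\Phi_{g,k}$ encodes the pair $(g,k)$, so centralisers only give us $C(g)\cap C(k)=C(g)\cap C(k')$. This is not enough to conclude $k=k'$ for an arbitrary $g$. We therefore move information from the ``good'' pairs of Lemma~\ref{lem:Phigh-Char} to arbitrary pairs by multiplying by elements already known to be fixed.

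Fix $\phi_k\in A$ and $\phi_g\in E$. By Lemma~\ref{lem:Phigh-Char} (via Lemma~\ref{lem:each_g_has_its_nice_k}) there is some $\phi_{g_0}\in E$ with $\Phi_{g_0,k}\os=\Phi_{g_0,k}$. We then look for a fixed element $\Lambda$ with $\Phi_{g,k}\Lambda=\Phi_{g_0,k}$, or alternatively $\Lambda\Phi_{g,k}=\Phi_{g_0,k}$. Left multiplication is the natural choice, since $\Phi_{g_0,k}$ and $\Phi_{g,k}$ share the same $A$-part. Using the table, case~(\ref{PhiPhi}) gives $\Phi^1_T\Phi^1_S=\Phi^1_{TS}$. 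Right multiplication by a $\Phi^1_S$ with $S$ mapping $\phi_g\mapsto\phi_{g_0}$ and fixing $\phi_k$ would work, but that $S$ is not yet known to be fixed. So the cleaner route is to compare $A$-parts directly.

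The key relation we will exploit is
\[\Phi_{g_0,k}\,\Phi_{g,k}=\Phi_{g,k},\]
which holds because $E$ is sent to $\phi_g$ and $A$ to $\phi_k$, while $\phi_k\mapsto\phi_k$. Applying $\os$ gives
\[\Phi_{g_0,k}\,\Phi_{g,k'}=\Phi_{g,k'}.\]
This carries no new information, so instead we use
\[\Phi_{g,k}\,\Phi_{g_0,k}=\Phi_{g_0,k}.\]
Under $\os$ this becomes $\Phi_{g,k'}\Phi_{g_0,k}=\Phi_{g_0,k}$. Again this is automatic, since $\phi_{k'}\in A$ maps to $\phi_k$. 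The products among the $\Phi_{h,s}$ therefore only see $E$ versus $A$, and we expect this to be the main obstacle: we need a fixed element that distinguishes individual elements of $A$.

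For that we will use elements that separate $\phi_k$ from $\phi_{k'}$ inside one conjugacy class. Lemma~\ref{lem:conjugateAndCentraliserImpliesEqual} supplies an odd involution $w\in C(k)\setminus C(k')$ when $k\neq k'$.

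The concrete plan is as follows. Consider the idempotent $\Phi^1_S$ with $S$ sending $E\to\phi_{g_0}$, fixing $\phi_k$, and sending all other elements of $A\cup\{\phi_1\}$ to $\phi_1$. This $S$ satisfies the kernel-class condition only if $\phi_k$ forms a full class, which fails in general. So instead we will use $\Phi^w_{S}$ with top index $w$ of order two. Here $S$ sends $E$ to $\phi_{g_0}$ and $A$ to $\phi_k$, which is admissible precisely when $w\in C(g_0)\cap C(k)$. By Lemma~\ref{PhiTopIndex}, $\Phi^w_S\os=\Phi^w_{T}$. The same argument as in Lemmas~\ref{ResToE} and~\ref{lem:PhigkOsIsPhigk'}, using $\Phi^1_S=\Phi^w_S\Phi^w_S$ or the product $\Phi^w_S\Psi_{w}$, shows that $T$ is constant on $E$ with value $\phi_{g_0}$ and constant on $A$ with value $\phi_{k''}$, where $w\in C(k'')$. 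Since $\Phi^1_S=\Phi_{g_0,k}$ is fixed, we get $k''=k$. Finally, $\Phi_{g,k}\Phi^w_S$ equals $\Phi^w_{S}$ up to the $E$-part, and applying $\os$ forces $w\in C(k')$ for every odd $w\in C(g_0)\cap C(k)$. Combined with $C(g)\cap C(k)=C(g)\cap C(k')$ for suitable $g$ and with Lemma~\ref{lem:conjugateAndCentraliserImpliesEqual} (choosing $g_0$ to be a transposition factor of $k$, so that $w$ can be taken in $C(g_0)$), this yields $k'=k$. The step most likely to need adjustment is verifying that the separating $w$ can be chosen inside $C(g_0)$.
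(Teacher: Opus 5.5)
Your proposal has a genuine gap at the decisive step. By case~(\ref{PhiPhi}) of Table~\ref{tab:table}, a product $\Phi^h_T\Phi^{h'}_{S'}$ has top index $h'$ if $h$ is odd and top index $1$ if $h$ is even. Since $\Phi_{g,k}$ has top index $1$, every product in $\Phi$ having $\Phi_{g,k}$ as a factor, on either side, has top index $1$. In particular $\Phi_{g,k}\Phi^w_S=\Phi_{g_0,k}$, which does not have top index $w$. So the admissibility condition ``the top index centralises the image'' is never applied to anything involving $k'$.

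There is a further obstruction. Every admissible $S$ is constant on conjugacy classes, so $\phi_kS=\phi_{k'}S$. Hence right multiplication by a fixed element cannot distinguish $\Phi_{g,k}$ from $\Phi_{g,k'}$. Even granting that $\Phi^w_S$ is fixed, the identity $(\Phi_{g,k}\Phi^w_S)\os=\Phi_{g,k'}\Phi^w_S=\Phi_{g_0,k}$ holds automatically. The claim that odd elements of $C(g_0)\cap C(k)$ lie in $C(k')$ is therefore not established.

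Your auxiliary identities are also off. For odd $w$ one has $\Phi^w_S\Phi^w_S=\Phi^w_S$ and $\Phi^w_S\Psi_w=\Phi^w_S$, and neither is $\Phi^1_S$. Suppose instead you took the $E$-value of $S$ to be $\phi_g$, so that $\Phi_{g,k}\Phi^w_S=\Phi_{g,k}$ really does push $k'$ into the image of $T$. Then you would only recover $w\in C(k')$ for $w\in C(g)\cap C(k)$. That is already given by Lemma~\ref{lem:SameCentUnderAut}, and no separating $w$ can lie in $C(g)$.

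What is missing is a way to transport the defect $k\mapsto k'$ from the $E$-value $g$ to a different $E$-value. This requires a $\Phi^1_S$ that is \emph{non-constant on $E$}; the paper's argument runs as follows.
\begin{itemize}
\item Take an odd involution $w\in C(k)\setminus C(k')$ from Lemma~\ref{lem:conjugateAndCentraliserImpliesEqual}.
\item Write $E$ as a disjoint union of two nonempty unions of conjugacy classes $E_1,E_2$ with $\phi_g\in E_2$.
\item Let $S$ send $E_1\to\phi_w$, $E_2\to\phi_g$, $A\to\phi_k$ and $\phi_1\to\phi_1$, and write $\Phi^1_S\os=\Phi^1_T$. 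Lemma~\ref{ResToE} gives $T=S$ on $E$.
\item Applying $\os$ to $\Phi_{g,k}\Phi^1_S=\Phi_{g,k}$ gives $\phi_{k'}T=\phi_{k'}$. Since $k,k'$ are conjugate and the kernel classes of $T$ are unions of conjugacy classes, $\phi_kT=\phi_{k'}$.
\item For $\phi_p\in E_1$ we have $\Phi_{p,k}\Phi^1_S=\Phi_{w,k}$. Writing $\Phi_{p,k}\os=\Phi_{p,l}$ with $l$ conjugate to $k$ (Lemma~\ref{lem:PhigkostoPhigk'conj}), we get $\Phi_{w,k}\os=\Phi_{p,l}\Phi^1_T=\Phi_{w,k'}$.
\item Hence $C(w)\cap C(k)=C(w)\cap C(k')$, which contradicts $w\in C(k)\setminus C(k')$.
\end{itemize}
Here $w$ enters as an $E$-value rather than as a top index. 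So the requirement $w\in C(g_0)$ that worried you never arises.

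Your ``good pair'' instinct also fits this mechanism. Sending $E_1$ to $\phi_{g_0}$ instead of $\phi_w$ gives $\Phi_{g_0,k}=\Phi_{g_0,k}\os=\Phi_{p,l}\Phi^1_T=\Phi_{g_0,k'}$, and hence $k'=k$ directly.
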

\begin{proof}
	Consider $\Phi_{g,k}$; we know that if $k=1$ then $\Phi_{g,1}\os=\Phi_{g,1}$. We therefore suppose that $k\neq 1$ and $\Phi_{g,k}\os=\Phi_{g,k'}$ where $k$ and $k'$ are not equal and $k'\neq 1$. Then, by Lemma~\ref{lem:PhigkostoPhigk'conj} we have that $k$ and $k'$ are conjugate. Using Lemma~\ref{lem:SameCentUnderAut} with $M=\en_2(\sn)$, and arguing as in Lemma~\ref{lem:Phigh-Char}, have that $C(g)\cap C(k)=C(g)\cap C(k')$.

  We now make use of Lemma~\ref{lem:conjugateAndCentraliserImpliesEqual} to find $w\in\sn\setminus\an$ of order two such that in $w\in C(k)\setminus C(k')$. In particular, $\phi_w\neq\phi_g$.
	
	Now, let $E_1$ and $E_2$ be non-empty subsets of $E$ that satisfy:
	\begin{enumerate}[(i)]
		\item $E_1$ and $E_2$ are unions of conjugacy classes in $E$;
		\item $E_1\cup E_2=E$; and
		\item $\phi_g\in E_2$.
	\end{enumerate}
	
	Define $\Phi^1_S$ by $E_1S=\{\phi_w\}$, $E_2S=\{\phi_g\}$, $AS=\{\phi_k\}$ and $\phi_1S=\phi_1$. From Lemma~\ref{PhiTopIndex} we have that $\Phi^1_S\os=\Phi^1_T$ for some $T$ where from Lemma~\ref{PhiRank1}, $T$ is not constant and from Lemma~\ref{ResToE}, that $T$ agrees with $S$ on $E$. Then,
	\[\Phi_{g,k}\Phi^1_S=\Phi_{gS,kS}=\Phi_{g,k}.\]
Applying $\os$ yields
	\[\Phi_{g,k'}=\Phi_{g,k}\os=(\Phi_{g,k}\Phi^1_S)\os=\Phi_{g,k}\os\Phi^1_S\os=\Phi_{g,k'}\Phi^1_T=\Phi_{gT,k'T}=\Phi_{gT,kT},\]
Thus, $kT=k'$.

	Now, let $\phi_p$ be in $E_1$. Then,
	\[\Phi_{p,k}\Phi^1_S=\Phi_{pS,kS}=\Phi_{w,k}.\]
Suppose that $\Phi_{p,k}\os=\Phi_{p,l}$ so that $l$ is conjugate to $k$. Applying $\os$ yields
	\[\Phi_{w,k}\os=(\Phi_{p,k}\Phi^1_S)\os=\Phi_{p,k}\os\Phi^1_S\os=\Phi_{p,l}\Phi^1_T=\Phi_{pT,lT}=\Phi_{w,kT}=\Phi_{w,k'}.\]
Thus, by Lemma~\ref{lem:SameCentUnderAut}, we must have that $C(\Phi_{w,k})=C(\Phi_{w,k'})$ and consequently $C(w)\cap C(k)=C(w)\cap C(k')$. But this forces $w\in C(k')$, a contradiction.
	
	Thus, all $\Phi_{g,k}$ are fixed by $\os$.
\end{proof}
\begin{lemma}
	Let $\os\in\Ker(\mathfrak{R})$. Then $\os$ fixes any element in $\Phi$.
\end{lemma}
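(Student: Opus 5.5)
Given $\Phi^g_S$, Lemma~\ref{PhiTopIndex} gives $\Phi^g_S\os=\Phi^g_T$ for some $T$. It therefore suffices to show that $T=S$, i.e.\ that $\phi_hT=\phi_hS$ for every $\phi_h\in\sen(\sn)$. If $\rank(S)=1$ we are done by Lemma~\ref{PhiRank1}; by the same lemma applied to $\os^{-1}$, $T$ has rank $1$ exactly when $S$ does. So we assume $S$ and $T$ are both non-constant. Then both satisfy $ES\subseteq E$, $AS\subseteq A\cup\{\phi_1\}$ and $\phi_1S=\phi_1$, and similarly for $T$. In particular $\phi_1T=\phi_1=\phi_1S$. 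By Lemma~\ref{ResToE}, $\Phi^g_S$ and $\Phi^g_T$ agree on $E$, so $S|_E=T|_E$. It remains to treat $A$.

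For $A$, I would use the elements $\Phi_{g',k}$, all of which are fixed by Lemma~\ref{lem:AllPhigkFixed}, as probes. This mirrors how the $\Omega_{1,g}$ probed $E$ in Lemma~\ref{ResToE}. Fix $\phi_k\in A$ and pick any $\phi_{g'}\in E$. Using the product rule $\Phi^1_{T'}\Phi^h_S=\Phi^{h}_{T'S}$ (case (\ref{PhiPhi}) of Table~\ref{tab:table} with $h'=1\in\an$; I would check whether the top index becomes $1$ or $h$ and adjust accordingly), we get
\[\Phi_{g',k}\Phi^g_S=\Phi^{\ast}_{T_{g',k}S},\]
where $T_{g',k}S$ sends $E$ to $\phi_{g'}S$, sends $A$ to $\phi_kS$, and fixes $\phi_1$. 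Applying $\os$ and using $\Phi_{g',k}\os=\Phi_{g',k}$ gives
\[\Phi_{g',k}\Phi^g_S\os=\Phi_{g',k}\Phi^g_T,\]
so the left side equals $\Phi^{\ast}_{T_{g',k}T}$.

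The two resulting maps agree on $E$ by the previous step, and they agree at $\phi_1$. On $A$ their values are $\phi_kS$ and $\phi_kT$. I still need to confirm that $\os$ sends the left-hand product to the corresponding element on the right, since the left-hand product is itself of the form $\Phi$ with a known top index. The cleanest version is as follows. The product $\Phi_{g',k}\Phi^g_S$ is an element of the form $\Phi^{g''}_{R}$ with $R$ constant on $E$ and constant on $A$. If $\phi_kS\neq\phi_1$, this element is $\Phi_{g'S,kS}$, possibly twisted by a top index; if $\phi_kS=\phi_1$, it is $\Phi_{g'S,1}$. Such elements are fixed by $\os$ by Lemma~\ref{lem:AllPhigkFixed}, together with Lemma~\ref{PhiTopIndex} for a general top index. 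Hence $\Phi_{g'S,kS}=\Phi_{g'T,kT}$, and therefore $\phi_kS=\phi_kT$.

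The main obstacle is the bookkeeping of the top index when $g\neq1$. Products $\Phi_{g',k}\Phi^g_S$ may carry top index $g$ rather than $1$, and Lemma~\ref{lem:AllPhigkFixed} is stated only for top index $1$. I would handle this first by reducing to $g=1$: multiply on the right by a suitable element, or note $\Phi^g_S\Delta^1_Z=\Phi^g_Z$ to separate off the top index. Alternatively, I would use the $\Phi^1$ analogues directly, i.e.\ compute $\Phi^1_S$ first and then recover $\Phi^g_S$ via $\Phi^g_S=\Phi^g_Z$-type factorisations. Failing that, I would compare the two sides using the centraliser condition of Lemma~\ref{lem:SameCentUnderAut} combined with Lemma~\ref{lem:each_g_has_its_nice_k}, exactly as in Lemma~\ref{lem:Phigh-Char}, to force $\phi_kT=\phi_kS$. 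Once this is done, $S=T$ on all of $\sen(\sn)$, and so $\Phi^g_S\os=\Phi^g_S$.
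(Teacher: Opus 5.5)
Your argument is essentially the paper's: Lemma~\ref{ResToE} settles $S$ on $E$, and left multiplication by the fixed probes $\Phi_{g',k}$ of Lemma~\ref{lem:AllPhigkFixed} settles $S$ on $A$.

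The ``main obstacle'' you raise does not exist. It comes from misreading case~(\ref{PhiPhi}) of Table~\ref{tab:table}. The left factor $\Phi_{g',k}$ has top index $1\in\an$, so the rule is $\Phi^1_{T'}\Phi^g_S=\Phi^1_{T'S}$, not $\Phi^g_{T'S}$. You can see this directly: $\Phi_{g',k}$ sends every unit to $\psi_1$, and $\psi_1\Phi^g_S=\psi_1$. Hence $\Phi_{g',k}\Phi^g_S=\Phi_{g'S,kS}$ for every $g$, and Lemma~\ref{lem:AllPhigkFixed} applies directly to both sides.

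You should drop the fallbacks. In particular, the appeal to Lemma~\ref{PhiTopIndex} would not have worked: that lemma only preserves the top index, so it cannot show that some $\Phi^g_R$ with $g\neq 1$ is fixed. With the product rule corrected, your direct treatment of general $g$ is if anything slightly more explicit than the paper. The paper first asserts a reduction to $\Phi^1_S$ and then runs the same computation.
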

\begin{proof}
	By Lemma~\ref{PhiTopIndex} it is enough to show that $\os$ fixes all $\Phi^1_S$; we have already done this if $S$ is constant (see Lemma \ref{PhiRank1}), or for maps of the form $\Phi_{g,k}$ (see Lemma \ref{lem:AllPhigkFixed}).
	
	Consider $\Phi^1_S\in \Phi$ where $S$ is not constant and let $\Phi^1_S\os=\Phi^1_T$, so that $T$ is not constant, and $S$ and $T$ agree on $E$ (by Lemma \ref{ResToE}). Let $\phi_k\in A$ and pick any $\phi_g\in E$. Since $\Phi_{g,k}\Phi^1_S=\Phi_{gS,kS}$, we have that
	\[\Phi_{gS,kS}=\Phi_{gS,kS}\os=(\Phi_{g,k}\Phi^1_S)\os=\Phi_{g,k}\os\Phi^1_S\os=\Phi_{g,k}\Phi^1_T=\Phi_{gT,kT}.\]
This forces $\phi_kT=\phi_kS$ and consequently, $S$ and $T$ agree on $A$. Thus $S=T$ as required.
\end{proof}
\begin{theorem}\label{thm:AutEnd2IsomSn}
	The automorphism group of $\en_2(\sn)$ is isomorphic to $\sn$. It consists only of inner automorphisms.
\end{theorem}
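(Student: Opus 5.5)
The plan is to assemble the lemmas of this section into the direct product decomposition established at its start. By Theorem~\ref{tm:StructureOfAutEndSn}, the group of units of $\en_2(\sn)$ is $\au(\en(\sn))=\{\Psi_g\mid g\in\sn\}\cong\sn$. For $n\geq 7$ this group is complete, so Theorem~\ref{Tm:AutCompleteMonoids} gives
\[\au(\en_2(\sn))\cong\sn\times\Ker(\mathfrak{R}),\]
where $\Ker(\mathfrak{R})$ consists of the automorphisms fixing every $\Psi_g$. It therefore suffices to show that $\Ker(\mathfrak{R})$ is trivial. Once that is done, every automorphism is the image of some element of $\au(G)$ under the splitting map $\mathfrak{e}$, and that image is exactly $\inn(\en_2(\sn))$. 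So every automorphism is conjugation by some $\Psi_g$, and $\au(\en_2(\sn))\cong\sn$.

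To see that $\Ker(\mathfrak{R})$ is trivial, I would take $\os\in\Ker(\mathfrak{R})$ and go through the partition
\[\en_2(\sn)=\Psi\cup\Xi\cup\Delta\cup\Phi\cup\Omega\cup\Gamma\]
from the summary of Section~\ref{sec:end2}, checking that $\os$ fixes each block pointwise:
\begin{enumerate}[(i)]
\item $\Psi$ is fixed by the definition of $\Ker(\mathfrak{R})$.
\item The three families $\Omega_{1,g}$, $\Omega_{2,g}$, $\Omega_{3,g}$ are fixed by Lemmas~\ref{lem:Omega1Fixed}, \ref{lem:Omega2Fixed} and~\ref{lem:Omega3Fixed}.
\item $\Gamma$ is fixed by Lemma~\ref{lem:GammaFixed}.
\item $\Delta$ is fixed by Corollary~\ref{lem:DeltasFixed}, and $\Xi$ by the corollary that follows it.
\item $\Phi$ is fixed by the final lemma before the theorem.
\end{enumerate}
Since these blocks exhaust $\en_2(\sn)$, $\os$ is the identity. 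In the case $n\not\equiv 0 \pmod 4$ the set $\Xi$ is empty, and that step holds vacuously.

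The one point needing care is the identification of the subgroup isomorphic to $\sn$ with the inner automorphisms. I would argue directly from the proof of Theorem~\ref{Tm:AutCompleteMonoids}. For any $\os$, the restriction $\os|_G$ is an automorphism of $G\cong\sn$, so it is inner, say conjugation by $\Psi_h$. Let $\os'$ be the inner automorphism of $\en_2(\sn)$ given by conjugation by $\Psi_h$. Then $\os\,\os'^{-1}$ lies in $\Ker(\mathfrak{R})$, hence is trivial, so $\os=\os'$.

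The map $g\mapsto$ (conjugation by $\Psi_g$) is injective, because the centre of $G$ is trivial. Hence $\au(\en_2(\sn))=\inn(\en_2(\sn))\cong\sn$.

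I do not expect a real obstacle here: all the substantive work sits in the preceding lemmas. The only thing to confirm is that the listed families really cover every element of $\en_2(\sn)$, and this is exactly the summary classification.
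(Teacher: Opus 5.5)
Your proposal is correct and follows the paper's proof: use the decomposition $\au(\en_2(\sn))\cong\Ker(\mathfrak{R})\times\sn$ from Theorem~\ref{Tm:AutCompleteMonoids}, then show that $\Ker(\mathfrak{R})$ is trivial by running through the preceding lemmas block by block. Your extra paragraph identifying the $\sn$ factor with $\inn(\en_2(\sn))$ is also correct; it spells out the "only inner automorphisms" claim, which the paper leaves implicit.
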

\begin{proof}
	We had already seen that $\au(\en_2(\sn))\cong\Ker(\mathfrak{R})\times\sn$. The series of lemmas above show that if $\os$ is in $\Ker(\mathfrak{R})$ then it must be the identity map. Thus, $\Ker(\mathfrak{R})\times\sn\cong\sn$.
\end{proof}
\section{Conclusion and further directions}\label{sec:thoughts}
	We began this work in the knowledge that, for the first two steps of the endomorphism tower of $\sn$ (for $n\geq 7$), the groups of units were again $\sn$. Of course, at the first step ($\en_0(\sn)=\sn$) this is trivial. At the second ($\en_1(\sn)$) it follows easily since the structure of $\en_1(\sn)$ is straightforward to calculate from knowledge of the normal subgroups of $\sn$ and the order 2 elements of $\sn$, and one has to hand the classic result that $\sn\cong \au(\sn)$. The determination that $\au(\en_1(\sn))\cong\sn$ and the description of $\en_2(\sn)$ is much more complex, but again relies on properties of the order 2 elements of $\sn$ and considerations of centralisers. The same is true of the determination that $\au(\en_2(\sn))\cong\sn$. Given these results, the first question we pose is an obvious one. To answer this, one would first have to discover some pattern in the elements of $\en_i(\sn)$ (one that passes from one level to the next, rather than relying on the ground level), that has so far eluded us.
\begin{question}\label{qn:always}
	Is $\au(\en_i(\sn))$ isomorphic to $\sn$ for $i\geq 3$?
\end{question}

We began with a natural semigroup, indeed a group, $\sn$.
\begin{question}\label{qn:otherfinite}
	What can we say about the endomorphism tower of other natural monoids of transformations?
\end{question}

Work has already begun on the endomorphism tower of $\tn$ \cite{Schein:98,Gould:26} at the lowest level. We know the endomorphism tower of a finite semigroup never stabilises in a finite number of steps. However, we can ask the following.
\begin{question}\label{qn:ham2}
	Is there an infinite semigroup $S$ such that the endomorphism tower of $S$ stabilises after a finite number of steps?
\end{question}
\begin{question}\label{qn:ham3}
	For certain monoids $M$, such as $\sn$, can we form infinite endomorphism towers (for this one needs natural morphisms $\alpha:\en_i(M)\rightarrow\en_{i+1}(M)$)? In this case, does the endomorphism tower of a finite, or of an infinite, $M$ stabilise?
\end{question}

This article has focused on semigroup endomorphisms. For a monoid $M$ we let $\en^1(M)$ denote the monoid of {\em monoid} endomorphisms of $M$; the {\em monoid endomorphism tower} is then defined as for the endomorphism tower, but using the monoids $\en_i^1(M)$.
\begin{question}\label{qn:semigroups}
	How do our results translate to monoid endomorphism towers?
\end{question}
\begin{proposition}
	Let $M$ be a finite monoid such that $M$ is isomorphic to $\en^1(M)$. Then $M$ is either the trivial monoid $\{1\}$ or is the trivial monoid with an adjoined zero $\{1,0\}$. These are the only monoids with a terminating monoid endomorphism tower.
\end{proposition}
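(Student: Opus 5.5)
The plan is to adapt the idempotent-counting argument of Lemma~\ref{lem:EndTowerSolFin} to monoid endomorphisms. The constant maps to idempotents used there are no longer available, since a monoid endomorphism must send $1$ to $1$. I will replace them with a family of \emph{two-valued} idempotent monoid endomorphisms.

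Let $M$ be finite with group of units $G$. The first step is to note that if $M\neq G$ then $I:=M\setminus G$ is an ideal of $M$. This holds because in a finite monoid $xy\in G$ forces $x,y\in G$: a one-sided inverse is a two-sided inverse.

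For each idempotent $e\in E(M)$ I then define
\[\chi_e:x\mapsto\begin{cases}1&x\in G,\\ e&x\in I.\end{cases}\]
To check that $\chi_e$ is a homomorphism I split products $xy$ by membership of $G$ and $I$. A product of two units is a unit, and its image $1=1\cdot 1$ is correct. If at least one factor lies in $I$, then $xy\in I$ and its image is $e$, which equals each of $1\cdot e$, $e\cdot 1$ and $e\cdot e$. Also $1\chi_e=1$, and each $\chi_e$ is idempotent. The maps $\chi_e$ are pairwise distinct because they take different values on $I$. Together with $\mathrm{id}_M$ this gives $|E(\en^1(M))|\geq |E(M)|$, with strict inequality unless $\mathrm{id}_M=\chi_e$ for some $e$.

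The equality $\mathrm{id}_M=\chi_e$ forces $G=\{1\}$ and $I=\{e\}$, so $M=\{1,0\}$.

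The remaining case is $M=G$, where $\en^1(G)=\en(G)$. Here the trivial map and the identity are two distinct idempotents whenever $G\neq\{1\}$, while $|E(G)|=1$. Hence a nontrivial group is not isomorphic to its endomorphism monoid.

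Combining the two cases, $M\cong\en^1(M)$ implies that $M$ is trivial or $M=\{1,0\}$. For the converse I check both directly. The trivial monoid has trivial endomorphism monoid. For $M=\{1,0\}$, the monoid endomorphisms are the identity and $c:x\mapsto 1$, which is a homomorphism since $0\cdot 0=0$ maps to $1\cdot 1$. In $\en^1(\{1,0\})$ we have $c\circ\mathrm{id}=\mathrm{id}\circ c=c$ and $c^2=c$, so $c$ is a zero and $\en^1(\{1,0\})\cong\{1,0\}$. Both towers are therefore constant.

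The step I expect to need care is the final sentence, "only monoids with a terminating tower". Read as "$M\cong\en^1(M)$", it follows from the above. Read as "$\en_i^1(M)\cong\en_{i+1}^1(M)$ for some $i$", the argument only shows that some $\en_i^1(M)$ is trivial or $\{1,0\}$. For instance $M=\mathbb{Z}_2$ has $\en^1(M)\cong\{1,0\}$ and so stabilises after one step. I would therefore state the conclusion in the form "the tower terminates at $i=0$", or equivalently describe the terminating towers as those that eventually reach one of these two monoids. The strict increase of $|E(\en_i^1(M))|$ at every level before that point guarantees that these two are the only possible limits.
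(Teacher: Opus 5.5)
Your argument is correct and is essentially the paper's. Both use the same idempotent endomorphisms $\chi_e$ (the paper's $\lambda_e$) together with $\mathrm{id}_M$ to force $|E(\en^1(M))|>|E(M)|$. The differences are organisational. The paper disposes of $\mathbb{Z}_2$ by observing that the constant map to $1$ is always a zero of $\en^1(M)$, and it separates $\mathrm{id}_M$ from the $\lambda_e$ by rank when $|M|\geq 3$. You instead treat groups separately, analyse the equality case $\mathrm{id}_M=\chi_e$ directly, and also justify that $M\setminus G$ is an ideal, which the paper leaves implicit.

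Your caveat about the final sentence of the statement is well founded. The paper's proof only establishes the reading $M\cong\en^1(M)$. Under the paper's own definition of termination, $\mathbb{Z}_2$ is a counterexample, since $\en^1(\mathbb{Z}_2)\cong\{1,0\}$.
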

\begin{proof}
	If $M=\{1\}$ is the trivial monoid then its only monoid endomorphism is the identity map. Thus $M$ is isomorphic to $\en^1(M)$.
	
	Now notice that for any monoid $M$, the constant map to the identity of $M$ is a zero element for $\en^1(M)$. Thus, if $M$ is a non-trivial monoid that is isomorphic to $\en^1(M)$ then $M$ must have a zero element. If $M$ only contains the identity and a zero then it is easy to verify that $M$ is isomorphic to $\en^1(M)$.

	It only remains to show that if $M$ is a finite monoid containing at least 3 elements then $M$ is not isomorphic to $\en^1(M)$. For any idempotent $e$ in such a monoid we can define an idempotent $\lambda_{e}$ in $\en^1(M)$ by
		\[\lambda_{e}:x\to\begin{cases}
			1	&\text{if }x\text{ is in the group of units of }M\\
			e	&\text{otherwise.}
		\end{cases}\]
	Thus, there are at least $|E(M)|$ idempotents in $\en^1(M)$. All the elements of the form $\lambda_e$ are either rank 1 or rank 2 and so are not the identity map on $M$. Thus, $|E(M)|<|E(\en^1(M))|$, which implies that $M$ is not isomorphic to $\en^1(M)$.
\end{proof}
There remains from the group case a question of Hamkins \cite{hamkins:98}.
\begin{question}\label{qn:ham}
	Is there a finite group $G$ such that the automorphism tower of $G$ does not stabilise in a finite number of steps?
\end{question}


\begin{thebibliography}{99}
\bibitem{Adjith:2025} M.V. Ajith, P. J. Cameron, M. Ghosh and A. Lakshmanan S., Endomorphism and automorphism graphs of finite groups. \url{https://arxiv.org/abs/2511.15602}.
\bibitem{Araujo:09} J. Ara\'{u}jo and J Konieczny, General theorems on automorphisms of semigroups and their application, {\em J. Australian Math. Soc.} {\bf 87} (2009), 1–17.
\bibitem{araujo:11} J. Ara\'{u}jo, V. H. Fernandes, M. M. Jesus, V. Maltcev and J. D. Mitchell, Automorphisms of partial endomorphism semigroups, {\em Publ. Math. Debrecen} {\bf 79} (2011), 23--39.
\bibitem{chubb:08} J. Chubb, V.S. Harizanov, A.S. Morozov, S. Pingrey and E. Ufferman, Partial automorphism semigroups, {\em Ann. Pure Appl. Logic} {\bf 156} (2008) 245--258.
\bibitem{Fong:1980} Y. Fong and J. D. P. Meldrum, The endomorphism near-rings of the symmetric groups of degree at least five, {\em J. Australian Math. Soc.} {\bf 30} (1980), 37--49.
\bibitem{Gould:26} V. Gould, A. Grau and M. Johnson, The structure of $\en(\tn)$, {\em Monatsh. Math.} {\bf 209} (2026), 1--37. 
\bibitem{GGJK:2024} V. Gould, A. Grau, M. Johnson and M. Kambites, Translational hulls of semigroups of endomorphisms of an algebra, {\em J. Pure Appl. Algebra} {\bf 230} (2026) 108203.
\bibitem{hamkins:98} J. D. Hamkins, Every group has a terminative transfinite automorphism tower, {\em Proc. American Math. Soc.} {\bf 126} (1998), 3223--3226.
\bibitem{hou:07} H. Hou, Y. Luo, Z. Cheng, The endomorphism monoid of $\overline{P_n}$, {\em European J. Combinatorics} {\bf 29} (2008), 1173--1185.
\bibitem{Just:1999} W. Just, S. Shelah and S. Thomas, The automorphism tower problem revisited, {\em Adv. Math.} {\bf 148} (1999), 243--265.
\bibitem{mazorchuk:02} W. Mazorchuk, Endomorphisms of $\mathfrak{B}_n$, $\mathfrak{PB}_n$, and $\mathfrak{C}_n$, {\em Comm. Algebra} {\bf 30} (2002), 3489--3513.
\bibitem{Thomas:1985} S. Thomas, The Automorphism Tower Problem, {\em Proc. American Math. Soc.} {\bf 95} (1985), 166--168. 
\bibitem{Weilant} H. Wielandt, Eine Verallgemeinerung der invarianten Untergruppen, {\em Math. Z.} {\bf 45} (1939), 209--244.
\bibitem{Schein:98} B. Schein and B. Teclezghi, Endomorphisms of finite full transformation semigroups {\em Proc. American Math. Soc.} {\bf 126} (1998), 2579--2587.
\end{thebibliography}
\end{document}